\tikzset{degil/.style={
            decoration={markings,
            mark= at position 0.5 with {
                  \node[transform shape] (tempnode) {$\bigtimes$};
                  }
              },
              postaction={decorate}
}
}
\newtheorem{theorem}{Theorem}[section]
\newtheorem{lemma}[theorem]{Lemma}
\newtheorem*{lemma*}{Lemma}
\newtheorem{proposition}[theorem]{Proposition}
\newtheorem{corollary}[theorem]{Corollary}
\newcommand{\chiinf}{%
  \mathrel{\raisebox{1.1pt}{$\chi$}}^\infty%
}
\theoremstyle{definition}
\newtheorem{definition}[theorem]{Definition}
\newtheorem{example}[theorem]{Example}
\theoremstyle{remark}
\newtheorem{remark}[theorem]{Remark}
\numberwithin{equation}{section}
\newcommand{\C}{\mathbb{C}}
\newcommand{\D}{\partial}
\newcommand{\DD}{\mathbb{D}}
\newcommand{\R}{\mathbb{R}}
\DeclareMathOperator{\dist}{dist}
\DeclareMathOperator{\loc}{loc}
\def\XXint#1#2#3{{\setbox0=\hbox{$#1{#2#3}{\int}$}
\vcenter{\hbox{$#2#3$}}\kern-.5\wd0}}
\def\le{\leqslant}
\def\ge{\geqslant}
\renewcommand{\Re}{\operatorname{Re}}
\def \D{\textup{D}}
\def \mb{\mathbb}
\def \tp{\textup}
\def \p{\partial}
\newcommand{\N}{\mathbb{N}}
\newcommand{\M}{\mathbb{R}^{2 \times 2}}
\newcommand{\WW}{\mathrm{W}}
\newcommand{\LL}{\mathrm{L}}
\newcommand{\CC}{\mathrm{C}}
\newcommand{\Meas}{\mathscr{M}}
\newcommand{\dd}{\mathrm{d}}
\newcommand{\qc}{\mathrm{qc}}
\newcommand{\Id}{\mathrm{Id}}
\newcommand{\B}{\mathbf{B}}
\newcommand{\weak}{\rightharpoonup}
\newcommand{\eR}{\overline{\mathbb{R}}}
\definecolor{green}{RGB}{40,160,90}
\definecolor{orange}{RGB}{180,80,0}
\begin{document}
\baselineskip5mm
\title[Local Burkholder functional and quasiconvexity]{The Local Burkholder functional, quasiconvexity and \\
Geometric Function Theory} 

\author{K.~Astala}
\address{Department of Mathematics and Statistics, University of Helsinki, Finland}
\email{kari.astala@helsinki.fi}

\author{D. ~Faraco}
\address{Departamento de Matematicas, Universidad Autonoma de Madrid,  Spain and ICMAT CSIC, Madrid, Spain}
\email{daniel.faraco@uam.es}

\author{A.~Guerra}
\address{Institute for Theoretical Studies ETH-ITS, Z\"urich, Switzerland}
\email{andre.guerra@eth-its.ethz.ch}

\author{A.~Koski}
\address{Department of Mathematics and Systems Analysis, Aalto University, Espoo, Finland}
\email{aleksis.koski@aalto.fi}

\author{J.~Kristensen}
\address{Mathematical Institute, University of Oxford, United Kingdom}
\email{kristens@maths.ox.ac.uk}

\begin{abstract}  
We show that the local Burkholder functional ${\mathcal B}_K $ is quasiconvex. In the limit of $p$ going to $2$ we find a class
of non-polyconvex functionals which are quasiconvex on the set of matrices with positive determinant. 

In order  to prove the validity of lower semicontinuity arguments in this setting, we show that  the Burkholder functionals satisfy
a sharp extension of the classical function theoretic area formula. As a corollary, in addition to functionals in geometric function theory,
one finds new classes of non-polyconvex functionals, degenerating as the determinant vanishes, for which there is existence of minimizers. 
As a by-product we get optimal constants in by now classical estimates in geometric function theory.


\end{abstract}

\maketitle

\section{Introduction}


In 1952, C.~Morrey coined the notion of quasiconvexity \cite{Morrey} to investigate the scope of the
direct method of Calculus of Variations for vectorial variational problems of the type
\begin{equation}
\label{eq:variational}
{\mathcal E }[f] \,\equiv \int_\Omega { \bf E}(\D f(x)) \, \dd x
\end{equation}
where $f \colon \Omega \to \mathbb{R}^m$ is a suitably regular map and the functional ${ \bf E}\colon \R^{m \times n} \to \mathbb{R}$ is assumed to be at least Borel measurable.
Here, and throughout the paper, $\Omega\subset \R^n$ is a bounded domain and $m,n\geq 2$.
Under natural growth assumptions, quasiconvexity is equivalent  \cite{AcerbiFusco,Morrey} to the sequential weak lower semicontinuity of
\eqref{eq:variational}, and (strong) quasiconvexity is equivalent to the coercivity of \eqref{eq:variational}, see \cite{ChJK}. 

The interest  in quasiconvexity was  largely increased when J.M.\ Ball discovered its relevance to the theory of Nonlinear
Elasticity \cite{Ball1,Ball1981}, opening a new era in the field. In hyperelasticity one considers variational problems as in \eqref{eq:variational} with
$m=n$, but to exclude interpenetration of matter one additionally assumes $J_f(x) \geq 0$ a.e.~for the admissible maps, 
or even  the stronger condition
\begin{equation}
\label{eq:blowup}
{ \bf E}(A) \to +\infty \quad \tp{as } \det A \to 0^+,
\end{equation}
which penalizes the compressibility severely, 
see  for example \cite[\S 4.6]{Ciarlet} for a detailed discussion in the elasticity context.
Nowadays, condition \eqref{eq:blowup} has become classical in the mathematical treatment of variational problems arising in hyperelasticity see e.g \cite{Cartesian}.
In any case, both conditions  ask that ${ \bf E}$ be defined in $\R^{n\times n}_+\equiv \R^{n\times n}\cap \{\det >0\}$. 
 
Somewhat independently, the concept of quasiconvexity has attracted the attention of researchers in Geometric Function Theory,  for instance because
of its relations to the  sharp $\LL^p$ theory of singular integrals and quasiconformal 
mappings. Prime among the relevant objects here  is the celebrated Burkholder functional,  defined for $A\in \M$ by
\begin{equation}  \label{Burk}
 {\bf B}_p(A) \equiv  \Bigl( (\tfrac{p}{2} - 1)|A|^2 - \tfrac{p}{2}\det A \Bigr) |A|^{p-2}, \quad 2 \leq p < \infty,
\end{equation}
where $|\cdot|$ denotes the operator norm.
This functional was introduced by D.~Burkholder \cite{Burk1,Burk2} in the context of martingale inequalities;  see also \cite{BarnsteinMontgomerySmith,Sverak91}
and \cite{Tade} for its relation with quasiconvexity, geometric function theory and the Morrey problem discussed below. 

The Burkholder functional is both $p$-homogeneous and isotropic, and moreover ${\bf B}_2(A)=-\det(A)$.
Thus one can interpret ${\bf B}_p$ as an $\LL^p$-version of the determinant.  This analogy goes in fact much deeper: the determinant is a null-Lagrangian,
while the Burkholder functional has a large class of extremals, i.e.\ maps for which the integral bound \eqref{eq:quasiconvexity} below holds as an equality, see
e.g.\ \cite{AIPS12, AIPS15a, BarnsteinMontgomerySmith,Tade}.  Furthermore,  the Burkholder functional and its (potential) quasiconvexity  is
intimately related to the problem of determining the $\LL^p$-norms of the Beurling--Ahlfors transform, cf.\ \eqref{Beur1}.
We refer the reader to \cite{BJ,BJV,NV} and to the survey \cite{Banuelos} for further results and information in this direction.

The main objects of study in hyperelasticity are \textit{deformations}, i.e.\ regular maps with regular inverses \cite{Ball1981,Ciarlet,Cartesian}, so
that no distinction is made between the properties of the map and its inverse.  Precisely the same view holds also for the
quasiconformal maps and their applications \cite{IO}.  Thus, as in \cite{Ball1, Cartesian}, and motivated
by our interest in Nonlinear Elasticity and Geometric Function Theory, for functionals defined on $\R^{n\times n}_+$ it is natural to set:
\begin{definition}\label{quasiconvexity}
Let ${\bf E}\colon \R^{n\times n}_+\to \R$ be locally bounded and Borel measurable. Then ${\bf E}$ is said to be quasiconvex at $A\in \R^{n\times n}_+$ if 
\begin{equation}\label{eq:quasiconvexity}
{\bf E}(A)\leq \fint_{\Omega}  {\bf E}(\D f) \, \dd x
\end{equation}
whenever $\Omega \subset \R^n$ is a bounded domain and $f$ a $\CC^1$-diffeomorphism with $f=A$ in $\p \Omega$; we clarify that 
$f \in \CC^1(\overline \Omega)$ and  $f^{-1}\in \CC^1\Bigl(\overline{A(\Omega)} \Bigr)$ are  required.
\end{definition}

\begin{remark} From the general point of view, it is important to note that in this paper we study functionals ${\bf E}$ defined on $\M_+$ and their
quasiconvexity properties, so that the test functions are typically homeomorphisms. 
Some of these functionals satisfy \eqref{eq:blowup}, some not, and
thus the applications of our results goes beyond non-linear elasticity. 
\end{remark}
Another important remark is that for a given functional,  inequality \eqref{eq:quasiconvexity} typically holds for a much larger class
of Sobolev functions. For instance, for all explicit functionals studied in this work we will prove  \eqref{eq:quasiconvexity} for homeomorphisms such
that both $f \in \WW^{1,2}(\Omega, \Omega')$ and $f^{-1} \in \WW^{1,2}(\Omega', \Omega)$, where $\Omega' = A(\Omega)$. 

It is still a question of interest to study \eqref{eq:quasiconvexity} when testing the inequality for general Lipschitz maps  with  a.e.\ $\det(\D f) > 0$.
Then, however,  the  question becomes a problem of approximation
of Sobolev homeomorphisms, an active area of its own \cite{HP,IKO}. See Section~\ref{sec:shield} for a discussion on this and for some positive partial results.


Testing the quasiconvexity inequality with smooth approximations of a planar wave, i.e.\ a map which only takes two values, shows that that
quasiconvexity implies convexity along rank-one directions,  abbreviated as \textit{rank-one convexity}.  Whether conversely rank-one convexity
implies quasiconvexity is a famous problem, going back to Morrey's work \cite{Morrey,Morrey2}. The celebrated work of V.~\v{S}ver\'ak \cite{Sverak92}
gives a counterexample in dimensions $m\ge 3$. His example consists of a map which is a superposition of three planar waves (see also \cite{Grabovsky}
for a different example when $m\geq 8$).  When $m=2$  \v{S}ver\'{a}k's example does not work  \cite{PedregalSverak98} and indeed three waves cannot
provide a counterexample \cite{SebestyenSzekelyhidi17},  see also \cite{GTdC}.  In fact, in two dimensions there are partial
positive  results \cite{FaracoSzekelyhidi08,HKL18,KirchheimSzekelyhidi,MullerDiag} which suggest that rank-one convexity might imply quasiconvexity. 
 

The purpose of this paper is  to investigate whether for $n=m=2$ rank-one convexity might imply quasiconvexity, at least for functionals with
symmetries and additional structure, such as the Burkholder functionals.  

 In the context of Definition \ref{quasiconvexity} it is natural to set ${\bf E}(A) = +\infty$ when 
$\det(A) \leq 0$. In this case quasiconvexity as defined above is no longer a sufficient condition for weak lower semicontinuity,  cf.\ Section \ref{sec:prelims}.
Thus, in addition, one also needs an exploration towards the properties of the functionals on existence of minimizers. Here we realised that a
stronger quasiconvexity inequality, one which for the Burkholder functional ${\bf B}_p(A)$ can be viewed as a version of the classical area formula, is
then needed, see Theorem \ref{thm:Bpareaintro}. Moreover, versions of such inequalities in the limit $p \to 2$ lead to new lower semicontinuity and
existence theorems of interest in their own. 

\subsection{The Burkholder functional}
The Burkholder functional is known to be rank-one convex, since the original work   
\cite{Burk1,Burk2}. For other approaches on this see \cite{BarnsteinMontgomerySmith,Sverak91} or \cite{Tade}.

Our first theorem asserts that the Burkholder functional is quasiconvex when restricted to the set where it takes \textit{non-positive} values.
To interpret this setting,  note that 
\begin{equation}\label{Burkh46}
{\bf B}_p(A) \leq 0 \iff |A|^2 \leq \frac{p}{p-2} \det(A),
\end{equation}
thus such a map $A \in \M$ is  $K$-quasiconformal with $K = \frac{p}{p-2}$, equivalently
\begin{equation}\label{K46}
p = p_K \equiv \frac{2K}{K-1}. 
\end{equation}
In particular, if \eqref{Burkh46} holds then $A \in \M_+$, unless $A = 0$.
\begin{theorem} \label{thm:soft}
Let $p\geq 2$. For any $A \in \M$ and any  $f \in A+ \WW^{1,2}_0(\Omega , \R^2)$ such that ${\bf B}_p(\D f) \leq 0$ a.e.\ in $\Omega$, we have
\begin{equation}\label{Burkh}
{\bf B}_p(A) \;  \leq \;  \fint_{\Omega} \! {\bf B}_p\bigl(\D f(z)\bigr) \, \dd m(z).
\end{equation}
\end{theorem}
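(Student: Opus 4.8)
The plan is to pass to complex notation, where \eqref{Burkh} turns into an area-type inequality for the Beurling--Ahlfors transform $\beur$ along a quasiconformality constraint, and then to combine the sharp distortion theory of planar quasiconformal maps with the algebraic structure uncovered by Burkholder. Identifying $A\in\M$ with $(a,b)\in\C^2$ through $A\zeta=a\zeta+b\bar\zeta$, so that $|A|=|a|+|b|$ and $\det A=|a|^2-|b|^2$, a short computation gives
\[
-\B_p(A)=\bigl(|a|+|b|\bigr)^{p-1}\bigl(|a|-(p-1)|b|\bigr),
\]
which is $\ge0$ exactly on the $K$-quasiconformal cone $|b|\le\tfrac1{p-1}|a|$, $K=\tfrac p{p-2}$, in agreement with \eqref{Burkh46}. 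Given $f\in A+\WW^{1,2}_0(\Omega,\R^2)$ with $\B_p(\D f)\le0$ a.e., I extend $f$ by the linear map $A$ to all of $\C$; then $\varphi:=f-A\in\WW^{1,2}(\C)$ has compact support in $\overline\Omega$, $h:=\bar\partial\varphi\in\LL^2(\C)$ is compactly supported, and $\partial\varphi=\beur h$, which is then also supported in $\overline\Omega$. Hence \eqref{Burkh} is equivalent to
\[
\int_{\C}\bigl[\B_p(\D f)-\B_p(A)\bigr]\,\dd m\ \ge\ 0
\]
for $K$-quasiregular $f$, i.e.\ maps satisfying the pointwise constraint $\B_p(\D f)\le0$ a.e. When $p=2$ this is the classical null-Lagrangian identity $\int_\Omega\det\D f\,\dd m=\det A\,|\Omega|$, and in that case \eqref{Burkh} holds with equality.

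Two points must then be settled, and I expect the first to be the main obstacle. The first is integrability: since $\B_p(\D f)\le0$, the right-hand side of \eqref{Burkh} is a priori only a well-defined element of $[-\infty,0]$, and the estimate asserts in particular that it is finite, i.e.\ that $\B_p(\D f)\in\LL^1(\Omega)$. On the quasiconformal cone one has
\[
-\B_p(\D f)\ \le\ \tfrac p2\,|\D f|^{p-2}\det\D f\ \le\ \tfrac p2\,K^{p-1}(\det\D f)^{p/2},
\]
so it suffices to prove $(\det\D f)^{p/2}\in\LL^1(\Omega)$. This is precisely the \emph{endpoint} exponent $s=\tfrac K{K-1}=\tfrac p2$ of Astala's area-distortion theorem, which unconditionally yields only $s<\tfrac p2$; crossing the endpoint genuinely uses that $f$ coincides with a linear map outside $\Omega$, i.e.\ is of principal type. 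This sharp, function-theoretic area formula --- the analytic heart shared with Theorem~\ref{thm:Bpareaintro} --- I would prove by the holomorphic-motions method: embed the Beltrami coefficient $\mu_f$ in an analytic family $\lambda\mapsto f_\lambda$, $\lambda\in\overline{\DD}$, with $f_1=f$, and exploit the subharmonicity of $\lambda\mapsto\log\|\det\D f_\lambda\|_{\LL^s(\Omega)}$ together with a restricted weak-type interpolation at parameters $\lambda$ on the unit circle, as in the sharpest forms of Astala's theorem (which this argument also recovers, with the optimal constant, on taking $f=\id$ outside $\DD$).

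The second point is the inequality itself, now that $\B_p(\D f)\in\LL^1$ is available. Here I would normalize $A$ to be diagonal using the isotropy and $p$-homogeneity of $\B_p$, and reduce to a sufficiently regular $K$-quasiregular $f$ by a routine approximation that is controlled by the quasiconformality constraint, the endpoint integrability supplying the equi-integrability needed to pass to the limit. The displayed inequality then follows from the holomorphic-motion/area-distortion argument underlying Astala's theorem, whose algebraic core is exactly the identity for $\B_p$ above together with the $\LL^2$-isometry $\|\beur h\|_2=\|h\|_2$ (the $p=2$ null-Lagrangian identity being the $\lambda=0$ endpoint). Finally, the case $\B_p(A)>0$ is vacuous: the right-hand side of \eqref{Burkh} is then $\le0<\B_p(A)$, so the claim forces the class of admissible $f$ to be empty --- which indeed it is, as a modulus-of-continuity argument shows that no $K$-quasiregular map can have linear boundary values $A$ with $\B_p(A)>0$.
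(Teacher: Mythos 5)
There is a genuine gap, and it sits exactly at the point your proposal treats as routine: the passage from identity boundary values to a general linear map $A$. The holomorphic-motion/complex-interpolation machinery you invoke (the proof of \eqref{Burk123}, i.e.\ Theorem \ref{closedQC} from \cite{AIPS12}) is intrinsically tied to \emph{principal} maps, that is, maps with expansion $z+b_1/z+\dots$ at infinity: the analytic family $\lambda\mapsto f_{\mu_\lambda}$ consists of principal solutions, and the deformation does not preserve boundary values, so the only linear datum it can see is $\Id$. Your normalization by isotropy and $p$-homogeneity only reduces $A$ to the form $A(z)=z+a\bar z$ with $a\neq 0$; extending $f$ by such an $A$ outside $\Omega$ does \emph{not} produce a principal map (the conformal behaviour at infinity is $z+a\bar z$, not $z+\mathcal O(1/z)$), so neither the endpoint integrability estimate nor the inequality itself follows by "the argument underlying Astala's theorem". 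Comparing $\int\B_p(\D f)$ with $\B_p(A)$ for non-conformal $A$ is precisely the new content of Theorem \ref{thm:soft} relative to \cite{AIPS12}, and your proposal contains no mechanism for it. In the paper this is done by a different route: the gradient Young measure generated by $\D f$ is supported in the cone $Q_2(K)$ and is re-generated by $K$-quasiconformal \emph{principal} maps via Stoilow factorization (Theorem \ref{thm:YMqc}); then a relaxation functional $\mathscr R$ is shown to be rank-one convex, positively $p$-homogeneous and isotropic with $\mathscr R(\Id)=\B_p(\Id)$ (the identity case being the only place \cite{AIPS12} is used), and the extremality of $\B_p$ in this class (Theorem \ref{thm:extremality}, proved by testing with radial stretchings as in Lemma \ref{lemma:rcradialstretchings}) forces $\mathscr R=\B_p$ on all of $Q_2(K)$, which is the Jensen inequality at every $A$. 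An alternative route for maps with general asymptotics is the Burkholder area inequality, Theorem \ref{thm:Bpareaintro}, but that again requires a nontrivial extension/approximation construction and a limiting argument in $p$, not a direct appeal to the interpolation bound.

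Two smaller remarks. First, your reduction "extend $f$ by $A$ and prove an inequality over $\C$" is fine, and your endpoint-integrability discussion is in the right spirit \emph{for the principal case}; but as stated it inherits the same defect, since coinciding with a non-conformal $A$ outside $\Omega$ is not "of principal type". Second, your closing claim that the case $\B_p(A)>0$ is vacuous is correct, but the clean justification is the $\WW^{1,2}$-quasiconvexity of the cone $Q_2(K)$ (Example \ref{ex:qccone}, with $2\geq\tfrac{2K}{K+1}$), which forces $A=\fint_\Omega \D f\in Q_2(K)$ whenever $\B_p(\D f)\le 0$ a.e.; a modulus-of-continuity argument is not needed and would have to be supplied.
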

\smallskip

On the other hand,  it was shown in \cite{GK} that the positive part $\mathbf{B}_{p}^{+} \equiv \max \{ \mathbf{B}_{p},0 \}$ is
polyconvex. In particular, it follows that \eqref{Burkh} persists for maps $f \in A+ \WW^{1,2}_0(\Omega , \R^2)$ satisfying $\mathbf{B}_{p}(\D f) \geq 0$
almost everywhere in $\Omega$.
Thus  combined, these two results provide strong evidence towards the full quasiconvexity of ${\bf B}_p$.

Theorem \ref{thm:soft} was established in the special case  $A=\Id$ in \cite{AIPS12}. Also, as we will explain later, the result
entails a sharp integrability statement for quasiconformal maps.  At the moment let us emphasize that it comprises
delicate cancellation properties: for any $K$-quasiregular map $f$ and for $p = p_K$, we have ${\bf B}_{p_K}(\D f(z)) \in \LL^1_{\loc}$,
even if in general the map  $f \in \WW^{1,s}_{\loc}$ only for $s < p_{K}$ \cite{Astala1994}.

    



In order to establish the existence of minimizers for the induced Burkholder energy we actually need a stronger form of quasiconvexity. To this end,
it is convenient to introduce the \textit{local Burkholder functional}, defined by
\begin{equation}
\label{eq:locBurk}
{\mathcal B}_K (A) \equiv \begin{cases}
\B_{p_K}(A), & \tp{if } |A|^2 \leq K \det A,\\
+\infty & \tp{otherwise}.
\end{cases}
\end{equation}
Thus ${\mathcal B}_K$ equals the Burkholder functional $\B_p$ with the largest $p$ for which $\B_p(\D f) \leq 0$ for every $K$-quasiconformal map $f$,
and  it becomes defined in all of $\R^{2 \times 2}$ at the cost of admitting the value $+\infty$ outside the $K$-quasiconformal cone.

However, since ${\mathcal B}_K $ assumes the value $+\infty$, the notion of quasiconvexity needs to be strengthened to \textit{closed quasiconvexity}
\cite{JK,Pe1}: briefly, one requires  the Jensen inequality, that is \eqref{Burkh}, to hold not just for maps but also for gradient Young measures, cf.\ Section
\ref{sec:prelims} for the precise definition.
With this terminology,  we obtain the following  stronger version of Theorem \ref{thm:soft}.


%

\begin{theorem} \label{main}
Let $K\geq 1$ and  $p>\tfrac{2K}{K+1}$.
Then the local Burkholder functional   ${{\mathcal B}_{K}\colon \M \to \R \cup \{+\infty\}}$ 
is closed $\WW^{1,p}$-quasiconvex.
\end{theorem}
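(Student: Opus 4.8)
The plan is to check closed $\WW^{1,p}$-quasiconvexity directly from the definition recalled in Section~\ref{sec:prelims}: the Jensen inequality \eqref{Burkh} must hold at every $A\in\M$ against every homogeneous $\WW^{1,p}$-gradient Young measure $\nu$ with barycenter $\bar\nu=A$. If $\langle\nu,\mathcal{B}_K\rangle=+\infty$ there is nothing to prove, so assume this average finite. Since $\mathcal{B}_K\equiv+\infty$ off the closed $K$-quasiconformal cone $\mathcal{C}_K\equiv\{M\in\M:|M|^2\le K\det M\}$, while on $\mathcal{C}_K$ it agrees with the continuous nonpositive function $\B_{p_K}$, finiteness forces $\operatorname{supp}\nu\subset\mathcal{C}_K$, which reduces the claim to showing $A\in\mathcal{C}_K$ and
\begin{equation*}
\B_{p_K}(A)\;\le\;\langle\nu,\B_{p_K}\rangle,\qquad A=\bar\nu .
\end{equation*}
I would first establish $A\in\mathcal{C}_K$, so that the left-hand side is indeed $\mathcal{B}_K(A)$. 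Fix a generating sequence $(u_j)$, $p$-equiintegrable with $u_j\rightharpoonup Ax$ in $\WW^{1,p}$, whose gradients generate $\nu$; since $\nu$ is carried by $\mathcal{C}_K$ we have $\dist(\D u_j,\mathcal{C}_K)\to0$ in measure, i.e.\ $u_j$ is asymptotically $K$-quasiconformal. This is where the hypothesis $p>\tfrac{2K}{K+1}$ enters for the first time: $\tfrac{2K}{K+1}$ is the optimal self-improving exponent in the planar theory of mappings of bounded distortion (Astala's area distortion theorem and its low-integrability refinements, cf.\ \cite{Astala1994}) and it is the H\"older conjugate of $p_K$; with it the $u_j$ are bounded in $\WW^{1,s}_{\loc}$ for every $s<p_K$, hence in $\WW^{1,2}_{\loc}$. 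Therefore $\det\D u_j$ is locally equiintegrable, $\langle\nu,\det\rangle=\det A$, and $\langle\nu,|\cdot|^2\rangle\le K\langle\nu,\det\rangle=K\det A$; together with the convexity of $|\cdot|$ and Cauchy--Schwarz this gives $|A|^2\le\langle\nu,|\cdot|\rangle^2\le\langle\nu,|\cdot|^2\rangle\le K\det A$, so $A\in\mathcal{C}_K$.

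The core is then the Jensen inequality $\B_{p_K}(A)\le\langle\nu,\B_{p_K}\rangle$, which I would reach by passing to the limit in the averages $\fint_\Omega\B_{p_K}(\D u_j)\,\dd m$ and splitting $\B_{p_K}=\B_{p_K}^{+}-\B_{p_K}^{-}$. On the part of the domain where $\D u_j$ is essentially inside the cone one has $\B_{p_K}(\D u_j)\le0$, and the average is bounded below by $\B_{p_K}(A)$ via Theorem~\ref{thm:soft} — more precisely via its Young-measure-robust strengthening, the sharp Burkholder area formula of Theorem~\ref{thm:Bpareaintro} — applied to $\WW^{1,2}_0$-modifications of $u_j$ into honest $K$-quasiconformal test maps, such modifications being admissible precisely because of the $\WW^{1,2}_{\loc}$-bound just obtained. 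On the ``bad'' set $\{\B_{p_K}(\D u_j)>0\}$, which shrinks to a null set in measure, one uses the polyconvexity of $\B_{p_K}^{+}$ from \cite{GK}: the contribution $\fint_\Omega\B_{p_K}^{+}(\D u_j)\,\dd m$ then has to be shown to tend to zero, which amounts to an equiintegrability statement for $\B_{p_K}^{+}(\D u_j)$. Since $\B_{p_K}$ grows like $|M|^{p_K}$ while $p_K$-integrability of the gradients is exactly what is \emph{not} available — the Burkholder energy of even a genuine $K$-quasiconformal $f$ is integrable only through cancellation, although $\D f\notin\LL^{p_K}$ in general — this equiintegrability cannot come from Sobolev bounds; it is what the area formula of Theorem~\ref{thm:Bpareaintro} is designed to provide, controlling the tails $\int_{\{|\D u_j|>\lambda\}}\B_{p_K}^{+}(\D u_j)\,\dd m$ uniformly in $j$ in terms of the fixed boundary value $A$. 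Combining the two parts yields $\liminf_j\fint_\Omega\B_{p_K}(\D u_j)\,\dd m\ge\B_{p_K}(A)$ and $\langle\nu,\B_{p_K}\rangle=\lim_j\fint_\Omega\B_{p_K}(\D u_j)\,\dd m$, which is the required inequality.

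The hardest point I expect is exactly this last one — exchanging limit and integral past a functional of $p_K$-growth when the Young measure only carries $\WW^{1,p}$-integrability for a $p$ that may lie well below $2$, so that neither the identification $\langle\nu,\B_{p_K}\rangle=\lim\fint\B_{p_K}(\D u_j)$ nor the vanishing of the bad-set contribution is routine. Its resolution rests on the interplay of the two sharp ingredients that are the real novelty: the area-formula estimate of Theorem~\ref{thm:Bpareaintro}, which supplies the missing uniform integrability and upgrades Theorem~\ref{thm:soft} to the gradient Young measure setting, and the threshold $p>\tfrac{2K}{K+1}$, which is at once the H\"older conjugate of $p_K$ and the optimal exponent below which an asymptotically $K$-quasiconformal sequence need no longer self-improve to $\WW^{1,2}_{\loc}$ — and which therefore pins down the exact range of $p$ in the statement.
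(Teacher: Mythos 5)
There is a genuine gap, and it is structural: your argument is circular with respect to what is actually available. The two tools you lean on for the core Jensen inequality at a general matrix $A$ --- Theorem~\ref{thm:soft} and its strengthening, the Burkholder area inequality of Theorem~\ref{thm:Bpareaintro} --- are, in this paper, \emph{consequences} of Theorem~\ref{main}: closed quasiconvexity implies quasiconvexity, which is how Theorem~\ref{thm:soft} is obtained, and Theorem~\ref{thm:Bpareaintro} is then proved from Theorem~\ref{thm:soft} in Section~\ref{sec:areaBp}. The only independently available input is the AIPS estimate at the \emph{identity} (Theorem~\ref{closedQC}, i.e.\ \cite[Theorem 1.3]{AIPS12}), valid for principal quasiconformal maps. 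Your proposal contains no mechanism for passing from the identity to general linear data $A$; that passage is precisely the hard point, and the paper handles it by a completely different device: define the relaxation $\mathscr{R}(A)=\inf\{\int^*{\bf B}_{K,p}\,\dd\nu:\nu\in\Meas^p_{\qc},\ \langle\nu,\Id\rangle=A\}$, show (using the characterization of $\Meas^p_{\qc}$ in Theorem~\ref{thm:KP}) that $\mathscr{R}$ is rank-one convex, positively $p$-homogeneous and isotropic, with $\mathscr{R}(\Id)={\bf B}_p(\Id)$ by the identity case, and then invoke the extremality Theorem~\ref{thm:extremality} to force $\mathscr{R}={\bf B}_p$ on the whole cone $Q_2(K)$. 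Without something playing the role of this extremality step (or of principal maps with non-affine asymptotics), ``quasiconvexity at $\Id$'' does not propagate to other $A$.

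The second gap is your limit-passage at the critical exponent $p_K$. You need $\langle\nu,{\bf B}_{p_K}\rangle=\lim_j\fint_\Omega{\bf B}_{p_K}(\D u_j)$ and the vanishing of the bad-set contribution, and you propose to extract the required equiintegrability of ${\bf B}_{p_K}^{+}(\D u_j)$ from Theorem~\ref{thm:Bpareaintro}; but that inequality concerns a single principal map and its Laurent coefficients and gives no uniform tail control along an arbitrary $\WW^{1,p}$-bounded generating sequence (whose gradients may lie far outside the cone on the bad set, where only $p$-th moments, possibly with $p<2$, are available against $p_K$-growth). Likewise, the claimed self-improvement of an only \emph{asymptotically} $K$-quasiconformal sequence to $\WW^{1,s}_{\loc}$ bounds for all $s<p_K$ does not follow from the area distortion theorem, which applies to genuine quasiregular maps. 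The paper circumvents both issues with one idea you are missing: replace the generating sequence by honest $K$-quasiconformal \emph{principal} maps generating the same measure (Lemma~\ref{gen.seq} and Theorem~\ref{thm:YMqc}, via a Cauchy-transform correction and Stoilow factorization), which kills the bad set, yields the subcritical bounds \eqref{Belt2} and Corollary~\ref{cor:higherintegrability}, and makes $({\bf B}_{s}(\D f_j))$ equiintegrable for every $s<p_K$; the critical exponent is then reached not by proving equiintegrability at $p_K$ but by proving closed quasiconvexity of ${\bf B}_{K,s}$ for all $2<s<p_K$ and letting $s\nearrow p_K$ with Fatou, using that ${\bf B}_s\le 0$ on $Q_2(K)$. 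Your identification of the exponent $\tfrac{2K}{K+1}$ as the Hölder conjugate of $p_K$ and the reduction to measures supported in the cone are fine, but as written the proof both assumes its own conclusion and lacks the two mechanisms (principal-map generation and the relaxation/extremality argument) that make the paper's proof work.
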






By combining Theorem \ref{main} with standard results from the theory of  Young measures and the Direct Method of the Calculus of Variations we obtain the  existence of minimizers:


\begin{corollary}\label{cor:Bpminims}
Let $K\geq 1$ and $2 \leq p < \frac{2K}{K-1}$.  Then for any $K$-quasiregular map $g\colon \C\to\C$,  the problem
$$
\inf\left\{\int_\Omega {\B}_p(\D f)\, \dd m(z):  f\in g+\WW^{1,p}_0(\Omega) \text{ is } K\text{-quasiregular}\right\}
$$
admits a minimizer $f\in g+\WW^{1,p}_0(\Omega)$.
\end{corollary}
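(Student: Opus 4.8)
The plan is to run the direct method on the constrained admissible class
\[
\mathcal A \equiv \bigl\{\, f\in g+\WW^{1,p}_0(\Omega):\ f \text{ is }K\text{-quasiregular} \,\bigr\},
\]
which is nonempty since $g|_\Omega\in\mathcal A$, with Theorem~\ref{main} supplying the weak lower semicontinuity. The case $p=2$ is immediate: there $\B_2(\D f)=-\det\D f$, the $2\times 2$ Jacobian is a null-Lagrangian, so the functional is the constant $-\int_\Omega\det\D g\,\dd m$ on $\mathcal A$ and $g$ itself is a minimizer. So assume $2<p<\tfrac{2K}{K-1}$ and put $K'\equiv\tfrac{p}{p-2}$, so that $p=p_{K'}$; since $K\mapsto p_K$ is decreasing, $p<p_K$ forces $K<K'<\infty$. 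Every $f\in\mathcal A$ is then also $K'$-quasiregular, hence $\mathcal B_{K'}(\D f)=\B_{p_{K'}}(\D f)=\B_p(\D f)$ a.e.\ in $\Omega$, so that on $\mathcal A$ the functional coincides with $f\mapsto\int_\Omega\mathcal B_{K'}(\D f)\,\dd m$. Finally $p>2>\tfrac{2K'}{K'+1}$, so Theorem~\ref{main} applies to $\mathcal B_{K'}$.

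First I would prove that $\mathcal A$ is bounded in $\WW^{1,p}(\Omega;\R^2)$. Since $p\ge 2$, each $f\in\mathcal A$ has $f-g\in\WW^{1,2}_0(\Omega)$, so by the null-Lagrangian property of the Jacobian $\int_\Omega\det\D f\,\dd m=\int_\Omega\det\D g\,\dd m$; with $|\D f|^2\le K\det\D f$ a.e.\ this yields the uniform bound $\|\D f\|_{\LL^2(\Omega)}^2\le K\int_\Omega\det\D g\,\dd m$. Extending $f$ by $g$ outside $\Omega$ gives a globally $K$-quasiregular map $\tilde f\in\WW^{1,p}_{\loc}(\C)$ with $\|\D\tilde f\|_{\LL^2(U)}$ bounded uniformly over $\mathcal A$ on a fixed $U\Supset\overline\Omega$; since $p<p_K$, the sharp higher integrability of planar $K$-quasiregular maps \cite{Astala1994} (self-improving reverse Hölder inequalities) then bounds $\|\D f\|_{\LL^p(\Omega)}$ uniformly on $\mathcal A$. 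Together with the Poincaré inequality (all competitors share the trace $g$) this gives $\WW^{1,p}$-boundedness, and since $|\B_p(A)|\le\tfrac p2|A|^p$ on the $K$-quasiconformal cone the infimum $\mathcal I\equiv\inf_{f\in\mathcal A}\int_\Omega\B_p(\D f)\,\dd m$ is a real number.

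Next I would check that $\mathcal A$ is sequentially weakly closed in $\WW^{1,p}$ and that the functional is sequentially weakly lower semicontinuous on it. For the first point, if $f_j\weak f$ in $\WW^{1,p}$ with $f_j\in\mathcal A$, then $f-g\in\WW^{1,p}_0(\Omega)$, while $\det\D f_j\weak\det\D f$ distributionally (here $p>2$) and $\int_\Omega\varphi\,|\D f|^2\,\dd m$ is weakly lower semicontinuous for $\varphi\ge 0$ by convexity of $A\mapsto|A|^2$; passing to the limit in $|\D f_j|^2\le K\det\D f_j$ therefore gives $|\D f|^2\le K\det\D f$ a.e., so $f\in\mathcal A$. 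For the second point, $\mathcal B_{K'}$ is closed $\WW^{1,p}$-quasiconvex by Theorem~\ref{main}, it is lower semicontinuous, and on its finite domain (the $K'$-quasiconformal cone) it satisfies $|\mathcal B_{K'}(A)|\le\tfrac p2|A|^p$; hence the standard lower semicontinuity theorem for closed quasiconvex integrands, via gradient Young measures (cf.\ Section~\ref{sec:prelims} and \cite{JK,Pe1}), gives $\liminf_j\int_\Omega\mathcal B_{K'}(\D f_j)\,\dd m\ge\int_\Omega\mathcal B_{K'}(\D f)\,\dd m$ whenever $f_j\weak f$ in $\WW^{1,p}$. Taking $f_j$ a minimizing sequence, the previous paragraph makes it $\WW^{1,p}$-bounded, so along a subsequence $f_j\weak f$ with $f\in\mathcal A$, and then
\begin{align*}
\int_\Omega\B_p(\D f)\,\dd m=\int_\Omega\mathcal B_{K'}(\D f)\,\dd m
&\le\liminf_j\int_\Omega\mathcal B_{K'}(\D f_j)\,\dd m\\
&=\liminf_j\int_\Omega\B_p(\D f_j)\,\dd m=\mathcal I,
\end{align*}
so that $f$ attains $\mathcal I$ and is the required minimizer.

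With Theorem~\ref{main} granted, this is essentially a routine run of the direct method, and the two steps carrying real content are both visible above. The first is the $\WW^{1,p}$-boundedness of $\mathcal A$: this is exactly where the hypothesis $p<p_K=\tfrac{2K}{K-1}$ enters, through the sharp higher integrability of $K$-quasiregular maps, anchored to the boundary datum by the null-Lagrangian identity for the Jacobian (the ``classical area formula'' of the abstract); without such control the infimum could be $-\infty$, as $\B_p(A)$ is comparable to $-|A|^p$ on the $K$-quasiconformal cone. The second, and the real crux, is the lower semicontinuity: because $\mathcal B_{K'}$ attains the value $+\infty$ and, where finite, is only comparable to $-|A|^p$, ordinary quasiconvexity — even the quasiconvexity for diffeomorphisms of Definition~\ref{quasiconvexity}, or Theorem~\ref{thm:soft} — does not suffice, and one genuinely needs the \emph{closed} $\WW^{1,p}$-quasiconvexity of $\mathcal B_{K'}$ provided by Theorem~\ref{main}, that is, the Jensen inequality for all gradient Young measures supported in the $K'$-quasiconformal cone. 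If one wanted quantitative information on $\mathcal I$ — for instance an explicit lower bound in terms of $g|_{\p\Omega}$ — that would instead come from the area-type estimate of Theorem~\ref{thm:Bpareaintro}.
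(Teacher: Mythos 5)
Your route is essentially the paper's: the direct method, a uniform bound for the admissible class obtained by extending each competitor by $g$ and invoking the sharp higher integrability of planar $K$-quasiregular maps, weak closedness of the quasiregular constraint, and lower semicontinuity through the closed quasiconvexity of Theorem \ref{main}. The gap is in how you justify the lower semicontinuity. There is no ``standard lower semicontinuity theorem for closed quasiconvex integrands'' that covers $\mathcal B_{K'}$: this functional is signed, takes the value $+\infty$, and where finite its negative part has critical growth comparable to $-|A|^p$. The paper itself stresses (end of Section \ref{sec:prelims}) that signed extended-real-valued integrands of this type had not been treated before, and this is precisely why it proves Proposition \ref{prop:lscBp}. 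Closed $\WW^{1,p}$-quasiconvexity plus the Young-measure Jensen inequality only handles oscillation; to identify $\lim_j\int_\Omega\B_p(\D f_j)\,\dd m$ with $\int_\Omega\langle\nu_z,\B_p\rangle\,\dd m(z)$ (Remark \ref{rmk:moreintegrands}) one needs $\bigl(\B_p(\D f_j)\bigr)$, equivalently $\bigl(|\D f_j|^p\bigr)$, to be equiintegrable; without that, the negative critically-growing integrand can concentrate along the weakly convergent sequence and push the $\liminf$ strictly below $\int_\Omega\B_p(\D f)\,\dd m$. The uniform $\LL^p$ bound you derive does not by itself exclude this, so the step as written does not stand.

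The repair is already contained in your own boundedness argument: run the higher-integrability step at an exponent $q$ with $p<q<\tfrac{2K}{K-1}$ (this is exactly where the strict inequality $p<p_K$ earns its keep in the paper's proof), which bounds the minimizing sequence uniformly in $\WW^{1,q}(\Omega)$ and hence makes $\bigl(|\D f_j|^p\bigr)$, and so $\bigl(\B_p(\D f_j)\bigr)$, equiintegrable. With that in hand, the argument of Proposition \ref{prop:lscBp} — generate a gradient Young measure, show its support lies in the quasiconformal cone via the integrand $\min\{0,K\det(\cdot)-|\cdot|^2\}$, localize (Proposition \ref{prop:homogenization}), apply Theorem \ref{main} pointwise, and use equiintegrability to pass to the limit — gives the lower semicontinuity you asserted. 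Once this is inserted, the remainder of your proof (the trivial $p=2$ case, the reformulation via $K'=\tfrac{p}{p-2}$, the coercivity anchored by the null-Lagrangian identity, and the weak closedness of the constraint, which the paper leaves implicit) is correct and coincides with the paper's proof in all essentials.
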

\noindent Note that  here we do not need to 
 require the maps to be  homeomorphisms.  

\vspace{.03cm}

\subsection{The Burkholder area inequality}
We next turn to a further refinement  of Theorem \ref{thm:soft}, of independent interest, but also one of the key points 
in the study of the associated weak lower semicontinuity and minimization problems, see Section \ref{sec:swlsc}.

Namely, given a $\WW^{1,2}_\tp{loc}(\C)$-homeomorphism $f$, analytic outside $\mb D$, we say that $f$ is a \textit{principal map} if it  has  the Laurent expansion 
\begin{equation} \label{eq:principal}
f(z)=z + \frac{b_1}{z} + \sum_{j=2}^\infty \frac{b_j}{z^j},\qquad |z|>1.
\end{equation}

It follows from the classical Gr\"onwall--Bieberbach area formula 
 that in this expansion $|b_1| < 1$, see Section \ref{sec:principal}. This allows us to interpret
the first two terms of the above series,  i.e.\ the main asymptotics of $f$, in terms of the invertible  linear map
\begin{equation} \label{eq:asympt}
 A_f(z) = z+b_1 \bar z, \qquad {\rm equivalently} \quad A_f\equiv \fint_{\mb D} \D f(w) \, \dd m(w).
\end{equation}

As we will see, in many respects $A_f$ plays the role which the linear boundary values have in the standard definition of quasiconvexity.
For instance, with this notation the classical area formula,  cf.\ \eqref{areafmla},  asserts that
\begin{equation}
\label{eq:area}
\fint_{\mb D} \left[ - \det \D f + \det A_f\right]\dd m(z)=  \sum_{j=2}^\infty j |b_j|^2,
\end{equation}  
and one can think of this identity as a sharpening of the well-known null-Lagrangian property of the Jacobian determinant.

In the same spirit one can consider also the Burkholder functional, recalling that ${\B}_2(A) = - \det(A)$. In fact, with Theorem \ref{thm:soft}
we find an $\LL^p$ version of \eqref{eq:area}:

\begin{theorem}[Burkholder Area Inequality]\label{thm:Bpareaintro}
Let $f$ be a $K$-quasiconformal principal map as in  \eqref{eq:principal}.  Then, for any $2\leq p\leq p_K$, we have
\begin{equation*}
\fint_\mb D \left[ {\B}_p(\D f)- {\B}_p(A_f)\right] \dd m(z) \ge \gamma_p(A_f) \sum_{j=2}^\infty j |b_j|^2
\end{equation*}
where $\gamma_p(A_f) \equiv \frac{p}{2}   \frac{{\B}_p(A_f)}{{\B}_2(A_f)}>0$.  Note that $\gamma_2=1$.
\end{theorem}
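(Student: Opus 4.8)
The plan is to derive the inequality from Theorem~\ref{thm:soft} together with the classical area formula \eqref{eq:area}. Since $f$ is $K$-quasiconformal its affine part $A_f=\fint_{\mb D}\D f$ lies in the $K$-quasiconformal cone $\{|A|^2\le K\det A\}$, so $\B_p(A_f)\le 0$ for every $2\le p\le p_K$ while $\B_2(A_f)=-\det A_f<0$; hence $\gamma_p(A_f)\ge 0$, and one checks from \eqref{eq:area} that in the only case where $\gamma_p(A_f)=0$ could occur, namely $|b_1|=\tfrac{K-1}{K+1}$ with $p=p_K$, necessarily $b_j=0$ for all $j\ge 2$ (because then $\fint_{\mb D}\det\D f\ge(1-|b_1|^2)\fint_{\mb D}|f_z|^2\ge\det A_f$) and the asserted inequality reads $0\ge 0$. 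I would first reduce to the generic situation: mollifying the Beltrami coefficient of $f$ and rescaling yields smooth $K$-quasiconformal principal maps $f_\varepsilon\to f$ locally in $\WW^{1,2}$ with $b_j^\varepsilon\to b_j$ and $\sum_{j\ge 2}j|b_j^\varepsilon|^2\to\sum_{j\ge 2}j|b_j|^2$; the uniform $\LL^1_\loc$ bound for $\B_{p_K}(\D f_\varepsilon)$ coming from Theorem~\ref{thm:soft} on large disks, together with Astala's higher integrability, lets one pass to the limit on the left-hand side for $2\le p<p_K$, and a separate limiting argument then recovers the critical exponent. Multiplying \eqref{eq:area} by $\gamma_p(A_f)$ and substituting for the right-hand side, the inequality to be proved becomes equivalent to
\begin{equation*}
\fint_{\mb D}\bigl[\B_p(\D f)+\gamma_p(A_f)\det\D f\bigr]\,\dd m\ \ge\ \B_p(A_f)+\gamma_p(A_f)\det A_f .
\end{equation*}

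To establish this I would compare $f$ with the conformal model of $A_f$, namely the homeomorphism $\kappa$ equal to $A_f(z)=z+b_1\bar z$ on $\mb D$ and to $z\mapsto z+b_1/z$ on $\C\setminus\overline{\mb D}$, a $K'$-quasiconformal principal map with $K'=\tfrac{1+|b_1|}{1-|b_1|}\le K$, with $A_\kappa=A_f$ and vanishing Laurent tail. From $f$ one builds a competitor $g\in A_f+\WW^{1,2}_0(\mb D)$ (no injectivity is needed for Theorem~\ref{thm:soft}) by retaining $f$ on a sub-disk and interpolating, across a collar at $\p\mb D$, from $f$ towards $A_f$ through the quasiconformal maps obtained from $f$ by straightening $\p\bigl(f(\mb D)\bigr)$ towards the ellipse $\p\bigl(A_f(\mb D)\bigr)$; using the margin $p<p_K$ this $g$ can be kept $\tfrac{p}{p-2}$-quasiconformal, so $\B_p(\D g)\le 0$ and Theorem~\ref{thm:soft} gives $\B_p(A_f)\le\fint_{\mb D}\B_p(\D g)\,\dd m$. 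The crucial remaining step is the sharp comparison
\begin{equation*}
\fint_{\mb D}\bigl[\B_p(\D f)-\B_p(\D g)\bigr]\,\dd m\ \ge\ \gamma_p(A_f)\sum_{j\ge 2}j|b_j|^2 ,
\end{equation*}
which uses that $\D f$ and $\D g$ coincide off the collar, that their difference there is governed by the Laurent tail $\sum_{j\ge 2}b_j z^{-j}$, and that $\fint_{\mb D}\det\D g=\det A_f$ by the null-Lagrangian property while $\fint_{\mb D}\det\D f=\det A_f-\sum_{j\ge2}j|b_j|^2$ by \eqref{eq:area}. Expanding $\B_p$ to second order about the values taken by $\D g$ and using the pointwise convexity of $\B_p$ along the anti-conformal directions carried by the correction, together with $\det\D f,\det\D g>0$, the leading defect works out to be exactly $\gamma_p(A_f)$ times the determinant defect, with $\gamma_p(A_f)=\tfrac{p}{2}\B_p(A_f)/\B_2(A_f)$ surfacing as the ratio of the radial derivatives of $\B_p$ and $-\det$ at $A_f$.

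The main obstacle is precisely this comparison. A naive realization of it — say, interpolating towards the identity on a large disk $\mb D_R$ and letting $R\to\infty$ — only yields a lossy constant, so the competitor must be built carefully: one has to keep it inside the cone $\{|A|^2\le\tfrac{p}{p-2}\det A\}$ where Theorem~\ref{thm:soft} is available (this is what forces the detour through $p<p_K$ and the final passage $p\uparrow p_K$), and simultaneously carry out the second-order expansion with no loss in the constant. For the latter it may be cleaner to replace the single competitor by the closed quasiconvexity of the local Burkholder functional $\mathcal{B}_K$ (Theorem~\ref{main}), applied to the gradient Young measure generated by the collar interpolation, which encodes the degenerate directions automatically. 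As a consistency check, for $p=2$ one has $\B_2=-\det$, $\gamma_2=1$, and the competitor may be taken to be $g=f-\sum_{j\ge 2}b_j\bar z^j$; every inequality above is then an equality and the argument collapses to \eqref{eq:area}.
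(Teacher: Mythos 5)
Your overall strategy---reduce to Theorem \ref{thm:soft} by manufacturing a competitor with affine boundary values $A_f$ and then tracking a lower-order defect---is the right starting point, and it is also where the paper begins. But the step you yourself call ``the crucial remaining step,'' namely the sharp comparison $\fint_{\DD}\bigl[\B_p(\D f)-\B_p(\D g)\bigr]\,\dd m \ \ge\ \gamma_p(A_f)\sum_{j\ge 2}j|b_j|^2$, is a genuine gap, not a technicality. You offer only a heuristic (``second-order expansion of $\B_p$ about $\D g$'' plus ``pointwise convexity along anti-conformal directions'') with no computation showing that the constant comes out to be exactly $\tfrac p2\,\B_p(A_f)/\B_2(A_f)$, and you concede that a naive collar interpolation loses the constant. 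Since your competitor $g$ is obtained by modifying $f$ \emph{inside} $\mb D$ on a collar, the quantity $\fint_\DD\B_p(\D f)$ you want to bound is itself altered there, so the entire content of the theorem is pushed into precisely the estimate you do not prove; invoking closed quasiconvexity of $\mathcal B_K$ (Theorem \ref{main}) cannot rescue this either, because Jensen-type inequalities only give a right-hand side equal to zero, never a strict quantified gain of size $\gamma_p(A_f)\sum_j j|b_j|^2$.

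For contrast, the paper gets the sharp constant from an \emph{exterior} construction and exact identities, not from a Taylor expansion in the disk. One replaces $f$ outside $\DD$ by $h\circ A_f$, where $h=f\circ R^{-1}$ and $R(z)=z+b_1/z$, so that $h(z)=z+\mathcal O(z^{-2})$; cutting this correction off at radius $j$ and letting auxiliary exponents $p_n\nearrow p$ (to stay in the cone where Theorem \ref{thm:soft} applies, as you anticipated) yields the global inequality $\int_\C\bigl(\B_p(\D\tilde f)-\B_p(A_f)\bigr)\,\dd m\ge 0$. The exterior contribution is then computed \emph{exactly}: since $\tilde f=h\circ A_f$ there, $\B_p(\D\tilde f)-\B_p(A_f)=\B_p(A_f)\bigl(|h'\circ A_f|^p-1\bigr)$, and two changes of variables reduce everything to $\int_{\C\setminus\DD}\bigl(|R'+\phi'|^p|R'|^{2-p}-|R'|^2\bigr)\,\dd m$ with $\phi'=-\sum_{j\ge2}jb_jz^{-j-1}$. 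The constant then falls out of the elementary sharp inequality $|1+w|^p\ge 1+p\,\Re w+\tfrac p2|w|^2$ applied to $w=\phi'/R'$, combined with the orthogonality $\int_{\C\setminus\DD}\phi'\,\overline{R'}\,\dd m=0$ of the Laurent powers; this is where $\gamma_p(A_f)$ emerges with no loss. Your proposal contains no analogue of these three ingredients (the reparametrized extension with $\mathcal O(z^{-2})$ decay, the exact conformal change of variables, and the orthogonality killing the cross term), so as written it does not prove the theorem. A minor additional point: your preliminary mollification step is unnecessary, and the asserted convergence $\sum_{j\ge2}j|b_j^\varepsilon|^2\to\sum_{j\ge2}j|b_j|^2$ would itself require justification beyond locally uniform convergence of the maps.
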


%
%

Theorem \ref{thm:Bpareaintro}  sharpens  the main result in \cite{AIPS12}, since ${\B}_p(A_f) \ge {\B}_p(\Id )$
for any principal map $f$ as above (note, however, the different sign-convention for the Burkholder functional in  \cite{AIPS12}).

Since $A_f=\fint_{\mb D} \D f(z) \, \dd m(z)$,  a surprising feature of Theorem \ref{thm:Bpareaintro} is that it
establishes a Jensen inequality even without requiring that the map takes affine boundary values, as in the definition of quasiconvexity!
In fact, the result shows that, if 
$$
\int_{\mb D} {\B}_p(\D f) \, \dd m(z) = \int_{\mb D} {\B}_p(A_f) \, \dd m(z),
$$
then $f(z)=z+\frac{b_1}{z}$ for $|z|>1$; in particular, $f|_{\mb S^1}$ is linear.

\subsection{Functionals for Nonlinear Elasticity, as $p \to 2$.}

As observed in \cite{Tade} the theory of Burkholder functionals has very interesting consequences at the  limit when the index $p$ goes to $2$.
Namely, we have $\B_2(A)=-\det (A)$ and for the next order of approximation
\[ \lim_{p \to 2} \, \frac{p}{p-2} \left[ {\bf B}_p(A) - {\bf B}_2(A)\right] = {\mathscr  F}(A), \]
where the functional
 \begin{equation*}\label{F2}
 { \mathscr  F}(A) \equiv   \, |A|^2\, -  \,\left(1\, + \,\log |A|^2 \,\right )\, \det(A),  
 \end{equation*}
is rank-one convex in $\R^{2\times 2}$, but not polyconvex.  On the other hand,  the local quasiconvexity of the Burkholder functional, as formulated in
Theorem \ref{thm:soft},  allows us to show that  $\mathscr F$  satisfies the quasiconvexity inequality \eqref{eq:quasiconvexity} 
for all  $\WW^{1,2}$-homeomorphisms with linear boundary values $A \in \R^{2\times 2}_+$, see Corollary~\ref{cor:LlogL} for the precise statement.
Notice that this for instance implies a sharp and quantitative version of the celebrated $\LL \log \LL$-higher integrability properties for the derivatives
of $\WW^{1,2}$-homeomorphisms. 

Another consequence of these relations concerns the quasiconvexity of the functional 
\begin{equation}
\label{eq:Wfcn}
\mathscr W(A)\equiv  \frac{|A|^2}{\det A} - \log \left( \frac{|A|^2}{\det A}\right) + \log \det A, \quad A \in \M_+.
\end{equation}
This is an example of a rank-one convex but non-polyconvex functional, which diverges as the $\det(A) \to 0.$
It was introduced in \cite{AIPS12} and further studied in the recent works \cite{Voss1, Voss2}, where its quasiconvexity remained undecided. 

In fact, $\mathscr W$ arises from ${\mathscr  F}$ by applying the Shield transform \cite{Shield}. Therefore Theorem \ref{thm:soft} leads us to the following:
\begin{corollary} \label{cor:Wintro}
The functional $\mathscr{W} \colon \R^{2\times 2}_+\to \R$ 
is quasiconvex.
\end{corollary}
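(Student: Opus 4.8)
The plan is to identify $\mathscr W$, up to an irrelevant additive constant, with the \emph{Shield transform} \cite{Shield} of the functional $\mathscr F(A)=|A|^2-(1+\log|A|^2)\det A$, and then to observe that the Shield transform preserves quasiconvexity in the sense of Definition~\ref{quasiconvexity} --- indeed, this is exactly why that definition is phrased in terms of diffeomorphisms. Recall that the Shield transform of $\mathbf E\colon\M_+\to\R$ is
\[
\mathbf E^{\ast}(M)\equiv(\det M)\,\mathbf E(M^{-1}),\qquad M\in\M_+,
\]
and that, for any $\CC^1$-diffeomorphism $h$ with $\CC^1$ inverse, substituting $y=h(x)$ and using $\D(h^{-1})(h(x))=(\D h(x))^{-1}$ yields the area-type identity
\[
\int_U\mathbf E(\D h(x))\,\dd x=\int_{h(U)}\mathbf E^{\ast}\bigl(\D(h^{-1})(y)\bigr)\,\dd y .
\]

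First I would record the elementary fact that $|M^{-1}|=|M|/\det M$ for $M\in\M_+$ (compare singular values), so that $|M^{-1}|^2=|M|^2/(\det M)^2$, $\log|M^{-1}|^2=\log|M|^2-2\log\det M$ and $\det(M^{-1})=1/\det M$. Feeding this into the definition,
\[
\mathscr F^{\ast}(M)=(\det M)\,\mathscr F(M^{-1})=\frac{|M|^2}{\det M}-\bigl(1+\log|M|^2-2\log\det M\bigr)=\frac{|M|^2}{\det M}-\log|M|^2+2\log\det M-1 ,
\]
and since $-\log|M|^2+2\log\det M=-\log\!\bigl(\tfrac{|M|^2}{\det M}\bigr)+\log\det M$, this equals $\mathscr W(M)-1$. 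Hence $\mathscr W=\mathscr F^{\ast}+1$ on $\M_+$.

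Next I would transport the quasiconvexity inequality. Fix $B\in\M_+$ and let $g$ be a $\CC^1$-diffeomorphism on a bounded domain $\Omega'$ with $g=B$ on $\p\Omega'$, $g\in\CC^1(\overline{\Omega'})$ and $g^{-1}\in\CC^1(\overline{B(\Omega')})$, as in Definition~\ref{quasiconvexity}. Standard topological considerations give $\Omega\equiv g(\Omega')=B(\Omega')$, so $|\Omega|=(\det B)\,|\Omega'|$, while $f\equiv g^{-1}$ is a $\CC^1$-diffeomorphism of $\Omega$ with $f=B^{-1}$ on $\p\Omega$; in particular $f$ is a $\WW^{1,2}$-homeomorphism of $\Omega$ with linear boundary values $B^{-1}\in\M_+$. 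Corollary~\ref{cor:LlogL} --- obtained from the local quasiconvexity of the Burkholder functional, Theorem~\ref{thm:soft}, by letting $p\to2$ --- then applies to $f$, and combined with the area identity above taken with $h=f$ it yields
\[
\int_{\Omega'}\mathscr W(\D g)\,\dd y-|\Omega'|=\int_{\Omega'}\mathscr F^{\ast}(\D g)\,\dd y=\int_{\Omega}\mathscr F(\D f)\,\dd x\ \ge\ |\Omega|\,\mathscr F(B^{-1})=(\det B)\,|\Omega'|\,\mathscr F(B^{-1}) .
\]
Dividing by $|\Omega'|$ and adding $1$ gives $\fint_{\Omega'}\mathscr W(\D g)\,\dd y\ge(\det B)\,\mathscr F(B^{-1})+1=\mathscr F^{\ast}(B)+1=\mathscr W(B)$, which is precisely the quasiconvexity of $\mathscr W$ at $B$.

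The substantive input here is Corollary~\ref{cor:LlogL}, i.e.\ the passage from Theorem~\ref{thm:soft} to the $\LL\log\LL$-type quasiconvexity of $\mathscr F$ along $\WW^{1,2}$-homeomorphisms; granting it, the remaining points are purely formal --- that the inverse of a $\CC^1$-diffeomorphism is again an admissible competitor (this is the role of the homeomorphism constraint in Definition~\ref{quasiconvexity}) and that the reference domain rescales by the factor $\det B$ under the linear map $B$. Beyond invoking Corollary~\ref{cor:LlogL}, I expect no genuine difficulty.
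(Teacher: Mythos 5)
Your proof is correct and follows essentially the paper's own route: writing $\mathscr W=\widehat{\mathscr F}+1$ via the Shield transform and transferring the quasiconvexity of $\mathscr F$ (Corollary \ref{cor:LlogL}, itself obtained from Theorem \ref{thm:soft} as $p\to 2$) to $\mathscr W$ by changing variables with the inverse map. The only difference is that you carry out the Shield step just for $\CC^1$-diffeomorphisms, which indeed suffices for Definition \ref{quasiconvexity}, whereas the paper's Proposition \ref{prop:shield} establishes it for all $\WW^{1,1}$-homeomorphisms with $K_f\in\LL^1(\Omega)$ (using the Hencl--Koskela--Onninen identity and Lusin's condition (N)), which yields the stronger Corollary \ref{cor:W} used later for lower semicontinuity.
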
 
In Section \ref{sec:swlsc} we investigate weak lower semicontinuity properties of $\mathscr W$ and establish that some
features similar to closed quasiconvexity hold also for $\mathscr{W}$;  for precise formulations see Proposition \ref{thm:closedqc.new}.

Originating from ${\bf B}_p$, the initial functional \eqref{eq:Wfcn} assumes all real values, and even tends  to $-\infty$ along suitable
directions when the determinant goes to zero. However, it allows easy modifications creating quasiconvex and non-polyconvex functionals that
satisfy \eqref{eq:blowup}. The following is perhaps the easiest example:
\begin{equation}\label{modiW}
\widetilde{\, \mathscr W \,} (A) \equiv  \frac{|A|^2}{\det A} - \log \left( \frac{|A|^2}{\det A}\right) + |\log \det A|,
\end{equation}
see Remark \ref{weehat}.


The interest in \cite{Voss1,Voss2} on $\mathscr W$ originates from the fact that it spans the only non-polyconvex extreme ray in a
class of  functionals satisfying the \textit{additive volumetric-isochoric split}, see  also \cite{Guerra19}  for further information on extremal functionals.
Thus, as a consequence of Corollary \ref{cor:Wintro}, we obtain a solution to Morrey's problem in a class of elastic functionals:

\begin{theorem}\label{thm:morreysplit}
Let $\, {\bf E}\colon \R^{2\times 2}_+\to \R$ be a functional of the form
$${\bf E}(A)=g(\det A)+h(K_A), \qquad K_A\equiv  \frac{|A|^2}{\det A},$$
where $h\colon[1,+\infty)\to \R$ is convex and $g\colon (0,+\infty)\to \R$. Then
$${\bf E} \text{ is rank-one convex} \iff {\bf E} \text{ is quasiconvex.}$$
\end{theorem}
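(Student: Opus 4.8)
The implication ``$\mathbf{E}$ quasiconvex $\Rightarrow$ $\mathbf{E}$ rank-one convex'' is classical: since $\mathbf{E}(A)=g(\det A)+h(K_A)$ is finite on all of $\M_+$, it is obtained exactly as recalled after Definition~\ref{quasiconvexity}, by testing \eqref{eq:quasiconvexity} against suitable diffeomorphic smoothings of a planar wave whose two gradient values lie on a rank-one segment contained in $\M_+$. The real content of the statement is therefore the converse, and the plan is to combine Corollary~\ref{cor:Wintro} with the known structure of rank-one convex functionals obeying the additive volumetric--isochoric split.

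The main step is a \emph{reduction to $\mathscr W$}: if $\mathbf{E}(A)=g(\det A)+h(K_A)$ is rank-one convex with $h$ convex, then there is a constant $t=t(\mathbf{E})\geq 0$ such that
\[
\mathbf{E}(A)-t\,\mathscr W(A)\ \text{ is polyconvex on }\M_+ .
\]
This is exactly the assertion that, modulo polyconvex functionals, the convex cone of rank-one convex functionals with this split is the single ray generated by $\mathscr W$; it follows from (or is implicit in) the analysis of \cite{Voss1,Voss2}, cf.\ also \cite{Guerra19}. One proves it by writing out the rank-one convexity conditions for the split ansatz --- these reduce to convexity of $h$ together with a coupling differential inequality relating $g$ and $h$ --- and checking that the extremal, equality case of that coupling is realised precisely by $\mathscr W$, so that subtracting the appropriate multiple of $\mathscr W$ leaves a polyconvex remainder. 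Carrying out (or carefully citing) this classification is the principal obstacle; everything else is soft.

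Granting the decomposition, the conclusion is immediate. First, polyconvex functionals are quasiconvex in the sense of Definition~\ref{quasiconvexity}: for a $\CC^1$-diffeomorphism $f$ with $f=A$ on $\p\Omega$ one has $\fint_\Omega \D f\,\dd x=A$ and, by the null-Lagrangian property of the Jacobian, $\fint_\Omega \det\D f\,\dd x=\det A$, whence \eqref{eq:quasiconvexity} follows from Jensen's inequality applied to the convex function of $(\D f,\det\D f)$. Second, $\mathscr W$ is quasiconvex by Corollary~\ref{cor:Wintro}. Finally, the inequality \eqref{eq:quasiconvexity} is plainly stable under sums and under multiplication by a nonnegative constant, so writing $\mathbf{E}=(\mathbf{E}-t\,\mathscr W)+t\,\mathscr W$ shows that $\mathbf{E}$ is quasiconvex. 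The same argument in fact yields \eqref{eq:quasiconvexity} for the larger class of homeomorphisms with $f,f^{-1}\in\WW^{1,2}$, since both summands enjoy this stronger form of the inequality, cf.\ the remark following Definition~\ref{quasiconvexity}.
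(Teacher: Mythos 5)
Your proposal is correct and follows essentially the same route as the paper: the key reduction is exactly the paper's Theorem~\ref{thm:decomposition}, which writes a rank-one convex split functional as $\mathbf{E}=\mathbf{F}+c\,\mathscr W$ with $\mathbf{F}$ polyconvex and $c\geq 0$, and quasiconvexity then follows by combining this with Corollary~\ref{cor:Wintro} (quasiconvexity of $\mathscr W$), just as you argue. The only difference is that where you defer the decomposition to \cite{Voss1,Voss2}, the paper supplies its own short direct proof via the second-derivative conditions for the split ansatz together with the Baker--Ericksen inequality (Lemma~\ref{lemma:BE} and Lemma~\ref{trivlemma}), which is precisely the computation you sketch.
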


The additive volumetric-isochoric split goes back at least to the work of Flory \cite{Flory} and since then it has been used extensively
to model slightly compressible materials, see for instance \cite{HartmannNeff,Ogden} and the references therein.
\smallskip

\subsection{Lower semicontinuity and existence of minimizers}
As discussed above the notion of quasiconvexity was introduced by Morrey to characterize sequential weak lower semicontinuity for integral functionals
in the vectorial calculus of variations. On the other hand, for functionals ${\bf E}\colon \R^{2\times 2}_+\to \R$ the condition \eqref{eq:blowup} expresses
the intuitive and natural requirement of hyperelasticity that \textit{an infinite amount of energy is required to compress a finite volume of material into
  zero volume} \cite{Ball1981, Ciarlet}. However, for such functionals with \eqref{eq:blowup} it is not clear if quasiconvexity suffices for lower semicontinuity
results. The stronger notion of polyconvexity does suffice, allowing  a wealth of interesting models for  hyperelastic materials \cite{Ball1}. 
 
In fact,  Theorem \ref{thm:morreysplit} already provides a natural class of functionals that have been considered before in the engineering literature \cite{Ciarlet},
but their weak lower semicontinuity and minimization properties had not been established. In addition to  Theorem~\ref{thm:morreysplit} or Corollary~\ref{cor:Wintro}
we shall also address this point here. In this connection, the extended Stoilow factorization due to Iwaniec and \v{S}ver\'{a}k \cite{IwaSve} suggested to us that
Jensen inequality with respect to principal maps might be sufficient for lower semicontinuity.  Indeed, this was our original indication that a theorem like the
Burkholder area inequality might be true. Applying this line of thought leads us to  the following Jensen inequality for principal maps: 

\begin{theorem} \label{thm:Wareaintro}
Let  $f \in \WW^{1,1}_{\loc}(\C)$ be a homeomorphism, and  a principal map with integrable distortion $K_f \in \LL^1(\mb D)$. Then
$$
\int_{\mb D} \bigl[ \mathscr W\bigl( \D f(z) \bigr) - \mathscr W\bigl( A_f \bigr) \bigr]\dd m(z) \geq 0.
$$ 
\end{theorem}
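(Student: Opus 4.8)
The plan is to derive this from the Burkholder Area Inequality (Theorem~\ref{thm:Bpareaintro}) by sending $p\to 2^+$, and then transporting the resulting statement from $\mathscr F$ to $\mathscr W$ via the Shield transform \cite{Shield}. First I would reduce to the quasiconformal case: given a principal map $f$ with $K_f\in\LL^1(\mb D)$, let $f_n$ be the principal solution of the Beltrami equation whose coefficient is $\mu_f$ truncated to the set $\{|\mu_f|\le 1-\tfrac1n\}$; then each $f_n$ is $K_n$-quasiconformal and principal, $f_n\to f$ in $\WW^{1,1}_{\loc}(\C)$, and $A_{f_n}\to A_f$. Since $|b_1|<1$ forces $A_f\in\M_+$, the right-hand side passes to the limit by continuity of $\mathscr W$ on $\M_+$; the left-hand side requires the lower semicontinuity $\int_{\mb D}\mathscr W(\D f)\,\dd m\le\liminf_n\int_{\mb D}\mathscr W(\D f_n)\,\dd m$, which is delicate because $\mathscr W$ is not bounded below, but should follow from a.e.\ convergence of $\D f_n$ together with equi-integrability of the negative part of $\log J_{f_n}$ on $\mb D$ inherited from the truncation.

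For a $K$-quasiconformal principal map $f$ I would next produce an $\mathscr F$-version of the area inequality. Subtract from Theorem~\ref{thm:Bpareaintro} its $p=2$ instance, which by \eqref{eq:area} is the \emph{identity} $\fint_{\mb D}[\B_2(\D f)-\B_2(A_f)]\,\dd m=\sum_{j\ge2}j|b_j|^2$; dividing by $p-2>0$, multiplying by $p$, and sending $p\to 2^+$ (dominated convergence being legitimate on the $K$-quasiconformal cone, where $\D f\in\LL^s(\mb D)$ for every $s<p_K$) yields
\[
\fint_{\mb D}\bigl[\mathscr F(\D f)-\mathscr F(A_f)\bigr]\,\dd m\;\ge\;\Bigl(2+2\log|A_f|-\tfrac{|A_f|^2}{\det A_f}\Bigr)\sum_{j\ge2}j|b_j|^2,
\]
the coefficient being twice the $p$-derivative of $\gamma_p(A_f)=\tfrac p2\,\B_p(A_f)/\B_2(A_f)$ at $p=2$.

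Finally I would pass to $g:=f^{-1}$, a homeomorphism of $\C$ that is conformal outside the Jordan domain $D:=f(\mb D)$ with $g(w)=w+O(|w|^{-1})$ at infinity. Since $\mathscr W$ is, up to an additive constant, the Shield transform of $\mathscr F$, so that $\det(A)\,\mathscr F(A^{-1})=\mathscr W(A)-1$, the change of variables $w=f(z)$ together with linearity of the adjugate gives
\[
\int_{\mb D}\mathscr W(\D f)\,\dd m=\int_{D}\mathscr F(\D g)\,\dd m+\pi,\qquad \int_D\D g\,\dd m=\pi\,\mathrm{adj}(A_f),\qquad m(D)=\pi\Bigl(\det A_f-\sum_{j\ge2}j|b_j|^2\Bigr),
\]
so the claim becomes $\int_D\mathscr F(\D g)\,\dd m\ge\pi\det(A_f)\,\mathscr F(A_f^{-1})$. \textbf{The main obstacle} is that this is an area-type inequality for a map conformal outside a \emph{general} Jordan domain $D$ — indeed $f|_{\C\setminus\overline{\mb D}}$ is exactly the exterior Riemann map of $\C\setminus\overline D$ — rather than outside $\mb D$, so the previous step does not apply verbatim. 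I see two routes: reprove the area inequality from scratch in this generality, the argument behind Theorem~\ref{thm:Bpareaintro} being essentially local at the curve $\p D$ where conformality holds; or uniformize the exterior of $D$ by the disk via a quasiconformal extension of the exterior Riemann map, at the price of absorbing a non-conformal precomposition into $\mathscr F$ (which is not conformally covariant). Either way, the decisive point is combinatorial: after the substitutions $\det A_f$, $m(D)$, $\mathrm{adj}(A_f)=\det(A_f)A_f^{-1}$, the correction term $\bigl(2+2\log|A_f|-K_{A_f}\bigr)\sum_{j\ge2}j|b_j|^2$ from the second step must recombine into exactly the nonnegative quantity the theorem demands — this bookkeeping, not any single estimate, is the crux. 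A possibly cleaner alternative avoids the Shield transform altogether: embed $f$ in a holomorphic family $f_\lambda$ of principal maps and show that $\lambda\mapsto\int_{\mb D}[\mathscr W(\D f_\lambda)-\mathscr W(A_{f_\lambda})]\,\dd m$ attains its minimum at $\lambda=0$, exploiting the subharmonicity mechanism underlying Burkholder's inequalities.
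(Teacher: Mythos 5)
Your proposal does not close the argument: the step you yourself flag as ``the main obstacle'' is precisely the content of the theorem, and neither of the routes you sketch for it is carried out or plausible as stated. After inverting to $g=f^{-1}$ you need an area-type inequality for a map that is conformal outside a \emph{general} Jordan domain $D=f(\mb D)$ with the exterior Riemann normalization, and your two suggestions both fail at the decisive point: the proof of Theorem \ref{thm:Bpareaintro} is not ``local at the curve $\p D$'' -- it uses the principal normalization globally (cutoffs of the $\mathcal O(z^{-2})$ tail at infinity to create affine boundary values, quasiconvexity tested on those modifications, and explicit Laurent--series orthogonality on $\C\setminus\DD$), none of which survives replacing $\DD$ by a general $D$; and the uniformization route founders, as you admit, on the fact that $\mathscr F$ is not conformally covariant under precomposition. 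The ``bookkeeping'' you defer is therefore not bookkeeping but the missing proof. In addition, your preliminary reduction to the quasiconformal case is itself unsubstantiated: for a principal map with merely $K_f\in\LL^1(\mb D)$, the convergence of the truncated principal solutions and, above all, the lower semicontinuity of $\int_{\mb D}\mathscr W(\D f_n)$ (with $\mathscr W$ unbounded below and $\log J_{f_n}$ not controlled by the truncation in any way you establish) are essentially as hard as the theorem; the paper never needs such a reduction.

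For comparison, the paper's route applies the Shield transform at the level of the \emph{quasiconvexity inequality}, not of the area inequality: Corollary \ref{cor:W} gives $\mathscr W(A)\le\fint_\Omega\mathscr W(\D f)$ for homeomorphisms with $K_f\in\LL^1$ and affine boundary values, directly in the integrable-distortion class. Then, instead of inverting $f$, one modifies it outside $\mb D$ (Lemma \ref{lemma:extension}): $\tilde f=h\circ A_f$ on $\C\setminus\mb D$ with $h=f\circ R^{-1}$, $h(z)=z+\mathcal O(z^{-2})$. Cutting off the tail and applying Corollary \ref{cor:W} yields the global inequality of Lemma \ref{lemma:globalqcW}, and the exterior correction term is then shown to vanish \emph{exactly}: on $\C\setminus\DD$ one has $\mathscr W(\D\tilde f)-\mathscr W(A_f)=(\log J_h)\circ A_f$, and since $A_f(\DD)$ is an ellipse whose complement is a null quadrature domain (Lemma \ref{lemma:nullquaddomain}) and $\log h'=\mathcal O(z^{-3})$, the integral of $\log J_h$ over $\C\setminus A_f(\DD)$ is zero. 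This is the mechanism your scheme lacks; your differentiated-at-$p=2$ inequality for $\mathscr F$ does match the paper's Corollary with coefficient $1-\mathscr F(A_f)/\det A_f$, but it is only an auxiliary statement there and cannot, by the route you propose, be converted into the $\mathscr W$-inequality.
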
 

\begin{remark}
Notice that the above inequality implies sharp  bounds on the integrability of $ \log(1/J_f) $ 
 in terms of those of $K_f$. 
\end{remark}

Here the assumption $K_f \in \LL^1$ is optimal mathematically. Moreover, notice that in the study of incompressible neo-Hookean materials, 
the first invariant of the isochoric part of the Cauchy-Green tensor of the deformation $f$ is $\widehat{I_1} = K_f + 1/K_f$, see \cite{Ogden}.
Since  it is unclear how to measure experimentally the response of materials as the determinant tends to zero \cite{Ciarlet}, the condition $K_f \in \LL^1$ might
in fact be  the right postulate within our current knowledge. In addition, recall that the norm $\| K_f\|_{\LL^1}$ equals the $\WW^{1,2}$-Sobolev norm of $f^{-1}$;
thus it is plausible that the condition $K_f \in \LL^1$ is the right regularity requirement  in order to have a flexible lower semicontinuity theory. 

It turns out that,  for general functionals with the blow-up \eqref{eq:blowup},  proving lower semicontinuity requires two properties: the Jensen inequality for principal maps,
allowing analysis via gradient Young measures, and secondly, control of concentration, typically via suitable equiintegrability. With these properties available we 
easily obtain the following lower semicontinuity result.  


\begin{theorem}
\label{thm:lscW}
Let $g\in \WW^{1,2}_\tp{loc}(\C)$ be a homeomorphism. Consider a sequence $(f_j )$ in $g + \WW^{1,2}_0(\Omega)$ such that 
$f_j\weak f$ in $\WW^{1,2}(\Omega)$ and for some $q>1$ we have $\|K_{f_j}\|_{\LL^{q}(\Omega)} \leq C < \infty$. Then 
$$
\liminf_{j\to \infty} \int_\Omega {\mathscr W}(\D f_j(z)) \, \dd m(z) \geq \int_\Omega {\mathscr W}(\D f(z))\, \dd m(z).
$$
\end{theorem}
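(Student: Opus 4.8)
The plan is to deduce Theorem~\ref{thm:lscW} from the Jensen inequality for principal maps (Theorem~\ref{thm:Wareaintro}) via the standard blow-up/Young measure localization machinery, with the extra equiintegrability input $\|K_{f_j}\|_{\LL^q}\le C$ playing the role of the concentration control. First I would pass to the gradient Young measure $(\nu_x)_{x\in\Omega}$ generated (up to a subsequence) by $(\D f_j)$; since $f_j\weak f$ in $\WW^{1,2}$, its barycenter is $\bar\nu_x=\D f(x)$ for a.e.\ $x$. Because $\mathscr W$ is bounded below on $\M_+$ only after subtracting a determinant term (it $\to-\infty$ as $\det\to 0^-$ along some directions, but on $\M_+$ it is bounded below by a constant, as $t-\log t\ge 1$ and $\log\det$ is handled by the determinant being a null-Lagrangian), I would split $\mathscr W(A)=\big(\tfrac{|A|^2}{\det A}-\log\tfrac{|A|^2}{\det A}+1\big)+\big(\log\det A-1\big)$; the first bracket is nonnegative, and $\int_\Omega\log\det\D f_j\to\int_\Omega\log\det\D f$ will follow once we know $\log\det\D f_j$ is equiintegrable — which is exactly where $q>1$ enters, since $|\log J_{f_j}|\le K_{f_j}+ (\text{a multiple of }|\log|\D f_j|^2|)$ and the latter is controlled in $\LL^{q}\cap\LL\log\LL$ by the $\WW^{1,2}$ bound together with $K_{f_j}\in\LL^q$. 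Alternatively, and more cleanly, one uses that $\log\det$ composed with admissible maps is weakly continuous under these hypotheses (an Iwaniec--\v{S}ver\'ak / degree-theory fact), so only the nonnegative part needs semicontinuity.

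Next I would localize: by the standard theory, for a.e.\ $x_0\in\Omega$ the Young measure $\nu_{x_0}$ is itself a homogeneous $\WW^{1,2}$-gradient Young measure with barycenter $\D f(x_0)\in\M_+$, and one can realize it by a sequence $g_k = \D f(x_0)\,\mathrm{id} + \varphi_k$ with $\varphi_k\in\WW^{1,2}_0$ of the unit ball (or any fixed domain), whose distortions remain bounded in $\LL^q$ of that domain — this last point is inherited from $\|K_{f_j}\|_{\LL^q}\le C$ via the localization and a diagonal argument, and it is essential because Theorem~\ref{thm:Wareaintro} requires $K\in\LL^1$ finite. The homeomorphism hypothesis is preserved along the original sequence (each $f_j\in g+\WW^{1,2}_0$ with $\det>0$ is a homeomorphism by degree theory) and can be arranged, via the extended Stoilow factorization of Iwaniec--\v{S}ver\'ak, for the localizing sequence as well: I would factor each localized competitor so that its non-analytic part is a principal $\WW^{1,2}$-homeomorphism of integrable distortion, reducing the lower bound $\int\mathscr W(\D g_k)\ge\mathscr W(\D f(x_0))$ precisely to Theorem~\ref{thm:Wareaintro} applied to that principal map (here one uses that $\mathscr W$ is invariant under post-composition with conformal maps and homogeneous of degree zero, so the affine barycenter $\D f(x_0)$ corresponds to $A_f$). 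Integrating the resulting pointwise inequality $\langle\nu_{x_0},\mathscr W\rangle\ge\mathscr W(\D f(x_0))$ over $\Omega$, and combining with Fatou applied to the nonnegative bracket and weak continuity of the $\log\det$ bracket, yields the claimed lower semicontinuity.

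The main obstacle I expect is the equiintegrability/concentration step, i.e.\ ensuring that no energy escapes either to the boundary of the localization balls or, more dangerously, into regions where $\det\D f_j\to 0$. The bound $\mathscr W\ge 0$ on the first bracket controls concentration of the isochoric part, but one must also rule out that $\int_{\{J_{f_j}<\varepsilon\}}|\log J_{f_j}|$ fails to vanish as $\varepsilon\to 0$ uniformly in $j$; this is precisely what the hypothesis $q>1$ buys, through the interpolation $|\log J_{f_j}|\lesssim_\delta J_{f_j}^{-\delta}\le K_{f_j}^\delta|\D f_j|^{2\delta}$, together with the a priori $\LL^2$ bound on $|\D f_j|$ and $\LL^q$ bound on $K_{f_j}$, giving a uniform higher-integrability margin for $\log J_{f_j}$ and hence equiintegrability. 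A secondary technical point is arranging the Stoilow factorization compatibly with the localization so that Theorem~\ref{thm:Wareaintro} applies verbatim; I would handle this by first proving the homogeneous statement "every $\WW^{1,2}$-gradient Young measure with barycenter in $\M_+$ generated by homeomorphisms of $\LL^q$-bounded distortion satisfies $\langle\nu,\mathscr W\rangle\ge\mathscr W(\bar\nu)$", which is the closed-quasiconvexity-type statement alluded to in Proposition~\ref{thm:closedqc.new}, and then the integration over $\Omega$ is immediate.
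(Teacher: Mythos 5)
Your overall architecture is the paper's: generate a $\WW^{1,2}$-gradient Young measure, localize, prove the homogeneous statement ``$\langle\nu,\mathscr W\rangle\ge\mathscr W(\langle\nu,\Id\rangle)$ for measures generated by homeomorphisms with $\LL^q$-bounded distortion'' by reducing to principal maps via Stoilow factorization and invoking Theorem \ref{thm:Wareaintro} (this is exactly Proposition \ref{thm:closedqc.new}, resting on Theorem \ref{thm:intdistort}), and then integrate. However, two of your supporting steps fail as stated. The claim that $\int_\Omega\log J_{f_j}\to\int_\Omega\log J_f$ (``weak continuity of the $\log\det$ bracket'') is false: $\det$ is a null Lagrangian but $\log\det$ is not, and by concavity of $\log$ one only gets $\langle\nu_z,\log\det\rangle\le\log\det\bigl(\D f(z)\bigr)$, so under equiintegrability the log-Jacobian integrals converge to something that may be \emph{strictly below} $\int_\Omega\log J_f$. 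Consequently the additive split into a nonnegative isochoric bracket plus $\log\det$ cannot be handled term by term; the cancellation between distortion and log-Jacobian is precisely why one needs the Jensen inequality for the full $\mathscr W$ (Proposition \ref{thm:closedqc.new}), with equiintegrability of $\mathscr W(\D f_j)$ used only to represent $\lim_j\int_\Omega\mathscr W(\D f_j)$ through the Young measure (Remark \ref{rmk:moreintegrands}), not to send any individual term to its ``expected'' limit.

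The second gap is your equiintegrability mechanism. The pointwise interpolation is wrong: $J_f^{-\delta}=K_f^{\delta}\,|\D f|^{-2\delta}$, not $K_f^{\delta}|\D f|^{2\delta}$, and the corrected bound is useless where $|\D f|$ is small; there is no pointwise inequality controlling $\log(1/J_f)$ by integrable powers of $K_f$ and $|\D f|$ under the stated hypotheses. The genuine ingredient is the global Koskela--Onninen--Rajala estimate $\int_\Omega\log^q\bigl(e+1/J_f\bigr)\,\dd m\le C\bigl(1+\int_\Omega K_f^q\,\dd m\bigr)$ for homeomorphisms with the prescribed boundary data (Proposition \ref{prop:KOR}, used in Lemma \ref{lemma:equiintegrability}); this is a nontrivial theorem exploiting the homeomorphism and boundary structure, and it is exactly where $q>1$ and the global setting enter. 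Finally, a smaller but substantive point: $\mathscr W$ is neither $0$-homogeneous nor invariant under postcomposition with conformal maps --- $\mathscr W(tA)=\mathscr W(A)+\log t^2$, and the Stoilow factor contributes $\log J_h$ --- so in the reduction to principal maps one must actually identify the conformal factor (as in the proof of Theorem \ref{thm:intdistort}) or integrate it away over the complement of an ellipse via the null-quadrature-domain Lemma \ref{lemma:nullquaddomain}; appealing to ``invariance'' skips the step where the real work is done.
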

Similar weak lower semicontinuity  holds for the rank-one convex  functionals from Theorem \ref{thm:morreysplit} with the appropriate volumetric-isochoric
split, see Corollary \ref{cor:lscsplit}. Notice that as in the Burkholder setting the endpoint result, which in this context is  $K_f \in \LL^1$, is missing.
This amounts to study possible concentration effects and we will investigate it in a future work. 


Building on $ {\mathscr W}$ or on $\widetilde  {\, \mathscr W\,}$ we obtain in Subsection \ref{sec:elasticity} a number of  quasiconvex non-polyconvex functionals
for which the direct method of the Calculus of Variation gives  existence of minimizers, see Corollary \ref{thm:miniselasticity} and Example \ref{nonpoly}. 
Morever since,  by the work of Koskela and Onninen \cite{KO},  the norm $\|K_f\|_{\LL^q}$   controls $\|\log J_f\|_{\LL^q}$,  many of these functionals
allow the blow-up condition \eqref{eq:blowup}. This, in particular,  sheds  light on the problem  of existence of minimizers in hyperelasticity, see \cite{Ball2002} and in particular Problem 1 there. 

On the other hand, this class also contains functionals which degenerate in various ways  when the determinant vanishes, like the Burkholder functional itself.
Therefore its interest is not restricted to the elasticity ecosystem but is relevant also  to the geometric function theory interpretation of our work.

\subsection*{Outline}

Finally, we conclude the introduction with a description of the organization of the paper.

 Section~\ref{sec:prelims} revisits the standard notions of the vectorial calculus of variations, with an emphasis on the special care needed to treat extended-real valued functionals.  

Section~\ref{sec:principal}  reviews relevant parts  of the basic quasiconformal theory,  with special focus on principal maps.

Section~\ref{Young} adapts the theory of quasiregular Young measures, initiated in  \cite{AstalaFaraco02} and applied for instance in \cite{FaracoSzekelyhidi08},
to the case of principal maps. It also extends this theory to maps of integrable distortion.  


Section~\ref{sec:Bf} provides the preliminary results needed in the proof of  the local quasiconvexity of the Burkholder functional,
in particular an extremality argument in the spirit of \cite{AIPS12}.  

Section~\ref{sec:Maintheorem} presents the proof of Theorems \ref{thm:soft} and \ref{main}.

 Section~\ref{sec:areaBp} contains the proof of Theorem \ref{thm:Bpareaintro}. 

Section~\ref{sec:LlogL} studies the functional $\mathscr F$, which is the derivative of $B_p$ at $p=2$, and is
closely related to the higher integrability of the Jacobian. 

Section~\ref{sec:shield} revisits the classical Shield transformation, which uses inverses to define new integral functionals and,
in particular, presents $\mathscr W$ as a transformation of $\mathscr F$, which leads to the proof of Corollary \ref{cor:Wintro}.
For the sake of completeness we also investigate for which class of test functions the quasiconvexity inequality for $\mathscr W$ can be verified. 

Section~\ref{sec:inequalities}  proves Theorem \ref{thm:Wareaintro}, which gives quasiconvexity of $\mathscr W$ in the class of principal maps.
Similar inequalities are established for $\mathscr F$. 

Section~\ref{sec:advolum} proves the quasiconvexity of functionals with volumetric isochoric split after that of $\mathscr W$,
streamlining the arguments in \cite{Voss1} and proving Theorem \ref{thm:morreysplit}.

In Section~\ref{sec:swlsc} we are then in position to apply a version of the direct method of calculus of variations to prove lower
semicontinuity theorems and existence of minimizers for a quite large family of functionals, see in particular Theorem~\ref{thm:miniselasticity}.
In this section  we also prove Corollary \ref{cor:Bpminims} and Theorem \ref{thm:lscW}.

\subsection*{Notation}
We denote by $\mb D\subset \C$ the unit disk and by $\mb A(r,R)$ the annulus $\{z:r<|z|<R\}$.
Unless explicitly stated otherwise, $\Omega\subset \R^n$ is a bounded domain such that $\mathscr{L}^n(\p \Omega)=0$; in most of the paper we will take ${n=2}$.
Given a map $g\colon \R^n\to \R^n$,  we  sometimes use the notation $\WW^{1,p}_g(\Omega)\equiv g+\WW^{1,p}_0(\Omega,\R^n)$; in particular, this space
is well-defined even if $\p\Omega$ is irregular.  We also use the standard notation
$$
  \fint_\Omega \varphi(x) \,\dd x \equiv \frac{1}{|\Omega|} \int_\Omega \varphi(x) \,\dd x.
$$
A matrix $A\in \R^{2\times 2}$ is naturally identified with a linear map $A\colon \C\to \C$.
It will also be useful to use conformal coordinates, whereby we identify $A\in \R^{2\times 2}$ with a pair $(a_+,a_-)\in \C^2$ according to the rule 
\begin{equation}
\label{eq:confcoords}
A(z) = a_+ z + a_- \bar z.
\end{equation}
Here, on the left-hand side we see $z$ as an element of $\R^2$, while on the right-hand side $z\in \C$. In these coordinates we have
\begin{equation}
\label{eq:detconformal}
\det A=|a_+|^2-|a_-|^2, \qquad |A|=|a_+|+|a_-|,
\end{equation}
where $|A|\equiv \max_{z\in \mb S^1} |A(z)|$ denotes the operator norm.

Given a map $f\in \WW^{1,1}_\tp{loc}(\Omega)$, we write $J_f\equiv  \det \D f$ for its Jacobian. If 
$J_f>0$ a.e.~ in $\Omega$ then there is a measurable function $K\colon \Omega\to [1,+\infty]$ such that $K<\infty$ a.e.~ and
\begin{equation*}\label{distortion2}
|\D f(z)|^2\leqslant K(z)J_f(z),\qquad {\rm a.e.\ in} \;\; \Omega.
\end{equation*}
The {\it distortion function} of $f$, which we denote by $K_f$, is the smallest such function $K$.

\subsection*{Acknowledgments}
D.F, K.A, A.K acknowledge the financial support of QUAMAP, the ERC Advanced Grant 834728, 
 and of  
the Severo Ochoa Programme 
CEX2019-000904-S.
A.G. was supported by Dr.\ Max R\"ossler, the Walter Haefner Foundation and the ETH Z\"urich Foundation.  
D.F and A.K were  partially supported by CM and UAM,
and A.K by Academy of Finland CoE Randomness and Structures, and Academy Fellowship Grant 355840. 
 D.F acknowledge financial support by PI2021-124-195NB-C32. 
 
 K.A,  D.F, A.G, A.K   acknowledge the hospitality and financial support of the Institute of Advanced studies during
various periods in 2021-2022 and the discussions there with  C. De Lellis, V.\v{S}ver\'ak  and L.Sz\'ekelyhidi Jr, on topics related to the paper. K.A, A.G, J.K acknowledge the hospitality of Universidad
Aut\'onoma de Madrid and ICMAT during the autumn of 2022. D.F also wants to acknowledge the hospitality of the Mathematical Institute of Oxford during the summer of 2023.

\section{Extended real-valued functionals and Young measures}
\label{sec:prelims}

\subsection{Quasiconvexity and rank-one convexity}

Put $\eR \equiv  \R \cup \{ \pm \infty \}$ and let $F \colon \R^d \to \eR$ be an extended real-valued function.
The epigraph of $F$ is the subset
$$
\mathrm{epi}(F) \equiv  \bigl\{ (x,y) \in \R^{d+1} : \, y \geq F(x) \bigr\}
$$
of $\R^{d+1}$. We say that $F$ is \textit{convex} if its epigraph is a convex subset of $\R^{d+1}$.

The following result is an easy consequence of the definitions and we leave the proof to the interested reader.
\begin{lemma}\label{lem1}
Assume $F \colon \R \to \eR$ is convex and that for some $x_0 \in \R$ we have $F(x_{0})=-\infty$. Then there exist
$\alpha$, $\beta \in \eR$ with $\alpha \leq x_{0} \leq \beta$ such that
$$
F = \left\{
\begin{array}{ll}
-\infty & \mbox{ in } ( \alpha , \beta ),\\
+\infty & \mbox{ on } \R \setminus [ \alpha , \beta ] .
\end{array}
\right.
$$
Furthermore, when $\alpha \in \R$ or $\beta \in \R$, the values of $F$ there are unrestricted.
\end{lemma}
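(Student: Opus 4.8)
The plan is to argue entirely from the hypothesis that $\mathrm{epi}(F)\subset\R^2$ is convex, extracting the whole structure of $F$ from the single point $x_0$ with $F(x_0)=-\infty$. The heart of the matter is a ``propagation of $-\infty$'' principle, which I would isolate first: \emph{if $F(x_0)=-\infty$ and $F(x_1)<+\infty$ for some $x_1\neq x_0$, then $F\equiv-\infty$ on the open segment between $x_0$ and $x_1$.} Indeed, $\mathrm{epi}(F)$ contains the entire vertical line $\{x_0\}\times\R$, and it contains a point $(x_1,c)$ with $c\in\R$ (take $c=F(x_1)$ if that is finite, and any $c$ otherwise). For $x=(1-t)x_0+tx_1$ with $t\in(0,1)$ and any $M>0$, convexity places the segment from $(x_0,-M)$ to $(x_1,c)$ inside $\mathrm{epi}(F)$, so $F(x)\le (1-t)(-M)+tc$; letting $M\to\infty$ forces $F(x)=-\infty$.

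Granting this, I would set $S=\{x:F(x)=-\infty\}$, which is nonempty since $x_0\in S$, and define $\alpha=\inf S\in[-\infty,x_0]$ and $\beta=\sup S\in[x_0,+\infty]$. That $(\alpha,\beta)\subseteq S$ follows by picking, for a given $x\in(\alpha,\beta)$, points $x_-,x_+\in S$ with $x_-<x<x_+$ (possible by the definitions of $\inf$ and $\sup$) and applying the propagation principle with $x_0:=x_-$, $x_1:=x_+$ (noting $x_+\in S\subseteq\{F<+\infty\}$). For the complementary statement, suppose $x>\beta$ and $F(x)<+\infty$; applying the propagation principle to $x_0\in S$ (so $x_0\le\beta<x$) and this $x$ gives that the open segment $(x_0,x)$, which contains points exceeding $\beta$, lies in $S$ --- contradicting $\beta=\sup S$. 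The case $x<\alpha$ is symmetric, so $F=+\infty$ on $\R\setminus[\alpha,\beta]$, which completes the main assertion.

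For the final remark, when $\alpha\in\R$ (resp.\ $\beta\in\R$) the value $F(\alpha)$ is unconstrained: for any prescribed $v\in\eR$ one checks directly that the corresponding epigraph --- the strip $(\alpha,\beta)\times\R$ together with a vertical ray, a vertical line, or nothing over the endpoint --- is still convex, so each choice is realised by some convex $F$. I would simply record this verification.

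I do not expect a genuine obstacle here: the statement is a piece of one-variable convex analysis and the argument is mostly bookkeeping. The only points needing a little attention are the limiting argument $M\to\infty$ in the propagation principle and handling the degenerate configurations uniformly (for instance $\alpha=\beta$, or $\alpha,\beta$ infinite, or $x_1$ itself a point where $F=-\infty$); the formulation above is chosen so that these require no special casework.
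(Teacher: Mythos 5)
Your argument is correct: the ``propagation of $-\infty$'' step, the definition of $\alpha=\inf S$, $\beta=\sup S$ for $S=\{F=-\infty\}$, and the contradiction argument outside $[\alpha,\beta]$ all follow soundly from convexity of $\mathrm{epi}(F)$, and the degenerate cases ($\alpha=\beta$, infinite endpoints, $F(x_1)=-\infty$) are indeed handled uniformly. Note that the paper offers no proof of this lemma --- it is explicitly left to the reader as ``an easy consequence of the definitions'' --- and your write-up is exactly the intended elementary argument, including the observation that prescribing arbitrary values of $F$ at a finite endpoint still yields a convex epigraph, which is all the final sentence of the statement asserts.
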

Let ${\bf E} \colon \R^{m\times n} \to \eR$ be an extended real-valued function (henceforth we refer to functions defined on $\R^{m\times n}$ as \textit{functionals}).
The \textit{effective domain} of ${\bf E}$ is the subset
\begin{equation} \label{effective}
\mathrm{dom}({\bf E}) \equiv  \bigl\{ A \in \R^{m\times n}: \, {\bf E}(A) < +\infty \bigr\}
\end{equation}
of $\R^{m\times n}$, where we emphasize that the value $-\infty$ is allowed for ${\bf E}$ on $\mathrm{dom}({\bf E})$.

\begin{definition}\label{def:rc}
A functional ${\bf E} \colon \R^{m\times n} \to \eR$ is \textit{rank-one convex} if, for all $A$, $X \in \R^{m\times n}$ with $\mathrm{rank}(X)=1$, the
function $t \mapsto {\bf E}(A+tX)$ is convex. 
\end{definition}
We emphasize that in the considered generality this notion of rank-one convexity is
 rather weak, see already Example \ref{ex:4pts} below.

For the next result we need the notion of algebraic interior or \textit{core} of a subset $S \subseteq \R^{m\times n}$, namely
$A \in \mathrm{core}(S)$ provided for each $X \in \R^{m\times n}$ we can find $\delta > 0$ such that $A+tX \in S$ for all $t \in [0,\delta )$.

\begin{lemma}\label{lem2}
Assume ${\bf E} \colon \R^{m\times n} \to \eR$ is a rank-one convex functional and that for some $A_{0} \in \mathrm{core}\bigl( \mathrm{dom}({\bf E}) \bigr)$
we have ${\bf E}(A_{0}) \in \R$. Then ${\bf E} > -\infty$ on all lines through $A_0$ that are parallel to a rank one matrix.
\end{lemma}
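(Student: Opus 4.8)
The plan is to combine the one-dimensional structure result (Lemma~\ref{lem1}) with the definition of rank-one convexity to propagate finiteness-from-below along rank-one lines through $A_0$. Fix any rank-one matrix $X$ and consider the slice $g(t) \equiv {\bf E}(A_0 + tX)$, which by Definition~\ref{def:rc} is a convex function $g\colon \R \to \eR$. We know $g(0) = {\bf E}(A_0) \in \R$, so $g$ is not identically $+\infty$ and, crucially, $g(0) \neq -\infty$. I would then argue that $g$ never takes the value $-\infty$: if it did at some point $t_1$, then by Lemma~\ref{lem1} (applied to the convex function $g$, which takes the value $-\infty$ somewhere) there are $\alpha \leq t_1 \leq \beta$ with $g \equiv -\infty$ on the open interval $(\alpha,\beta)$ and $g \equiv +\infty$ outside $[\alpha,\beta]$; in particular $g$ can only be finite (and real) at the two endpoints $\alpha, \beta$. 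Since $g(0)\in\R$, the point $0$ must be one of these endpoints, say $0 = \alpha$ (the case $0=\beta$ is symmetric).

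The remaining point is to rule out $0$ being such an endpoint, and this is where the hypothesis $A_0 \in \mathrm{core}(\mathrm{dom}({\bf E}))$ enters. If $0 = \alpha$ is the left endpoint of the interval where $g = -\infty$, then for all small $t > 0$ we have $g(t) = {\bf E}(A_0 + tX) = -\infty < +\infty$, so $A_0 + tX \in \mathrm{dom}({\bf E})$ for such $t$ — that part is consistent with $A_0$ being in the core. The contradiction instead comes from looking in the \emph{opposite} direction, i.e.\ from considering the rank-one matrix $-X$: since $A_0 \in \mathrm{core}(\mathrm{dom}({\bf E}))$, there is $\delta > 0$ with $A_0 + t(-X) = A_0 - tX \in \mathrm{dom}({\bf E})$, i.e.\ $g(-t) < +\infty$, for all $t \in [0,\delta)$. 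But the structure from Lemma~\ref{lem1} forces $g \equiv +\infty$ on $\R \setminus [\alpha,\beta] = \R\setminus[0,\beta]$, which contains $(-\delta, 0)$; this is the desired contradiction. Hence $0$ cannot be an endpoint of such a degeneracy interval, so no degeneracy interval exists, and $g > -\infty$ everywhere on $\R$. As $X$ was an arbitrary rank-one matrix, this proves ${\bf E} > -\infty$ on every line through $A_0$ parallel to a rank-one matrix.

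I expect the only subtle point — and hence the "main obstacle" — to be the careful bookkeeping of which side of the degeneracy interval the core hypothesis rules out: one must notice that membership of $A_0$ in the core gives information in \emph{both} directions $\pm X$ along the line, and it is the combination with the fact that a convex extended-real function which is $-\infty$ somewhere is $+\infty$ on the complement of a closed interval (Lemma~\ref{lem1}) that produces the contradiction. Once that is set up, everything else is a direct unwinding of definitions, with no computation involved. A cosmetic alternative is to phrase the whole argument symmetrically from the start: apply Lemma~\ref{lem1} to $g$, obtain $[\alpha,\beta]$, observe $g$ is real only possibly at $\alpha,\beta$, and use the core property to place a neighbourhood of $0$ inside $\mathrm{dom}({\bf E}) = \{g < +\infty\} \subseteq [\alpha,\beta]$, forcing $\alpha < 0 < \beta$ and thus $g(0) = -\infty$, contradicting $g(0)\in\R$.
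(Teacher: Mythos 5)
Your proof is correct and follows essentially the same route as the paper: both arguments use the structure of extended-real convex functions on a line (Lemma~\ref{lem1}, or the convexity statement it encodes) to show that a value $-\infty$ on the rank-one line through $A_0$ would force ${\bf E}=+\infty$ on a half-line emanating from $A_0$ in a rank-one direction, which the hypothesis $A_{0} \in \mathrm{core}\bigl(\mathrm{dom}({\bf E})\bigr)$ rules out. The only difference is cosmetic bookkeeping of which endpoint $0$ occupies, which the paper avoids by normalizing the point where ${\bf E}=-\infty$ to be $A_0-X$.
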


\begin{proof}
If there exists $X \in \R^{m\times n}$ of rank one such that ${\bf E}(A_{0}-X) = -\infty$, then by convexity in the direction of $X$ we infer that ${\bf E}(A_{0}+tX) = +\infty$
for all $t>0$. But this is impossible when $A_{0} \in \mathrm{core}\bigl( \mathrm{dom}({\bf E}) \bigr)$, so ${\bf E}> -\infty$ must hold on all lines through $A_0$
that are parallel to a rank one matrix.
\end{proof}

\begin{corollary}\label{cor2a}
Assume ${\bf E} \colon \R^{m\times n} \to \eR$ is a rank-one convex functional and that for some $A_0$ in the topological interior of $\mathrm{dom}({\bf E})$ we
have ${\bf E}(A_{0}) \in \R$. Then ${\bf E} > -\infty$ on the connected component $\mathcal{U}$ of the interior of $\mathrm{dom}({\bf E})$ that contains $A_0$.
In fact, ${\bf E} > -\infty$ on any line through a point of $\mathcal{U}$ that is parallel to a rank one matrix.
\end{corollary}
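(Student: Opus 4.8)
The plan is to bootstrap Lemma~\ref{lem2} from a single point to the whole component $\mathcal{U}$ by a connectedness argument, the only extra ingredient being an elementary observation about rank-one polygonal paths. First I would record the setup: since $\R^{m\times n}$ is locally connected, the connected component $\mathcal{U}$ of $\mathrm{int}(\mathrm{dom}({\bf E}))$ containing $A_0$ is open and connected; moreover $\mathcal{U}\subseteq \mathrm{int}(\mathrm{dom}({\bf E}))\subseteq \mathrm{core}(\mathrm{dom}({\bf E}))$ (the topological interior is always contained in the core) and $\mathcal{U}\subseteq \mathrm{dom}({\bf E})$, so on $\mathcal{U}$ we have ${\bf E}<+\infty$ and every point of $\mathcal{U}$ lies in the core of the effective domain. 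Thus the assertion ``${\bf E}>-\infty$ on $\mathcal{U}$'' is the same as ``$S=\mathcal{U}$'', where $S\equiv\{A\in\mathcal{U}:{\bf E}(A)\in\R\}$, and $A_0\in S$.

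The geometric ingredient is the \emph{coordinate staircase}: writing the standard basis matrices $E_{ij}$, each of which has rank one, any two matrices $P,Q$ are joined by the polygonal path $P=P_0,P_1,\dots,P_{mn}=Q$ obtained by switching on the coordinates of $Q-P$ one at a time; because the $E_{ij}$ are orthonormal in the Euclidean (Frobenius) inner product, every vertex $P_k$ and every point of every segment $[P_{k-1},P_k]$ lies in the closed Euclidean ball $\overline{B}(P,\|Q-P\|)$, and every segment is parallel to a rank-one matrix. Combined with Lemma~\ref{lem2} this gives the \emph{propagation mechanism}: if $P_0,\dots,P_N$ is a polygonal path with each increment $P_k-P_{k-1}$ of rank one, with all $P_k\in\mathrm{core}(\mathrm{dom}({\bf E}))$ and with ${\bf E}(P_0)\in\R$, then ${\bf E}(P_k)\in\R$ for every $k$ --- by induction, Lemma~\ref{lem2} applied at $P_{k-1}$ gives ${\bf E}>-\infty$ along the whole line $P_{k-1}+\R(P_k-P_{k-1})$, so ${\bf E}(P_k)>-\infty$, while $P_k\in\mathrm{dom}({\bf E})$ forces ${\bf E}(P_k)<+\infty$.

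With these tools I would show $S$ is both open and closed in $\mathcal{U}$. For openness, given $A\in S$ choose $r>0$ with $B(A,r)\subseteq\mathcal{U}$; for any $B\in B(A,r)$ the coordinate staircase from $A$ to $B$ stays inside $B(A,r)\subseteq\mathcal{U}\subseteq\mathrm{core}(\mathrm{dom}({\bf E}))$, so the propagation mechanism yields ${\bf E}(B)\in\R$, i.e.\ $B(A,r)\subseteq S$. For closedness in $\mathcal{U}$, given $A\in\mathcal{U}\setminus S$ we have ${\bf E}(A)=-\infty$ (it cannot be $+\infty$ on $\mathcal{U}$); choose $r>0$ with $B(A,2r)\subseteq\mathcal{U}$, and observe that if some $B\in B(A,r)$ had ${\bf E}(B)\in\R$, then the coordinate staircase from $B$ to $A$ would lie in $B(A,2r)\subseteq\mathrm{core}(\mathrm{dom}({\bf E}))$ and propagation would force ${\bf E}(A)\in\R$, a contradiction; hence $B(A,r)\subseteq\mathcal{U}\setminus S$. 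Connectedness of $\mathcal{U}$ then gives $S=\mathcal{U}$, i.e.\ ${\bf E}>-\infty$ on $\mathcal{U}$. The final assertion is immediate afterwards: any $C\in\mathcal{U}$ has ${\bf E}(C)\in\R$ and $C\in\mathrm{core}(\mathrm{dom}({\bf E}))$, so Lemma~\ref{lem2} applies and gives ${\bf E}>-\infty$ on every line through $C$ parallel to a rank-one matrix.

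I do not expect a genuine obstacle here: Lemma~\ref{lem2} carries the real weight and everything else is elementary. The only point that needs care --- and the reason the statement is phrased in terms of a connected \emph{component} of $\mathrm{int}(\mathrm{dom}({\bf E}))$ --- is that Lemma~\ref{lem2} only forbids the value $-\infty$ on a rank-one line, not the value $+\infty$; hence to iterate it one must keep every vertex of the connecting path inside the effective domain (indeed inside its core), which is precisely what staying within a single component of the interior guarantees.
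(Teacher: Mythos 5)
Your proposal is correct and follows essentially the same route as the paper: the paper simply cites the well-known fact that any two points of an open connected set in $\R^{m\times n}$ can be joined by a piecewise linear path with rank-one segments and then propagates finiteness along it via Lemma~\ref{lem2}. Your coordinate-staircase plus open/closed connectedness argument just supplies a proof of that cited fact, so the substance is the same.
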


\begin{proof}
It is well-known that any two points of an open connected set in $\R^{m\times n}$  can be connected by a piecewise linear curve with
each line segment parallel to a rank one matrix. It therefore follows using Lemma \ref{lem2} that ${\bf E} > -\infty$ on $\mathcal{U}$
and hence that ${\bf E}>-\infty$ on any line which is parallel to a rank one matrix and which intersects $\mathcal{U}$.
\end{proof}

In relation to rank-one convexity it is natural to consider quasiconvexity, which is an integral condition.
 In our key results we are considering  extended real-valued functionals ${\bf E}$, and thus given a Borel
measure $\nu$ in $\R^{m\times n}$ we write  for such functionals
\begin{equation}  \label{upperint}
 \int_{\R^{m\times n}}^* \! {\bf E} \, \dd\nu \equiv  
\begin{cases}
 \int_{\M} \! {\bf E} \, \dd\nu  \quad &\text{if} \;\; {\bf E} \in \LL^{1}(\nu), \\ 
 + \infty \quad &\text{if} \;\; {\bf E}^+ \notin \LL^{1}(\nu),\\ 
-\infty & \text{if} \;\; {\bf E}^+ \in \LL^{1}(\nu) \; {\rm but } \; {\bf E}^- \notin \LL^{1}(\nu).
\end{cases}
\end{equation}
One can interpret \eqref{upperint} as an \textit{upper $\nu$-integral} of ${\bf E}$.

In the setting of extended real-valued functionals, a natural counterpart to the notion \eqref{quasiconvexity} of quasiconvexity is to require integral bounds such as 
\begin{equation}
\label{eq:qc23}
{\bf E}(A)\leq \fint^*_\Omega {\bf E}(A+\D \varphi) \,\dd x
\end{equation}
to hold for all bounded domains $\Omega\subset \R^n$ and, say, for all  $\varphi \in \WW^{1,p}_0(\Omega,\R^m)$. 
However,  the above version of quasiconvexity is very weak. In fact, it does not even imply rank-one convexity \cite[Example 3.5]{BM1}:
\begin{example}[Two points]\label{ex:2pts}
Let $X\in \R^{m\times n}$ have $\tp{rank}(X)=1$. The functional
$${\bf E}(0)={\bf E}(X)=0,\qquad {\bf E}=+\infty \tp{ otherwise},$$
is  not rank-one convex, yet it satisfies \eqref{eq:qc23} for all $\varphi  \in \WW^{1,1}_0$ and $A \in \R^{m\times n}$.
\end{example} 
Thus, arguably, the correct notions to consider are those of closed rank-one convexity and closed quasiconvexity \cite{Pe1}, see also \cite{JK}.
In order to define these concepts we need to introduce the gradient Young measures, and this is the purpose of the next subsection.

\subsection{Gradient Young measures}
Let $\Omega\subset \R^n$  be a bounded domain with $\mathscr L^n(\p \Omega)=0$. Informally, a \textit{gradient Young measure} on $\Omega$ is
a parametrized family of probability measures $\nu=(\nu_x)_{x\in \Omega}$ on $\R^{m \times n}$, where at each point $x \in \Omega$, the measure
$\nu_x$ describes the oscillations of the derivatives $\D\varphi_j(y)$ for points $y$ near $x$, in the limit $j \to \infty$ for 
 a sequence $( \varphi_j )$ that converges weakly in $\WW^{1,p}(\Omega, \R^m)$. 

More precisely,  if $\Meas (\R^{m \times n})$ denotes the space of finite signed Borel measures on $\R^{m \times n}$, then to any $\varphi \in \WW^{1,p}(\Omega, \R^m)$
we can associate the function
$$
T_\varphi \colon \Omega \mapsto \Meas (\R^{m \times n}), \quad T_\varphi (x) = \delta_{\D\varphi(x)}.
$$
Here $T_\varphi$ is bounded and weakly$\mbox{}^\ast$ measurable, that is, $T_\varphi  \in \LL^{\infty}_{\omega}\bigl(\Omega, \Meas (\R^{m \times n})\bigr)$.
Furthermore, with the natural duality pairing, the Lebesgue--Pettis space $\LL^{\infty}_{\omega}\bigl(\Omega, \Meas (\R^{m \times n})\bigr)$ is the dual of
the Lebesgue--Bochner space $\LL^1\bigl(\Omega, \CC_0(\R^{m \times n})\bigr)$ and under this duality the $\WW^{1,p}$-gradient Young measures $\nu=(\nu_x)_{x\in \Omega}$
are precisely the weak$^\ast$-limits of the sequences $( T_{\varphi_j} )$, where $( \varphi_j )$ is a sequence converging weakly in $\WW^{1,p}(\Omega, \R^m)$.
In explicit terms, this means that for all ${\bf E}\in \CC_0( \R^{m\times n})$,
\begin{equation} \label{Young1}
  \;  {\bf E}( \D\varphi_j (x) ) \; \stackrel{w^*}{\longrightarrow} \; \; \langle \nu_x ,  {\bf E} \rangle \,  \equiv \int_{\R^{m \times n}} {\bf E}(A) \, \dd\nu_x(A)
  \qquad {\rm in } \; \;   \LL^\infty(\Omega).
\end{equation}
When  \eqref{Young1} holds, we say that the sequence $(D\varphi_j)$ of gradients \textit{generates} the Young measure $\nu$. 
Formalising this concept we have:
\begin{definition}\label{def:gYM}
Let $p\in [1,\infty]$. A parametrized family of probability measures $\nu=(\nu_x)_{x\in \Omega}$ on $\R^{m \times n}$ is a \textit{$\WW^{1,p}$-gradient Young measure} if  
there exists a weakly converging sequence $(\varphi_j)$ in $\WW^{1,p}(\Omega,\R^m)$ whose gradients generate $\nu$,  that is, \eqref{Young1} holds for all
${\bf E}\in \CC_0( \R^{m\times n})$.
\end{definition} 

\begin{remark} \label{spGYM}
We collect a few immediate consequences of the definition of $\WW^{1,p}$ gradient Young measure. First, as a limit of weak$^\ast$-measurable functions also the family
$( \nu_x )$ is weak$^*$-measurable. This means that for every ${\bf E}\in \CC_0( \R^{m\times n})$ the function
$$
\Omega \ni x \mapsto \int_{\R^{m\times n}} {\bf E}(A) \, \dd \nu_x(A)
$$
is Lebesgue measurable. This property ensures that standard constructions involving $( \nu_x )$ result in measurable functions.

Next, the fact that $( \nu_x )$ is generated by an $\LL^p$-bounded sequence implies that the measure satisfies the \textit{$p$-moment condition}: \, For  almost all $x \in \Omega$,
\begin{equation}\label{moments}
\int_{\Omega} \! \langle \nu_{x} , | \cdot |^{p} \rangle \, \dd x < \infty \quad \mbox{ when } p \in [1,\infty ),
\end{equation}
whereas when $p=\infty$ there exists $R>0$ such that the supports $\mathrm{supp}( \nu_{x}) \subset B_{R}(0)$ for $\mathscr{L}^n$ almost all $x \in \Omega$.
The moment condition in particular means that for $\mathscr{L}^n$ almost all $x \in \Omega$ the probability measure $\nu_x$ has a centre
of mass, and it is not difficult to see from \eqref{Young1} and Remark \ref{Young31} that
\begin{equation} \label{limit} 
\langle \nu_x, \Id \rangle \equiv \int_{\R^{m \times n}} \! A \, \dd\nu_x(A) = \D \phi (x) \qquad {\rm for} \; a. e. \; x \in \Omega ,
\end{equation}
where $\phi$ is the weak limit of the generating sequence $( \phi_{j})$.

Finally we note that since for any bounded open set $\Omega$ with $\mathscr{L}^{n}( \partial \Omega )=0$ the inclusion
$\WW^{1,s}(\Omega,\R^m) \subset  \WW^{1,p}(\Omega,\R^m)$ holds if $p \leq s$, every $\WW^{1,s}$-gradient Young measure is a $\WW^{1,p}$-gradient Young
measure whenever $p \leq s$.
\end{remark}
Any norm-bounded sequence of gradients admits a subsequence that generates a gradient Young measure \cite[Theorem 3.1]{Muller}: 

\begin{theorem}\label{thm:fundYM}
Let $p\in [1,\infty]$ and let $( \varphi_j  )$ be a bounded sequence in $\WW^{1,p}(\Omega,\R^m)$.
Then there is a subsequence, which we do not relabel, such that $(\D\varphi_j)$ generates a $\WW^{1,p}$-gradient Young measure $\nu=(\nu_x)_{x\in \Omega}$. 
\end{theorem}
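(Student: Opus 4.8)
The plan is to obtain this from the classical Fundamental Theorem on Young measures, applied to the sequence of gradients $v_j \equiv \D\varphi_j$, which by hypothesis is bounded in $\LL^p(\Omega,\R^{m\times n})$. The only features specific to the \emph{gradient} setting will be the bookkeeping with the duality $\LL^\infty_\omega(\Omega,\Meas(\R^{m\times n})) = \LL^1(\Omega,\CC_0(\R^{m\times n}))^\ast$ recalled above and the identification of the barycentre as in \eqref{limit}. I would split the argument into a compactness step, a step ensuring the limit measures are probability measures, and a step identifying their barycentres.

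\emph{Compactness.} To each $v_j$ I associate the function $T_j(x) \equiv \delta_{v_j(x)}$, which (as recorded in \S\ref{sec:prelims}) is weak$^\ast$-measurable with $\norm{T_j}=1$, so it lies in the closed unit ball of $\LL^\infty_\omega(\Omega,\Meas(\R^{m\times n}))$. Since $\Omega$ carries $\sigma$-finite Lebesgue measure and $\CC_0(\R^{m\times n})$ is separable, the predual $\LL^1(\Omega,\CC_0(\R^{m\times n}))$ is separable, so bounded sets of its dual are sequentially weak$^\ast$ compact. Passing to a subsequence, which I do not relabel, $T_j \weakstar T$, and I set $\nu_x \equiv T(x)$; then $(\nu_x)$ is weak$^\ast$-measurable by construction. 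Pairing the convergence with $g \otimes \mathbf E$ for $0 \le g \in \LL^1(\Omega)$ and $0 \le \mathbf E \in \CC_0(\R^{m\times n})$, and letting $\mathbf E$ range over a countable dense subset of $\CC_0$, gives $\nu_x \ge 0$ and $\norm{\nu_x}\le 1$ for a.e.\ $x$. Moreover, for every $\mathbf E \in \CC_0(\R^{m\times n})$ and $g \in \LL^1(\Omega)$ the weak$^\ast$ convergence reads $\int_\Omega g\, \mathbf E(\D\varphi_j)\,\dd x \to \int_\Omega g\, \langle \nu_x, \mathbf E\rangle\,\dd x$, and as $\mathbf E$ is bounded, $(\mathbf E(\D\varphi_j))$ is bounded in $\LL^\infty(\Omega)$; this is precisely \eqref{Young1}, i.e.\ $(\D\varphi_j)$ generates $\nu$.

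\emph{Probability measures and barycentre.} It remains to check $\norm{\nu_x}=1$ and that $\nu$ fits Definition~\ref{def:gYM}. For $p<\infty$, Chebyshev gives $\mathscr{L}^n(\{x:|v_j(x)|>R\}) \le R^{-p}\sup_j \norm{v_j}_{\LL^p(\Omega)}^p$, which tends to $0$ as $R\to\infty$ uniformly in $j$; choosing increasing cut-offs $\eta_R \in \CC_c(\R^{m\times n})$ with $\mathbf 1_{B_R(0)} \le \eta_R \le 1$, integrating $\langle \nu_x,\eta_R\rangle$ against $\mathbf 1_E$, passing to the limit first in $j$ and then in $R$ by monotone convergence, I obtain $\int_E \norm{\nu_x}\,\dd x \ge \mathscr{L}^n(E)$ for every measurable $E\subseteq\Omega$, hence $\norm{\nu_x}=1$ a.e. For $p=\infty$ the uniform bound $\norm{v_j}_{\LL^\infty}\le M$ instead forces $\mathrm{supp}\,\nu_x \subseteq \overline{B_M(0)}$ a.e.\ (test with a cut-off vanishing on $\overline{B_M(0)}$), which gives $\norm{\nu_x}=1$ and the $\infty$-moment clause of Remark~\ref{spGYM}. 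Finally, for $1<p\le\infty$ one more extraction makes $\varphi_j$ converge weakly, resp.\ weakly$^\ast$, in $\WW^{1,p}$ to some $\varphi$, by reflexivity of $\WW^{1,p}$ (resp.\ Arzel\`a--Ascoli together with weak$^\ast$ compactness in $\LL^\infty$ when $p=\infty$); then $\D\varphi_j \weak \D\varphi$ in $\LL^p$, and since the $p$-moment bound \eqref{moments} lets one pass the barycentre through the limit (test against $A\mapsto A\,\chi_R(|A|)$ and let $R\to\infty$), $\langle\nu_x,\Id\rangle = \D\varphi(x)$ a.e., which is \eqref{limit}. Thus $\nu$ is a $\WW^{1,p}$-gradient Young measure.

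\emph{Main obstacle.} The one genuinely substantive point is tightness: ruling out escape of the mass of $\delta_{v_j(x)}$ to infinity, so that each $\nu_x$ is a probability and not merely a sub-probability measure — the $\LL^p$-bound on the gradients is exactly what controls this. The remaining delicate case is the endpoint $p=1$, where a bounded sequence in $\WW^{1,1}$ need not be weakly precompact; to exhibit a \emph{weakly $\WW^{1,1}$-convergent} generating sequence, as Definition~\ref{def:gYM} demands, I would first replace $(\varphi_j)$ by a subsequence with equiintegrable gradients, obtained by modifying each $\varphi_j$ on a set of vanishing measure via a Lipschitz-truncation argument; this does not change the generated Young measure, after which the barycentre identity \eqref{limit} follows as before. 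With these two points in place the proof is complete.
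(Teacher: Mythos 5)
Your proposal is correct, but note that the paper does not prove this statement at all: Theorem \ref{thm:fundYM} is quoted directly from the literature (M\"uller, \cite[Theorem 3.1]{Muller}), so your self-contained argument is necessarily a different route --- namely the standard proof of the fundamental theorem of Young measures. Your three steps are the classical ones and they are sound: weak$^\ast$ sequential compactness of the unit ball of $\LL^\infty_\omega\bigl(\Omega,\Meas(\R^{m\times n})\bigr)$ via separability of the predual $\LL^1\bigl(\Omega,\CC_0(\R^{m\times n})\bigr)$; tightness of $(\delta_{\D\varphi_j(x)})$ from the $\LL^p$-bound via Chebyshev and nested cut-offs, which upgrades the sub-probability limits $\nu_x$ to probability measures; and identification of the barycentre by truncating $\Id$ and using the uniform smallness of $\int_{\{|\D\varphi_j|>R\}}|\D\varphi_j|$ for $p>1$ (with the $p=\infty$ case trivialized by the uniform support bound). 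The one point where your write-up is thinner than it should be is the endpoint $p=1$, which you correctly flag: Definition \ref{def:gYM} asks for a \emph{weakly converging} generating sequence in $\WW^{1,1}$, and your fix --- modify $\varphi_j$ on sets of vanishing measure by Lipschitz truncation so that the gradients become equiintegrable, observe that this does not change the generated Young measure, and then extract a weak limit --- is precisely the content of the Decomposition Lemma that the paper itself invokes around Theorem \ref{generates} (\cite{FMP,JK-1,JK0}); to be complete you would also need to say why the truncated \emph{functions}, not only their gradients, admit a weakly convergent subsequence in $\LL^1(\Omega)$ (e.g.\ by combining the truncation with local Rellich compactness and a diagonal argument, or by arranging $\LL^1$-equiintegrability of the functions in the truncation). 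With that detail either supplied or delegated to the cited Decomposition Lemma, your argument is a complete and correct proof, and for $p\in(1,\infty]$ it is already fully self-contained, which is more than the paper offers for this statement.
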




However, even if the original sequence $(\varphi_j)$ converges weakly in $\WW^{1,p}(\Omega,\R^m)$, the gradient Young measures its
subsequences generate need not be unique. On the other hand, since the center of mass or barycenter is the weak limit of \eqref{limit}, this is of course
the same for all Young measures generated by the subsequences of a given weakly converging sequence.
\smallskip

In view of \eqref{Young1} the gradient Young measures provide flexible methods to compute weak limits of nonlinear quantities.
Hence the notion is very useful also in our setting. For later purposes we  list here some of their basic and well-known properties.
For instance, given a Young measure one can often modify or improve the generating sequence.

\begin{theorem} \label{generates} \cite[Theorem 8.15 and Lemma 6.3]{Pe} Suppose $\nu=(\nu_x)_{x\in \Omega}$ is a $\WW^{1,p}$-gradient Young measure in $\Omega$
  and suppose $\langle \nu_{x},\mathrm{Id} \rangle = \D g(x)$ a.e.~in $\Omega$, where $g \in \WW^{1,p}( \R^n , \R^m )$.

(1)  Then there is a bounded sequence $(\varphi_j)$ in $g+\WW^{1,p}_{0}(\Omega,\R^m)$ whose gradients generate $\nu$ and for which 
  \begin{equation} \label{equi}
( | \D \varphi_j |^p ) \quad {\rm is \; equiintegrable \; on} \; \Omega.  
\end{equation}

(2)  If $(\varphi_j)$ is a bounded sequence in $\WW^{1,p}(\Omega,\R^m)$ that generates $\nu$, and $(\psi_j)$ is another sequence for which $\D \psi_j-\D\varphi_j\to 0$
in $\LL^p(\Omega)$, then also $(\D\psi_j)$ generates $\nu$.
\end{theorem}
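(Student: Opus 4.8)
The plan is to handle the two assertions separately, since (2) is soft whereas (1) requires real work. For (2), recall that a sequence of gradients generates $\nu$ precisely when $\mathbf{E}(\D\varphi_j)\weakstar\langle\nu_\cdot,\mathbf{E}\rangle$ in $\LL^\infty(\Omega)$ for every $\mathbf{E}\in\CC_0(\R^{m\times n})$. Since $\Omega$ has finite measure, $\D\psi_j-\D\varphi_j\to 0$ in $\LL^p(\Omega)$ forces convergence to $0$ in measure; as $\mathbf{E}$ is bounded and uniformly continuous, $\mathbf{E}(\D\psi_j)-\mathbf{E}(\D\varphi_j)\to 0$ in measure while remaining bounded in $\LL^\infty(\Omega)$, hence $\to 0$ in $\LL^1(\Omega)$ by Vitali's theorem. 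Testing against $\phi\in\LL^1(\Omega)$ and splitting $\phi$ into a bounded truncation plus a small $\LL^1$-tail then yields $\int_\Omega\phi\,[\mathbf{E}(\D\psi_j)-\mathbf{E}(\D\varphi_j)]\,dx\to 0$, so $(\D\psi_j)$ generates the same $\nu$; boundedness of $(\psi_j)$ in $\WW^{1,p}$ is automatic from $\D\psi_j=\D\varphi_j+o(1)$ in $\LL^p$.

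Part (1) has two independent ingredients. The first is \emph{equiintegrabilisation}: starting from any sequence $(u_j)$, bounded in $\WW^{1,p}(\Omega,\R^m)$, whose gradients generate $\nu$, produce a new generating sequence for $\nu$ with $(|\D u_j|^p)$ equiintegrable. I would do this by Lipschitz (Calder\'on--Zygmund / maximal function) truncation: at threshold $\lambda$, replace $u_j$ by a $C\lambda$-Lipschitz function agreeing with $u_j$ on $\{M(|\D u_j|)\le\lambda\}$, where $M$ is the Hardy--Littlewood maximal operator; then $|\{u_j\text{ is changed}\}|\lesssim\lambda^{-p}\|\D u_j\|_{\LL^p}^p$, and the truncated gradients admit uniform $\LL^p$-tail estimates giving equiintegrability once $\lambda=\lambda_j\to\infty$ grows slowly enough that the change set has measure $\to 0$. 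Since each $\mathbf{E}\in\CC_0$ is bounded, altering the gradients on a set of vanishing measure does not change the generated Young measure (same truncation-and-split argument as in part (2), now with the indicator of the change set). From here on we may thus assume $(|\D u_j|^p)$ equiintegrable.

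The second ingredient is the \emph{boundary-value correction}. Passing to a subsequence (which does not alter the generated measure) we may assume $u_j\weak u$ in $\WW^{1,p}$; then $\D u=\langle\nu_\cdot,\Id\rangle=\D g$ a.e., so, $\Omega$ being connected, after subtracting a constant we may take $u=g$, and $u_j\to g$ in $\LL^p(\Omega)$ by Rellich--Kondrachov. Fix open sets $\Omega_k\Subset\Omega$ with $|\Omega\setminus\Omega_k|\to 0$ and cutoffs $\theta_k\in\CC^\infty_c(\Omega)$, $0\le\theta_k\le1$, $\theta_k\equiv1$ on $\Omega_k$, and set $\varphi_{j,k}\equiv g+\theta_k(u_j-g)\in g+\WW^{1,p}_0(\Omega,\R^m)$. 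Then $\D\varphi_{j,k}=\D u_j$ on $\Omega_k$, so $(\D\varphi_{j,k})_j$ generates $\nu|_{\Omega_k}$ there, while everywhere $|\D\varphi_{j,k}|^p\le C(|\D g|^p+|\D u_j|^p+|(u_j-g)\otimes\nabla\theta_k|^p)$, the last term tending to $0$ in $\LL^1(\Omega)$ as $j\to\infty$ for fixed $k$. For $\phi\in\LL^1(\Omega)$ the contributions of $\Omega\setminus\Omega_k$ to both $\int_\Omega\phi\,\mathbf{E}(\D\varphi_{j,k})$ and $\int_\Omega\phi\,\langle\nu_\cdot,\mathbf{E}\rangle$ are bounded by $\|\mathbf{E}\|_\infty\int_{\Omega\setminus\Omega_k}|\phi|$, which is small uniformly in $j$ once $k$ is large. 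A standard diagonal extraction against a countable dense family of pairs $(\mathbf{E},\phi)\in\CC_0\times\LL^1$, simultaneously requiring $\|(u_j-g)\otimes\nabla\theta_{k(j)}\|_{\LL^p}\to 0$, then produces $k(j)\to\infty$ such that $\varphi_j\equiv\varphi_{j,k(j)}$ generates $\nu$; the bound $|\D\varphi_j|^p\le C(|\D g|^p+|\D u_j|^p+|(u_j-g)\otimes\nabla\theta_{k(j)}|^p)$ exhibits $(|\D\varphi_j|^p)$ as a sum of equiintegrable families, and $(\varphi_j)$ is bounded in $\WW^{1,p}$ by construction.

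The main obstacle is the equiintegrabilisation step: the Lipschitz-truncation estimates — that the truncated gradients form an equiintegrable family and differ from the originals only on an arbitrarily small set, without disturbing the Young measure — are the only genuinely hard input, the borderline exponents ($p=1$ via the weak-type bound, $p=\infty$ being immediate) requiring separate but standard care. The boundary-value correction is comparatively routine; its only delicate point is the bookkeeping in the diagonal argument, so that the passage $k\mapsto k(j)$ preserves generation of $\nu$ and $p$-equiintegrability at the same time.
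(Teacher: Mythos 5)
Your argument is essentially the standard proof of this result, which the paper itself does not prove: it is quoted from Pedregal's monograph \cite[Theorem 8.15, Lemma 6.3]{Pe}, with the equiintegrability part being exactly the Decomposition Lemma of \cite{FMP,JK-1,JK0} that the paper points to right after the statement. Your treatment of (2) (uniform continuity of $\mathbf{E}\in\CC_0$, convergence in measure, and splitting the $\LL^1$ test function) is correct and is the same soft perturbation argument used there, and your two-step scheme for (1) --- equiintegrabilisation by Lipschitz/maximal-function truncation followed by the cut-off correction $\varphi_{j,k}=g+\theta_k(u_j-g)$ with a diagonal extraction over a countable dense family of pairs $(\mathbf{E},\phi)$ --- is precisely the route taken in the cited references. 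So in substance you have reconstructed the proof the paper outsources.

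Two caveats. First, the one place where your sketch is too glib is the assertion that truncating at levels $\lambda_j\to\infty$ ``growing slowly enough'' automatically makes $(|\D u_j|^p)$ equiintegrable: this is the genuinely delicate content of the Decomposition Lemma, whose proofs select the truncation levels via the biting lemma (equivalently, via the Young measure generated by $(|\D u_j|^p)$ or $(M(|\D u_j|)^p)$) and pass to a subsequence; the weak-type maximal estimate alone does not hand you uniform $\LL^p$-tail control of the truncated gradients for the full sequence. Passing to a subsequence is harmless for the theorem, since any subsequence of a generating sequence still generates the same $\nu$, but you should say so rather than suggest the full sequence can always be retained; also $p=1$ (where $M$ is only weak-$(1,1)$) and $p=\infty$ need the separate treatment you allude to. Second, a minor point in the boundary correction: $\Omega$ is only a bounded domain with $\mathscr{L}^n(\partial\Omega)=0$, so the global compact embedding $\WW^{1,p}(\Omega)\hookrightarrow\LL^p(\Omega)$ is not available in general; you should invoke Rellich locally, which suffices because $\nabla\theta_k$ is supported in a compactly contained subset of $\Omega$, and this is all you need for $\|(u_j-g)\otimes\nabla\theta_k\|_{\LL^p}\to 0$. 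With these adjustments the argument is complete and matches the cited proofs.
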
 
\medskip

The fact that the generating sequence  can be chosen so that $(|\D\varphi_j|^p)$ is equiintegrable will be important for us and is
related to the so-called Decomposition Lemma \cite{FMP,JK-1,JK0}.

\begin{remark} \label{Young31}
On the other hand,  the limit \eqref{Young1} exists even for a general continuous  ${\bf E}\in \CC(\R^{m\times n})$ which does
not vanish at $\infty$, but where, for the generating sequence, $( {\bf E}(\D \varphi_j) )$ is equiintegrable, see \cite[Theorem 6.2]{Pe}.
\end{remark} 

The literature on Gradient Young measures is by now quite extensive and for further properties 
we refer to the monographs \cite{KrRa, Muller, Pe, Rindler}.

\begin{remark} \label{rmk:moreintegrands}  It is not difficult to extend the convergence \eqref{Young1} also 
to  lower semicontinuous functionals ${\bf E}\colon \R^{m\times n}\to\R\cup\{+\infty\}$ which are continuous on $\tp{dom}({\bf E})$,
provided that $\left({\bf E}(\D \varphi_j)\right)$ is equiintegrable.
This can be proved by approximating ${\bf E}$ with the sequence of truncations ${\bf E}_k\equiv \min\{{\bf E},k\}$ and applying the standard
lower semicontinuity result
\begin{equation}
\label{eq:lscYM}
\liminf_{j\to \infty} \int_\Omega \eta(x) {\bf E}_k(\D \varphi_j)\, \dd x\geq \int_\Omega \eta(x) \int_{\R^{m\times n}} {\bf E}_k(A) \,\dd \nu_x(A) \, \dd x,
\end{equation}
where $\eta\in \LL^\infty(\Omega)$ is arbitrary, cf.\ \cite[Corollary 3.3]{Muller}; the opposite direction simply follows from the equi-integrability assumption. 
We note that \eqref{eq:lscYM} can also be proved by approximating lower semicontinuous functionals with continuous ones, using the Scorza--Dragoni theorem.
\end{remark}
In this paper, only  in Section \ref{sec:swlsc} we really use  gradient Young measures in the full generality of Definition \ref{def:gYM}. Instead, most of the time we will
be content to work with homogeneous gradient Young measures:

\begin{definition}
A $\WW^{1,p}$-gradient Young measure $(\nu_x)_{x\in \Omega}$ is \textit{homogeneous} if there is a probability measure $\nu$ on $\R^{m\times n}$
such that $\nu_x=\nu$ for a.e.\ $x\in \Omega$. In this case we naturally identify  $(\nu_x)_{x\in \Omega}$ with $\nu$.

We often denote by $\Meas^p_\tp{qc}$ the class of homogeneous $\WW^{1,p}$-gradient Young measures;  the notation is motivated
by Theorem \ref{thm:KP} below. For a subset $\mathcal{U}\subset\R^{m\times n}$ we write $\Meas^{p}_{\qc}( \mathcal{U})$ for the set of measures in $\Meas^{p}_{\qc}$
whose support is contained in $\mathcal{U}$. 
\end{definition}

\begin{remark}\label{graddist}
There are many natural ways to construct homogeneous  $\WW^{1,p}$-gradient Young measure. For instance, given $A \in \R^{m \times n}$ and $\phi \in \WW^{1,p}_{0}( \Omega , \R^m )$ we associate to them the measure  $\nu_{A+\D\phi}$ defined by the rule
\begin{equation}\label{exhgym}
\nu_{A+\D\phi}(\mathcal{S}) \equiv \frac{\mathscr{L}^{n}\bigl( \{ x \in \Omega : \, A+\D \phi (x) \in \mathcal{S} \} \bigr)}{\mathscr{L}^{n}( \Omega )}, 
\end{equation}
where $\mathcal{S} \subset \R^{m \times n}$ is a Borel set.
By inspection, $\nu_{A+\D\phi}$ is a Borel probability measure with a finite $p$-th moment $\langle \nu_{A+\D\phi} , | \cdot |^{p} \rangle < +\infty$ and centre of mass
$\langle \nu_{A+\D\phi},\mathrm{Id} \rangle =A$. In particular, the measure
$\nu_{A+\D\phi}$ describes the distribution of values of $A+\D \phi (x)$ in $\R^{m \times n}$ when $x$ varies over $\Omega$ and we use the normalized volume $\mathscr{L}^n$ as weight.

To represent $\nu_{A+\D\phi}$  as a homogeneous Young measure, since $\Omega$ is a bounded domain with $\mathscr{L}^{n}( \partial \Omega )=0$, we can realize this distribution on any other open bounded subset of $\R^n$, and for later purposes we choose  a realization 
on the open unit cube, $\mathbf{Q} \equiv \bigl( -\tfrac{1}{2},\tfrac{1}{2} \bigr)^n$.
Indeed, a standard exhaustion argument allows us to write $\mathbf{Q}$ as a disjoint union of scaled and translated copies of $\Omega$:
$$
\mathbf{Q} = N \cup \bigcup_{s \in \N} \bigl( x_{s}+r_{s}\Omega \bigr) \quad \bigl(\mbox{disjoint union!} \bigr)
$$
where $\mathscr{L}^{n}(N)=0$. Next we import $\phi$ on $\mathbf{Q}$ by the definition
$$
\varphi (x) \equiv \left\{
\begin{array}{ll}
  r_{s}\phi \bigl( \frac{x-x_{s}}{r_{s}} \bigr) & \mbox{ if } x \in x_{s}+r_{s}\Omega , \, s \in \N\\
  0 & \mbox{ if } x \in N.
\end{array}
\right.
$$
It is routine to check that hereby $\varphi \in \WW^{1,p}_{0}( \mathbf{Q}, \R^m )$ and that $\nu_{A+\D\varphi} = \nu_{A+\D\phi}$, if $\nu_{A+\D\varphi}$ is defined
as in \eqref{exhgym} with the obvious modifications. It is now easy to check that $\nu_{A+\D\phi}$ is a homogeneous $\WW^{1,p}$-gradient Young measure with centre of
mass at $A$: Namely, extend $\varphi$ to $\R^n$ by $\mathbf{Q}$-periodicity and define $u_{j}(x) \equiv Ax+\varphi (jx)/j$, $x \in \Omega$.
Using the Riemann-Lebesgue lemma it follows that $u_{j} \weak A$ in $\WW^{1,p}( \Omega , \R^m )$ and that $( \D u_j )$ generates the Young
measure $( \nu_x )_{x \in \Omega}$, where $\nu_x = \nu_{A+\D\phi}$ for all $x \in \Omega$. 
\end{remark}

The examples provided in the next Subsection \ref{closedqc} show that $\Meas^{p}_{\qc}$, the set of homogeneous measures, contains probability measures that cannot be represented
as $\nu_{A+\D\phi}$ for any $A$, $\phi$. On the other hand, using a variant of the above construction and a diagonalization argument it is not too difficult to see that $\Meas^{p}_{\qc}$
can be defined as a suitable closure of the set $\bigl\{ \nu_{A+\D\phi} : \, \phi \in \WW^{1,p}_{0}( \Omega , \R^m ), \, A \in \R^{m \times n} \bigr\}$. However, we will not need this in the sequel.


The usefulness of homogeneous gradient Young measures stems from the fact that a \textit{general} gradient Young measure is essentially
a collection of homogeneous gradient measures:

\begin{proposition}\textrm{(The Localization Principle \cite{KP1})}\label{prop:homogenization}
Given a $\WW^{1,p}$-gradient Young measure $\nu=(\nu_x)_{x\in \Omega}$, we have $\nu_x\in \mathscr M^p_\tp{qc}$ for a.e.\ $x\in \Omega$.
\end{proposition}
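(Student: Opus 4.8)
The plan is to deduce the Localization Principle for extended-real-valued gradient Young measures from the structure of a general $\WW^{1,p}$-gradient Young measure $\nu=(\nu_x)_{x\in\Omega}$ via a Lebesgue-point/blow-up argument, exactly as in the classical case treated in \cite{KP1,Muller}. First I would fix a representative of the weak$^*$-measurable map $x\mapsto \nu_x$ and recall that, by separability of $\CC_0(\R^{m\times n})$, there is a countable dense family $\{{\bf E}_k\}\subset\CC_0(\R^{m\times n})$ such that for a.e.\ $x_0\in\Omega$ the point $x_0$ is a Lebesgue point of all the functions $x\mapsto\langle\nu_x,{\bf E}_k\rangle$ simultaneously, and also of $x\mapsto\langle\nu_x,|\cdot|^p\rangle$ (which is in $\LL^1$ by the moment condition \eqref{moments}); moreover, by the Besicovitch differentiation theorem applied to the generating sequence, $x_0$ can be taken so that the oscillation of the generating gradients around $x_0$ is controlled at fine scales. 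I claim that at every such $x_0$ the fixed measure $\nu_{x_0}$ lies in $\Meas^p_\tp{qc}$.

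The key step is the blow-up construction: starting from a sequence $(\varphi_j)$ bounded in $\WW^{1,p}(\Omega,\R^m)$ with $\D\varphi_j$ generating $\nu$, and using Theorem \ref{generates}(1) to arrange that $(|\D\varphi_j|^p)$ is equiintegrable, I would rescale around $x_0$ by setting, for a small cube $x_0+rQ\subset\Omega$, the functions $\varphi_j^{r}(y)\equiv r^{-1}\bigl(\varphi_j(x_0+ry)-\varphi_j(x_0)\bigr)$ on $Q$. A diagonal extraction in $r\to0$ and $j\to\infty$, combined with the Lebesgue-point property, produces a sequence whose gradients generate the \emph{homogeneous} Young measure equal to $\nu_{x_0}$ a.e.\ on $Q$: testing against each ${\bf E}_k$, the spatial averages $\fint_{x_0+rQ}\langle\nu_x,{\bf E}_k\rangle\,\dd x$ converge to $\langle\nu_{x_0},{\bf E}_k\rangle$ as $r\to0$, and density of $\{{\bf E}_k\}$ upgrades this to all of $\CC_0$. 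The equiintegrability of $(|\D\varphi_j|^p)$ is what guarantees that the rescaled sequence remains $\WW^{1,p}$-bounded (no loss of mass at the blow-up scale) and that the $p$-moment is preserved, so the limiting object is genuinely a $\WW^{1,p}$-gradient Young measure and not merely a measure-valued limit. Finally, applying Theorem \ref{thm:fundYM} to the bounded rescaled sequence yields a subsequence generating some homogeneous $\mu\in\Meas^p_\tp{qc}$, and the testing argument identifies $\mu=\nu_{x_0}$.

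The main obstacle I expect is the careful bookkeeping in the double limit: one must choose, for each $r$ in a sequence $r_i\to0$, an index $j(i)$ large enough that $\D\varphi^{r_i}_{j(i)}$ is within $2^{-i}$ (in the relevant weak-$^*$/metric sense on a suitable compact set of test functions) of the spatial average $\fint_{x_0+r_iQ}\nu_x\,\dd x$, while simultaneously keeping the $\LL^p$ norms bounded and the boundary values under control — the latter not strictly needed for membership in $\Meas^p_\tp{qc}$ but convenient. One also needs equiintegrability to survive rescaling, which it does precisely because equiintegrability is a statement about uniform smallness of integrals over small-measure sets, a scale-invariant notion after normalising by $\mathscr L^n$; this is exactly the point of invoking Theorem \ref{generates}(1) at the outset. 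Since all of these are standard ingredients and the statement is quoted from \cite{KP1}, I would present the argument concisely, emphasising the Lebesgue-point selection and the rescaling, and refer to \cite[Theorem 3.1]{Muller} or \cite{KP1} for the routine details of the diagonalisation.
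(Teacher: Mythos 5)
Your blow-up argument is correct and is exactly the approach the paper relies on: the paper does not prove the proposition but cites \cite{KP1}, and Remark \ref{rmk:homogenization} sketches precisely your construction, generating $\nu_{x_0}$ by a diagonal subsequence of the rescaled maps $\psi_{j,\lambda}(x)=\lambda^{-1}(\varphi_j(x_0+\lambda x)-\varphi(x_0))$ at Lebesgue points, with Theorem \ref{generates}(1) supplying the equiintegrability that keeps the rescaled sequence $\WW^{1,p}$-bounded. The only detail worth spelling out when you write it up is that identifying the blow-up limit as the \emph{homogeneous} measure $\nu_{x_0}$ requires testing against products $\eta(y)\,{\bf E}_k(A)$ with spatial cutoffs $\eta$, which the same Lebesgue-point property handles.
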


\begin{remark}\label{rmk:homogenization}
In fact,  if $\nu$ is generated by a sequence $(\varphi_j)$ then, for a.e.\ $x_0$, the measure $\nu_{x_0}$ is generated by a diagonal subsequence
of the sequence $\psi_{j,\lambda}(x) \equiv \lambda^{-1} (\varphi_j(x_0+\lambda x) - \varphi(x_0))$,  as $j\to \infty$ and $\lambda\to 0$. 
Note that
$$
\D \psi_{j, \lambda}(x) = \D\varphi_j(x_0+\lambda x).
$$
Since $\lambda\to 0$, we can assume that the maps $\psi_{j,\lambda}$ are defined on any bounded open set $\Omega$ for which $\mathscr L^n(\p \Omega)=0$.  

\end{remark}
Also, the basic invariance properties of $\mathscr M^p_\tp{qc}$ follow quickly: For  $t>0$, let
$$
\langle \nu_t,{\bf E}\rangle \equiv  \langle \nu, {\bf E}(t\cdot)\rangle = \int_{\R^{n\times n}} \! {\bf E}(t A) \, \dd \nu(A),
$$
and similarly, if $m=n$ and $Q,R\in \tp{SO}(n)$, we define
$$
\langle \nu_{Q,R},{\bf E}\rangle \equiv \int_{\R^{n\times n}} \! {\bf E}(Q A R) \, \dd \nu(A).
$$
\begin{lemma}\label{lemma:invYM}
Fix $p\in [1,\infty]$.
\begin{enumerate}
\item \label{it:YMhomogeneous} For any $t>0$, the map $\nu\mapsto \nu_t$, maps $\mathscr M^p_\tp{qc}$ bijectively onto itself.
\item\label{it:YMisotropic} Similarly, if $m=n$ and $Q,R\in \tp{SO}(n)$, the map 
$\nu\mapsto \nu_{Q,R},$ 
maps $\mathscr M^p_\tp{qc}$ bijectively onto itself.
\end{enumerate}
\end{lemma}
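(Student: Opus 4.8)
The plan is to verify both invariance properties by producing, from a generating sequence for $\nu$, a new generating sequence whose associated Young measure is $\nu_t$ (respectively $\nu_{Q,R}$), and then to observe that the construction is invertible. The key point throughout is that both operations — scaling the matrix argument by $t>0$ and multiplying on the left/right by fixed rotations — can be realized at the level of maps by composition with affine maps, and that such compositions preserve the class of weakly convergent sequences in $\WW^{1,p}$.

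For part \eqref{it:YMhomogeneous}, suppose $\nu = \nu_{x}$ (constant in $x$) is generated on the cube $\mathbf{Q}$ by a bounded sequence $(\varphi_j)$ in $\WW^{1,p}(\mathbf{Q},\R^n)$, so $\D\varphi_j$ generates $\nu$ in the sense of \eqref{Young1}. Fix $t>0$ and set $\psi_j(x) \equiv t\,\varphi_j(x)$. Then $\D\psi_j(x) = t\,\D\varphi_j(x)$, and for any ${\bf E}\in \CC_0(\R^{n\times n})$ the function $A\mapsto {\bf E}(tA)$ is again in $\CC_0(\R^{n\times n})$, so
\begin{equation*}
{\bf E}(\D\psi_j(x)) = {\bf E}(t\,\D\varphi_j(x)) \;\stackrel{w^*}{\longrightarrow}\; \langle \nu, {\bf E}(t\cdot)\rangle = \langle \nu_t, {\bf E}\rangle \quad \text{in } \LL^\infty(\mathbf{Q}).
\end{equation*}
Moreover $(\psi_j)$ is bounded in $\WW^{1,p}(\mathbf{Q},\R^n)$. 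Hence $\nu_t$ is a homogeneous $\WW^{1,p}$-gradient Young measure, i.e.\ $\nu_t \in \mathscr M^p_\tp{qc}$. The inverse of $\nu\mapsto\nu_t$ is $\nu\mapsto \nu_{1/t}$, which is of the same type, so the map is a bijection of $\mathscr M^p_\tp{qc}$ onto itself.

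For part \eqref{it:YMisotropic}, with $m=n$ and $Q,R\in \tp{SO}(n)$ fixed, start again from a generating sequence $(\varphi_j)$ for $\nu$ on $\mathbf{Q}$ and define $\psi_j(x) \equiv Q\,\varphi_j(R^{-1}x)$, where we note $R^{-1}\mathbf{Q} = \mathbf{Q}$ since $R$ is a rotation and $\mathbf{Q}$ is the origin-centred cube — actually the cube is not rotation-invariant, so more carefully one works on a ball $\B$ or uses Remark \ref{graddist} to realize $\nu$ on any bounded open set of full measure, in particular on $R(\mathbf{Q})$; for definiteness realize $\nu$ on the unit ball $\B$, which is rotation-invariant. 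Then $\D\psi_j(x) = Q\,\D\varphi_j(R^{-1}x)\,R^{-1}$, so with the change of variables $y = R^{-1}x$ (which preserves Lebesgue measure on $\B$) one gets, for any ${\bf E}\in \CC_0(\R^{n\times n})$ and any $\eta\in\LL^1(\B)$,
\begin{equation*}
\int_\B \eta(x)\,{\bf E}(\D\psi_j(x))\,\dd x = \int_\B \eta(Ry)\,{\bf E}\bigl(Q\,\D\varphi_j(y)\,R^{-1}\bigr)\,\dd y \longrightarrow \int_\B \eta(Ry)\,\langle \nu, {\bf E}(Q\cdot R^{-1})\rangle\,\dd y,
\end{equation*}
using that $A\mapsto {\bf E}(QAR^{-1})$ lies in $\CC_0(\R^{n\times n})$ and that $\D\varphi_j$ generates the homogeneous measure $\nu$. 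Since $R^{-1}\in\tp{SO}(n)$ this identifies the generated measure as the homogeneous measure $A\mapsto \langle\nu,{\bf E}(QAR^{-1})\rangle$; replacing $R$ by $R^{-1}$ in the statement (or simply noting that $R$ ranges over all of $\tp{SO}(n)$) gives $\nu_{Q,R}\in\mathscr M^p_\tp{qc}$. Again $(\psi_j)$ is $\WW^{1,p}$-bounded, and the inverse map is $\nu\mapsto \nu_{Q^{-1},R^{-1}}$, of the same form, so $\nu\mapsto\nu_{Q,R}$ is a bijection of $\mathscr M^p_\tp{qc}$ onto itself.

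The only genuine subtlety — and the one place to be careful — is the domain bookkeeping in part \eqref{it:YMisotropic}: composing with a rotation maps $\mathbf{Q}$ to a rotated cube, so one should either work from the outset on a rotation-invariant domain such as a ball, or invoke the freedom, guaranteed by Remark \ref{graddist} and Remark \ref{rmk:moreintegrands}, to realize a homogeneous gradient Young measure on an arbitrary bounded open set of full measure. Everything else is a routine change of variables together with the elementary observation that $\CC_0(\R^{n\times n})$ is invariant under the affine reparametrizations $A\mapsto tA$ and $A\mapsto QAR$.
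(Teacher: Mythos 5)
Your proof is correct and takes essentially the same route as the paper: transform a generating sequence by an elementary operation so that its gradients generate $\nu_t$, respectively $\nu_{Q,R}$, and deduce bijectivity from $(\nu_t)_{t^{-1}}=\nu$ and $(\nu_{Q,R})_{Q^{-1},R^{-1}}=\nu$. The only differences are cosmetic: in (1) you scale the values ($t\varphi_j$) while the paper rescales the domain variable ($\varphi_j(t\cdot)$), and in (2) the paper simply takes $\eta_j\equiv Q\varphi_j(R\cdot)$, so $\D\eta_j=Q\,\D\varphi_j(R\cdot)\,R$ and no $R$ versus $R^{-1}$ relabeling is needed; your domain bookkeeping is fine but not essential, since a homogeneous gradient Young measure can be realized on any bounded open set with negligible boundary.
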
 
\begin{proof}
Let $(\varphi_j)$ be a bounded sequence in $\WW^{1,p}( \Omega ,\R^m )$ such that $( \D\varphi_j )$ generates $\nu$.
For part (\ref{it:YMhomogeneous}) we consider a new sequence $\psi_j\equiv  \varphi_j(t \cdot)$ and for part (\ref{it:YMisotropic}) we take $\eta_j \equiv Q \varphi_j(R\cdot)$, so 
$$
\D \psi_j = t \D \varphi_j(t \cdot), \qquad \D \eta_j = Q \D \varphi_j(R\cdot) R.
$$
It is easy to verify that the sequences 
$(\D \psi_j)$ and $(\D \eta_j)$ generate the measures $\nu_t$ and $\nu_{Q,R}$, respectively.  Since $(\nu_t)_{t^{-1}}= \nu$ and $(\nu_{Q,R})_{Q^{-1},R^{-1}}= \nu$, the claim follows.
\end{proof}  
Homogeneous gradient Young measures admit another characterization which is often more convenient to work with \cite{KP,KP1}.

\begin{theorem}
\label{thm:KP} Fix $p\in [1,\infty)$ and let $\nu$ be a Borel probability measure on $\R^{m\times n}$. Here and throughout we denote its center of mass by
$$
\left\langle \nu, \mathrm{Id} \right\rangle \equiv \int_{\R^{m\times n}} A \,\dd \nu(A),
$$
so that in particular $\langle \nu, \mathrm{Id} \rangle \in \, \R^{m\times n}$. 

Then we have $\nu\in \Meas^p_\tp{qc}$ if and only if $\nu$ satisfies the following two conditions:
\begin{enumerate}
\item\label{it:jensen} For all quasiconvex ${\bf E} \colon \R^{m\times n} \to \R$ with 
$\sup_{A\in \R^{m\times n}} \frac{|{\bf E}(A)|}{1+|A|^p} < \infty$, (no condition if $p=\infty$), the Jensen inequality
$$
  {\bf E}( \langle \nu, \mathrm{Id} \rangle ) \, \leq \; \int_{\R^{m\times n}} \! {\bf E}(A) \, \dd \nu(A) 
$$
holds;  
\item \label{it:pmoment} $\nu$ has finite $p$-th moment, in the sense \eqref{moments}. 
\end{enumerate}
\end{theorem}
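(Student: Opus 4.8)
The plan is to prove Theorem~\ref{thm:KP}, the Kinderlehrer--Pedregal characterization of homogeneous $\WW^{1,p}$-gradient Young measures, by establishing the two implications separately. For the necessity direction, suppose $\nu \in \Meas^p_\tp{qc}$, so there is a bounded sequence $(\varphi_j)$ in $\WW^{1,p}(\Omega,\R^m)$ with $\D\varphi_j$ generating the (constant-in-$x$) measure $\nu$. Condition \eqref{it:pmoment} is then immediate from the $p$-moment property \eqref{moments} recorded in Remark~\ref{spGYM}. For \eqref{it:jensen}, I would first use Theorem~\ref{generates}(1) to replace $(\varphi_j)$ by a generating sequence of the form $Ax + \psi_j$ with $\psi_j \in \WW^{1,p}_0(\Omega,\R^m)$, where $A = \langle\nu,\Id\rangle$, and with $(|\D\varphi_j|^p)$ equiintegrable. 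Given a quasiconvex ${\bf E}$ with $p$-growth, the equiintegrability of $(|\D\varphi_j|^p)$ forces equiintegrability of $({\bf E}(\D\varphi_j))$, so by Remark~\ref{Young31} the convergence \eqref{Young1} extends to this ${\bf E}$, giving $\fint_\Omega {\bf E}(\D\varphi_j)\,\dd x \to \langle\nu,{\bf E}\rangle$. On the other hand, quasiconvexity of ${\bf E}$ (in Morrey's classical sense, valid here since ${\bf E}$ is finite-valued with $p$-growth) together with ${\bf E}(A) \le \fint_\Omega {\bf E}(A+\D\psi_j)\,\dd x$ for each $j$ yields ${\bf E}(A) \le \langle\nu,{\bf E}\rangle$ in the limit, which is precisely the Jensen inequality.

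For the sufficiency direction I would invoke the construction already set up in Remark~\ref{graddist}: the set of measures $\nu_{A+\D\phi}$ with $\phi \in \WW^{1,p}_0$ all lie in $\Meas^p_\tp{qc}$. The strategy is then to show that any $\nu$ satisfying \eqref{it:jensen} and \eqref{it:pmoment} can be approximated, in an appropriate sense that is stable under the Young measure limit, by such ``elementary'' measures. Concretely, one truncates $\nu$ to compact supports $\nu^{(R)} \equiv \nu\llcorner B_R / \nu(B_R)$ (adjusting the barycenter by a small linear perturbation so it stays at $A$), and for each such compactly supported measure satisfying the relevant finite-dimensional Jensen inequalities one constructs a sequence $\phi_k \in \WW^{1,p}_0$ with $\nu_{A+\D\phi_k} \weakstar \nu^{(R)}$; this is the heart of the Kinderlehrer--Pedregal argument and uses a Hahn--Banach separation together with the characterization of laminates/quasiconvex hulls. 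Finally a diagonal argument over $R\to\infty$ and $k\to\infty$, combined with the closure property of $\Meas^p_\tp{qc}$ under weak$^*$ limits of Young measures (a metrizability and diagonalization argument on bounded pieces, as indicated in the paragraph following Remark~\ref{graddist}), produces a generating sequence for $\nu$ itself; the $p$-moment condition \eqref{it:pmoment} is what guarantees the truncation errors vanish and no mass escapes to infinity in the limit.

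The main obstacle is the sufficiency direction, specifically the step of realizing a compactly supported probability measure satisfying all Jensen inequalities against quasiconvex integrands as a weak$^*$ limit of measures $\nu_{A+\D\phi_k}$. This requires a functional-analytic duality argument (identifying the closed convex hull of the $\nu_{A+\D\phi}$'s, with respect to the duality against continuous $p$-growth integrands, as exactly the set cut out by the Jensen inequalities) and is the genuinely nontrivial content of the Kinderlehrer--Pedregal theorem. The handling of non-compact support and the $p$-moment bookkeeping is technically delicate but routine once the compact case is in hand. Since this is a recollection of a known result, I would in the actual write-up cite \cite{KP,KP1} for the hard implication and only spell out the necessity direction in detail, as that is the form in which the result will be applied in the sequel.
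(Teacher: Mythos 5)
Your treatment matches the paper's: Theorem~\ref{thm:KP} is not proved in the paper at all, but simply quoted from Kinderlehrer--Pedregal \cite{KP,KP1}, and your plan — spell out the necessity direction and cite \cite{KP,KP1} for the sufficiency — is therefore entirely consistent with how the result is used. Your necessity argument is correct as written: Theorem~\ref{generates}(1) gives a generating sequence with affine boundary values $A x+\psi_j$ and $(|\D\varphi_j|^p)$ equiintegrable, the $p$-growth of ${\bf E}$ then makes $({\bf E}(\D\varphi_j))$ equiintegrable so that Remark~\ref{Young31} applies, and Morrey quasiconvexity (which for real-valued integrands with $p$-growth is indeed testable on $\WW^{1,p}_0$ perturbations) passes to the limit to give the Jensen inequality; the moment condition is Remark~\ref{spGYM}.

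One caveat on the part you defer: the truncation scheme you sketch for the sufficiency direction would not work as stated, because restricting $\nu$ to a ball and renormalizing does not preserve the Jensen inequalities against quasiconvex integrands (these are not stable under conditioning), so the compactly supported case cannot be reduced to in that way. The actual Kinderlehrer--Pedregal argument runs the Hahn--Banach separation directly in the duality with continuous integrands of $p$-growth, with the finite $p$-th moment controlling the non-compactness. Since you explicitly cite \cite{KP,KP1} for this implication rather than relying on the truncation sketch, this does not create a gap in what you actually commit to, but the sketch should not be presented as a viable alternative proof.
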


\noindent In this connection we also recall that
\begin{equation*}\label{momentcondition}
  \Meas_{\qc}^{p} = \Meas^{1}_{\qc} \cap \bigl\{ \nu : \, \nu \mbox{ has a finite $p$-th moment } \bigr\}
\end{equation*}  
for $p \in (1,\infty ]$, see \cite[Corollary 1.8]{JK0} for $p<\infty$ and \cite{KZ} for $p=\infty$. 
\medskip

\begin{remark} \label{affine}
Note also that if $p\in (1,\infty)$, $\nu \in \Meas^p_\tp{qc}(\R^{m\times n})$ and  $\varphi$ is the weak limit of a sequence  $(\varphi_j)$ generating $\nu$,
then by \eqref{limit} 
$$
\D\varphi(x) \equiv \langle \nu, \mathrm{Id} \rangle
$$
and therefore $\varphi$ is affine.
\end{remark}
As a last remark, Theorem \ref{thm:KP} also motivates the definition of the class of measures which satisfy  Jensen's inequality with respect
to rank-one convex instead of quasiconvex functionals.

\begin{definition}
For $p\in [1,\infty]$, $\Meas^p_\tp{rc}$ is the set of those Borel probability measures $\nu$ in $\R^{m\times n}$ such that conditions (\ref{it:jensen}) and
(\ref{it:pmoment}) of Theorem \ref{thm:KP} hold, with the word \textit{quasiconvex} replaced by \textit{rank-one convex}.
\end{definition}
The set $\Meas^p_\tp{rc}$ agrees with the set of \textit{$p$-laminates}, see e.g.\ \cite[Definition 5.3]{Faraco2004}, although we will not use laminates in the sequel.

\subsection{Closed quasiconvexity and closed rank-one convexity}\label{closedqc}
We are now ready to introduce what we believe are the correct notions of rank-one convexity and quasiconvexity for extended-real valued functionals.

\begin{definition}\label{def:closedqc}
A functional ${{\bf E}}\colon \R^{m \times n} \to \overline \R$ is \textit{closed $\WW^{1,p}$-quasiconvex}
(respectively \textit{closed $p$-rank-one convex}) if for all $A\in \R^{m\times n}$ and all $\nu \in \Meas^p_\tp{qc}$ (respectively all $\nu \in \Meas^p_\tp{rc}$)
with $\langle \nu, \mathrm{Id} \rangle =A$ we have 
\begin{equation}\label{closed}
{\bf E}(A) = {\bf E}\left( \int_{\R^{m\times n}} \lambda \,\dd \nu(\lambda) \right) \leq \int_{\R^{m\times n}}^* {\bf E}(\lambda) \,\dd \nu(\lambda).
\end{equation}
\end{definition} 
In particular we note that if ${{\bf E}}\colon \R^{m \times n} \to \overline \R$ is closed $\WW^{1,p}$-quasiconvex, then it 
is also closed $\WW^{1,s}$-quasiconvex for every $s > p$.
Furthermore, closed $\WW^{1,p}$-quasiconvexity implies the standard $\WW^{1,p}$-quasiconvexity. In order to verify this, let $\phi \in \WW^{1,p}_{0}( \Omega , \R^m )$ and
$A \in \R^{m \times n}$ and recall Remark \ref{graddist}, where the probability measure $\nu_{A+\D \phi}$ was defined and shown to be a homogeneous $\WW^{1,p}$-gradient Young
measure with centre of mass $A$. Hence if $\mathbf{E}$ is a closed $\WW^{1,p}$-quasiconvex functional, then
$$
\fint_{\Omega}^* \! \mathbf{E}(A + \D \phi (x)) \, \dd x = \int_{\R^{m \times n}}^* \! \mathbf{E} \, \dd \nu_{A+\D\phi} \geq \mathbf{E}(A).
$$
For other approaches see for instance \cite{Muller, Pe, Rindler}.



 However, the converse is not true, and it is instructive to exemplify this point here. 
Similarly to Example \ref{ex:2pts}, the standard counterexamples are based on the following functional: Given a subset $\mathcal{U}\subset\R^{m\times n}$ we define
its characteristic function, in the sense of convex analysis, by
\begin{equation}
\label{eq:indicator}
\chiinf_{\,\mathcal{U}}(A) \equiv  \left\{
\begin{array}{ll}
  0 & \mbox{ if } A \in \mathcal{U},\\
  +\infty & \mbox{ if } A \in \M \setminus \mathcal{U}.
\end{array}
\right.
\end{equation}
We then choose the set $\mathcal U$ to be a special finite set of matrices called a \textit{$T_N$-configuration}, see \cite[Definition 2.1]{Forster2018}.
In particular such sets contain no rank-one connections, meaning that if $\mathcal T_N=\{A_1,\dots, A_N\} \subset \R^{2\times 2}$ then
\begin{equation}
\label{eq:noconnection}
\tp{rank}(A_i-A_j)>1\quad \tp{ for all } i\neq j.
\end{equation}
Kirchheim and Preiss \cite{K1,K2} provide the following optimal example concerning $T_N$-configurations with $N \geq 5$, see also \cite{Forster2018}.

\begin{example}[five points]\label{ex:5pts}
It is possible to construct a set $\mathcal T_5=\{A_1,\dots, A_5\}$ of five matrices with the following property:
\begin{equation}
\label{eq:trivsols}
\tp{There is } A\not \in \mathcal{T}_5 \tp{ and }\varphi \in W^{1,\infty}_0(\Omega,\R^2) \tp{ with } A + \D \varphi \in \mathcal{T}_5 \tp{ a.e.}
\end{equation}
Thus the functional $\chiinf_{\,\mathcal T_5}$ is rank-one convex but not quasiconvex, it does not satisfy \eqref{eq:qc23}. Note that this
in particular provides us with an example of a rank-one convex functional on symmetric $2$-by-$2$ matrices which is not quasiconvex. It is an extended
real-valued and lower semicontinuous functional and it is not clear if one could construct such an example which is continuous, let alone real-valued.
\end{example}

The property \eqref{eq:trivsols} fails for all configurations of four matrices as proved by Chlebik and Kirchheim in \cite{Chlebik}. This gives us the
following example:

\begin{example}[four points]\label{ex:4pts}
Let $\mathcal T_4=\{A_1,\dots, A_4\}$ be any set in $\R^{2 \times 2}$ satisfying \eqref{eq:noconnection}. Then \eqref{eq:trivsols} fails for the set. Thus the functional
$\chiinf_{\,\mathcal T_4}$ is both quasiconvex and rank-one convex. Moreover, as observed in \cite[Example 1.3]{JK}  the corresponding variational integral
$\int_{\Omega} \! \chiinf_{\, \mathcal{T}_{4}} \! (Du) \, \dd x$ is sequentially weakly lower semicontinuous on $\WW^{1,1}( \Omega , \R^{2})$. 
Nonetheless, if we assume the set $\mathcal{T}_4$ is a $T_4$ configuration, then there exists nontrivial $\nu \in \Meas^\infty_\tp{rc}(\mathcal T_4)\subseteq \Meas^\infty_\tp{qc}(\mathcal T_4)$
(see for instance \cite{Muller}), so $\chiinf_{\,\mathcal T_4}$ is neither closed $\WW^{1,p}$-quasiconvex nor closed $p$-rank-one convex for any $p \in [1,\infty ]$.
\end{example}

To summarize,  the above examples with Example \ref{ex:2pts} show for the extended real-valued functionals in $\R^{m\times n}$, with $m,n\geq 2$, the following relations
$$
\begin{tikzcd}[row sep=tiny,arrows=Rightarrow]
& \tp{quasiconvex} \arrow[dr, shift left=0.9ex,degil]\arrow[dd,degil]\arrow[dl,shift left=0.9ex,degil]& \\
\tp{closed quasiconvex}\arrow[ur,shift left=0.9ex] \arrow[dr] & & \tp{rank-1 convex}  \arrow[shift left=0.9ex, ul,degil] \arrow[shift left=0.9ex,dl,degil]\\
& \tp{closed rank-1 convex} \arrow[ur,shift left=0.9ex]	& 
\end{tikzcd}
$$
and we emphasize that whether or not closed rank-one convexity implies either quasiconvexity or closed quasiconvexity is unknown if $m=2$, $n \geq 2$.
If $m>2$ then closed rank-one convexity is a strictly weaker notion \cite{Sverak92}.

As observed already by Morrey \cite{Morrey}, quasiconvexity of a given functional ${\bf E}\colon \R^{m\times n}\to \R \cup\{+\infty\}$ is intimately tied
to the sequential weak lower semicontinuity of the associated integral (compare also Remark \ref{rmk:moreintegrands}, Proposition \ref{prop:homogenization} and Theorem \ref{thm:KP}). 
 In particular, see e.g. \cite{Dacorogna2007}, for functionals with standard growth  $ |{\bf E}| \leq C(1+|\cdot|^p)$, the $\WW^{1,p}$-quasiconvexity is equivalent to sequential weak lower semicontinuity in  $\WW^{1,p}$. Here to be precise, for signed functionals  such as the determinant which take also negative values, one must restrict to sequences with fixed boundary values, as otherwise concentration effects
at the boundary can destroy the lower semicontinuity (see \cite{Dacorogna2007} for an example and \cite{ChJK} for some positive results).
 
 It is not known what are the weakest possible growth properties for functionals  still guaranteing this equivalence. In any case,  all growth conditions imply that $\mathbf{E}$ is real valued. Without going into too much detail, we remark that for {\it non-negative} functionals ${\bf E} \geq 0$ taking the value $+\infty$ and for $1\leq p<\infty$, we have, 
$$
\begin{tikzcd}[row sep=tiny,arrows=Rightarrow]
\tp{closed $\WW^{1,p}$-quasiconvex}\arrow[r,shift left=0.9ex] 				
& \arrow[l,shift left=0.9ex,degil]\tp{$\WW^{1,p}$-seq. wlsc}\arrow[r,shift left=0.9ex] 	 &		\arrow[l,shift left=0.9ex,degil]\tp{$\WW^{1,p}$-quasiconvex}			\\								
\end{tikzcd}
$$
To see that the implications above cannot be reversed, see Example \ref{ex:4pts}  for the first and Example \ref{ex:2pts} for the second. Indeed, by \cite[Example 3.5]{BM1} the functional in  Example \ref{ex:2pts} is quasiconvex but with planar waves,  described in the introduction, one sees that the functional is not sequentially weak lower semicontinuous.

The case of signed functionals that are
allowed to assume also the value $+\infty$, such as the local Burkholder functionals arising from our work, has not been considered before. Consequently, this requires a separate study of their lower semicontinuity properties, covered in the last Section 12.


Finally, note that our
approach here with \textit{pointwise} definitions of the associated variational integrals, such as \eqref{eq:variational}, is not the only possibility.
Following an old tradition going back to H.~Lebesgue, J.~Serrin and introduced in the current context by P.~Marcellini \cite{Marcellini}, a definition by
\textit{relaxation from smooth maps} is often more natural and desirable as it leads to variational integrals with better properties.
We intend to return to this elsewhere.

The characteristic function in \eqref{eq:indicator} also leads us to a natural notion of quasiconvexity for sets:
\begin{definition}\label{qcset}
A subset $\mathcal{U}\subset\R^{m\times n}$ is said to be \textit{$\WW^{1,p}$-quasiconvex} if
$$
\nu \in \Meas^p_{\qc}(\mathcal U) \quad \implies \quad  \langle \nu, \mathrm{Id} \rangle  \in \mathcal U.
$$
\end{definition}
Thus a closed set $\mathcal{U}$ is $\WW^{1,p}$-quasiconvex if and only if $\chiinf_{\,\mathcal U}$ 
is a closed $\WW^{1,p}$-quasiconvex functional.

We have been discussing $\WW^{1,p}$-quasiconvex functionals, but we are yet to mention the role played by $p$.
For later reference we record the following example, which displays the dependence of quasiconvexity on $p$.

\begin{example}\label{ex:qccone}
Let $K\geq 1$. The $K$-quasiconformal cone 
$$
Q_{2}(K)\equiv \bigl\{ A\in \R^{2\times2}:|A|^2\leq K \det A \bigr\}
$$
is a $\WW^{1,p}$-quasiconvex set if and only if $p \geq \tfrac{2K}{K+1}$.
\end{example}

The statement for $p>\frac{2K}{K+1}$ was shown in \cite{AstalaFaraco02} and follows easily from Theorem \ref{thm:YMqc} below, while the reader can find a
proof of the case $p<\frac{2K}{K+1}$ in \cite{Faraco2004}. We also refer the reader to \cite{YZ} where higher dimensional versions of Example \ref{ex:qccone} are discussed in detail. 
The borderline case $p=\frac{2K}{K+1}$ is more subtle and follows from a variant of Theorem \ref{thm:soft}, as will be shown elsewhere.


\subsection{Rank-one convexity and radial maps}

To conclude this section we relate rank-one convexity to radial maps. For simplicity and because it is our main focus we will only consider planar maps.
\begin{definition}
A \textit{radial map} is a map $\phi \in \WW^{1,\infty}(\mb D)$ of the form 
$$\phi(z)=\rho(r)\frac{z}{r}, \qquad r\equiv |z|,$$ where $\rho\colon[0,1]\to \R$ is a Lipschitz function such that $\rho(0)=0$.
If $\rho\colon [0,1]\to [0,+\infty)$ we say that $\phi$ is a \textit{radial stretching}.
\end{definition}
 It is easy to verify that for a radial map $\phi$ we have a.e.\ in $\mb D$ the identities
\begin{gather}
\label{eq:derivativephi}
\p_z \phi = \frac 1 2 \Big(\dot \rho(r) +\frac{\rho(r)}{r}\Big),\qquad
\p_{\bar z} \phi = \frac 1 2 \Big(\dot \rho(r) -\frac{\rho(r)}{r}\Big) \frac{\, z \, }{\bar z}.
\end{gather}

Rank-one convex functionals are quasiconvex along radial maps, see e.g.\ \cite{Sivaloganathan1988} or \cite{Ball1990}.
Here we state a slightly more general version of this result in order to account for extended real-valued functionals.

\begin{lemma}\label{lemma:rcradialstretchings} 
Let ${\bf E}\colon \R^{2\times 2}\to \overline \R$ be a rank-one convex functional such that $\{t \,\Id: t>0\}$ is in the interior of
$\tp{dom}({\bf E})$. Then, for a radial map $\phi\colon \mb D\to \C$ such that $\D \phi \in \tp{int}(\tp{dom}({\bf E}))$ a.e., we have
$$
{\bf E}(\Id)\leq \fint_{\mb D}^* {\bf E}(\D \phi(z)) \,\dd m(z). 
$$
\end{lemma}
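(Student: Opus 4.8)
The plan is to strip the statement down, by a few reductions, to an elementary one‑dimensional inequality. First I would dispose of the degenerate cases: $\mathbf{E}(\Id)=+\infty$ is impossible since $\Id\in\{t\,\Id:t>0\}\subset\tp{int}(\tp{dom}\,\mathbf{E})$, and if $\mathbf{E}(\Id)=-\infty$ there is nothing to prove, so I may assume $\mathbf{E}(\Id)\in\R$. Corollary~\ref{cor2a} then gives $\mathbf{E}>-\infty$ on the connected component $\mathcal U$ of $\tp{int}(\tp{dom}\,\mathbf{E})$ containing $\Id$; as a rank‑one convex functional is in particular separately convex, hence locally Lipschitz where it is finite, $\mathbf{E}$ is real‑valued and continuous on $\mathcal U$, and using Lemma~\ref{lem2} together with the hypothesis $\D\phi\in\tp{int}(\tp{dom}\,\mathbf{E})$ a.e.\ one checks that all matrices that will appear in the argument lie in a region where $\mathbf{E}$ is finite. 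If, after this, the right‑hand side equals $-\infty$ there is again nothing to prove, so I may treat $\fint^*_{\mb D}$ as an ordinary integral with values in $\R\cup\{+\infty\}$.

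Next I would pass to polar coordinates. Writing $z=re^{i\theta}$ and using \eqref{eq:derivativephi} in the conformal coordinates \eqref{eq:confcoords}, one gets $\D\phi(z)=R_\theta\,\mathrm{diag}\!\bigl(\dot\rho(r),\rho(r)/r\bigr)\,R_{-\theta}$, where $R_\theta\in\tp{SO}(2)$ is the rotation by $\theta$. Integrating the $\theta$‑variable first,
\[
\fint_{\mb D}\mathbf{E}(\D\phi(z))\,\dd m(z)=2\int_0^1\Psi\!\bigl(\dot\rho(r),\tfrac{\rho(r)}{r}\bigr)\,r\,\dd r,\qquad \Psi(a,b)\equiv\fint_0^{2\pi}\mathbf{E}\bigl(R_\theta\,\mathrm{diag}(a,b)\,R_{-\theta}\bigr)\,\dd\theta.
\]
Since conjugation by a rotation preserves ranks, each integrand $A\mapsto\mathbf{E}(R_\theta A R_{-\theta})$ is rank‑one convex, so $\Psi$ is separately convex (convexity along the diagonal directions), and because $\mathrm{diag}(b,a)=R_{\pi/2}\,\mathrm{diag}(a,b)\,R_{-\pi/2}$ it is also symmetric, $\Psi(a,b)=\Psi(b,a)$; moreover $\Psi(1,1)=\mathbf{E}(\Id)$. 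Since $\rho(1)=1$ (the boundary values of $\phi$ being $\Id$), it remains to prove $2\int_0^1\Psi(\dot\rho(r),\rho(r)/r)\,r\,\dd r\geq\Psi(1,1)$.

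For this one‑dimensional inequality I would put $b(r)\equiv\rho(r)/r$, so $\dot\rho=b+r\dot b$. Convexity of $\Psi$ in its first slot gives $\Psi(\dot\rho,b)\geq\Psi(b,b)+\partial_1\Psi(b,b)\,(\dot\rho-b)=\Psi(b,b)+\partial_1\Psi(b,b)\,r\dot b$. Multiplying by $2r$, integrating over $(0,1)$, and integrating the term $2\int_0^1\partial_1\Psi(b,b)\,\dot b\,r^2\,\dd r$ by parts using a primitive $\widetilde H$ of $t\mapsto\partial_1\Psi(t,t)$ normalised by $\widetilde H(1)=0$ — the boundary terms vanish because $\widetilde H(b(1))=\widetilde H(1)=0$ and $\widetilde H(b(r))\,r^2\to0$ as $r\to0^+$ — one obtains
\[
\fint_{\mb D}\mathbf{E}(\D\phi)\,\dd m\geq2\int_0^1 G(b(r))\,r\,\dd r,\qquad G(t)\equiv\Psi(t,t)-2\widetilde H(t).
\]
The punchline is that symmetry of $\Psi$ forces $G$ to be constant: $G'(t)=\tfrac{\dd}{\dd t}\Psi(t,t)-2\partial_1\Psi(t,t)=\partial_2\Psi(t,t)-\partial_1\Psi(t,t)=0$ almost everywhere. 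Hence $G\equiv G(1)=\Psi(1,1)$, and therefore $\fint_{\mb D}\mathbf{E}(\D\phi)\,\dd m\geq\Psi(1,1)\cdot2\int_0^1 r\,\dd r=\Psi(1,1)=\mathbf{E}(\Id)$.

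The main difficulty is not in the mechanism above but in the bookkeeping around it: making the reductions of the first paragraph rigorous for an \emph{extended}‑real‑valued functional (keeping track, via Lemmas~\ref{lem1}, \ref{lem2} and Corollary~\ref{cor2a}, of the set on which $\mathbf{E}=-\infty$, and using \emph{both} hypotheses on $\tp{dom}(\mathbf{E})$ to guarantee that $\Psi$ stays finite along the one‑parameter family of diagonal matrices that occurs — which includes $b(r)\,\Id$), and handling the fact that a separately convex $\Psi$ is merely locally Lipschitz, so that $\partial_1\Psi$ exists only almost everywhere and the chain rule and integration by parts demand the usual care. The cleanest way to deal with the latter is to prove the inequality first for $\rho\in\CC^\infty$ and then pass to the general Lipschitz case by approximation, which is legitimate since the gradients of the smooth approximations converge to $\D\phi$ boundedly almost everywhere.
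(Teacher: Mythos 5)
Your argument is correct in its core mechanism and is, at bottom, the same proof as the paper's, packaged differently: the paper applies the tangent-plane inequality for $\mathbf{E}$ directly at the conformal matrix $\tfrac{\rho(r)}{r}\Id$ along the rank-one direction $(\dot\rho-\tfrac{\rho}{r})\bigl(1,\tfrac{z}{\bar z}\bigr)$, integrates in $\theta$ to kill the angular factor, and recognizes the resulting radial integrand as $\tfrac12\tfrac{\dd}{\dd r}\bigl(r^2\,\mathbf{E}(\tfrac{\rho(r)}{r}\Id)\bigr)$, so the conclusion is the fundamental theorem of calculus. Your rotational average $\Psi$, the tangent plane at $(b,b)$ in the first slot (note $\mathrm{diag}(a,b)-\mathrm{diag}(b,b)$ is the same rank-one direction, conjugated), the integration by parts, and the constancy of $G$ forced by the symmetry of $\Psi$ are exactly this total-derivative identity with the angular integration performed first. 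Both proofs also use the same implicit normalization $\rho(1)=1$, which the lemma's statement leaves tacit, so that is not a point against you.

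The one genuine soft spot is your regularization remark at the end: smoothing $\rho$ does not address the problem you yourself raise. The delicate quantities are $\partial_1\Psi(b(r),b(r))$ and, above all, the identity $\partial_1\Psi(t,t)=\partial_2\Psi(t,t)$ used to get $G'\equiv 0$: a separately convex, locally bounded $\Psi$ is locally Lipschitz and hence differentiable $\mathscr L^2$-a.e., but the diagonal $\{(t,t)\}$ is an $\mathscr L^2$-null set, so this gives you nothing at the points you need, no matter how smooth $\rho$ is. If instead you run the argument with an arbitrary measurable subgradient selection $m(t)\in\partial_1\Psi(\cdot,t)(t)$, the constancy of $G(t)=\Psi(t,t)-2\widetilde H(t)$ can fail: for the symmetric, separately convex $\Psi(a,b)=|a-b|$ and the right-derivative selection $m\equiv 1$ one gets $G(t)=2-2t$. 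The standard and easy repair is the one the paper takes as its very first step: since the essential range of $\D\phi$ is a compact subset of the open set where $\mathbf{E}$ is finite (here both hypotheses on $\tp{dom}(\mathbf{E})$ enter, exactly as you note), mollify $\mathbf{E}$ itself; mollification preserves rank-one convexity and converges locally uniformly on that set, and then $\Psi$ is smooth near the diagonal segment $\{b(r)\Id\}$, your chain rule and integration by parts are unproblematic, and the limit recovers the stated inequality. With that substitution for your final paragraph, the proof is complete.
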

\begin{proof}
The essential range of $\D \phi$ is the smallest closed set $\mathcal U$ such that $\D \phi \in \mathcal U$  a.e.\ in $\mb D$; since $\phi$ is Lipschitz,
clearly this set is compact. Thus, since $\{t\, \Id: t>0\}$ is contained in the interior of $\tp{dom}({\bf E})$, after mollifying ${\bf E}$ 
we can without loss of generality assume that ${\bf E}$ is smooth in the interior of its domain.

Identifying $\D \phi=(\p_z \phi, \p_{\bar z} \phi)$, by \eqref{eq:derivativephi} we may write
$$
\D \phi = \left(\frac{\rho(r)}{r},0\right)+\frac 1 2\left(\dot \rho(r)-\frac{\rho(r)}{r}\right) \left(1,\frac{z}{\bar z}\right)
$$
and notice that the second term corresponds to a matrix with rank one, since it is singular, cf. \eqref{eq:detconformal}.
Thus, by rank-one convexity,  we obtain
\begin{equation*}
\label{eq:rcinequality}
      {\bf E}(\D \phi)\geq {\bf E}\left(\frac{\rho(r)}{r} ,0\right) + \frac 1 2 \Big\langle {\bf E}'\left(\frac{\rho(r)}{r},0\right), \left(\dot \rho(r)
      -\frac{\rho(r)}{r}\right)\left(1,\frac{z}{\bar z}\right)\Big\rangle.
\end{equation*}
Integrating over $\mb D$, and using $\int_{\mathbb S^{1}} (1,\frac{z}{\bar z}) \,\dd \theta=\int_{\mb S^1}(1,0) \,\dd \theta$,  we find 
\begin{align*}
& \int_{\mb D} {\bf E}(\D \phi) \,\dd m(z)
 \\ & \geq \int_{\mb S^{1}} \int_0^1
 \left[{\bf E}\left(\frac{\rho(r)}{r} ,0\right) + \frac 1 2 \Big\langle {\bf E}'\left(\frac{\rho(r)}{r},0\right),
   \left(\dot \rho(r)-\frac{\rho(r)}{r}\right)\left(1,0\right)\Big\rangle\right]r\, \dd r \,\dd \theta\\
&  = \frac 1 2 \int_{\mb S^{1}} \int_0^1 \frac{\dd }{\dd r}\left(r^2 {\bf E}\bigg(\frac{\rho(r)}{r} \Id\bigg)\right) \dd r\, \dd \theta \\
& = \pi {\bf E}(\Id).
\end{align*}	
Note that, since $\rho$ is Lipschitz continuous and ${\bf E}$ is locally Lipschitz continuous in a neighborhood of $\{t\, \Id:t>0\}$, see e.g.\ \cite{Dacorogna2007},
the application of the fundamental theorem of calculus in the last step is justified.
\end{proof}

\section{Principal mappings} \label{sec:principal} 

Throughout this paper we will use principal quasiconformal maps to test quasiconvexity
and to generate the required gradient Young measures. 

One can of course always use  the flexibility  allowed by Theorem \ref{generates} in  choosing the generating sequences.
However, in proving quasiconvexity for the local Burkholder functional this is a delicate issue, since here the natural deformations
of maps typically deform  also the boundary values, c.f. Subsection \ref{deformations} for a discussion.
We solve this issue by using the principal maps, which allow deformations yet carry enough information to control the center of mass of
the limiting gradient Young measures; for typical cases see e.g. the next section. 

In the end, it turns out that quasiconvexity with respect to the principal maps determines the lower semicontinuity properties
for a large class of functionals, similarly as the classical quasiconvexity does for functionals with standard growth.

In this section we hence briefly recall the basic properties of such maps. 
First, for $K\geq 1$, a planar ${\WW}^{1,2}_{\loc}$ map $f\colon \Omega \to \C$ is \textit{$K$-quasiregular} if
$$
|\D f(z)|^2 \leq K J_f(z)  \qquad \tp{for a.e.\ } z \tp{ in } \Omega;
$$
if in addition $f$ is a homeomorphism, we say that $f$ is \textit{$K$-quasiconformal}.
Any such mapping satisfies the Beltrami equation
\begin{equation}\label{Belt1}
\, f_{\bar{z}} = \mu(z) f_z
\end{equation}
where  $\mu$ is a measurable function with
$\| \mu\|_\infty  \leq  k \equiv  \frac{K-1}{K+1} <1$. We are especially interested in homeomorphic solutions of \eqref{Belt1} with a certain normalization:
\smallskip

\begin{definition}\label{prmap}
A map $f\colon \C \to \C$ is said to be a \textit{principal map} if:
\begin{enumerate}
\item $f$ is a $\WW^{1,1}_\tp{loc}(\C)$-homeomorphism and 
\item $f$ is conformal outside  $\DD$, with Laurent series 
\begin{equation}\label{princip1}
f(z) = z +  \frac{b_1}{z} + \sum_{j=2}^\infty \frac{b_j}{z^j}, \qquad |z| > 1.
\end{equation}
\end{enumerate}
\end{definition}


The classical area formula, a quick consequence of  Green's theorem \cite[Theorem 2.10.1]{AIM} gives for any $\WW^{1,2}_\tp{loc}(\C)$-principal map 
the identity
\begin{equation}\label{areafmla} 
\int_{\mb D} \! J_f(z) \, \dd m(z) = \pi \left( 1 - \sum_{j=1}^\infty j |b_j|^2 \right),
\end{equation}
which controls the size  of the coefficients in \eqref{princip1}.

For instance, the Jacobian of a Sobolev  homeomorphism does not change sign,  thus $J_f(z) \geq 0$ for any principal map and hence
$|b_1| \leq1$ in \eqref{princip1}. Indeed, even if the principal map has only $\WW^{1,1}_\tp{loc}$-regularity we still have the bound  
$$
\sum_{j=1}^\infty j |b_j|^2 \leq 1,
$$
and in particular the condition $|b_1| \leq 1$ holds. But if in either case $|b_1| = 1$, then the area formula forces all other
coefficients to vanish, and that would force $f$ affine and non-injective on the unit circle. Thus $|b_1| < 1$ for every principal homeomorphism as in Definition \ref{prmap}.

    
We hence find that for any principal map as in \eqref{princip1}, the associated linear operator
$$
A_f(z) \equiv z+b_1 \bar z
$$
is a homeomorphism with
\begin{equation}\label{asympto} 
\det( A_f ) > 0.
\end{equation}
Principal maps are therefore, in a sense,  close to having affine boundary values on the unit circle, but are yet flexible enough to allow deformations of maps.


As another aspect of this view,  the area formula \eqref{areafmla}  implies \cite[Corollary 2.10.3]{AIM} that 
\begin{equation}\label{Kareafmla} 
f\; \mbox{ is $K$-quasiconformal } \quad  \Longrightarrow  \quad A_f \; \mbox{ is $K$-quasiconformal}.
\end{equation}
This is not immediate since the set of $K$-quasiconformal linear maps is not convex.

The notion of a principal map is very natural also since for each coefficient $\mu$ supported in the unit disk with $\| \mu\|_\infty < 1$,
there is a unique  $\WW^{1,2}_\tp{loc}(\C)$ principal solution $f = f_\mu$ to \eqref{Belt1}, cf.\ \cite[Theorem 5.3.2]{AIM}.
This extends even to suitable  degenerate  Beltrami equations, see \cite{IwaSve}. 

Indeed, a simple way to find principal solutions for the given coefficient $\mu$ is via  the Cauchy transform
$$
\mathbf C \varphi (z) =  \frac{1}{\pi} \int_\mathbb C \frac{\varphi(\xi)}{ z - \xi }\, \textnormal d \xi.
$$ 
One now looks for a solution in the form 
 \begin{equation} \label{Cauchy}
f(z) = z + (\mathbf C \omega) (z),
\;\;\;\textnormal{with} \;\; \omega 
\, \in \,  \mathscr \LL^2(\mb D),
\end{equation}
and the derivative $f_{\bar{z}} \equiv  \omega $ is then found by a Neumann-series argument.
Namely, if $\mathbf S$ is the \textit{Beurling--Ahlfors transform}, i.e.\ a Calder\'on--Zygmund singular integral operator bounded in $\LL^s(\C)$ for all
$1 < s < \infty$ and defined by
\begin{equation}\label{Beur1}
\mathbf S \varphi (z) \; \equiv \;- \frac{1}{\pi}  \int_\mathbb C \frac{\varphi(\xi)}{ (z - \xi)^2 }\dd \xi,
\end{equation}
then \cite[(5.8)]{AIM} shows that  
\begin{equation}\label{Beur2}
f_{\bar{z}}  = (I - \mu  \mathbf S)^{-1} \mu, \qquad f_z = 1 + (I - \mu  \mathbf S)^{-1}  \mathbf S \mu.
\end{equation}
Here $\| \mathbf S\|_{L^2(\C)} = 1$ while by  \cite[Theorem 14.0.4]{AIM}, the operator $I - \mu  \mathbf S$ is invertible on $\LL^{s}(\C)$ whenever
\begin{equation}
\label{eq:invertBelt}
1 +  \| \mu\|_\infty < s < 1+1/ \| \mu\|_\infty,
\end{equation}
with the operator-norm of the inverse $\| (I - \mu  \mathbf S)^{-1} \|_{\LL^s(\C)}$
bounded by a constant that depends  only on $s$ and $ \| \mu\|_\infty$. 

Finally, to show that the mapping defined by \eqref{Beur2} is a homeomorphism requires more work; for details see \cite{AIM}. 
\medskip

From \eqref{Beur2} we obtain global higher integrability bounds for the derivatives of principal solutions
to  the Beltrami equation \eqref{Belt1}, in particular
\begin{equation}\label{Belt2}
\| f_{\bar{z}}  \|_{\LL^{s}(\C)} + \| f_z -1 \|_{\LL^{s}(\C)} \leq C_s(K) < \infty,
\end{equation} 
whenever
\begin{equation}\label{Holder}
\frac{2K}{K+1} < s < \frac{2K}{K-1},
\end{equation}
where this last condition comes from \eqref{eq:invertBelt}, recalling that $\|\mu\|_\infty\leq  \frac{K-1}{K+1}$.

Note also that for exponents $s > 2$ and $\alpha < 1 -2/s$, the Cauchy operator $\mathbf{C} \colon \LL^{s}(\mb D) \to \CC^\alpha(\C)$ is compact.
Therefore  from \eqref{Cauchy} and \eqref{Beur2} we see that  for each $k <1$ the family
\begin{equation}\label{Belt3}
{\mathscr F}_k \equiv  \biggl\{ f \mbox{ is a principal solution to \,} \eqref{Belt1} \, \mbox{ with } \| \mu \|_{\infty} \leq k  \biggr\}
\end{equation}
is normal, i.e.\ every sequence of ${\mathscr F}_k $ contains a subsequence converging uniformly on $\C$. The limit, too, belongs to 
${\mathscr F}_k $ since any (non-constant) limit of a  uniformly converging sequence of $K$-quasiconformal maps is $K$-quasiconformal. 
\smallskip

%
%

\section{Quasiregular gradient Young measures}\label{Young}

In this section we consider homogeneous gradient Young measures which are generated by sequences with suitable bounds on their distortion;
our goal is to show that such measures can be generated also by principal maps with the same distortion bounds. The case where the sequence has
uniformly bounded distortion was studied by the first two authors in \cite{AstalaFaraco02}, but here we are also interested in the more general
case where only the integrals of the distortion functions are uniformly bounded.

To start with, recall that the effective domain \eqref{effective} of the local Burkholder functional ${\mathcal B}_K (A)$ in \eqref{eq:locBurk} is
the set of all $K$-quasiconformal matrices 
$$
Q_2(K) \equiv \{ A \in \R^{2 \times 2} :  |A|^2 \leq K \det A \}.
$$

\noindent
This takes us to study the gradient Young measures supported on $Q_2(K)$. 

We  begin with a modification of results from \cite{AstalaFaraco02}: 

\begin{theorem} \label{thm:YMqc}
Let $\frac{2K}{K+1}<  s < \frac{2K}{K-1}$ and let $\nu\in \mathscr{M}^s_\tp{qc}(Q_2(K))$ be such that
\begin{equation}  \label{CM2}
\langle \nu, \mathrm{Id} \rangle = A, \qquad Az = z + a \bar z.\end{equation}
Then there is a sequence of $K$-quasiconformal principal maps which generates in $\DD$ the Young measure $\nu$.
\end{theorem}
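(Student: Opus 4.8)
The plan is to reduce Theorem~\ref{thm:YMqc} to the uniformly-bounded-distortion result from \cite{AstalaFaraco02}, but with the generating sequence produced from principal maps rather than from maps with affine boundary values. Since $\nu \in \mathscr{M}^s_{\tp{qc}}(Q_2(K))$ with center of mass $A$, there is by definition a sequence $(\varphi_j)$, bounded in $\WW^{1,s}(\mathbf{Q},\R^2)$ with $\mathbf{Q}$ the unit cube, such that $\varphi_j \weak A$ and $(\D\varphi_j)$ generates $\nu$; moreover $\D\varphi_j(x) \in Q_2(K)$ for a.e.\ $x$ up to a small perturbation (one first truncates/modifies so that the support of $\nu$ being in $Q_2(K)$ is reflected in the gradients lying in a slightly larger cone, then passes to the limit). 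By Theorem~\ref{generates}(1) we may assume $(|\D\varphi_j|^s)$ is equiintegrable and $\varphi_j \in A + \WW^{1,s}_0(\mathbf{Q},\R^2)$. The first key step is to manufacture, out of each $\varphi_j$, a \emph{$K$-quasiconformal} map defined on $\mathbf{Q}$ (or a rescaled copy) which generates the same Young measure. Here one invokes the standard construction: the gradient $\D\varphi_j$ takes values in $Q_2(K)$, so its Beltrami coefficient $\mu_j \equiv (\varphi_j)_{\bar z}/(\varphi_j)_z$ satisfies $\|\mu_j\|_\infty \le k = \frac{K-1}{K+1}$; one then solves the Beltrami equation with this $\mu_j$ to produce an honest $K$-quasiconformal map whose derivative agrees in distribution with $\D\varphi_j$ up to lower-order corrections, using the $\LL^s$-estimate \eqref{Belt2} valid in the range \eqref{Holder} to control the correction terms in $\LL^s$ and then appeal to Theorem~\ref{generates}(2).

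The second, and central, step is to replace affine-type boundary behaviour by the principal normalization. Given the $K$-quasiconformal map $\psi_j$ obtained above, with Beltrami coefficient $\mu_j$ supported in $\mathbf{Q}$ (after extending $\mu_j$ by zero outside), let $h_j = f_{\mu_j}$ be the unique $\WW^{1,2}_{\tp{loc}}(\C)$ principal solution of the Beltrami equation, so $h_j(z) = z + \mathbf{C}\omega_j(z)$ with $\omega_j = (I-\mu_j\mathbf{S})^{-1}\mu_j$ as in \eqref{Cauchy}--\eqref{Beur2}. By \eqref{Belt2} the family $(h_j)$ is bounded in $\WW^{1,s}_{\tp{loc}}$ and, by \eqref{Belt3}, normal; pass to a subsequence so $h_j \to h$ locally uniformly with $h$ a $K$-quasiconformal principal map. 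The point is now a \emph{localization} argument: on small disks $\mathbf{A}(x_0,\lambda)$ inside $\mathbf{D}$, the rescaled maps $z \mapsto \lambda^{-1}(h_j(x_0+\lambda z) - h_j(x_0))$ have Beltrami coefficients that are rescalings of $\mu_j$, and one wants to show that as $j\to\infty$, $\lambda \to 0$ the associated gradients generate precisely $\nu$ on a fixed domain. This is where one exploits that both $(\varphi_j)$ (equivalently $(\psi_j)$) and $(h_j)$ are built from the \emph{same} coefficients $\mu_j$: the difference $h_j - \psi_j$ solves a Beltrami equation with zero right-hand side modifications and can be controlled in $\WW^{1,s}$ by the invertibility bound for $I - \mu_j\mathbf{S}$ on $\LL^s(\C)$, uniformly in $j$ because $\|\mu_j\|_\infty \le k$. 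Hence $\D h_j - \D\psi_j \to 0$ in $\LL^s_{\tp{loc}}$ and Theorem~\ref{generates}(2) gives that $(\D h_j)$ generates $\nu$ in $\mathbf{D}$ as well.

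The remaining bookkeeping is to verify the center of mass matches \eqref{CM2}: since $h_j$ is principal, $A_{h_j} = \fint_{\mathbf{D}} \D h_j \, \dd m$, and this average converges to $\fint_{\mathbf{D}} \langle \nu, \mathrm{Id}\rangle = A$; one must confirm that the first Laurent coefficient $b_1^{(j)}$ of $h_j$ converges to $a$, which follows from the $\LL^s$-convergence of $\omega_j$ and the explicit relation between $b_1$ and $\int_{\mathbf{D}} \omega_j$. One also needs the localization principle (Remark~\ref{rmk:homogenization}, Proposition~\ref{prop:homogenization}) to upgrade generation ``in $\mathbf{D}$'' to generation by a homogeneous sequence of principal maps, and to handle the possibility that $\nu$ is not literally $\nu_{A+\D\phi}$ for any $\phi$; the diagonalization $j\to\infty$, $\lambda\to 0$ should be organized exactly as there.

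The main obstacle, I expect, is the gluing step: showing that the principal solutions $h_j$ really do generate $\nu$ on $\mathbf{D}$ and not merely on smaller scales, i.e.\ reconciling the global normalization forced by the principal expansion \eqref{princip1} with the local oscillation pattern encoded by $\nu$. The affine-boundary-values case of \cite{AstalaFaraco02} does not have to contend with this, since there one can simply prescribe the boundary data; here one must argue that replacing the free boundary values of $\varphi_j$ by the principal normalization only changes the gradient by a term tending to zero in $\LL^s$, which is exactly the content of the uniform invertibility of $I - \mu_j \mathbf{S}$ on $\LL^s(\C)$ in the range \eqref{Holder}. Making this quantitative, and in particular ensuring the subtracted corrections do not themselves carry nontrivial Young-measure mass, is the technical heart of the argument.
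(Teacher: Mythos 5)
Your reduction breaks down at the step you yourself flag as the heart of the matter. After correcting the generating sequence you have exact $K$-quasiregular solutions $\psi_j$ of $\partial_{\bar z}\psi_j=\mu_j\partial_z\psi_j$ on $\DD$, and you compare them with the principal solutions $f_j=f_{\mu_j}$ of the same equation. But the difference of two solutions of the same \emph{homogeneous} Beltrami equation is again a solution, and the uniform invertibility of $I-\mu_j\mathbf S$ on $\LL^s(\C)$ gives no smallness whatsoever: that estimate controls the solution of an inhomogeneous equation in terms of a small right-hand side, and here there is none --- $\psi_j$ lives on $\DD$ with completely free boundary behaviour, while $f_j$ carries a global normalization. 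Concretely, by Stoilow factorization $\psi_j=h_j\circ f_j$ with $h_j$ holomorphic in $f_j(\DD)$, so $\D\psi_j-\D f_j=(h_j'\circ f_j-1)\,\D f_j$, and this tends to zero only if one can show $h_j'\circ f_j\to 1$; nothing in the singular-integral bound forces that. The paper supplies exactly the missing input: uniform $\LL^1$ bounds on $h_j'$ (change of variables plus \eqref{Belt2}) make $(h_j)$ a normal family; the weak limit of $\D\psi_j$ is the constant $A$ by \eqref{limit} and Remark \ref{affine}; uniqueness of the principal solution with dilatation $a\chi_{\DD}$ then identifies the limit $f$ and forces $h=\mathrm{id}$, whence $h_j'\circ f_j\to 1$ locally uniformly, $\|\D\psi_j-\D f_j\|_{\LL^s(\DD(0,r))}\to 0$ for each $r<1$, and equiintegrability of $(|\D f_j|^s)$ upgrades generation from compact subdisks to all of $\DD$. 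This is precisely where the center-of-mass hypothesis \eqref{CM2} enters, and it cannot be replaced by the operator estimate you invoke.

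A secondary but real issue is the provenance of the coefficients $\mu_j$. The gradients of a generating sequence need not lie in $Q_2(K)$ --- only the limit measure is supported there --- so $(\varphi_j)_{\bar z}/(\varphi_j)_z$ is not bounded by $k=\tfrac{K-1}{K+1}$, and your fallback of truncating into a slightly larger cone would produce $K'$-quasiconformal maps with $K'>K$, not the $K$-quasiconformal principal maps the theorem asserts. What is needed, and what the paper uses, is Lemma \ref{gen.seq}: there exist $\mu_j$ with $\|\mu_j\|_\infty\le\tfrac{K-1}{K+1}$ such that $\partial_{\bar z}\varphi_j-\mu_j\partial_z\varphi_j\to 0$ in $\LL^s(\DD)$. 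Only in that correction step, where the right-hand side is genuinely small, is the invertibility of $I-\mu_j\mathbf S$ on $\LL^s(\C)$ the appropriate tool; reusing it for the passage from the corrected maps to the principal maps is where your argument has a genuine gap.
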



%


\noindent
Here recall that any matrix in $ Q_2(K)$ is a scalar multiple of $Az = z + a \bar z$ where $|a| \leq \frac{K-1}{K+1}$.
Thus the choice  \eqref{CM2} is merely a normalisation.
Moreover, recall  that if $f \in \WW^{1,2}_{\loc}(\C)$ is the weak limit of the generating sequence given by Theorem \ref{thm:YMqc}, then
$f$ is $K$-quasiconformal and principal; this follows from the compactness of the family ${\mathscr F}_k$ in \eqref{Belt3}. In addition,  \eqref{limit} shows that 
$$
\D f(x) = \langle \nu, \Id \rangle \qquad {\rm a.e.} \; x \in \mb D.
$$  
Thus also the center of mass $\langle \nu, \Id \rangle$ is a $K$-quasiconformal matrix and for some $|a| \leq \frac{K-1}{K+1}$,
$$
f(z) = \left\{
\begin{array}{ll}
  z+a\bar{z} & \mbox{ if } |z| \leq 1,\\
  z+\tfrac{a}{z} & \mbox{ if } |z| > 1.
\end{array}
\right.
$$
For the proof of Theorem \ref{thm:YMqc} we  borrow an auxiliary result from  
\cite[Lemma 4.1]{AstalaFaraco02}:

\begin{lemma} \label{gen.seq}
Suppose $\frac{2K}{K+1}< s < \frac{2K}{K-1}$ and  $\nu\in \mathscr{M}^s_\tp{qc}(Q_2(K))$ is generated by a sequence
$(\phi_j)$ such that $( |\D\phi_j|^s)$ is equiintegrable in $\mb D$.

Then there are measurable functions $\mu_j\colon\mb D\to \C$, with $\| \mu_j\|_\infty  \leq \frac{K-1}{K+1}$, such that 
$$
\lim_{j\to \infty} \| {\partial_{\bar z}} \phi_j - \mu_j \partial_z \phi_j \|_{\LL^s(\mb D)} = 0.
$$
\end{lemma}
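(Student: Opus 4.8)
The plan is to exploit that $\nu$ being supported in the $K$-quasiconformal cone $Q_2(K)=\{A:|A|^2\le K\det A\}$ forces the generating gradients $\D\phi_j$ to concentrate on $Q_2(K)$ in a strong, $\LL^s$-quantitative sense, and then to read off the Beltrami coefficients $\mu_j$ from a measurable, pointwise projection onto $Q_2(K)$. The one algebraic fact I would use throughout is that, in the conformal coordinates \eqref{eq:confcoords}, a matrix $A=(a_+,a_-)$ lies in $Q_2(K)$ if and only if $|a_-|\le k|a_+|$ with $k\equiv\frac{K-1}{K+1}$ (so in particular $a_+=0$ forces $A=0$); this is exactly the bridge between membership in $Q_2(K)$ and the bound $\|\mu_j\|_\infty\le k$ that is required.

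First I would prove that $\dist(\D\phi_j,Q_2(K))\to 0$ in $\LL^s(\mb D)$. For this I apply the Young measure convergence to the integrand $F(A)\equiv\dist(A,Q_2(K))^s$, which is continuous, vanishes identically on $Q_2(K)$, and satisfies $0\le F(A)\le|A|^s$ because $0\in Q_2(K)$ and $\dist(\cdot,Q_2(K))$ is $1$-Lipschitz. Since $(|\D\phi_j|^s)$ is equiintegrable by hypothesis, so is $(F(\D\phi_j))$, and hence \eqref{Young1} extends to the non-compactly supported integrand $F$ by Remark~\ref{Young31} (see also Remark~\ref{rmk:moreintegrands}); testing against the constant $1\in\LL^1(\mb D)$ yields $\int_{\mb D}F(\D\phi_j)\,\dd m\to|\mb D|\,\langle\nu,F\rangle=0$, the last equality because $F\equiv 0$ on $\mathrm{supp}(\nu)\subset Q_2(K)$. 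I expect this to be the only genuinely substantial step: it is precisely where the equiintegrability hypothesis is consumed, turning the qualitative statement ``the limit measure lives on $Q_2(K)$'' into the quantitative statement ``$\D\phi_j$ is $\LL^s$-close to $Q_2(K)$''.

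Next I would fix, for each $j$, a Borel projection $P\colon\R^{2\times 2}\to Q_2(K)$ and set $g_j\equiv P\circ\D\phi_j$. One can take $P$ explicitly in conformal coordinates: $P(a_+,a_-)=(a_+,a_-)$ if $|a_-|\le k|a_+|$, $P(a_+,a_-)=\bigl(a_+,\,k|a_+|\,a_-/|a_-|\bigr)$ if $|a_-|>k|a_+|$ and $a_+\ne 0$, and $P(0,a_-)=0$. A short triangle-inequality check --- using $|a_-|\le|a_--b_-|+k|b_+|\le|A-B|+k|a_+|$ for any $B=(b_+,b_-)\in Q_2(K)$ together with the norm formula \eqref{eq:detconformal} --- shows that $P$ is Borel, $P(A)\in Q_2(K)$, and $|A-P(A)|\le\dist(A,Q_2(K))$. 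Writing $g_j=(b_+^j,b_-^j)$, we then have $|b_-^j|\le k|b_+^j|$ a.e., so defining $\mu_j\equiv b_-^j/b_+^j$ where $b_+^j\ne 0$ and $\mu_j\equiv 0$ otherwise gives a measurable function with $\|\mu_j\|_\infty\le k=\frac{K-1}{K+1}$ and $b_-^j=\mu_j b_+^j$ a.e.\ (the set $\{b_+^j=0\}$ being contained in $\{g_j=0\}$).

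Finally I would combine everything: since the operator norm dominates each conformal component (by \eqref{eq:detconformal}, $|a_\pm-b_\pm|\le|A-B|$) and $|\mu_j|\le k$, we get a.e.\ in $\mb D$
$$|\partial_{\bar z}\phi_j-\mu_j\partial_z\phi_j|=\bigl|(\partial_{\bar z}\phi_j-b_-^j)-\mu_j(\partial_z\phi_j-b_+^j)\bigr|\le (1+k)\,|\D\phi_j-g_j|\le (1+k)\,\dist(\D\phi_j,Q_2(K)).$$
Raising this to the power $s$, integrating over $\mb D$ and invoking the first step gives $\|\partial_{\bar z}\phi_j-\mu_j\partial_z\phi_j\|_{\LL^s(\mb D)}\to 0$, which is the assertion. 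The remaining points --- Borel measurability of $P$ and of $\mu_j$, and the elementary inequalities above --- are routine, so I would only indicate them briefly.
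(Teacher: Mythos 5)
Your proof is correct. One caveat on the comparison: the paper does not prove this lemma at all—it is imported verbatim from \cite[Lemma 4.1]{AstalaFaraco02}—so there is no in-paper argument to match against; your write-up is a valid self-contained substitute, and it runs on the same mechanism as the original one. The substantial step is exactly where you locate it: $Q_2(K)$ is closed, $F(A)\equiv\dist(A,Q_2(K))^s$ is continuous with $0\le F\le|\cdot|^s$, so equiintegrability of $(|\D\phi_j|^s)$ lets you invoke Remark \ref{Young31} and the support condition $\mathrm{supp}\,\nu\subset Q_2(K)$ to conclude $\int_{\mb D}F(\D\phi_j)\,\dd m\to\mathscr L^2(\mb D)\,\langle\nu,F\rangle=0$. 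The remaining bookkeeping checks out: by \eqref{eq:detconformal} the cone is $\{|a_-|\le k|a_+|\}$ with $k=\tfrac{K-1}{K+1}$, your map $P$ is Borel, takes values in $Q_2(K)$, and satisfies $|A-P(A)|\le\dist(A,Q_2(K))$ (in fact with equality), and the quotient of the conformal coordinates of $P(\D\phi_j)$ yields measurable $\mu_j$ with $\|\mu_j\|_\infty\le k$ and $|\partial_{\bar z}\phi_j-\mu_j\partial_z\phi_j|\le(1+k)\dist\bigl(\D\phi_j,Q_2(K)\bigr)$ a.e., which together with the first step gives the claim. A harmless observation: your argument never uses the hypothesis $\tfrac{2K}{K+1}<s<\tfrac{2K}{K-1}$; that restriction is only needed downstream, in the proof of Theorem \ref{thm:YMqc}, where $I-\mu_j\mathbf S$ must be inverted on $\LL^s(\C)$.
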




\begin{proof}[Proof of Theorem \ref{thm:YMqc}]
Let $(\phi_j) \subset \WW^{1,s}(\mb D, \C)$ be a sequence generating $\nu$,  given by Theorem \ref{generates}, for which $(|\D \phi_j|^s)$ is equiintegrable. We then set
$$
\eta_j \equiv \bigl( \partial_{\bar z}\phi_j - \mu_j \partial_z \phi_{j}\bigr)  \chi_{\mb D} \in \LL^s(\C),
$$
where the $\mu_j$ are given by Lemma \ref{gen.seq}.
As in \eqref{Cauchy}, via the Cauchy transform we find global solutions  $\omega_j$ to
$$ \partial_{\bar z} \omega_j -  \mu_j \partial_z \omega_j =  \eta_j$$ with $|\D\omega_j| \in \LL^s(\C)$, simply by  letting
$$
\p_{\bar z}  \omega_j = (I - \mu_j  \mathbf S)^{-1} \eta_j, \quad  \omega_j = \mathbf C( \partial_{\bar z}  \omega_j).
$$
In particular, $\| \D\omega_j\|_{\LL^s(\C)} \leq C_s(K)  \| \eta_j \|_{\LL^s(\C)} \to 0$ as $j \to \infty$.

By Theorem \ref{generates}, also the sequence of maps
$$
\psi_j \equiv  \phi_j - \omega_j \in \WW^{1,s}(\mb D)
$$
 generates the given Young measure $\nu$, since $\| \D\psi_j - \D\phi_j \|_{\LL^s (\mb D)} \to 0$. 
Further, the maps $\psi_j$ are all $K$-quasiregular, $\partial_{\bar z} \psi_j -  \mu_j \partial_z \psi_j = 0$ in $\mb D$, but they need not be  homeomorphisms.

On the other hand, in Section \ref{sec:principal} we saw that there  is a  principal solution $f_j \in \WW^{1,p}_\textup{loc}(\C, \C)$ to the Beltrami equation 
$$
\partial_{\bar z} f_j -   \chi_{\mb D} \mu_j \partial_{z} f_j = 0.
$$
Then by Stoilow's factorization  \cite[Theorem 5.5.1]{AIM} we have 
$$
\psi_j = h_j \circ f_j,
$$
where the maps $h_j$ are holomorphic in $f_j(\mb D)$.  
 
We can next estimate as in \cite[(4.11)]{AstalaFaraco02}. Namely, as $J_{f_j}(z) \leq  |\D f_j(z)|^2$, by the change of variables formula and \eqref{Belt2} we have
\begin{align} \label{holom.factor}
\int_{f_j(\mb D)} |h_j'(w)|  & \, \dd m(w)  = \int_\mb D  |h_j'(f_j (z))| J_{f_j}(z) \, \dd m(z)\\
& \leq \left( \int_\mb D  |h_j'(f_j (z))|^s |\D f_j(z)|^s \right)^{1/s} \left( \int_\mb D |\D f_j(z)|^t\right)^{1/t} \nonumber \\
& \leq C_t(K) \left( \int_\mb D  |\D \psi_j (z)|^s \right)^{1/s}, \nonumber
\end{align}
where $t$, the H\"older conjugate of $s$, also satisfies  \eqref{Holder}.
Thus the norms $\| h_j'  \|_{\LL^{1}(f_j(\mb D))}$ of the derivatives of the holomorphic factors are uniformly bounded.
Choosing a point $x_0 \in \mb D$ and adding a constant to elements of the generating sequence $\phi_j$, we can assume that $\psi_j(x_0) = x_0$, i.e.\ that 
$h_j$ takes $f_j(x_0)$ to $x_0$. Choosing then a subsequence such that $f_j(x_0)$ converge, we see that the holomorphic functions
$h_j \colon f_j(\mb D) \to \C$ form a normal family.

All in all, taking subsequences we can assume that 
$f_j \to f \in {\mathscr F}_k$  uniformly on $\C$ and 
$h_j \to h$  locally uniformly on $f(\mb D)$,
where $h$ is analytic on $f(\mb D)$. Further, we have the weak convergence
\begin{equation}  \label{CM3}
\D \psi_j = h_j'(f_j(z)) \D f_j \rightharpoonup  \D(h \circ f) \quad \mbox{ in } \; \LL^{s}(\mb D).
\end{equation}

Since $(\psi_j)$ generate the given Young measure $\nu$,  we see from \eqref{CM2} and Remark \ref{affine} 
 that $\D(h \circ f)(z)=A$ for a.e.\ $z \in \mb D$. 
 In particular, this means  that $f$ has the complex dilatation
$\mu = a\chi_{\DD}$. 
But given any compactly supported dilation with $\| \mu \|_\infty < 1$, this time $\mu = a \chi_{\DD}$, there is a unique $W^{1,2}_{\rm loc}$-principal mapping with this dilatation \cite[Theorem 5.1.2]{AIM}. So  $f$  must be equal to 
$$ 
f(z)=\begin{cases}
A(z)= z + a \bar z, & \tp{if } |z|\leq 1, \\ z+ \frac a z, & \tp{if } |z|\geq 1.
\end{cases}
$$ 
It follows that $\D h = \Id$ in $f(\mb D)$ and so we see that $h_j'(f_j(z)) \to 1$ locally uniformly on $\mb D$ as $j \to \infty$.
Thus with \eqref{Belt2}, for any $r<1$, 
$$
\|\D \psi_j - \D f_j\|_{\LL^{s}(\mb D(0,r))} = \| (h_{j}' \circ f_j - 1) \D f_j\|_{\LL^{s}(\mb D(0,r))} \to 0.
$$
Therefore, again by Theorem \ref{generates}, 
$( f_j)$ and $(\psi_j)$ generate the same (homogeneous) Young measure $\nu$ in every disc compactly contained in $\mb D$.
Since the sequence $(|\D f_j|^s)$ is equiintegrable over $\mb D$, and as $\mathscr L^2(\mb S^1)=0$, this finally shows that the sequence of principal maps $( f_j )$ generates  $\nu$ in $\mb D$. 
\end{proof}

Combining Theorem \ref{thm:YMqc} with \eqref{Belt2} we obtain the following consequence,  cf.\  \cite[Corollary 1.6]{AstalaFaraco02}: 

\begin{corollary}
\label{cor:higherintegrability}
If $\nu\in\Meas^s_\tp{qc}(Q_2(K))$ for some  $\frac{2K}{K+1}< s$, then $\nu \in \Meas^p_\tp{qc}(Q_2(K))$ for all $p<\frac{2K}{K-1}$.
\end{corollary}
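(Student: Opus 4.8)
The plan is to reduce the statement, via the invariance properties of $\Meas^p_\tp{qc}$, to a normalized measure and then feed it through Theorem~\ref{thm:YMqc} together with the uniform higher-integrability estimate \eqref{Belt2}.

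First I would dispose of the trivial ranges: whenever $p\le s$ we have $\WW^{1,s}(\Omega,\C)\subset\WW^{1,p}(\Omega,\C)$ on bounded domains, hence $\Meas^s_\tp{qc}(Q_2(K))\subseteq\Meas^p_\tp{qc}(Q_2(K))$ (cf.\ Remark~\ref{spGYM}) and there is nothing to do; in particular this already settles the case $s\ge\tfrac{2K}{K-1}$. So I may assume $\tfrac{2K}{K+1}<s<\tfrac{2K}{K-1}$ and fix $p$ with $s<p<\tfrac{2K}{K-1}$. I would then normalize the centre of mass. Since $Q_2(K)$ is $\WW^{1,s}$-quasiconvex for $s>\tfrac{2K}{K+1}$ (Example~\ref{ex:qccone}, Definition~\ref{qcset}), the matrix $M\equiv\langle\nu,\mathrm{Id}\rangle$ lies in $Q_2(K)$; if $M=0$ then $\nu=\delta_0$ (because $\det$ is a null-Lagrangian while $\det\ge 0$ on $Q_2(K)$, so $0=\det M=\langle\nu,\det\rangle$ forces $\det=0$ and hence $A=0$ $\nu$-a.e.) and the claim is trivial, so assume $M\ne 0$. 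Expressing $M$ in conformal coordinates, one checks that $M=\lambda\,Q A_0$ with $\lambda>0$, $Q\in\tp{SO}(2)$ and $A_0 z=z+a\bar z$, $|a|\le\tfrac{K-1}{K+1}$. As $Q_2(K)$ is a cone invariant under left multiplication by rotations, Lemma~\ref{lemma:invYM} lets me replace $\nu$ by $\tilde\nu\equiv(\nu_{Q^{-1},\mathrm{Id}})_{1/\lambda}\in\Meas^s_\tp{qc}(Q_2(K))$, which now satisfies $\langle\tilde\nu,\mathrm{Id}\rangle=A_0$; since the same lemma shows this operation preserves membership in $\Meas^p_\tp{qc}(Q_2(K))$, it suffices to prove $\tilde\nu\in\Meas^p_\tp{qc}(Q_2(K))$. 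Thus I may assume outright that $\langle\nu,\mathrm{Id}\rangle=A$ with $Az=z+a\bar z$, $|a|\le\tfrac{K-1}{K+1}$.

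Now the main step. By Theorem~\ref{thm:YMqc} there is a sequence $(f_j)$ of $K$-quasiconformal principal maps whose gradients generate $\nu$ in $\mathbb D$. Each $f_j$ is the principal solution of a Beltrami equation with dilatation bounded by $\tfrac{K-1}{K+1}$, so \eqref{Belt2}, used with the exponent $p$ --- admissible precisely because $p$ satisfies \eqref{Holder} --- gives $\|\p_{\bar z}f_j\|_{\LL^p(\mathbb D)}+\|\p_z f_j-1\|_{\LL^p(\mathbb D)}\le C_p(K)$. Combined with $|\D f_j|=|\p_z f_j|+|\p_{\bar z}f_j|$ and the normality of the family $\mathscr F_k$ (which bounds $\|f_j\|_{\LL^\infty(\mathbb D)}$ along a subsequence), the sequence $(f_j)$ is bounded in $\WW^{1,p}(\mathbb D,\C)$. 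By Theorem~\ref{thm:fundYM} a subsequence of $(\D f_j)$ generates a $\WW^{1,p}$-gradient Young measure; but $(\D f_j)$ already generates $\nu$, and this property is inherited by subsequences, so the homogeneous family $(\nu)_{x\in\mathbb D}$ is a $\WW^{1,p}$-gradient Young measure supported in $Q_2(K)$. Hence $\nu\in\Meas^p_\tp{qc}(Q_2(K))$, and unwinding the normalization via Lemma~\ref{lemma:invYM} yields the same for the original measure and for every $p<\tfrac{2K}{K-1}$.

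The one genuinely delicate point --- and the main obstacle --- is that a \emph{single} sequence must at once generate $\nu$ (the $\WW^{1,s}$ output of Theorem~\ref{thm:YMqc}) and be bounded in $\WW^{1,p}$ for the strictly larger exponent $p$. This is exactly what \eqref{Belt2} supplies, since the bound there is uniform over the whole interval $\tfrac{2K}{K+1}<p<\tfrac{2K}{K-1}$ once the dilatation bound is fixed. Everything else amounts to bookkeeping with the invariances of $\Meas^p_\tp{qc}$ recorded in Lemma~\ref{lemma:invYM} and with the already-known monotonicity in the exponent.
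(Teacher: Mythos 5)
Your proposal is correct and takes essentially the same route as the paper: the paper's one-line proof is precisely "combine Theorem \ref{thm:YMqc} with \eqref{Belt2}", i.e.\ generate $\nu$ by $K$-quasiconformal principal maps and then use the uniform bounds of \eqref{Belt2}, valid for every exponent $p<\tfrac{2K}{K-1}$, to see that the same sequence is bounded in $\WW^{1,p}(\mb D)$ and hence exhibits $\nu$ as a $\WW^{1,p}$-gradient Young measure. The additional bookkeeping you supply (the trivial range $p\le s$, the normalization of the barycenter via Lemma \ref{lemma:invYM}, and the degenerate case $\langle\nu,\mathrm{Id}\rangle=0$) is exactly what the paper leaves implicit in its remark that \eqref{CM2} is merely a normalisation.
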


We conclude  this section with  a version of Theorem \ref{thm:YMqc} for gradient Young measures which are generated by sequences with  integrable distortion. 

\begin{theorem}\label{thm:intdistort}
Let $\nu \in \mathscr M^{2}_\tp{qc}(\R^{2\times 2}_+)$ be such that, for some $|a|<1$, 
\begin{equation}
\label{centermA}
\langle \nu, \Id \rangle = A,  \qquad A(z)=z + a \bar z.
\end{equation}
Assume that $\nu$ is generated by a bounded sequence $\{\psi_j\}\subset \WW^{1,{2}}(\mb D)$ of homeomorphisms such that 
for some $q > 1$,
$$\|K_{\psi_j}\|_{\LL^{q}(\mb D)}\leq C. 
$$
Then there is a sequence of  maps $f_j\colon \C\to \C$ such that:
\begin{enumerate}
\item $f_j$ are principal maps;
\item For each $r<1$, the sequence $(f_j |_{\mb D(0,r)}) \subset \WW^{1,2}(\mb D(0,r))$
is bounded  and generates $\nu$; 
\item $\psi_j= h_j \circ f_j$ for some conformal maps $h_j\colon f_j(\mb D)\to \psi_j(\mb D)$.
\end{enumerate}
\end{theorem}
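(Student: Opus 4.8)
The plan is to follow the proof of Theorem~\ref{thm:YMqc}, replacing the uniform quasiconformal estimates \eqref{Belt2} and the normality of the family \eqref{Belt3} by their counterparts in the theory of mappings of integrable distortion. Write $\mu_j$ for the Beltrami coefficient of $\psi_j$ on $\mb D$, extended by $0$ outside, so that $\p_{\bar z}\psi_j=\mu_j\p_z\psi_j$ a.e.\ in $\mb D$ and the distortion of $\mu_j$ agrees with $K_{\psi_j}$; in particular $\|K_{\mu_j}\|_{\LL^q(\mb D)}\le C$ with $q>1$. By the extended Beltrami theory of Iwaniec and \v{S}ver\'ak \cite{IwaSve}, for each $j$ there is a unique $\WW^{1,2}_{\loc}(\C)$ principal solution $f_j=f_{\mu_j}$, which is moreover a homeomorphism of $\C$, and the extended Stoilow factorization gives $\psi_j=h_j\circ f_j$ with $h_j$ holomorphic on $f_j(\mb D)$; since $h_j=\psi_j\circ f_j^{-1}$ is injective, $h_j$ is conformal. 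This yields conclusions (1) and (3), so only (2) remains.

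Next I would record the compactness estimate exactly as in \eqref{holom.factor}. Since $J_{f_j}\le|\D f_j|^2$ pointwise and $|\D\psi_j|=|h_j'(f_j)|\,|\D f_j|$, the change of variables formula gives
\[
\int_{f_j(\mb D)}|h_j'(w)|\,\dd m(w)=\int_{\mb D}|h_j'(f_j(z))|\,J_{f_j}(z)\,\dd m(z)\le\int_{\mb D}|\D\psi_j(z)|^2\,\dd m(z)\le C,
\]
using the $\WW^{1,2}(\mb D)$-bound on $(\psi_j)$. Replacing $\psi_j$ by $\psi_j-(\psi_j(x_0)-x_0)$ for a fixed $x_0\in\mb D$ (this leaves $\D\psi_j$, $\mu_j$ and injectivity unchanged and can be absorbed into $h_j$ afterwards) we may assume $\psi_j(x_0)=x_0$, hence $h_j(f_j(x_0))=x_0$. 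Now the principal maps $f_j$ form a normal family: outside $\mb D$ they are conformal with $f_j(z)=z+O(1/z)$, hence uniformly bounded by the area theorem, while inside $\mb D$ the modulus-of-continuity estimates for homeomorphisms of $\LL^q$-integrable distortion, $q>1$, give equicontinuity uniform in $j$; moreover, for $q>1$, a locally uniform limit of such normalized principal homeomorphisms is again a principal homeomorphism \cite{IwaSve}. Passing to a subsequence, $f_j\to f$ locally uniformly on $\C$ with $f$ a principal homeomorphism, and then — arguing with the $\LL^1$-bound on $h_j'$ and the normalization $h_j(f_j(x_0))=x_0$ exactly as in the proof of Theorem~\ref{thm:YMqc} — $h_j\to h$ locally uniformly on $f(\mb D)$, with $h$ holomorphic and $h(f(x_0))=x_0$.

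To identify the limit, note that $\psi_j=h_j\circ f_j\to h\circ f$ locally uniformly on $\mb D$, while $(\D\psi_j)$ is bounded in $\LL^2(\mb D)$ and generates the homogeneous measure $\nu$; hence $\D\psi_j\weak A$ weakly in $\LL^2(\mb D(0,r))$ for each $r<1$ by \eqref{limit}, so $h\circ f\in\WW^{1,2}(\mb D(0,r))$ with $\D(h\circ f)=A$, i.e.\ $h\circ f(z)=z+a\bar z+c$ on $\mb D$. As this is non-constant, Hurwitz's theorem shows $h$ is injective, so $f|_{\mb D}=h^{-1}\circ(z+a\bar z+c)$ has complex dilatation $a$ on $\mb D$. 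Together with conformality outside $\mb D$, $f$ is a principal $\WW^{1,2}_{\loc}$-solution of the Beltrami equation with coefficient $a\chi_{\mb D}$, which is non-degenerate since $|a|<1$; by uniqueness \cite[Theorem 5.1.2]{AIM}, $f(z)=z+a\bar z$ for $|z|\le1$ and $f(z)=z+a/z$ for $|z|>1$. Hence $\D f=A$ on $\mb D$, which forces $h=\Id$ there, and therefore $h_j'(f_j)\to1$ locally uniformly on $\mb D$. Consequently, for each $r<1$ we have $|h_j'(f_j)|\ge\tfrac12$ on $\mb D(0,r)$ for $j$ large, so $|\D f_j|\le2|\D\psi_j|$ there; this gives a uniform $\WW^{1,2}(\mb D(0,r))$-bound for $(f_j|_{\mb D(0,r)})$ and
\[
\|\D f_j-\D\psi_j\|_{\LL^2(\mb D(0,r))}=\|(h_j'(f_j)-1)\D f_j\|_{\LL^2(\mb D(0,r))}\le 2\,\|h_j'(f_j)-1\|_{\LL^\infty(\mb D(0,r))}\,\|\D\psi_j\|_{\LL^2(\mb D)}\to0.
\]
By Theorem~\ref{generates}\,(2), $(f_j|_{\mb D(0,r)})$ is bounded in $\WW^{1,2}(\mb D(0,r))$ and generates $\nu$, for every $r<1$; this is conclusion (2).

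The main obstacle is the compactness input used in the second paragraph. In the quasiconformal setting of Theorem~\ref{thm:YMqc} the relevant normality and closedness was the elementary statement \eqref{Belt3}, whereas here one must invoke the considerably more delicate theory of mappings of integrable distortion — existence and uniqueness of $\WW^{1,2}_{\loc}$ principal solutions, uniform equicontinuity estimates, and the non-degeneration of locally uniform limits — all of which genuinely require $q>1$ and would fail at the endpoint $q=1$, consistent with the remarks following Theorems~\ref{thm:Wareaintro} and~\ref{thm:lscW}.
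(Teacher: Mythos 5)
Your overall architecture is the same as the paper's proof of Theorem \ref{thm:intdistort} (Iwaniec--\v{S}ver\'ak principal solutions and Stoilow factorization, normality of $(f_j)$ and of the conformal factors $(h_j)$, identification of $h\circ f$ with the affine map, then $h_j'(f_j)\to 1$ and transfer of the Young measure via Theorem \ref{generates}), and postponing the uniform $\WW^{1,2}(\mb D(0,r))$ bound to the end, where it follows from $|h_j'(f_j)|\ge \tfrac12$ on $\mb D(0,r)$, is a legitimate reordering of the paper's Koebe quarter-theorem estimate for $h_j^{-1}$. But two steps are not justified as written. First, the displayed bound $\int_{\mb D}|h_j'(f_j)|J_{f_j}\,\dd m\le\int_{\mb D}|\D\psi_j|^2\,\dd m$ does not follow from $J_{f_j}\le|\D f_j|^2$ and $|\D\psi_j|=|h_j'(f_j)|\,|\D f_j|$: these only give $|h_j'(f_j)|J_{f_j}\le|\D\psi_j|\,|\D f_j|$, and at this stage you have no $\LL^2$ control of $\D f_j$ (obtaining it is part of what is being proved). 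The paper's version uses $J_{f_j}\le|\D f_j|J_{f_j}^{1/2}$, Cauchy--Schwarz, and the bound $\int_{\mb D}J_{f_j}\,\dd m\le\pi$ valid for principal maps; since this $\LL^1$ bound on $h_j'$ is precisely what makes $(h_j)$ a normal family, the step needs to be repaired along those lines.

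The more serious gap is the identification of the limit $f$. You assert that, being a composition of a conformal map with an affine map in $\mb D$ and conformal outside, $f$ is a principal $\WW^{1,2}_{\loc}(\C)$ solution of $\p_{\bar z}f=a\chi_{\mb D}\,\p_z f$ and then invoke uniqueness. However, the only global regularity available for $f$ as a limit of the $f_j$ is $\WW^{1,2q/(q+1)}_{\loc}$, and the unit circle is in general \emph{not} removable for maps of finite distortion, so being a solution in $\mb D$ and conformal outside does not by itself make $f$ a global Sobolev solution to which the uniqueness theorem applies; this is exactly the point the paper singles out and circumvents with the welding argument ($\Phi=f\circ G^{-1}$ with the bi-Lipschitz map $G$, Weyl's lemma, and the principal normalization forcing $\Phi=\id$). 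Your route can be salvaged, but only with an argument you have not supplied: e.g. from $f=h^{-1}\circ(A+c)$ on $\mb D$ one computes $\int_{\mb D}|\D f|^2\,\dd m = K_A\,|f(\mb D)|\le \pi K_A$, the exterior conformal piece has finite Dirichlet energy near $\mb S^1$, and the continuity of the homeomorphism $f$ then allows one to glue the two Sobolev pieces across $\mb S^1$ without a singular part, after which the uniqueness of principal $\WW^{1,2}_{\loc}$ solutions may be invoked. As written, this key justification is missing (and, relatedly, the opening claim that Iwaniec--\v{S}ver\'ak provides a \emph{unique} $\WW^{1,2}_{\loc}$ principal solution for the degenerate coefficients $\mu_j$ overstates what that theory gives: one only has a $\WW^{1,1}_{\loc}$ principal solution together with the factorization, which is all the proof actually needs).
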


The proof strategy is similar to that of Theorem \ref{thm:YMqc}.
However, since we are in a setting where the Beltrami equations are degenerate elliptic the argument is more subtle. For instance, one does not know if 
the sequence $(f_j )$ is bounded in  $\WW^{1,2}(\mb D)$, which causes technical problems.
On the other hand, for the applications we have in mind it suffices to consider generating sequences which consist of homeomorphisms.

The argument  relies crucially on the Stoilow factorization for maps with integrable distortion, due to Iwaniec and \v{S}ver\'{a}k  \cite{IwaSve}. 

\begin{proof}
The maps $\psi_j  \in \WW^{1,2}(\mb D)$ give us homeomorphic  solutions to the Beltrami equations 
\begin{equation}
\label{integrable.dist}
\p_{\bar z}  \psi_j -\mu_j \p_z \psi_j=0 \quad \text{in } \mb D,
\end{equation}
where $K_{\psi_j} \equiv \frac{1+|\mu_j|}{1-|\mu_j|}\in \LL^q(\mb D)$. 

By the Iwaniec--\v{S}ver\'{a}k theorem, given any non-constant solution $F \in \WW^{1,2}_\tp{loc}(\mb D)$ to \eqref{integrable.dist}, 
there is a principal solution
$f_j\in \WW^{1,1}_\tp{loc}(\C)$ to the Beltrami equation
$$
\p_{\bar z} f_j -  \chi_{\mb D} \mu_j\p_z f_j=0.
$$
The general Iwaniec--\v{S}ver\'{a}k argument requires merely that $K_{\psi_j} \in \LL^1(\mb D)$, see   \cite[Theorem 20.2.1]{AIM}, then however one has uniform ($j$-independent) bounds only in $ \WW^{1,1}_{\loc}(\C)$, see \cite[pp. 539-540]{AIM}. 
 
 For better uniform bounds in our setting, needed  for the local uniform convergence,  we first note that for any principal map $f(z)$ as in \eqref{princip1}, all  scalings $\, f(Rz)/R$, with $R > 1,$  are again principal maps. This combined with Koebe's theorem, see e.g.  \cite[Theorem 2.10.4]{AIM}, shows that $f_j(R \,\DD) \subset 2 R \, \DD$,  for every $R \geq 1$. In particular,
$$
  \int_{\DD(0,R)} J_{f_j} dm(z) \le 4 \pi R^2, \qquad R \geq 1.
$$
Moreover, in  our situation $K_{f_j} \in \LL^q(\DD)$, with uniformly bounded norms. These allow the simple estimate

\begin{align*}
 \int_{\DD(0,2)} |\D f_j|^{\frac{2q}{q+1}} dm(z) & \leq \int_{\DD} K_{f_j}^{\frac{q}{q+1}}J_{f_j}^{\frac{q}{q+1}} dm(z) + 16 \pi \leq 16 \pi (1+C),
\end{align*}
where $\frac{2q}{q+1}>1$.

On the the other hand, outside the unit disc the area formula gives   uniform bounds for the coefficients $b_{j,n}$ in the expansion $f_j=z+\sum b_{j,n} z^{-n}$.  In brief, we see that $ f_j -z \in \WW^{1,\frac{2q}{q+1} }_{\textrm{loc}}(\C)$, with finite and $j$-independent bounds for  the $L^{\frac{2q}{q+1}}(\C)$-norms of their derivatives.  Notice that latter bound follows
 as well from the boundedness of the Beurling Ahlfors transform.

Coming back to the generating sequence of homeomorphisms $\psi_j$, by  the Iwaniec-\v{S}ver\'{a}k  theorem \cite[Theorem 20.2.1]{AIM} these  
admit the Stoilow's factorization 
$$
\psi_j = h_j\circ f_j		
$$
where $h_j$ are holomorphic, in fact conformal homeomorpisms, in $f_j(\mb D)$.  

To control the  factors $h_j$, we may assume that $\psi_j(0) = 0$. Next note that 
since $(\psi_j)$ generates the measure $\nu$, with center of mass $A$ as in \eqref{centermA},  and since by \cite[Theorem 20.1.1]{AIM}  the sequence is  locally uniformly equicontinuous,  Rellich's theorem with  \eqref{limit} then shows that 
\begin{equation}  \label{weakLim}
\psi_j \to A \quad {\rm locally \;  uniformly \;  on} \quad \mb D.
\end{equation}
In particular,  for any $r <1$, $\psi_j(r \, \mb D)$ is a compact subset of $\psi_j(\mb D)$, bounded away from the boundary of $\psi_j(\mb D)$ by a uniform constant $\delta = \delta(r) > 0$.
 
Further, we combine these with   bounds given by the classical Koebe 1/4-theorem,  see e.g.
 \cite[Theorem 2.10.6]{AIM}. 
 Indeed, since $h_j^{-1}\colon \psi_j(\mb D) \to f_j(\mb D)$ is conformal, the theorem gives
 $$ \left| \left(h_j^{-1}\right)'(w) \right| \leq 4 \frac{\dist \left(h^{-1}(w), f_j(\partial \mb D)\right)}{\dist\left(w, \psi_j( \partial \mb D) \right)} \leq \frac{8}{\delta(r)} , \quad w  \in \psi_j(r \mb D).
 $$
 The last estimate again uses  Koebe's result  $f_j(\mb D) \subset 2 \mb D$, valid for every principal map.


Consequently, via the decomposition $\psi_j = h_j \circ f_j$, we then have  for any fixed $r < 1$ and for all $j \geq j_0(r)$ large enough,
$$
\int_{\DD(0,r)} |\D f_j|^2 \, \dd m(z) = \int_{\DD(0,r)} | (h_j^{-1})'\bigl(\psi_j(z)\bigr)|^2  |\D \psi_j|^2 \, \dd m(z) \leq \frac{8}{\delta(r)}C ,
$$
where by our assumptions
$$ C \equiv \sup_j  \int_{\DD} |\D \psi_j|^2 \, \dd m(z) < \infty.
$$

 It only remains to show that the $(f_j|_{\mb D(0,r)})$ generate $\nu$ for every $r<1$. For this we first 
 use again \cite[Theorem 20.1.1]{AIM}  to get local uniform convergence on $\DD$, this time for the sequence of principal maps $f_j$. 
Moreover,  \cite[Theorem 20.2.1]{AIM} gives the inverse maps $g_j = (f_j)^{-1}$  a uniform modulus of continuity, so that the local uniform limit $f(z) = \lim f_j(z)$ is a homeomorphism on $\DD$. 

Similarly, as principal maps the $f_j$ define on $ \C \setminus \overline{\DD}$ a family of normalised conformal maps. This gives us a limiting normalised conformal map $f(z) = \lim f_j(z)$ on $ \C \setminus \overline{\DD}$, with convergence is locally uniform in the exterior disc. 
Finally, by the above $\WW^{1,\frac{2q}{q+1} } _{\textrm{loc}}(\mathbb C)$-bounds  and  Rellich's theorem,  we see that  
$f_j(z) $ converges to $f(z) $ in $\LL^{2q} _{\textrm{loc}}(\C)$, with derivatives $D f_j(z) - D f(z) \rightharpoonup  0$
 in $\LL^{\frac{2q}{q+1}}(\C)$. 

 
 We next argue as  in the proof of Theorem \ref{thm:YMqc}, but this time use  the pointwise bounds  
$$J_{f_j}(z) \leq  |\D f_j(z)| J_{f_j}(z)^{1/2}.$$  With a change of variables these give
\begin{align} \label{holom.factor}
 \int_{f_j(\mb D)} |h_j'(w)|   \, \dd m(w) 
&\leq \int_\mb D  |h_j'(f_j (z))|  |\D f_j(z)| J_{f_j}(z)^{1/2} \, \dd m(z) \\ 
 \leq \left( \int_\mb D  |h_j'(f_j (z))|^2  |\D f_j(z)|^2 \right)^{1/2} & \left( \int_\mb D J_{f_j}(z) \right)^{1/2} 
 \leq \pi \left( \int_\mb D  |\D \psi_j (z)|^2 \right)^{1/2}, \nonumber
\end{align}
since by the  area formula \eqref{areafmla}, for principal maps the area $|f_j(\mb D)| \leq \pi$.
 Therefore  $h_j \to h$  locally uniformly on $f(\mb D)$,
where $h$ is  conformal. 

Fixing $r<1$, we can thus take advantage of the uniform convergence of $h_j'(f_j)$ to deduce
that 
\begin{equation}  \label{CM3}
\D \psi_j = h_j'(f_j(z)) \D f_j \rightharpoonup  \D(h \circ f) \quad \mbox{ in } \; \LL^2(r \mb D).
\end{equation}
Via \eqref{weakLim} this shows that  $h \circ f(z) = A(z)$ for   $z \in \DD$. Here $h$ is a 
conformal map from $f(\DD)$ to the ellipse $$ {\mathcal E} \equiv \{ z+ a \overline{z}: |z| < 1 \}. $$

 In  the exterior disc $\C \setminus {\overline{\DD}}$ the limit map $f(z)$ is itself conformal, with the Laurent series of a principal mapping. 
In particular, $f$ has the complex dilatation $\mu_f=\mu_A  \chi_{\mb D} = a \chi_{\mb D}$. Thus our last task, in analogy with the proof of Theorem \ref{thm:YMqc}, is  to identify the factor  $h(z)$.

However, globally $f$ has only the  $\WW^{1,\frac{2q}{q+1} } _{\textrm{loc}}$-reqularity. 
In general the unit circle is not removable for such 
 maps of finite distortion,  for a simple example see e.g. \cite[(5.38)]{AIM}. 
 
 Hence we need to use  the specifics of $f$, but this comes easily: the rational  transformation
 $ R(z) = z + \frac{a}{z} 
 $
 defines a conformal map $R: \overline{\C} \setminus {\overline{\DD}} \to \overline{\C} \setminus {\overline{\mathcal E}}$.
 We can thus consider the map
 $$ 
\Phi(z)=\begin{cases}
h^{-1}(z), & \tp{if }  z \in \mathcal E \\ f \circ R^{-1}, & \tp{if } z \in \overline{\C} \setminus {\overline{\mathcal E}}.
\end{cases}
$$ 
Here  make use the auxiliary mapping  defined by $G(z) = R(z)$ for $|z| \geq 1,$ and $G(z) =  A(z) $ for $|z| \leq 1$. 
 This is  bi-Lipschitz, with $\Phi(z) = f \circ G^{-1}(z)$ for a.e. $z \in \C$. As $f \in \WW^{1,\frac{2q}{q+1} } _{\textrm{loc}}(\mathbb C)$, we obtain the same regularity for $\Phi(z)$ as well. But  then Weyl's lemma shows 
 $\Phi$ to be analytic in all of $ \overline{\C}$.
 
 We have now shown that  $\Phi(z)$ is a Mobius transformation fixing $\infty$,  thus a compostion of scaling and translation. Since both $f(z)$ and $R(z)$ are principal maps, in fact $\Phi(z) = z$.

Consequently, $h(z) = z$,  so that 
$h'_j\circ f_j \to 1$ locally uniformly,
and exactly as in the proof of Theorem \ref{thm:YMqc} we have
  $$
\|\D \psi_j - \D f_j\|_{\LL^{2}(\mb D(0,r))} = \| (h_{j}' \circ f_j - 1) \D f_j\|_{\LL^{2}(\mb D(0,r))} \to 0.
$$
 This  shows that $(\psi_j)|_{\mb D(0,r)}$ and $(f_j)|_{\mb D(0,r)}$ generate the same gradient Young measure.
\end{proof}
\smallskip

\section{The Burkholder functionals} \label{sec:Bf}
In the literature there are a few slightly varying versions of the Burkholder functional.
For instance, Iwaniec  studies in \cite{Tade}  the functionals 
$$
{\mathscr B}^{\pm}_p(A) = \left| 1 - \frac{n}{p}\right| \, |A|^p \, \pm |A|^{p-n} \det A, \quad A\in \R^{n \times n}.
$$
Here, see \cite[Theorem 5]{Tade}, both functionals ${\mathscr B}^{+}_p$ and ${\mathscr B}^{-}_p$ are \textit{rank-one convex},
and in two dimensions 
${\mathscr B}^{+}_p$ is convex in the directions of matrices with
nonnegative determinant, while ${\mathscr B}^{-}_p$ is convex in the directions of matrices with nonpositive determinant, see
\cite[Proposition 12.1]{Tade}. In fact, precomposing with a reflection one can interchange the functionals ${\mathscr B}^{\pm}_p$.
In any case,   our choice \eqref{Burk} amounts to
$${\bf B}_p(A) = (p/2){\mathscr B}^{-}_p(A),$$ with $n=2 \leq p < \infty$, which is a rank-one convex functional.

In conformal coordinates, cf.\ \eqref{eq:confcoords}, the Burkholder functional can be written as  
${\bf B}_p(A) = |A|^{p-1} \bigl( (p-1)|a_-| - |a_+| \bigr)$ and, in particular,  
\begin{equation} \label{Bconf}
{\bf B}_p(\D f) = |\D f|^{p-1} \left( (p-1)|f_{\overline{z}}| - |f_z| \right).
\end{equation}
  One easily checks that  ${\bf B}_p( \Id ) = -1$ and that, for $p= \frac{2K}{K-1}$,
\begin{equation*}
\label{eq:levelsetBp}
\bigl\{ {\bf B}_p \leq 0 \bigr\} = \bigl\{ A \in \R^{2 \times 2} : \, |A|^2 \leq K \det(A) \bigr\} = Q_2(K),
\end{equation*}
the cone of $K$-quasiconformal matrices introduced in Example \ref{ex:qccone}.
\medskip

\subsection{Weighted integral estimates and the Burkholder functionals} \label{deformations}

The proof of closed quasiconvexity of the local Burkholder functional \eqref{eq:locBurk},
with domain the  quasiconformal cone $Q_2(K)$, is based on two fundamentally different methods, both essential for the argument.
The first,  developed in \cite{AIPS12}, establishes  optimal weighted integral bounds for $K$-quasiconformal principal mappings,
via holomorphic motions and special complex interpolation. On the other hand, the second method, developed in this work,
analyses the interaction of the Burkholder functional with the gradient Young measures generated by principal mappings. 

For  the optimal integral estimates, 
recall first that 
principal solutions $f = f_\mu$ to the Beltrami equation  \eqref{Belt1} allow holomorphic deformations \cite[Section 12]{AIM}.
That is, given the coefficient $\mu(z)$ with $\| \mu \|_\infty < 1$, one can construct families of coefficients $\mu_\lambda(z)$ depending
holomorphically on the parameter $\lambda \in \mb D$, such that $\| \mu_\lambda \|_\infty < 1$ for all $\lambda \in \mb D$, and $\mu_{\lambda_0} = \mu$
for a suitable $\lambda_0 \in \mb D$. Typically one also requires that  $\mu_0 \equiv 0$.

In this setting, for each $\lambda \in \mb D$ there is a unique principal solution to \eqref{Belt1}  with coefficient $\mu_\lambda(z)$.
In fact, this family of solutions defines a {\it holomorphic motion} 
\begin{equation} \label{holomotion} \Phi(\lambda,z) \equiv f_{\mu_{\lambda}}(z),
\end{equation}
 that is 
$ \lambda \mapsto \Phi(\lambda,z) $ is holomorphic, $z \mapsto \Phi(\lambda,z) $ is injective and $\Phi(0,z) = z$, c.f. \cite[Section 12]{AIM}.

Furthermore,  the gradients of the above holomorphic deformations \eqref{holomotion} and their $\LL^p$-norms allow optimal  interpolation bounds,
similar to those in the classical  Riesz-Thorin complex interpolation. For details see  \cite[Lemmas 1.4 and 1.6]{AIPS12}.

To combine all this with the Burkholder functional, note that for any given solution $f$ to the Beltrami equation 
\eqref{Belt1}  the Burkholder functional may be written equivalently as 
\vspace{.01cm}
\begin{equation}\label{Beltqc}
 {\bf B}_p\bigl(Df(z)\bigr) =  \Bigl(\frac{p |\mu(z)}{1+|\mu(z)|} - 1\Bigr) \bigl(|f_z(z)| + |f_{\overline{z}}(z)| \bigr)^{p}.
\end{equation}

This suggests that for quasiconformal principal mappings  one should approach the Burkholder integrals via $\LL^p$-estimates in
the appropriate weighted spaces. Indeed, using the deformations \eqref{holomotion} and combining \eqref{Beltqc} with the  optimal
weighted $\LL^p$-bounds from  the above complex interpolation  
leads to the following result.

\begin{theorem} \cite[Theorem 3.5]{AIPS12}  \label{closedQC2}
Suppose $f \colon \C \to \C$ is a principal solution to the Beltrami equation
\begin{equation}\label{Belt4}
 f_{\overline{z}} = \mu f_z, \quad | \mu(z)| \leq k \chi_{\DD}(z), \quad 0 \leq k < 1,
\end{equation}
Then for all exponents $2 \leq p \leq 1 + 1/k$, we have
\begin{equation}\label{Belt55}
 \fint_{\DD} \Bigl( 1-\frac{p |\mu(z)|}{1+|\mu(z)|} \Bigr) \left(|f_z(z)| + |f_{\overline{z}}(z)| \right)^{p} \dd m(z) \leq 1. 
\end{equation}
\end{theorem}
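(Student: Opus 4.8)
The plan is to follow the holomorphic-motions strategy of \cite{AIPS12}: we embed $f$ in an analytic family of principal solutions, reduce \eqref{Belt55} to a bound on a single $\lambda$-dependent integral, and obtain that bound by a complex-interpolation argument whose base case is the classical area formula \eqref{areafmla}. First I would set $k = \| \mu \|_\infty$, put $\nu := \mu/k$ where $\mu \neq 0$ and $\nu := 0$ elsewhere (so $|\nu| \le \chi_{\mb D}$ and $\mu = k\nu$), and for $\lambda \in \mb D$ consider $\mu_\lambda := \lambda\nu$, so that $\| \mu_\lambda \|_\infty = |\lambda|\, \| \nu \|_\infty < 1$, $\mu_0 \equiv 0$ and $\mu_k = \mu$. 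Let $f^\lambda = f_{\mu_\lambda}$ be the principal $\WW^{1,2}_{\loc}$-solution of \eqref{Belt1} from Section~\ref{sec:principal} and $\Phi(\lambda,z) = f^\lambda(z)$ the associated holomorphic motion. By the Neumann series \eqref{Beur2}, $\lambda \mapsto \partial_z f^\lambda$ depends holomorphically on $\lambda$ as an $\LL^2_{\loc}(\C)$-valued map (and into $\LL^s(\C)$ on the range \eqref{eq:invertBelt}), and since every $f^\lambda$ is again a principal $\WW^{1,2}_{\loc}$-map, the area formula gives $\fint_{\mb D} J_{f^\lambda}\,\dd m \le 1$ for each $\lambda$.

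Next I would reformulate. Since $\partial_{\bar z} f^\lambda = \mu_\lambda\,\partial_z f^\lambda$ a.e., we have $|f^\lambda_z| + |f^\lambda_{\bar z}| = |\partial_z f^\lambda|\,(1 + |\lambda\nu|)$, so \eqref{Beltqc} and the identity $(1 - \tfrac{pu}{1+u})(1+u)^p = (1+u)^{p-1}(1-(p-1)u)$ turn the left-hand side of \eqref{Belt55} into $\Psi_p(k)$, where
$$
\Psi_p(\lambda) := \fint_{\mb D} \bigl(1 + |\lambda\nu(z)|\bigr)^{p-1}\bigl(1 - (p-1)|\lambda\nu(z)|\bigr)\,\bigl|\partial_z f^\lambda(z)\bigr|^p\,\dd m(z) .
$$
Here $\Psi_p(0) = 1$; the weight is nonnegative precisely when $p \le 1 + 1/k$ and, at the endpoint $p = 1+1/k$, vanishes exactly on $\{|\mu| = k\}$; and for $p = 2$ the integrand equals $J_{f^\lambda}$ pointwise, so $\Psi_2 \le 1$ on all of $\mb D$ by \eqref{areafmla}. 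It thus remains to propagate the bound $\Psi_p(k) \le 1$ to every $2 < p \le 1 + 1/k$.

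This propagation is the heart of the argument and is carried out by the complex-interpolation machinery of \cite[Lemmas~1.4 and 1.6]{AIPS12}: one parametrizes the problem over a strip (identified conformally with a suitable sub-disc of $\mb D$), pinning one edge to the exponent $p = 2$, and builds an analytic auxiliary function whose modulus on that edge is controlled by the area bound $\fint_{\mb D} J_{f^\lambda}\,\dd m \le 1$, whose modulus on the other edge is controlled by the vanishing of the weight on $\{|\mu| = k\}$ at $p = 1+1/k$, and whose value at the interior point attached to the given $p$ reproduces $\Psi_p(k)$; the three-lines lemma then yields $\Psi_p(k) \le 1$, and unwinding the reformulation gives \eqref{Belt55} for all $2 \le p \le 1 + 1/k$.

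The main obstacle is exactly the design of this analytic family. The weight $\bigl(1+|\mu_\lambda|\bigr)^{p-1}\bigl(1-(p-1)|\mu_\lambda|\bigr)$ depends only on $|\lambda|$, so one cannot simply integrate the subharmonic functions $\lambda \mapsto |\partial_z f^\lambda(z)|^p$ against it; the exponent $p$ must be allowed to slide along the interpolation parameter, precisely so that the weight degenerates at the correct endpoint and closes the estimate with constant $1$. That such sharpness is unavoidable is already visible from the radial stretching $f(z) = |z|^{1/K - 1}z$ on $\mb D$, extended by the identity outside $\mb D$, which is a $K$-quasiconformal principal map turning \eqref{Belt55} into an equality for every $p$ in the range --- so no soft compactness or crude $\LL^s$-estimate can substitute for the interpolation. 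A secondary technical point is the growth control in the unbounded strip required to apply the three-lines lemma, for which the global higher-integrability bounds \eqref{Belt2}--\eqref{Holder} and the normality of the family $\mathscr F_k$ in \eqref{Belt3} are the natural inputs.
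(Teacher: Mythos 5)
Your proposal follows essentially the same route as the paper: the theorem is quoted there directly from \cite[Theorem 3.5]{AIPS12}, and the paper's surrounding discussion describes precisely your strategy --- embed $f$ in a holomorphic family of principal solutions $f_{\mu_\lambda}$, rewrite the integrand in the weighted form \eqref{Beltqc}, use the area formula to settle the case $p=2$, and propagate to all $2\le p\le 1+1/k$ via the complex-interpolation machinery of \cite[Lemmas 1.4 and 1.6]{AIPS12}. Your algebraic reduction to the bound $\Psi_p(k)\le 1$ (including the identity $(1-\tfrac{pu}{1+u})(1+u)^p=(1+u)^{p-1}(1-(p-1)u)$ and the sharpness example) is correct, and, like the paper, you defer the decisive interpolation construction to the cited lemmas rather than carrying it out.
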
 
\medskip

Finally, comparing now \eqref{Beltqc} and \eqref{Belt55}, we may rewrite Theorem \ref{closedQC2}  as follows

\begin{theorem} \cite[Theorem 1.3]{AIPS12}   \label{closedQC}
Suppose $f \colon \C \to \C$ is a principal solution to the Beltrami equation \eqref{Belt4}. Then for all $p \in [2,1+1/k]$
we have
\begin{equation} \label{Burk123}
{\bf B}_p(\Id)\leq \fint_\mb D {\bf B}_p(\D f(z)) \,\dd m(z). 
\end{equation}
\end{theorem}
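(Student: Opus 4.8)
The statement is a direct reformulation of Theorem \ref{closedQC2}, so the plan is simply to translate the latter into the language of the Burkholder functional; no new analytic input is required. First I would record the pointwise identity \eqref{Beltqc}: combining the conformal-coordinate expression \eqref{Bconf} with the relation $f_{\bar z} = \mu f_z$ (valid for any solution of \eqref{Belt4}) and $|\D f| = |f_z| + |f_{\bar z}|$ from \eqref{eq:detconformal}, one obtains
\[
{\bf B}_p\bigl(\D f(z)\bigr) = \Bigl(\frac{p|\mu(z)|}{1+|\mu(z)|} - 1\Bigr)\bigl(|f_z(z)| + |f_{\bar z}(z)|\bigr)^{p} \qquad \text{a.e. in } \C .
\]
In particular the integrand appearing in \eqref{Belt55} is precisely $-{\bf B}_p(\D f(z))$. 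Since ${\bf B}_p(\Id) = -1$ (immediate from \eqref{Burk} with $A = \Id$, or from \eqref{Bconf}), Theorem \ref{closedQC2} asserts exactly that
\[
\fint_{\mb D} \bigl(-{\bf B}_p(\D f(z))\bigr)\,\dd m(z) \;\le\; 1 \;=\; -{\bf B}_p(\Id),
\]
and rearranging gives \eqref{Burk123}. That is the whole argument once Theorem \ref{closedQC2} is granted.

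For context, the substance lies in Theorem \ref{closedQC2}, whose proof I would structure as follows. Embed the given coefficient $\mu$ (supported in $\DD$, $\|\mu\|_\infty \le k$) into a holomorphic family $\mu_\lambda$, $\lambda \in \mb D$, with $\mu_0 \equiv 0$ and $\mu_{\lambda_0} = \mu$ for a parameter $\lambda_0$ with $|\lambda_0| = k$ — for instance $\mu_\lambda = (\lambda/k)\mu$ — and consider the associated holomorphic motion $\Phi(\lambda, z) = f_{\mu_\lambda}(z)$ of \eqref{holomotion}, so that $\Phi(0,z) = z$. Recasting the left-hand side of \eqref{Belt55} as a weighted $\LL^p$-quantity attached to $\Phi$, one obtains the bound with constant exactly $1$ by a Riesz--Thorin-type complex interpolation in the parameter $\lambda$: the estimate is trivial at $\lambda = 0$, one controls the boundary behaviour on the parameter disk, and a maximum-principle argument then yields the sharp interior bound at $\lambda_0$. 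This is \cite[Lemmas 1.4 and 1.6]{AIPS12}. The higher-integrability bounds \eqref{Belt2} and the invertibility of $I - \mu_\lambda \mathbf{S}$ in the range \eqref{eq:invertBelt} are what keep $\D f_{\mu_\lambda}$ in the relevant $\LL^p$ spaces uniformly in $\lambda$, making the interpolation legitimate.

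Accordingly, there is no real obstacle in proving Theorem \ref{closedQC} itself — that step is purely algebraic. The genuine difficulty sits one level down, in the interpolation underlying Theorem \ref{closedQC2}: one must arrange the holomorphic dependence of \emph{both} the map and the weight on $\lambda$ so that the interpolation constant comes out to be precisely $1$ rather than merely a universal constant. This sharpness is exactly what upgrades \eqref{Burk123} from a boundedness statement to a quasiconvexity-type inequality with extremals. Granting \cite[Theorem 3.5]{AIPS12}, however, the proof of Theorem \ref{closedQC} reduces to the two-line computation displayed above.
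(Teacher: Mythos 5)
Your argument is correct and is exactly how the paper handles this statement: the paper presents Theorem \ref{closedQC} as a rewriting of Theorem \ref{closedQC2} via the identity \eqref{Beltqc} together with ${\bf B}_p(\Id)=-1$, which is precisely your two-line translation. Your added sketch of the holomorphic-motion/interpolation proof of Theorem \ref{closedQC2} also matches what the paper attributes to \cite[Lemmas 1.4, 1.6 and Theorem 3.5]{AIPS12}, so nothing is missing.
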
 
\smallskip

In particular, if a quasiconformal map $f$ on $\mb D$ has identity boundary values, 
it extends trivially to a principal map of $\C$. Thus the above result shows that the Burkholder functional ${\bf B}_p$ is
quasiconvex at identity when tested with $K$-quasiconformal  maps, under the condition that $2\leq p \leq \frac{2K}{K-1}$.
Furthermore, the equality in \eqref{Burk123} occurs for a large class of radial mappings, see for instance Subsection \ref{Bradial} below.


\begin{remark}
Note that holomorphic deformations of solutions $f$ to  \eqref{Belt4}, such as $\lambda \mapsto f_{\mu_{\lambda}}(z)$ above, in general change
the boundary values of the mapping, even if the original map $f$ has identity boundary values  on $\partial \mb D$. 
For this reason, in particular,  the principal mappings and their integral bounds are indispensable  for the quasiconvexity estimates   \eqref{Burk123}.
\end{remark}
\medskip

On the other hand,  to prove quasiconvexity bounds such as \eqref{Burk123} 
for quasiconformal maps on $\mb D$ with  linear boundary values $A \neq \Id$, 
different methods appear necessary. Here we will make extensive use of the gradient Young measures discussed in Sections \ref{sec:prelims} and \ref{Young}.
However, even for these Theorem \ref{closedQC} is required as their basis and  the starting point for the estimates they provide, see e.g. Proposition \ref{Id}.


\begin{remark}
Note that, in general, at the endpoint exponent $p=1 + 1/k$ 
we have $\D f \notin \LL^{p}_{\loc}$ for the  solutions \eqref{Belt4}: simple examples are obtained, for instance, by considering radial stretchings. Nonetheless, we always have $\D f \in \tp{weak-}\LL^p_\tp{loc}$ \cite[Theorem 13.2.1]{AIM}.
A surprising feature of Theorem \ref{closedQC} is that, even if ${\bf B}_p$ has $p$-growth, we are able to test the quasiconvexity inequality \eqref{Burk123} with maps which are \textit{not} in $\WW^{1,p}_{\loc}$, and in particular ${\bf B}_p$ is \textit{integrable} along such maps. One can regarded this as an extension to the planar quasiconformal setting of the results of \cite{IS}, where it is shown that $\det \D f\in L^1_\tp{loc}$ if $f\in \tp{weak-}W^{1,n}_\tp{loc}$ is orientation-preserving.
\end{remark}

\vspace{.1cm}

\subsection{Burkholder functional and radial mappings}\label{Bradial}

In Lemma \ref{lemma:rcradialstretchings} we saw that rank-one convex functionals are quasiconvex along radial maps. The Burkholder functional is special, as it is \textit{quasiaffine} on a large class of radial stretchings:

\begin{lemma}\label{lemma:Bpradialstretchings}
Let $\phi(z)=\rho(r) \frac{z}{r}$ be a radial stretching satisfying the condition
\begin{equation*}
\label{eq:nonexpanding}
|\dot \rho(r)|\leq \frac{\rho(r)}{r}.
\end{equation*}
The functional ${\bf B}_p$ is quasiaffine along such radial stretchings: 
$$-1={\bf B}_p(\Id)= \fint_{\mb D} {\bf B}_p(\D \phi) \,\dd m(z).$$
\end{lemma}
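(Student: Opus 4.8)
The plan is to reduce the two-dimensional integral to a one-dimensional computation in the radial variable $r$, exploiting the fact that for the radial stretchings in question the Burkholder integrand is an exact $r$-derivative.

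First I would record the elementary consequences of the non-expanding hypothesis. By \eqref{eq:derivativephi} one has $|\p_z\phi| = \tfrac12\bigl|\dot\rho(r) + \tfrac{\rho(r)}{r}\bigr|$ and $|\p_{\bar z}\phi| = \tfrac12\bigl|\dot\rho(r) - \tfrac{\rho(r)}{r}\bigr|$ a.e.\ in $\mb D$. Since $\rho \ge 0$, the condition $|\dot\rho(r)| \le \rho(r)/r$ says exactly that $-\tfrac{\rho}{r} \le \dot\rho \le \tfrac{\rho}{r}$, hence $\dot\rho + \tfrac{\rho}{r} \ge 0$ and $\dot\rho - \tfrac{\rho}{r} \le 0$; the absolute values then disappear, and using $|\D\phi| = |\p_z\phi| + |\p_{\bar z}\phi|$ from \eqref{eq:detconformal} we obtain, a.e.\ in $\mb D$,
$$ |\p_z\phi| = \tfrac12\Bigl(\dot\rho + \tfrac{\rho}{r}\Bigr), \qquad |\p_{\bar z}\phi| = \tfrac12\Bigl(\tfrac{\rho}{r} - \dot\rho\Bigr), \qquad |\D\phi| = \frac{\rho(r)}{r}. $$

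Next I would substitute these into the conformal form \eqref{Bconf} of ${\bf B}_p$; a one-line simplification yields
$$ {\bf B}_p(\D\phi) = |\D\phi|^{p-1}\bigl((p-1)|\p_{\bar z}\phi| - |\p_z\phi|\bigr) = \tfrac12\Bigl(\tfrac{\rho(r)}{r}\Bigr)^{p-1}\Bigl((p-2)\tfrac{\rho(r)}{r} - p\,\dot\rho(r)\Bigr). $$
The crux is then to recognise the right-hand side as a total derivative: using ${\bf B}_p(s\,\Id) = -s^p$ for $s>0$ (immediate from \eqref{Burk}), a direct differentiation shows that for a.e.\ $r \in (0,1)$
$$ 2r\,{\bf B}_p(\D\phi) = \frac{\dd}{\dd r}\Bigl(-\,r^{2-p}\rho(r)^p\Bigr) = \frac{\dd}{\dd r}\Bigl(r^2\,{\bf B}_p\bigl(\tfrac{\rho(r)}{r}\,\Id\bigr)\Bigr). $$
Passing to polar coordinates, $\int_{\mb D}{\bf B}_p(\D\phi)\,\dd m(z) = \pi\int_0^1 2r\,{\bf B}_p(\D\phi)\,\dd r = \pi\bigl[-r^{2-p}\rho(r)^p\bigr]_0^1$, and since the stretching restricts to the identity on $\partial\mb D$, that is $\rho(1)=1$, this equals $-\pi = \pi\,{\bf B}_p(\Id)$, which is the assertion.

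The one point requiring care is the endpoint $r=0$ in the fundamental theorem of calculus: because $p\ge 2$ the factor $r^{2-p}$ may blow up there, so one invokes that $\rho$ is Lipschitz with $\rho(0)=0$, whence $0 \le r^{2-p}\rho(r)^p \le (\Lip\rho)^p\,r^2 \to 0$; concretely, apply the FTC on $[\varepsilon,1]$ and let $\varepsilon\to 0^+$, the $r$-integral converging by dominated convergence since ${\bf B}_p(\D\phi)\,r$ is bounded by $(\Lip\rho)^p$ on $(0,1)$. All remaining steps are elementary algebra.
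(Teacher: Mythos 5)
Your proof is correct. The paper itself offers no in-text argument for this lemma---it simply cites \cite[Theorem 8.1]{AIPS15a} and \cite{Guerra19}---so there is nothing internal to compare against; your direct computation is the standard one, and it mirrors the mechanism the authors do display in the proof of Lemma \ref{lemma:rcradialstretchings}, where the same total derivative $\tfrac{\dd}{\dd r}\bigl(r^{2}\,{\bf E}\bigl(\tfrac{\rho(r)}{r}\Id\bigr)\bigr)$ appears (there only as a lower bound via rank-one convexity, whereas you verify that for ${\bf B}_p$ under the non-expanding hypothesis the integrand is an exact derivative, which is precisely what ``quasiaffine'' means here). Two small remarks. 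First, you correctly read the normalization $\rho(1)=1$ into the statement: the paper's definition of a radial stretching does not require identity boundary values, but without $\rho(1)=1$ the asserted equality fails (your computation yields $-\rho(1)^{p}$), and this normalization is implicit in how the lemma is used, e.g.\ in the proof of Theorem \ref{thm:extremality}; making it explicit, and handling the endpoint $r=0$ via $\rho(r)\le(\Lip\rho)\,r$, which is the only place where $p\ge 2$ causes any issue, is exactly the care the citation glosses over. Second, a cosmetic slip: $r\,{\bf B}_p(\D\phi)$ is bounded by $(p-1)(\Lip\rho)^{p}$ rather than by $(\Lip\rho)^{p}$, but only boundedness matters for your truncation-and-limit argument, so nothing is affected.
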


\begin{proof}
We refer the reader to \cite[Theorem 8.1]{AIPS15a} or \cite{Guerra19} for a proof.
\end{proof}

We also have the following refinement of  \cite[Theorem 8.1]{AIPS15a} to the $\overline \R$-valued setting, showing that 
${\bf B}_p$ is an extreme point in a natural class of functionals.

\begin{theorem}
Let ${\bf E}\colon \R^{2\times 2} \to \overline \R$ be an functional such that:
\begin{enumerate}
\item\label{it:domain} for some $K\geq 1$, $Q_2(K)\subset\tp{int}\bigl(\tp{dom}({\bf E})\bigr)$;
\item ${\bf E}$ is rank-one convex and ${\bf E}(\Id)=-1$;
\item ${\bf E}$ is positively $p$-homogeneous, for some $p\geq 2$;
\item ${\bf E}$ is isotropic, that is ${\bf E}(QAR)={\bf E}(A)$ for all $A\in \R^{2\times 2}$ and all $Q,R\in \tp{SO}(2)$.
\end{enumerate} 
Then ${\bf E}(A)\geq {\bf B}_p(A)$ for all $A\in Q_2(K)$.
\label{thm:extremality}
\end{theorem}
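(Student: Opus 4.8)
The plan is to test the quasiconvexity inequality for ${\bf E}$ along suitable radial stretchings, exploiting two facts proved above: that ${\bf B}_p$ is quasiaffine along radial stretchings satisfying $|\dot\rho|\le\rho/r$ (Lemma~\ref{lemma:Bpradialstretchings}), and that every rank-one convex functional obeys the quasiconvexity inequality along such maps (Lemma~\ref{lemma:rcradialstretchings}). First I would reduce to a one-parameter problem: by hypothesis (3), ${\bf E}(0)=0={\bf B}_p(0)$, so one may take $A\neq 0$; by the isotropy hypothesis (4) both ${\bf E}$ and ${\bf B}_p$ depend on $A$ only through the moduli $(|a_+|,|a_-|)$ of its conformal coordinates \eqref{eq:confcoords}, and $A\in Q_2(K)$ means precisely $|a_-|\le k|a_+|$ with $k=\frac{K-1}{K+1}$. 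After rescaling (using $p$-homogeneity) and composing with rotations, it therefore suffices to prove
\[
{\bf E}(z+t\bar z)\ \ge\ {\bf B}_p(z+t\bar z)=(1+t)^{p-1}\bigl((p-1)t-1\bigr),\qquad t\in[0,k].
\]

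For a fixed $t\in[0,k]$ I would set $s:=\frac{1-t}{1+t}\in[\tfrac1K,1]$ and, for $\varepsilon\in(0,1)$, use the truncated power stretching $\phi_\varepsilon(z)=\rho_\varepsilon(|z|)\tfrac z{|z|}$, where $\rho_\varepsilon(r)=r^{s}$ for $r\ge\varepsilon$ and $\rho_\varepsilon(r)=\varepsilon^{s-1}r$ for $r<\varepsilon$. Then $\rho_\varepsilon$ is Lipschitz with $\rho_\varepsilon\ge0$, $\rho_\varepsilon(0)=0$ and $|\dot\rho_\varepsilon|\le\rho_\varepsilon/r$; moreover, by \eqref{eq:derivativephi}, $\D\phi_\varepsilon(z)$ has, as a function of $|z|$ only, conformal-coordinate moduli $\bigl(\varepsilon^{s-1},0\bigr)$ on $\{|z|<\varepsilon\}$ and $|z|^{s-1}\bigl(\tfrac{s+1}2,\tfrac{1-s}2\bigr)=\tfrac{|z|^{s-1}}{1+t}(1,t)$ on $\{|z|>\varepsilon\}$. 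In particular $\D\phi_\varepsilon$ ranges over a compact subset of $Q_2(K)$, hence of $\tp{int}(\tp{dom}({\bf E}))$ by hypothesis (1), and $\{t\,\Id:t>0\}\subset Q_2(K)$ as well, so both lemmas apply to $\phi_\varepsilon$ and give
\[
\fint_{\mb D}{\bf B}_p(\D\phi_\varepsilon)\,\dd m=-1,\qquad \fint_{\mb D}{\bf E}(\D\phi_\varepsilon)\,\dd m\ge-1 .
\]

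It then remains to evaluate both averages explicitly. Writing $M_s$ for the matrix $w\mapsto\tfrac1{1+t}(w+t\bar w)$ and $\alpha:=(s-1)p+2$, isotropy and $p$-homogeneity give ${\bf E}(\D\phi_\varepsilon(z))=\varepsilon^{(s-1)p}{\bf E}(\Id)=-\varepsilon^{(s-1)p}$ on $\{|z|<\varepsilon\}$ and $=|z|^{(s-1)p}{\bf E}(M_s)$ on $\{|z|>\varepsilon\}$, so an elementary integration in polar coordinates yields
\[
\fint_{\mb D}{\bf E}(\D\phi_\varepsilon)\,\dd m=-\varepsilon^{\alpha}+\frac{2\,{\bf E}(M_s)}{\alpha}\bigl(1-\varepsilon^{\alpha}\bigr)\ \ (\alpha\neq0),\qquad
\fint_{\mb D}{\bf E}(\D\phi_\varepsilon)\,\dd m=-1+2\,{\bf E}(M_s)\log\tfrac1\varepsilon\ \ (\alpha=0),
\]
together with the identical formulas for ${\bf B}_p$. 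Comparing the ${\bf B}_p$-formula with the value $-1$ forces ${\bf B}_p(M_s)=-\alpha/2$ (and ${\bf B}_p(M_s)=0$ when $\alpha=0$). Feeding the inequality $\fint_{\mb D}{\bf E}(\D\phi_\varepsilon)\,\dd m\ge-1$ into the ${\bf E}$-formula and rearranging — where the only subtlety is that $1-\varepsilon^{\alpha}$ has the same sign as $\alpha$ for $\varepsilon\in(0,1)$ — yields ${\bf E}(M_s)\ge-\alpha/2={\bf B}_p(M_s)$ in every case, already for a single fixed $\varepsilon\in(0,1)$ and with no passage to a limit needed. Since $z+t\bar z=(1+t)M_s$ up to rotations, hypotheses (3) and (4) give ${\bf E}(z+t\bar z)=(1+t)^p{\bf E}(M_s)\ge(1+t)^p{\bf B}_p(M_s)={\bf B}_p(z+t\bar z)$, which is the required inequality.

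The main obstacle — and the reason one assumes (1) rather than mere local boundedness — is to keep every competitor gradient inside $Q_2(K)$, so that Lemma~\ref{lemma:rcradialstretchings} can be invoked; the truncation $\phi_\varepsilon$ is built precisely for this. It also circumvents the fact that the untruncated power stretching $z\mapsto z|z|^{s-1}$ leaves $\WW^{1,p}_{\loc}$ exactly when ${\bf B}_p(z+t\bar z)\ge0$, i.e.\ when $t\ge\tfrac1{p-1}$ (equivalently $p\ge\tfrac{2K}{K-1}$): in that regime $\alpha<0$ and one cannot let $\varepsilon\to0$, but the inequality at a fixed $\varepsilon$ still closes the argument through the sign bookkeeping above.
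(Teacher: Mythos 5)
Your proof is correct and follows essentially the same route as the paper: the same two lemmas (quasiconvexity of rank-one convex functionals along radial maps and quasiaffineness of ${\bf B}_p$ along non-expanding radial stretchings) applied to the same family of truncated power stretchings, with the conclusion extended to all of $Q_2(K)$ by $p$-homogeneity and isotropy. The only difference is bookkeeping: the paper cancels the inner-disk contributions (where both functionals agree on multiples of $\Id$) and compares directly on the annulus, whereas you evaluate both averages explicitly and track the sign of $(1-\varepsilon^{\alpha})/\alpha$ — same argument, slightly heavier arithmetic.
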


\begin{proof}
Let $\phi(z)=\rho(r) \frac{z}{r}$ be a $K$-quasiconformal radial stretching satisfying \eqref{eq:nonexpanding}. Combining Lemmas \ref{lemma:rcradialstretchings} and \ref{lemma:Bpradialstretchings}, we estimate
$$\fint_{\mb D}^* {\bf E}(\D \phi) \,\dd m(z) \geq {\bf E}(\Id) = -1 ={\bf B}_p(\Id)=\fint_{\mb D} {\bf B}_p(\D \phi) \,\dd m(z);$$
here note that condition \eqref{it:domain} ensures that Lemma \ref{lemma:rcradialstretchings} is applicable.
We now take, for some $\alpha\in [-1,1]$,
$$\rho(r)\equiv \begin{cases}
\frac{r}{2^{\alpha-1}} & \text{if } r\leq \frac 1 2,\\
r^\alpha &\text{if } r\geq \frac 1 2.
\end{cases}$$  In particular, $\rho$ satisfies \eqref{eq:nonexpanding} whenever $|\alpha|\leq 1$. With this 
choice the map $\phi$ is $\frac{1}{|\alpha|}$-quasiconformal, 
since we have a.e.\ the identities
\begin{equation*}
|\D \phi(x)|^2= r^{2(\alpha-1)}, 
\qquad \det \D \phi(x)=\alpha r^{2(\alpha-1)},
\end{equation*}
 cf.\ \eqref{eq:derivativephi}. 
Since ${\bf E}(\Id)={\bf B}_p(\Id)=-1$ and both ${\bf E}$ and ${\bf B}_p$ are positively $p$-homogeneous, 
$$\int_{\frac 1 2\mb D} {\bf E}(\D \phi)\,\dd m(z) = \int_{\frac 1 2 \mb D} {\bf B}_p(\D \phi)\, \dd m(z).$$
For $\frac{1}{2} \leq |z| \leq 1$ we have $\rho(r)=r^\alpha$. Thus
\begin{equation}\label{annulus}
\int_{\mb A(\frac 1 2, 1)} {\bf E}(\D \phi) \,\dd m(z) \geq \int_{\mb A(\frac 1 2, 1)} {\bf B}_p(\D \phi)\, \dd m(z).\end{equation}
 Moreover, it follows from  \eqref{eq:derivativephi}  that for ${\bf E}$ as in the statement of the lemma
(hence for ${\bf B}_p$ as well),
\begin{equation}
{\bf E}(\D\varphi(x))=r^{p(\alpha-1)}{\bf E}(\alpha,1),
\end{equation}
where we have identified  $(x,y)\equiv  \tp{diag}(x,y)$.
Thus 
\eqref{annulus} gives
$${\bf E}(\alpha,1)\int_{\frac 1 2}^1  r^{p(\alpha-1)+1}\dd r \geq {\bf B}_p(\alpha,1)\int_{\frac 1 2}^1  r^{p(\alpha-1)+1}\dd r 
 \implies  {\bf E}(\alpha,1)\geq {\bf B}_p(\alpha,1). $$
Finally, varying $\alpha\in [1/K,1]$, we have ${\bf E}\geq {\bf B}_p$ on the rank-one segment $[\Id, \tp{diag}(1/K,1)]\subset Q_2(K)$.  Since  ${\bf E}$ and ${\bf B}_p$ are positively $p$-homogeneous and isotropic, it follows that ${\bf E}\geq {\bf B}_p$ in $Q_2(K)$.
\end{proof}
\smallskip

\section{Proof of Theorem \ref{main}} \label{sec:Maintheorem}

The purpose of this section is to prove Theorem \ref{main}, the closed quasiconvexity of the local Burkholder functional.  In fact, it will be convenient to consider  slightly more general versions of the local Burkholder functional $ \mathcal B_{K}$ introduced in \eqref{eq:locBurk}, so let us define
\begin{equation}\label{locB}
{\bf B}_{K,p}(A) \equiv 
\begin{cases}
{\bf B}_p(A),\quad &\text{if} \;\; |A|^2 \leq K \det(A), \\ 
+\infty, &\text{otherwise}. 
\end{cases}
\end{equation}
Thus these functionals all have the same domain $\,\mathrm{dom} \left({\bf B}_{K,p} \right)$ = $Q_2(K)$,
while $ \mathcal B_{K} = {\bf B}_{K,p_{K}}$  when $\,p\,$ equals the limiting exponent $p_K = 2K/(K-1)$. Also, for this exponent
$$
Q_2(K) = \{ A\in \M : {\bf B}_{p_K}(A) \leq 0 \}.
$$

We first consider the closed $\WW^{1,p}$-quasiconvexity   at the identity,  for these modified functionals  and for 
  $2 < p < p_K$.
\smallskip

\begin{proposition} \label{Id}
Let $K > 1$ and fix $p \in (2,\tfrac{2K}{K-1})$. Then the 
functional ${\bf B}_{K,p}\colon \M \to \R \cup \{+\infty\}$
is closed $\WW^{1,p}$-quasiconvex at $ \Id$.
\end{proposition}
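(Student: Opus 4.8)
The plan is to transfer to the level of homogeneous gradient Young measures the integral inequality ${\bf B}_p(\Id)\le\fint_\DD{\bf B}_p(\D f)\,\dd m$, which holds for every $K$-quasiconformal principal map $f$ by Theorem \ref{closedQC}. The bridge will be Theorem \ref{thm:YMqc}, ensuring that any measure in $\Meas^s_{\qc}(Q_2(K))$ whose barycenter is a $K$-quasiconformal matrix is generated, in $\DD$, by a sequence of such principal maps.

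First I would dispose of the trivial alternative. Fix $\nu\in\Meas^p_{\qc}$ with $\langle\nu,\Id\rangle=\Id$. If $\nu$ charges $\M\setminus Q_2(K)$, then ${\bf B}_{K,p}\equiv+\infty$ on that set of positive $\nu$-measure, so ${\bf B}_{K,p}^+\notin\LL^1(\nu)$ and $\int_{\M}^*{\bf B}_{K,p}\,\dd\nu=+\infty$ by \eqref{upperint}, while ${\bf B}_{K,p}(\Id)={\bf B}_p(\Id)=-1$; hence \eqref{closed} holds. So I may assume $\nu\in\Meas^p_{\qc}(Q_2(K))$, in which case ${\bf B}_{K,p}={\bf B}_p$ on $\operatorname{supp}\nu$; since $|{\bf B}_p(A)|\le(p-1)|A|^p$ and $\nu$ has finite $p$-th moment, the upper integral reduces to the ordinary integral $\langle\nu,{\bf B}_p\rangle\in\R$.

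The main step. As $p>2>\tfrac{2K}{K+1}$, Corollary \ref{cor:higherintegrability} upgrades $\nu$ to an element of $\Meas^{p'}_{\qc}(Q_2(K))$ for any fixed $p'\in\bigl(p,\tfrac{2K}{K-1}\bigr)$. Applying Theorem \ref{thm:YMqc} with exponent $s=p'$ and barycenter $\Id$ yields a sequence $(f_j)$ of $K$-quasiconformal principal maps generating $\nu$ in $\DD$. By the global bound \eqref{Belt2}, $(\D f_j)$ is bounded in $\LL^{p'}(\DD)$, and since $p'>p$ this makes $(|\D f_j|^p)$ --- hence $({\bf B}_p(\D f_j))$, by the $p$-growth of ${\bf B}_p$ --- equiintegrable on $\DD$; Remark \ref{Young31} then gives $\fint_\DD{\bf B}_p(\D f_j)\,\dd m\to\langle\nu,{\bf B}_p\rangle$. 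On the other hand each $f_j$ is a principal solution of a Beltrami equation as in \eqref{Belt4} with $k=\tfrac{K-1}{K+1}$, for which $1+1/k=\tfrac{2K}{K-1}>p$; so Theorem \ref{closedQC} gives ${\bf B}_p(\Id)\le\fint_\DD{\bf B}_p(\D f_j)\,\dd m$ for every $j$. Letting $j\to\infty$ yields $-1={\bf B}_p(\Id)\le\langle\nu,{\bf B}_p\rangle=\int_{\M}^*{\bf B}_{K,p}\,\dd\nu$, which is \eqref{closed} at $A=\Id$.

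I expect the main obstacle to be conceptual rather than computational, and it is exactly what the preliminary sections were designed to overcome: Theorem \ref{closedQC} constrains individual principal maps, whereas a homogeneous gradient Young measure with barycenter $\Id$ is, a priori, generated only by maps whose boundary values are neither the identity nor even affine, so the quasiconvexity-at-identity estimate cannot be invoked for such a generating sequence. Theorem \ref{thm:YMqc} removes this difficulty by re-generating $\nu$ through principal maps, for which Theorem \ref{closedQC} is an intrinsic statement; the slightly larger exponent $p'$ enters only to secure the equiintegrability needed to identify the limit of the Burkholder integrals with $\langle\nu,{\bf B}_p\rangle$.
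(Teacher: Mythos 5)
Your proof is correct and follows essentially the same route as the paper: regenerate $\nu$ by $K$-quasiconformal principal maps via Theorem \ref{thm:YMqc}, apply Theorem \ref{closedQC} to each map, and pass to the limit using the equiintegrability of $\bigl({\bf B}_p(\D f_j)\bigr)$ supplied by \eqref{Belt2}. The only difference is cosmetic: the paper applies Theorem \ref{thm:YMqc} directly at the exponent $s=p$ and gets the $\LL^{s}$-bounds for $p<s<\tfrac{2K}{K-1}$ straight from \eqref{Belt2} for the resulting principal maps, so your detour through Corollary \ref{cor:higherintegrability} and the auxiliary exponent $p'$ is unnecessary, though harmless.
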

\begin{proof}
Let $\nu \in \Meas_{\qc}^{p}$ be a homogeneous gradient Young measure with center of mass $\langle \nu, \Id \rangle = \Id$; our goal is to establish the Jensen inequality
$$ {\bf B}_{K,p}(\Id)\leq \langle \nu, {\bf B}_{K,p}\rangle = \int_{\R^{2 \times 2}}  {\bf B}_{K,p}(A) d\nu(A).$$
There is nothing to prove when $\langle \nu , {\bf B}_{K,p} \rangle = +\infty$ so without loss of generality we assume that
$\langle \nu , {\bf B}_{K,p} \rangle < +\infty$. 

Under this assumption, from the definition \eqref{locB} 
we see that $\nu$ is supported in the $K$-quasiconformal cone $Q_{2}(K)$.  Therefore
Theorem \ref{thm:YMqc} gives us a sequence $( f_j )$ of $K$-quasiconformal principal maps generating the measure $\nu$. In particular,  
each map  $f_j$ is  conformal outside $\DD$, and  by \eqref{Belt2} we have $(\D f_j )$ bounded in $\LL^s(\DD)$, for each $s < p_K$. 

Furthermore,  for this sequence ${\bf B}_{K,p}\bigl(\D f_j(z)\bigr) = {\bf B}_{p}\bigl(\D f_j(z)\bigr)$, so that  with Theorem \ref{closedQC} and \eqref{Burk123} we have
$$
{\bf B}_{K,p}\bigl( \langle \nu, \Id \rangle  \bigr)  =  {\bf B}_{p}( \Id )   \leq \,  \liminf_{j \to \infty} \fint_{\DD} \! {\bf B}_{K,p}\bigl(\D f_j(z)\bigr) \, \dd m(z).
$$

On the other hand, as  pointed out above, $(\D f_j )$ bounded in $\LL^s(\DD)$ for $p < s < p_K$ and 
 since  ${\bf B}_{p}$ is $p$-homogeneous, it follows 
that the sequence  $\bigl( {\bf B}_{K,p}(\D f_j )\bigr) = \bigl( {\bf B}_{p}(\D f_j )\bigr) $ is equiintegrable in $\mb D$. We can thus apply 
Remark \ref{rmk:moreintegrands}, which implies
$$  
\lim_{j\to \infty} \fint_{\DD} \! {\bf B}_{K,p}\bigl(\D f_j(z)\bigr) \, \dd m(z) = \int_{\M} \! {\bf B}_{K,p}(A) \, \dd\nu (A).
$$
This completes the proof.
\end{proof} 

To complete the proof of Theorem \ref{main}, our task is then to extend the closed  quasiconvexity 
from $A = \Id$ to a general matrix $A \in \R^{2 \times 2}$. We start with:  

\begin{proposition}\label{kqc}
Let $K>1$. Then  for each $\, 2 < p < \tfrac{2K}{K-1}$,  the local Burkholder functional  ${\bf B}_{K,p}  \colon \M \to \R \cup \{ +\infty \}$ is
closed $\WW^{1,p}$-quasiconvex.
\end{proposition}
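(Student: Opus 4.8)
The plan is to use Theorem~\ref{thm:YMqc} to replace the abstract Young measure by a concrete generating sequence of principal maps, and then to reduce everything to a single integral inequality for such maps with general linear part, which is a genuine sharpening of Theorem~\ref{closedQC}.

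\emph{Step 1 (reductions).} Let $\nu\in\Meas^p_\tp{qc}$ with $\langle\nu,\Id\rangle=A$; I must show ${\bf B}_{K,p}(A)\le\int_{\M}^{*}{\bf B}_{K,p}\,\dd\nu$. I may assume $\int_{\M}^{*}{\bf B}_{K,p}\,\dd\nu<+\infty$, which by the definition \eqref{locB} forces $\mathrm{supp}\,\nu\subset Q_2(K)$, hence (Example~\ref{ex:qccone}, as $p>2>\tfrac{2K}{K+1}$) also $A\in Q_2(K)$. Since Lemma~\ref{lemma:invYM}, together with the $p$-homogeneity and isotropy of ${\bf B}_{K,p}$ and the corresponding invariances of $Q_2(K)$, shows that closed $\WW^{1,p}$-quasiconvexity at $A$ is equivalent to closed $\WW^{1,p}$-quasiconvexity at $tQAR$ for every $t>0$ and $Q,R\in\tp{SO}(2)$, and since every nonzero $A\in Q_2(K)$ has this form with $Az=z+a\bar z$, $|a|\le\tfrac{K-1}{K+1}<1$ (the case $A=0$ being immediate), I may assume from now on that $Az=z+a\bar z$ with $|a|\le\tfrac{K-1}{K+1}$.

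\emph{Step 2 (principal maps and passage to the limit).} Using Corollary~\ref{cor:higherintegrability} to place $\nu$ in $\Meas^s_\tp{qc}(Q_2(K))$ for some $s\in(p,p_K)$, Theorem~\ref{thm:YMqc} produces a sequence $(f_j)$ of $K$-quasiconformal principal maps generating $\nu$ in $\mb D$. By \eqref{Belt2}, $(\D f_j)$ is bounded in $\LL^s(\mb D)$, so $\bigl({\bf B}_{K,p}(\D f_j)\bigr)=\bigl({\bf B}_p(\D f_j)\bigr)$ is equiintegrable on $\mb D$ and Remark~\ref{rmk:moreintegrands} gives $\fint_{\mb D}{\bf B}_{K,p}(\D f_j)\,\dd m\to\int_{\M}^{*}{\bf B}_{K,p}\,\dd\nu$. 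Moreover the weak limit of $(f_j)$ is the principal map $f$ with $\D f\equiv A$ on $\mb D$, so the linear parts $A_{f_j}\equiv\fint_{\mb D}\D f_j=(z\mapsto z+b_1^{(j)}\bar z)$ converge to $A_f=A$, whence ${\bf B}_p(A_{f_j})\to{\bf B}_p(A)$ by continuity. Therefore the proposition reduces to proving: for every $K$-quasiconformal principal map $g$ and every $p\in[2,p_K]$,
\[
\fint_{\mb D}{\bf B}_p(\D g)\,\dd m\ \ge\ {\bf B}_p(A_g),
\]
after which $\int_{\M}^{*}{\bf B}_{K,p}\,\dd\nu=\lim_j\fint_{\mb D}{\bf B}_p(\D f_j)\,\dd m\ge\limsup_j{\bf B}_p(A_{f_j})={\bf B}_p(A)={\bf B}_{K,p}(A)$.

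\emph{Step 3 (the key inequality — the main obstacle).} The displayed inequality is exactly the non-sharp form of the Burkholder Area Inequality (Theorem~\ref{thm:Bpareaintro}, recalling that there $\gamma_p(A_g)\ge0$), and this is where the real difficulty lies: Theorem~\ref{closedQC} only yields $\fint_{\mb D}{\bf B}_p(\D g)\ge{\bf B}_p(\Id)=-1$, which is strictly weaker, since ${\bf B}_p(A_g)\ge-1$ for every principal $g$, with equality only when the linear part is conformal. Rewriting the inequality via \eqref{Beltqc} as $\fint_{\mb D}\bigl(1-\tfrac{p|\mu_g|}{1+|\mu_g|}\bigr)\bigl(|g_z|+|g_{\bar z}|\bigr)^p\,\dd m\le-{\bf B}_p(A_g)$, the plan is to run the holomorphic-motion and weighted complex-interpolation machinery behind Theorems~\ref{closedQC2}--\ref{closedQC}, but anchored not at the identity but at the principal extension $\widehat{A_g}$ of the linear part --- the $K_0$-quasiconformal principal map with the \emph{constant} Beltrami coefficient $b_1\chi_{\mb D}$, for which the left-hand side equals exactly $-{\bf B}_p(A_g)$. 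Concretely one builds a holomorphic family $\lambda\mapsto\mu_\lambda$ of Beltrami coefficients with $\|\mu_\lambda\|_\infty<1$, $\mu_0=b_1\chi_{\mb D}$ and $\mu_{\lambda_*}=\mu_g$ for some $\lambda_*\in\mb D$, and estimates the associated holomorphic motion of principal solutions. The hard part will be precisely to carry the nontrivial linear term $b_1\bar z$ through this interpolation so that the base value of the weighted integral improves from $1$ to $-{\bf B}_p(A_g)$; the identity case of Proposition~\ref{Id} gives no leverage over this term. If instead the full Burkholder Area Inequality is established first in Section~\ref{sec:areaBp}, then Step~3 is simply a citation.
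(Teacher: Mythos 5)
Your Steps 1--2 are sound (the normalization via Lemma \ref{lemma:invYM}, the passage to $K$-quasiconformal principal generating sequences via Corollary \ref{cor:higherintegrability} and Theorem \ref{thm:YMqc}, and the equiintegrability argument all mirror what is done for Proposition \ref{Id}), but the proof has a genuine gap at Step 3, and you have in effect identified it yourself. The inequality $\fint_{\mb D}{\bf B}_p(\D g)\,\dd m\ge{\bf B}_p(A_g)$ for principal maps with a \emph{general} linear part is not something you establish: you only sketch a plan to rerun the holomorphic-motion/weighted-interpolation machinery of \cite{AIPS12} anchored at the principal extension of $A_g$, and you explicitly concede that carrying the term $b_1\bar z$ through the interpolation is the unresolved difficulty. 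That machinery, as it stands, only yields Theorem \ref{closedQC}, i.e.\ the bound with base value ${\bf B}_p(\Id)=-1$, which, as you note, is strictly weaker. Nor can you simply cite the Burkholder Area Inequality: in the paper that inequality is \emph{deduced from} Theorem \ref{thm:soft} (its proof, via Lemma \ref{lemma:globalqcBp}, applies Theorem \ref{thm:soft} to the modified map which equals $A_f$ near infinity), and Theorem \ref{thm:soft} is a consequence of Theorem \ref{main}, which rests on the very proposition you are proving. So within the paper's logical structure, Step 3 as a citation is circular, and as an argument it is only a conjecture about an interpolation scheme that has not been carried out.

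The paper avoids this obstacle by a completely different mechanism, which is worth internalizing: it never proves a principal-map Jensen inequality at a general linear part directly. Instead it defines the relaxation $\mathscr{R}(A)=\inf\{\int^*{\bf B}_{K,p}\,\dd\nu:\nu\in\Meas^p_{\qc},\ \langle\nu,\Id\rangle=A\}$, shows $\mathscr R$ is rank-one convex (by taking convex combinations $(1-\lambda)\nu_0+\lambda\nu_1$ and verifying membership in $\Meas^p_{\qc}$ through the Kinderlehrer--Pedregal characterization, Theorem \ref{thm:KP}), positively $p$-homogeneous and isotropic (via Lemma \ref{lemma:invYM}), finite on $Q_2(K)$, and equal to $-1$ at the identity by Proposition \ref{Id}; the extremality result Theorem \ref{thm:extremality}, proved with radial stretchings, then forces $\mathscr R={\bf B}_p$ on $Q_2(K)$, which is exactly the Jensen inequality at every $A$. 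In other words, the identity case plus an abstract rank-one-convexity/extremality argument replaces the quantitative estimate at general $A_g$ that your Step 3 requires; the area inequality you want is then recovered afterwards as a corollary. To repair your proof you would either have to reproduce an argument of this relaxation type, or genuinely prove the weighted interpolation estimate anchored at $b_1\chi_{\mb D}$, which is precisely the part you left open.
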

\smallskip

 \begin{proof}
Define the relaxation $\mathscr{R}\colon \M \to \eR$ of the Burkholder functional ${\bf B}_{K,p}$ by
$$
\mathscr{R}(A) \equiv  \inf \left\{ \int_{\M}^{\ast} \! {\bf B}_{K,p} \; \dd \nu : \, \nu \in \Meas^{p}_{\qc} \, \mbox{ and } \,  \langle \nu, \Id \rangle = A \right\}
\, , \quad A \in \M .
$$
Since $p > \tfrac{2K}{K+1}$,  the $K$-quasiconformal cone $Q_{2}(K)$ is $\WW^{1,p}$-quasiconvex,  cf.\ Example \ref{ex:qccone} and the discussion before Lemma  \ref{gen.seq}.
Thus we have for the effective domain 
 $\mathrm{dom} (\mathscr{R}) = Q_{2}(K)$.  Obviously, $$\mathscr{R} \leq {\bf B}_{K,p} \quad {\rm on} \quad  \M, $$ and according to Proposition \ref{Id},
 we have the equality at the identity matrix:
\begin{equation}\label{finiteid}
\mathscr{R}( \Id ) = {\bf B}_{K,p}( \Id ) = -1.
\end{equation}

We then claim that $\mathscr{R}$ is rank-one convex. To this end, let us fix $A_0$, $A_1 \in \M$ with $\mathrm{rank}(A_1-A_0) = 1$,
$\lambda \in (0,1)$ and let $A_{\lambda} \equiv  (1-\lambda )A_{0}+\lambda A_1$: we will show that 
\begin{equation}\label{rkconvex}
\mathscr R(A_\lambda)\leq \lambda \mathscr R(A_0)+(1-\lambda) \mathscr R(A_1).
\end{equation}
There is nothing to prove if one of the matrices 
$A_i \notin Q_{2}(K)$, so we can assume that $A_0$, $A_1 \in Q_{2}(K)$. 

Under this assumption, take $t_i \in \R$ such
that $t_i > \mathscr{R}(A_i )$. By the definition of $\mathscr R$, we may then find $\nu_i \in \Meas^{p}_{\qc}$ with 
$ \langle \nu_i, \Id \rangle = A_i$ such that  $t_i > \langle \nu_{i},{\bf B}_{K,p} \rangle$.
In particular, it follows that both measure $\nu_0$ and $\nu_1$ are supported on $Q_{2}(K)$.

Next, define $\nu_{\lambda} \equiv  (1-\lambda )\nu_{0}+\lambda \nu_1$.
Hereby $\nu_\lambda$ is a probability measure on $\M$, supported on $Q_{2}(K)$, with $ \langle \nu_\lambda, \Id \rangle=A_\lambda$. It clearly
also has a finite $p$-th moment and, if ${\bf E}$ is a quasiconvex functional
 of at most $p$-th growth in the sense of Theorem \ref{thm:KP}.(1), then
\begin{eqnarray*}
  \bigl\langle \nu_{\lambda},{\bf E} \bigr\rangle &=& (1-\lambda ) \bigl\langle \nu_{0},{\bf E} \bigr\rangle + \lambda \bigl\langle \nu_{1},{\bf E} \bigr\rangle\\
  &\geq& (1-\lambda ){\bf E}(A_{0})+\lambda {\bf E}(A_{1}) \geq {\bf E}(A_{\lambda}),
\end{eqnarray*}
where the first inequality follows from Theorem \ref{thm:KP}.(1), while second  holds since as a real-valued quasiconvex functional, ${\bf E}$ is rank-one convex.  

Thus all conditions required by Theorem \ref{thm:KP} are fulfilled. Consequently, $\nu_\lambda \in \Meas^{p}_{\qc}$
and  we get
$$
(1-\lambda )t_0 + \lambda t_1 > \bigl\langle \nu_{\lambda},{\bf B}_{K,p} \bigr\rangle \geq \mathscr{R}(A_{\lambda}).
$$
Since $t_i>\mathscr R(A_i)$ was arbitrary, the desired inequality \eqref{rkconvex} follows and proves that $\mathscr{R}$ is rank-one convex.  

By inspection we also deduce that $\mathscr{R}$ is positively $p$-homogeneous and isotropic. Indeed, this is a simple consequence of
Lemma \ref{lemma:invYM} and the fact that ${\bf B}_{K,p}$ has the same properties.  Let us just check positive homogeneity, as isotropy is similar.
Fix $t>0$ and, for a measure $\nu$, let $\langle \nu_t,{\bf E}\rangle \equiv  \langle \nu, {\bf E}(t \cdot)\rangle$ for any continuous ${\bf E}$ with
$p$-growth.  Since the map $\nu\mapsto \nu_t$ is a bijection of $\mathscr M^p_\tp{qc}$ onto itself, we have
\begin{align*}
t^p \mathscr R(A)  & = \inf\left\{\int^\ast {\bf B}_{K,p}(t\cdot) \, \dd \nu : \nu \in \mathscr M^p_\tp{qc},  \,  \langle \nu, \Id \rangle  = A\right\}\\ 
& = \inf\left\{\int^\ast {\bf B}_{K,p} \, \dd \mu : \mu \in \mathscr M^p_\tp{qc},  \,  \langle \mu, \Id \rangle  = tA\right\} = \mathscr R( t A),
\end{align*}
where we used positive homogeneity of ${\bf B}_{K,p}$ in the first equality.

We have so far shown that $\mathscr{R} \colon \R^{2\times 2} \to \overline\R$ is positively $p$-homogeneous, isotropic, rank-one convex
and $-\infty<\mathscr{R}\leq {\bf B}_{p}$ in $Q_2(K)$, with equality at the identity matrix. It follows from Theorem \ref{thm:extremality}
that in fact $\mathscr{R} = {\bf B}_{p}$ on $Q_{2}(K)$ and, consequently, that $\mathscr{R} = {\bf B}_{K,p}$
on $\M$. But this is exactly what we set out to prove: 
$$
{\bf B}_{K,p}(A) = \mathscr{R}(A) \leq \int_{\M} \! {\bf B}_{K,p} \, \dd \nu,
$$
where the inequality holds, by definition of $\mathscr R$, for all $A \in \M$ and all $\nu \in \Meas_{\qc}^{p}$ with $  \langle \nu, \Id \rangle =A$.
In brief, we have shown that for any $2 <  p < \tfrac{2K}{K-1}$ the functional ${\bf B}_{K,p}$ is
closed $\WW^{1,p}$-quasiconvex.  
\end{proof}

\begin{remark}  In fact, combined with Corollary \eqref{cor:higherintegrability}, the above argument shows that ${\bf B}_{K,p}$ is closed 
$\WW^{1,s}$-quasiconvex for every $s > \frac{2K}{K+1}$.
\end{remark}

We have done most of the hard work, so that the proof of Theorem \ref{main} now follows easily.

\begin{proof}[Proof of Theorem \ref{main}]
Let $p > \tfrac{2K}{K+1}$ and $\nu \in \Meas^{p}_{\qc}$. If $\int_{\M}^{\ast} \! \mathcal B_K \, \dd \nu = +\infty$, then there is nothing to prove,
so we can assume that $\int_{\M}^{\ast} \! \mathcal B_{K} \, \dd \nu < +\infty$. This assumption implies that $\nu$ is supported in $Q_{2}(K)$ and therefore
by Corollary \ref{cor:higherintegrability} we have  that
$\nu \in \Meas_{\qc}^{s}\bigl( Q_{2}(K) \bigr)$ for every $s < p_{K} = \tfrac{2K}{K-1}$. 

Next, we can use Theorem \ref{kqc} to get
$$
{\bf B}_{s}(  \langle \nu, \Id \rangle ) \leq \int_{\M} \! {\bf B}_{s} \, \dd \nu
$$
for each $2< s < p_{K}$. Also, observe that ${\bf B}_{s} \leq 0$ on $Q_{2}(K)$ and that ${\bf B}_{s} \to {\bf B}_{p_{K}}$ pointwise as $s \nearrow p_K$.
Consequently, Fatou's lemma yields
$$
{\bf B}_{p_K}\bigl(   \langle \nu, \Id \rangle \bigr) = \lim_{s\nearrow p_K} {\bf B}_{s}\bigl(   \langle \nu, \Id \rangle \bigr) \leq \limsup_{s \nearrow p_K} \int_{\M} \! {\bf B}_{s} \, \dd \nu \leq \int_{\M} \! {\bf B}_{p_K} \, \dd \nu. 
$$
This completes the proof, since $\mathcal B_K = {\bf B}_{p_K}$ in the support of $\nu$.
\end{proof}

In particular, as closed $\WW^{1,p}$-quasiconvexity implies $\WW^{1,p}$-quasiconvexity, see for instance the discussion after  Definition \ref{def:closedqc},
Theorem \ref{main} implies  Theorem \ref{thm:soft}.
\smallskip

\section{The Burkholder area inequality}
\label{sec:areaBp}

The purpose of this section is to prove Theorem \ref{thm:Bpareaintro}. We restate result here in a slightly different (but equivalent) form: 
\smallskip

\begin{theorem}\label{thm:Bparea}
Let $f$ be a $K$-quasiconformal principal map, conformal  outside $\DD$ with expansion
\begin{equation}
\label{eq:expansionarea}
f(z) = z + \frac {b_1}{z} + \sum_{j=2}^\infty \frac{b_j}{z^j} \equiv z+ \frac{b_1}{z} + \phi(z), \quad |z| > 1.
\end{equation}  
 Then with the linear asymptotics $A_f(z) \equiv z+b_1 \bar z$, we have
\begin{equation}
\label{eq:Bparea}
\int_\mb D \Big({\bf B}_p(\D f)-{\bf B}_p(A_f)\Big)\, \dd m(z) \ge -  \frac{p}{2} \frac{{\bf B}_p(A_f)}{ \det(A_f)}\int_{\C\setminus \DD}  |\phi'(z)|^2 \, \dd m(z),
\end{equation}
provided that $2\leq p \leq \frac{2K}{K-1}$.
\end{theorem}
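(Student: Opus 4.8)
The plan is to first reduce Theorem~\ref{thm:Bparea} to a sharp quasiconvexity inequality for ${\bf B}_p$ with an arbitrary affine part, and then to prove that by revisiting the complex-interpolation argument behind Theorem~\ref{closedQC2} with the base point moved off the identity.

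\emph{Step 1: algebraic reduction.} First recall that $A_f$ is $K$-quasiconformal by \eqref{Kareafmla}, so ${\bf B}_p(A_f)\le 0$, $\det A_f>0$, and hence $\gamma_p(A_f)\ge 0$. Since $\phi(z)=\sum_{j\ge 2}b_j z^{-j}$ on $\{|z|>1\}$, orthogonality of the powers gives $\int_{\C\setminus\mb D}|\phi'(z)|^2\,\dd m(z)=\pi\sum_{j\ge 2}j|b_j|^2$, and the classical area formula \eqref{areafmla} (together with $\fint_{\mb D}\D f\,\dd m=A_f$ and the null-Lagrangian property of $\det\D(f-A_f)$) shows that $\int_{\mb D}\big(\det A_f-\det\D f\big)\,\dd m$ equals the same quantity. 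Thus \eqref{eq:Bparea} is equivalent to the clean bound
\[
\fint_{\mb D}{\bf B}_p(\D f)\,\dd m(z)\;\ge\;{\bf B}_p(A_f)+\gamma_p(A_f)\sum_{j\ge 2}j|b_j|^2,
\]
which one may also read as a Jensen-type inequality $\fint_{\mb D}G(\D f)\ge G(A_f)$ for the functional $G\equiv{\bf B}_p+\gamma_p(A_f)\det$, tested against a principal map whose \emph{mean} gradient is $A_f$ but whose boundary values need not be affine.

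\emph{Step 2: sharpened interpolation.} When $A_f=\Id$ the inequality above is exactly \eqref{Belt55}/\eqref{Burk123}, established in \cite{AIPS12} by a holomorphic-motion and weighted $\LL^p$-interpolation argument based at the identity. I would rerun that argument with the base point at $A_f$: assuming first that $f=f_\mu$ with $\mu$ supported in $\mb D$, $\|\mu\|_\infty\le k$, $p\le 1+1/k$, embed $\mu$ in a holomorphic family $(\mu_\lambda)_{\lambda\in\mb D}$ with $\mu_0=b_1\chi_{\mb D}$ and $\mu_{\lambda_\ast}=\mu$ for a suitable $\lambda_\ast$, so that the holomorphic motion $\Phi(\lambda,z)\equiv f_{\mu_\lambda}(z)$ of \eqref{holomotion} has $\Phi(0,\cdot)$ equal to the principal extension $z\mapsto z+b_1\bar z$ of $A_f$ inside $\mb D$. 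Expressing ${\bf B}_p(\D\Phi(\lambda,\cdot))$ through \eqref{Beltqc} and carrying out the interpolation as in \cite[Lemmas~1.4, 1.6 and Theorem~3.5]{AIPS12} yields the plain Jensen bound $\fint_{\mb D}{\bf B}_p(\D f)\,\dd m\ge{\bf B}_p(A_f)$; the new point is to retain the slack. For this, observe that the Laurent coefficients $\lambda\mapsto b_j(\lambda)$ of $\Phi(\lambda,\cdot)$ are holomorphic, that $b_j(0)=0$ for $j\ge 2$ because $\Phi(0,\cdot)$ is affine in $\mb D$, and that the area formula \eqref{areafmla} forces $\sum_{j\ge 1}j|b_j(\lambda)|^2\le 1$. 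A Schwarz/Harnack-type quantitative refinement of the subharmonicity step — applied to the $\ell^2$-valued holomorphic map $\lambda\mapsto(\sqrt j\,b_j(\lambda))_{j\ge 2}$, which vanishes at $\lambda=0$ and stays in the unit ball — then forces the interpolation ``defect'' at $\lambda_\ast$ to dominate a constant multiple of $\sum_{j\ge 2}j|b_j(\lambda_\ast)|^2=\sum_{j\ge 2}j|b_j|^2$; optimising over the interpolation parameters produces the constant $\gamma_p(A_f)$, the value $\gamma_2(A_f)=1$ being recovered as the $p=2$ sanity check, where everything collapses to \eqref{areafmla}.

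\emph{Step 3: endpoint, approximation, rigidity.} The argument in Step~2 presupposes $\mu$ compactly supported in $\mb D$ with $\|\mu\|_\infty<1$ strictly, and a priori $p<p_K$. The first restriction is removed by mollifying $\mu$: the principal solutions $f_\varepsilon$ converge to $f$ in $\WW^{1,s}_{\loc}(\C)$ and their coefficients $b_j(\varepsilon)\to b_j$, so every term in the inequality is stable in the limit. The endpoint $p=p_K$ is then recovered by letting $p\nearrow p_K$ and applying Fatou's lemma to ${\bf B}_p\le 0$, exactly as at the end of the proof of Theorem~\ref{main}. Finally, equality in \eqref{eq:Bparea} forces equality in the Schwarz step, hence $b_j=0$ for all $j\ge 2$, i.e.\ $f(z)=z+b_1/z$ outside $\mb D$, which also gives the rigidity statement of the introduction. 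The main obstacle is Step~2: even the plain Jensen inequality $\fint_{\mb D}{\bf B}_p(\D f)\ge{\bf B}_p(A_f)$ lies deeper than a direct application of Theorem~\ref{thm:soft}, since $f$ is not an $\WW^{1,2}_0$-perturbation of $A_f$ — and tempting shortcuts (subtracting from $f$ a correction with boundary values $A_f|_{\mathbb S^1}$ that still solves a nice Beltrami equation) are unavailable, because Stoilow factorisation shows no such correction exists unless $\phi\equiv 0$. Turning the qualitative subharmonicity behind \eqref{Belt55} into a quantitative estimate that yields precisely $\gamma_p(A_f)$ is the real work.
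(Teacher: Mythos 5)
Your Step 1 reduction is fine, and your Step 3 limiting scheme (mollify $\mu$, let $p\nearrow p_K$ with Fatou) is in the spirit of what the paper does. But Step 2, which carries the entire content of the theorem, has a genuine gap. First, even the \emph{plain} Jensen bound $\fint_{\mb D}{\bf B}_p(\D f)\,\dd m\ge{\bf B}_p(A_f)$ at a base point $A_f\neq\Id$ is not obtained in the paper by ``rerunning'' the holomorphic-motion interpolation of \cite{AIPS12} with $\mu_0=b_1\chi_{\mb D}$: the authors explicitly note that holomorphic deformations change boundary values and that the interpolation machinery only yields the inequality anchored at the identity (Theorem \ref{closedQC}); passing to general $A$ is exactly what Theorems \ref{thm:soft} and \ref{main} are for, and they are proved by a different mechanism (quasiregular gradient Young measures plus the extremality result, Theorem \ref{thm:extremality}). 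Second, and more seriously, your claimed quantitative refinement — that a Schwarz/Harnack argument applied to $\lambda\mapsto(\sqrt j\,b_j(\lambda))_{j\ge2}$ forces the interpolation defect to dominate $\gamma_p(A_f)\sum_{j\ge2}j|b_j|^2$ with precisely the constant $\gamma_p(A_f)=\tfrac p2\,{\bf B}_p(A_f)/{\bf B}_2(A_f)$ — is asserted, not proved; you yourself flag it as ``the real work.'' Nothing in the proposal explains why the sharp constant, which is the whole point of the statement, would come out of such an argument, so as written the proof does not exist.

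For comparison, the paper's route avoids any new interpolation input. It modifies $f$ only \emph{outside} the disk: with $R(z)=z+b_1/z$ and $h=f\circ R^{-1}$ (Lemma \ref{lemma:extension}), the map $\tilde f=f$ in $\mb D$, $\tilde f=h\circ A_f$ outside, is again $K$-quasiconformal and satisfies $h(z)=z+\mathcal O(z^{-2})$. Cutting off the perturbation $\eta_j\psi$ makes the boundary values literally equal to $A_f$ far out, so Theorem \ref{thm:soft} (applied with exponents $p_n\nearrow p$, then a Fatou/dominated-convergence passage) gives the global inequality $\int_{\C}({\bf B}_p(\D\tilde f)-{\bf B}_p(A_f))\ge0$ of Lemma \ref{lemma:globalqcBp}. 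The exterior contribution is then computed \emph{exactly}: two changes of variables reduce it to $\int_{\C\setminus\mb D}\bigl(|R'+\phi'|^p|R'|^{2-p}-|R'|^2\bigr)\,\dd m$, and the sharp constant $\tfrac p2$ comes from the elementary pointwise inequality $|1+w|^p\ge1+p\Re w+\tfrac p2|w|^2$ (Lemma \ref{lemma:convexity}) together with the orthogonality $\int_{\C\setminus\mb D}\phi'\,\overline{R'}\,\dd m=0$. If you want to salvage your approach, you would have to either prove the base-point-shifted interpolation inequality with a quantified defect from scratch, or simply adopt this extension-plus-exact-computation argument, which derives the sharp constant from a one-line convexity estimate rather than from the interpolation theory.
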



We begin with a rather general lemma which improves the asymptotics  of the map in a controlled manner,
 while keeping the map unchanged in the disk.

\begin{lemma} \label{lemma:extension}
Suppose $f  \in \WW^{1,1}_\tp{loc}(\C)$
 is a 
 principal map with expansion \eqref{eq:expansionarea}. 
Then the  map $\tilde f \colon \C \to \C$ defined by
$$\tilde f \equiv \begin{cases} f & \tp{in } \mb D,\\ h \circ A_f & \tp{in } \C \backslash \mb D,\end{cases}$$ 
is a $\WW^{1,1}_\tp{loc}$-homeomorphism, where $h\colon A_f(\C \setminus \DD) \to f(\C \setminus \DD)  $ is a conformal map 
defined by $$h \equiv  f \circ R^{-1}, \qquad R(z)\equiv z+ b_1/z.$$
Moreover,
$$h(z) = z + {\mathcal O}\left(z^{-2}\right) \text{ as } |z| \to \infty.$$
Finally, if $f$ is $K$-quasiconformal for some $K \geq 1$, then so is $\tilde f$.
\end{lemma}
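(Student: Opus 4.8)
\smallskip
The plan is to recognise $\tilde f$ as a continuous map which is a homeomorphism, belongs to $\WW^{1,1}_{\loc}$ on each side of $\p\mb D$, and has matching traces there, so that a standard gluing argument applies. Throughout, recall from Section~\ref{sec:principal} that $|b_1|<1$, so that by \eqref{asympto} the linear map $A_f(z)=z+b_1\bar z$ is an orientation-preserving homeomorphism of $\C$; write $\mathcal E\equiv A_f(\overline{\mb D})$ for the closed elliptical region bounded by the Jordan curve $\theta\mapsto e^{i\theta}+b_1 e^{-i\theta}$. The first step is to study the rational map $R(z)=z+b_1/z$. If $R(z_1)=R(z_2)$ then $z_1=z_2$ or $z_1 z_2=b_1$, and the latter is impossible when $|z_1|,|z_2|\ge 1>|b_1|$; since $R$ is holomorphic on $\overline{\C}\setminus\overline{\mb D}$ with $R(\infty)=\infty$, it is therefore a conformal bijection of $\overline{\C}\setminus\overline{\mb D}$ onto the unbounded complementary component of the Jordan curve $R(\p\mb D)$. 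As $1/z=\bar z$ on $\p\mb D$, one has $R|_{\p\mb D}=A_f|_{\p\mb D}$, hence $R(\p\mb D)=\p\mathcal E$ and $R$ maps $\overline{\C}\setminus\overline{\mb D}$ conformally onto $\overline{\C}\setminus\overline{\mathcal E}$. Moreover $R'(z)=1-b_1/z^2$ vanishes only at $z=\pm\sqrt{b_1}\in\mb D$, so $R$ is a local biholomorphism near $\overline{\C}\setminus\mb D$ and $R^{-1}$ extends conformally across $\p\mathcal E$ from the exterior side.

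Set $h\equiv f\circ R^{-1}$, defined on $A_f(\C\setminus\mb D)=\overline{\C}\setminus\mathrm{int}\,\mathcal E$. Since $f$ is conformal on $\C\setminus\overline{\mb D}$ by Definition~\ref{prmap} and $R^{-1}$ is conformal, $h$ is holomorphic and injective on $\C\setminus\overline{\mathcal E}$, and it maps onto $f(\C\setminus\mb D)$. For the asymptotics, writing $z=R^{-1}(w)$, the relation $w=z+b_1/z$ holds identically, so $h(w)=f(z)=z+\tfrac{b_1}{z}+\phi(z)=w+\phi(R^{-1}(w))$; since $R^{-1}(w)=w+\mathcal O(w^{-1})$ and $\phi(\zeta)=\mathcal O(\zeta^{-2})$ as $\zeta\to\infty$, we obtain $h(w)=w+\mathcal O(w^{-2})$. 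Finally, on $\p\mb D$ we have $h\circ A_f=h\circ R=f$, using $R|_{\p\mb D}=A_f|_{\p\mb D}$ together with the identity $h\circ R=f$ valid on $\C\setminus\overline{\mb D}$ and extended by continuity to $\p\mb D$. Hence $\tilde f$ is continuous across $\p\mb D$, and therefore on all of $\C$.

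To see that $\tilde f$ is a homeomorphism, note that $f|_{\overline{\mb D}}$ and $(h\circ A_f)|_{\C\setminus\mb D}$ are homeomorphisms onto $f(\overline{\mb D})$ and $f(\C\setminus\mb D)$ respectively, the latter being a composition of the linear homeomorphism $A_f$, the conformal map $R^{-1}$, and the homeomorphism $f$; these two image sets have disjoint interiors and meet precisely along $f(\p\mb D)$, on which $f$ is injective, so $\tilde f$ is a continuous bijection of $\C$ onto $f(\C)=\C$ and, by invariance of domain, a homeomorphism. For the regularity, $\tilde f=f\in\WW^{1,1}_{\loc}$ on $\mb D$ by hypothesis, while on the open set $\C\setminus\overline{\mb D}$ the map $\tilde f=h\circ A_f$ is the composition of a linear map with a holomorphic one, hence smooth there; on a bounded annulus $\mb A(1,2)$ it lies in $\WW^{1,1}$ up to $\p\mb D$, because the substitution $z\mapsto R^{-1}(A_f(z))$ is a bi-Lipschitz diffeomorphism near $\p\mb D$ (as $R'\ne 0$ there) and reduces the estimate to $\int_{\mb A(1,r)}|f'|\,\dd m<\infty$, which holds since $f\in\WW^{1,1}_{\loc}(\C)$ is conformal on that annulus. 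The traces of the two pieces on $\p\mb D$ coincide, both being continuous and equal to $f|_{\p\mb D}$ by the above. By the standard gluing lemma across the smooth curve $\p\mb D$ we conclude $\tilde f\in\WW^{1,1}_{\loc}(\C)$; if moreover $f\in\WW^{1,2}_{\loc}(\C)$, the same argument with exponent $2$ gives $\tilde f\in\WW^{1,2}_{\loc}(\C)$. For quasiconformality, on $\mb D$ the map $\tilde f=f$ is $K$-quasiconformal by assumption, while on $\C\setminus\overline{\mb D}$ we have $\tilde f=h\circ A_f$ with $h$ conformal and, by \eqref{Kareafmla}, $A_f$ $K$-quasiconformal; thus $\D\tilde f=\D h(A_f)\,\D A_f$ with $\D h(A_f)$ a positive scalar times a rotation, whence $|\D\tilde f|^2=|h'|^2|\D A_f|^2\le K|h'|^2\det A_f=K\,J_{\tilde f}$ a.e.\ there. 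As $\p\mb D$ is Lebesgue-null, $|\D\tilde f|^2\le K\,J_{\tilde f}$ holds a.e.\ in $\C$, and together with the homeomorphism property and the $\WW^{1,2}_{\loc}$-regularity this gives that $\tilde f$ is $K$-quasiconformal.

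The step I expect to be the main obstacle is the gluing across $\p\mb D$: one must verify that the exterior piece $h\circ A_f$ has locally integrable derivative up to $\p\mb D$ — here it is essential that, although $f$ is only assumed $\WW^{1,1}_{\loc}$ (not conformal with derivative bounded up to $\p\mb D$), its distributional derivative on the open annulus around $\p\mb D$ coincides with the classical derivative there, so that $\int_{\mb A(1,r)}|f'|\,\dd m<\infty$ is automatic — and that the two traces match. Everything else is bookkeeping with the Joukowski-type map $R$.
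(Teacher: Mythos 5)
Your proof is correct and follows essentially the same route as the paper: the key points in both are that $|b_1|<1$ makes $R$ injective on $\C\setminus\DD$ and equal to $A_f$ on $\mb S^1$, so $\tilde f=f\circ(R^{-1}\circ A_f)$ glues to a $\WW^{1,1}_{\loc}$-homeomorphism, with $K$-quasiconformality coming from \eqref{Kareafmla} and the decay of $h$ from the expansion of $R^{-1}$. Your derivation of the decay via the exact identity $h(w)=w+\phi\bigl(R^{-1}(w)\bigr)$ is a slight streamlining of the paper's explicit expansion $R^{-1}(z)=\tfrac12\bigl(z+\sqrt{z^2-4b_1}\bigr)$, and your explicit Sobolev gluing across $\p\DD$ just spells out what the paper leaves implicit.
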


\begin{proof} We saw in Section \ref{sec:principal} that $|b_1| < 1$. Hence the rational map $R$ is injective in $\C \setminus \DD$ and equals the linear map $A_f$ on the unit circle. In particular, $R^{-1} \circ A_f$ is a homeomorphism of the exterior disc  $\C \setminus \DD$ and equals the identity on $\mb S^1$. Hence the map $\tilde f$ is a $\WW^{1,1}_\tp{loc}$-homeomorphism of $\C$.

It is clear that $h\equiv f \circ R^{-1} $ is conformal in $\C \setminus A_f(\DD)$ and by \eqref{Kareafmla} $\tilde f$ is $K$-quasiconformal when $f$ is, so we only need to prove the decay of $h$.
But this is easy, since 
$$ R^{-1}(z) = \frac{1}{2} \left( z + \sqrt{z^2 - 4b_1} \right) = z - \frac{b_1}{z} +  {\mathcal O}\left(\frac{1}{z^3} \right)\qquad \tp{as }  |z| \to \infty,
$$
which shows that
\begin{align*}
h(z)  =  f \circ R^{-1}(z)  & =  R^{-1}(z) +  \frac{b_1}{R^{-1}(z) } +  {\mathcal O}\left(\frac{1}{z^2} \right)\\
&  =  z - \frac{b_1}{z} + \frac{b_1}{ z - \frac{b_1}{z}} +  {\mathcal O}\left(\frac{1}{z^2} \right) = z +  {\mathcal O}\left(\frac{1}{z^2} \right).
\end{align*}
 completing the proof.
\end{proof}

The next lemma establishes a global quasiconvexity inequality	for the modified map $\tilde f$; here the fast decay of $h$, provided by Lemma \ref{lemma:extension}, is crucial.

\begin{lemma}
\label{lemma:globalqcBp}
Let $f$ be as in Theorem \ref{thm:Bparea} and define $\tilde f$ as in Lemma \ref{lemma:extension}. Then
$$\int_\C \Big({\bf B}_p(\D \tilde f) - {\bf B}_p(A_f)\Big)\, \dd m(z) \geq 0.$$
\end{lemma}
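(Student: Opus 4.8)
The plan is to exhaust $\C$ by large disks $\mb D(0,2R)$, correct $\tilde f$ near $\partial\mb D(0,2R)$ so that it attains the affine boundary value $A_f$, apply the quasiconvexity of ${\bf B}_p$ at $A_f$ furnished by Theorem \ref{thm:soft}, and finally let $R\to\infty$. Two structural facts about $\tilde f$ drive the argument. By Lemma \ref{lemma:extension}, $\tilde f$ is globally $K$-quasiconformal and, on $\C\setminus\mb D$, equals $h\circ A_f$ with $h$ conformal and $h(w)=w+\mathcal O(w^{-2})$, hence $h'(w)=1+\mathcal O(w^{-3})$. Writing $A_f$ in conformal coordinates, for $|z|>1$ one has $\D\tilde f(z)=h'(A_f z)\cdot A_f$ (the complex scalar $h'(A_f z)$ composed with $A_f$), so that from ${\bf B}_p(A)=|A|^{p-1}\bigl((p-1)|a_-|-|a_+|\bigr)$ one gets
$$
{\bf B}_p(\D\tilde f(z)) = |h'(A_f z)|^p\,{\bf B}_p(A_f),\qquad |z|>1,
$$
whence ${\bf B}_p(\D\tilde f(z))-{\bf B}_p(A_f)=(|h'(A_f z)|^p-1){\bf B}_p(A_f)=\mathcal O(|z|^{-3})$ is absolutely integrable over $\C\setminus\mb D$. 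Together with ${\bf B}_p(\D f)\in\LL^1(\mb D)$ (for $p<p_K$ this is \eqref{Belt2}; for $p=p_K$ it is the integrability of ${\bf B}_{p_K}(\D f)$ along $K$-quasiregular maps) this makes the integral in the statement well defined, and equal to $\int_{\mb D}\bigl({\bf B}_p(\D f)-{\bf B}_p(A_f)\bigr)\dd m+{\bf B}_p(A_f)\int_{\C\setminus\mb D}(|h'(A_f z)|^p-1)\dd m$.

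Assume first $2\le p<p_K\equiv\tfrac{2K}{K-1}$. Then $\{{\bf B}_p\le 0\}$ is the closed $\tfrac{p}{p-2}$-quasiconformal cone, which \emph{strictly} contains $Q_2(K)$; since $A_f$ is $K$-quasiconformal, cf.\ \eqref{Kareafmla}, it lies in the interior of $\{{\bf B}_p\le 0\}$, so ${\bf B}_p<0$ on some ball $B_\delta(A_f)$. Choose $\zeta_R\in\CC_c^\infty(\mb D(0,2R))$ with $\zeta_R\equiv 1$ on $\mb D(0,R)$ and $|\nabla\zeta_R|\le C/R$, and set $g_R\equiv A_f+\zeta_R(\tilde f-A_f)$. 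Then $g_R\in A_f+\WW^{1,2}_0(\mb D(0,2R))$ and $g_R=\tilde f$ on $\mb D(0,R)$, while on the annulus $\mb D(0,2R)\setminus\mb D(0,R)$ one has $|\tilde f-A_f|=\mathcal O(|z|^{-2})$ and $|\D\tilde f-A_f|=\mathcal O(|z|^{-3})$, so $\D g_R=A_f+\mathcal O(R^{-3})$ there; hence ${\bf B}_p(\D g_R)<0$ on the annulus once $R$ is large, and ${\bf B}_p(\D g_R)={\bf B}_p(\D\tilde f)\le 0$ on $\mb D(0,R)$. Thus ${\bf B}_p(\D g_R)\le 0$ a.e.\ in $\mb D(0,2R)$ and Theorem \ref{thm:soft} yields ${\bf B}_p(A_f)\le\fint_{\mb D(0,2R)}{\bf B}_p(\D g_R)\,\dd m$.

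Multiplying by $|\mb D(0,2R)|=4\pi R^2$ and splitting the integral over $\mb D$, $\mb D(0,R)\setminus\mb D$, and $\mb D(0,2R)\setminus\mb D(0,R)$, the three pieces are $\int_{\mb D}{\bf B}_p(\D f)\,\dd m$; then ${\bf B}_p(A_f)\bigl[\pi(R^2-1)+\int_{\mb D(0,R)\setminus\mb D}(|h'(A_f z)|^p-1)\,\dd m\bigr]$; and, by local Lipschitz continuity of ${\bf B}_p$ near $A_f\ne 0$, $3\pi R^2{\bf B}_p(A_f)+\mathcal O(R^{-1})$. The term $4\pi R^2{\bf B}_p(A_f)$ cancels, and letting $R\to\infty$ leaves precisely
$$
\int_{\mb D}\bigl({\bf B}_p(\D f)-{\bf B}_p(A_f)\bigr)\dd m+{\bf B}_p(A_f)\int_{\C\setminus\mb D}(|h'(A_f z)|^p-1)\dd m = \int_{\C}\bigl({\bf B}_p(\D\tilde f)-{\bf B}_p(A_f)\bigr)\dd m\ \ge\ 0 .
$$
For the endpoint $p=p_K$ I would argue by approximation: apply the inequality just proved with $2\le s<p_K$ and let $s\nearrow p_K$. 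The exterior integral converges by dominated convergence (its integrand is $\mathcal O(|z|^{-3})$ uniformly in $s\in[2,p_K]$, and $s\mapsto{\bf B}_s(A_f)$ is continuous), while ${\bf B}_s(\D f)\le 0$ and ${\bf B}_s(\D f)\to{\bf B}_{p_K}(\D f)$ pointwise on $\mb D$, so reverse Fatou gives $\limsup_{s\nearrow p_K}\int_{\mb D}{\bf B}_s(\D f)\,\dd m\le\int_{\mb D}{\bf B}_{p_K}(\D f)\,\dd m$, which is finite; since each $s$-inequality is $\ge 0$, so is the limiting one.

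\textbf{Main obstacle.} The delicate point is the construction of $g_R$: one needs it to have \emph{exactly} the affine boundary value $A_f$ and at the same time to satisfy ${\bf B}_p(\D g_R)\le 0$ a.e., so that Theorem \ref{thm:soft} applies verbatim. This is exactly why the strict inclusion $Q_2(K)\subsetneq\{{\bf B}_p\le 0\}$, valid only for $p<p_K$, is essential (a naive interpolant would only stay in the $K$-quasiconformal cone, which fails on an annulus where $\D g_R$ is merely close to the possibly-boundary matrix $A_f$), and why the borderline exponent must be reached by a separate limiting argument rather than directly.
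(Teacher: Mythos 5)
Your construction for $2\le p<p_K$ is correct and is in essence the paper's own argument: both proofs cut the perturbation $\tilde f-A_f$ off on a large annulus using the decay $\tilde f-A_f=\mathcal O(|z|^{-2})$, $\D\tilde f-A_f=\mathcal O(|z|^{-3})$, apply Theorem \ref{thm:soft} to the resulting map with affine boundary value $A_f$, and check that the transition annulus contributes $\mathcal O(R^{-1})$ while the exterior contribution converges to $\B_p(A_f)\int_{\C\setminus\mb D}\bigl(|h'\circ A_f|^p-1\bigr)\,\dd m$. The only structural difference is how the room inside $\{\B_p\le 0\}$ is created: you keep $p$ fixed and use that $A_f$ lies in the interior of the cone $\{\B_p\le0\}=Q_2(\tfrac{p}{p-2})\supsetneq Q_2(K)$ when $p<p_K$, whereas the paper lowers the exponent to $p_n<p$ (so that all of $Q_2(K)$ sits in the interior of $Q_2(K_{p_n})$) and runs one diagonal limit $p_n\nearrow p$, $j_n\to\infty$, which treats every $2\le p\le p_K$, endpoint included, in a single pass; for subcritical $p$ your version is, if anything, slightly cleaner.

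The weak point is your endpoint limit $s\nearrow p_K$. You justify convergence of the exterior term by asserting that $(|h'(A_fz)|^s-1)\B_s(A_f)$ is $\mathcal O(|z|^{-3})$ uniformly in $s\in[2,p_K]$, but that bound only holds away from the unit circle: $h'=(f\circ R^{-1})'$ is bounded on $A_f(\{|z|\ge\rho\})$ for each fixed $\rho>1$, while near $|z|=1$ the derivative of the conformal map $f$ (hence $h'$) need not be bounded, and at $s=p_K$ even the integrability of $|h'\circ A_f|^{p_K}$ on a collar $1<|z|<\rho$ is precisely the nontrivial endpoint cancellation, not a consequence of the decay at infinity. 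So the dominated-convergence step, as justified, does not stand. The repair is the device you already use on $\mb D$ and which the paper applies globally: split the exterior at a fixed $\rho>1$; on the bounded collar $1<|z|<\rho$ use reverse Fatou, relying only on $\B_s(\D\tilde f)\le 0$, so that $\B_s(\D\tilde f)-\B_s(A_f)\le|\B_s(A_f)|\le C$ is an integrable majorant there; on $|z|\ge\rho$ use genuine dominated convergence with the uniform $\mathcal O(|z|^{-3})$ bound. With this adjustment your proof closes, and the limiting scheme then coincides with the paper's (Fatou on a compact piece, dominated convergence outside a fixed radius, vanishing transition annulus).
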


\begin{proof}
Choose first  a sequence of exponents ${p_n} \nearrow p \leq \frac{2K}{K-1}$, and write  $K_{p_n} \equiv \frac{{p_n}}{{p_n}-2}$ so that $K < K_{p_n}$. Then define an auxiliary function $\psi(z)$, $|z|>1$,  via
$$\tilde f(z)=h(A_f(z))=A_f(z) + \mathcal O(A_f(z)^{-2})\equiv A_f(z) + \psi(z).$$

Finally choose,  for each large integer $j\geq 2$,  a smooth radially symmetric cutoff $0\leq \eta_j\leq 1$  such that
$\eta_j(z)= 1$ if $|z|\leq j$,  $\eta_j(z)=0$ if $|z|\geq j+1$ and $|\nabla \eta_j|\leq 2$. We claim that for any given index $n$, for all $j$  large enough  the map $A_f+ \eta_j \psi $ is $K_{p_n}-$quasiconformal 
 in $\C \backslash \mb D$. 
Indeed, note first that via \eqref{Kareafmla} the linear map $A_f$ is $K$-quasiconformal. Thus by construction, also $\tilde f(z)$ is   $K$-quasiconformal in all of $\C$. 

We then set
\begin{equation} \label{tildej}
\tilde f_j \equiv \begin{cases} f & \tp{in } \mb D,\\ A_f+\eta_j \psi & \tp{in } \C \backslash \mb D,\end{cases}
\end{equation}
so that the function equals $\tilde f$ whenever $|z| < j$.
In addition, since we chose  $K_{p_n}  >  K$, for any $n$
 we can take small $ \varepsilon_n > 0$, converging to $0$ as $n \to \infty$, 
 so that $X\in Q_2(K_{p_n})$ for every matrix with $|A_f-X|<\varepsilon_n$.


On the other hand, for all $j$ sufficiently large we can estimate
\begin{equation} \label{decay13}
|\D (\eta_j \psi)(z)| \leq  |\eta_j \D \psi|(z)+ |\psi \otimes \nabla \eta_j|(z) \leq C |z|^{-2}
\end{equation}
where the constant $C$ is independent of $j$. Thus, taking $j = j_n$ so  large that $C j_n^{-2}\leq \frac{\varepsilon_n}{ 2}$, 
one obtains $|\D (\eta_{j_n} \psi)(z)| \leq \frac{\varepsilon_n }{2}$ for $|z|\geq j_n$.  
 This means  that by \eqref{Burkh46} and by what we have just shown,  the map in \eqref{tildej} satisfies ${\bf B}_{p_n}(\D \tilde f_{j_n}) \leq 0$. 

We are now in a position to apply Theorem \ref{thm:soft}:
Since $\tilde f_{j_n}(z)=A_f$ for $|z|\geq {j_n}+1$, 
we conclude that 
\begin{equation} \label{preliBp}
\int_{\C} \Big({\bf B}_{p_n}(\D \tilde f_{j_n}) - {\bf B}_{p_n}(A_f)\Big)\, \dd m(z)\ge 0, \qquad n \geq 1.
\end{equation}

To complete the argument we decompose the integral in \eqref{preliBp} as follows. Since
 $\tilde f_{j_n} = \tilde f$ in $\{ |z|\leq {j_n}\}$, 
 we choose some fixed $\rho > 1$ and from \eqref{preliBp} obtain for all  ${j_n} > \rho$ the lower bound
 \begin{align}  \label{preliBp7} 
& \int_{|z|\leq \rho}  \left({\bf B}_{p_n}(\D \tilde f) - {\bf B}_{p_n}(A_f)\right) \dd m +  \int_{\rho \leq |z|\leq j_n} \left({\bf B}_{p_n}(\D \tilde f) - 
{\bf B}_{p_n}(A_f)\right) \dd m \\ 
&\;  \qquad  + \int_{j_n\leq |z|\leq j_n+1} \left({\bf B}_{p_n}(\D \tilde f_{j_n}) - {\bf B}_{p_n}(A_f)\right) \dd m\ge 0. \nonumber 
\end{align}


For the first term above,  Lemma \ref{lemma:extension}  tells that $\tilde f$ is $K$-quasiconformal and hence ${\bf B}_{p_n}(\D\tilde  f) \leq {\bf B}_p(\D \tilde f) \leq 0$. We can thus use  Fatou's lemma to see that  
$$ \int_{|z|\leq \rho} \Big({\bf B}_p(\D  \tilde f) - {\bf B}_p(A_f)\Big)\dd m\geq
\limsup_{p_n \nearrow p} \int_{|z|\leq \rho} \Big({\bf B}_{p_n}(\D \tilde f) - {\bf B}_{p_n}(A_f)\Big)\dd m.$$

For the second term in \eqref{preliBp7} recall that  in the annuli $1 \leq |z| \leq j_n$ we have $\tilde f(z) = h \circ A_f(z)$, where for the conformal factor
$h'$ is bounded on the set $A_f(\{ |z| \geq \rho \})$. Therefore in these annuli the functions
$${\bf B}_{p_n}(\D \tilde f) - {\bf B}_{p_n}(A_f) = {\bf B}_{p_n}(A_f)\left( |h'\circ A_f|^{p_n}-1 \right)= {\bf B}_{p_n}(A_f) \mathcal O(A_f(z)^{-3})$$
are  uniformly bounded and  decay like $|z|^{-3}$ at infinity. Thus the dominated convergence theorem gives 
$$ \lim_{n \to \infty}  \int_{\rho \leq |z|\leq j_n} \left({\bf B}_{p_n}(\D \tilde f) - {\bf B}_{p_n}(A_f)\right) \dd m =
\int_{|z| \geq \rho} \Big({\bf B}_p(\D  \tilde f) - {\bf B}_p(A_f)\Big)\dd m.
$$

Finally, we claim  that when $n\to \infty$, the third term in \eqref{preliBp7} vanishes. Indeed, from the explicit expression \eqref{Burk}, or from the general properties  
of positively $p$-homogeneous and rank-one convex functionals  \cite{BKK}, 
we have the Lipschitz estimate
$$\left| {\bf B}_{p_n}(A_f+ \D (\eta_{j_n} \psi) ) - {\bf B}_{p_n}(A)\right|\leq C(K) | \D (\eta_{j_n} \psi)| \left(|A_f|+| \D (\eta_{j_n} \psi)|\right)^{p-1}.$$
 Thus we can use the estimate \eqref{decay13} which gives
\begin{align*}
& \left|\int_{{j_n}\leq |z|\leq {j_n}+1} \left({\bf B}_{p_n}(\D \tilde f_{j_n}) - {\bf B}_{p_n}(A_f)\right) \dd m(z) \right| \\
& \qquad 
 \leq C(K)\int_{{j_n}\leq |z|\le {j_n}+1} | \D (\eta_{j_n} \psi)| \left(|A_f|+| \D (\eta_{j_n} \psi)|\right)^{p-1}\dd m(z) 
\\
& \qquad  \leq C(K, A_f) {j_n}^{-1} \to 0,
\end{align*}
since $|\D(\eta_{j_n}\psi)|\leq C {j_n}^{-2}$ in $\{{j_n}\leq |z|\leq {j_n}+1\}$ and the area of this annulus 
 is $ (2{j_n}+1)\pi$.
 Combining now the above estimates and letting $n\to\infty$, we conclude  that
\begin{align*}
\int_{\mb C} \left({\bf B}_p(\D \tilde f) - {\bf B}_p(A_f) \right) \dd m(z) \geq 0,
\end{align*} 
as claimed.
\end{proof}



\begin{proof}[Proof of Theorem \ref{thm:Bparea}]
Using the definition of $\tilde f$ from Lemma \ref{lemma:extension} and applying Lemma \ref{lemma:globalqcBp}, we obtain:
$$\int_{\DD} \Big( {\bf B}_p(\D f ) - {\bf B}_p(A_f) \Big)\,  \dd m(z)  
\geq  -  \int_{\C\setminus \DD} \Big( {\bf B}_p(\D (h \circ A_f)) - {\bf B}_p(A_f) \Big)\, \dd m(z).$$
Our goal is to give a lower bound on the right-hand side of this estimate.  We saw already in the previous lemma that
${\bf B}_p(\D (h \circ A_f)) - {\bf B}_p(A_f) = {\bf B}_p(A_f)  \Bigl(\bigl| h' \circ A_f \bigr|^p  \, - 1 \Bigr) $. Hence changing the variables twice, we have:
 \begin{align*}
& \int_{\C\setminus \DD} \Bigl( {\bf B}_p(\D (h \circ A_f)) - {\bf B}_p(A_f)\Bigr) \, \dd m(z) \\
& \qquad =  \frac{{\bf B}_p(A_f)}{\det(A_f)} \int_{A_f(\C\setminus \DD)} \Bigl(  \bigl| h'(z) \bigr|^p  \, - 1 \Bigr)  \, \dd m(z)\\
& \qquad =   \frac{{\bf B}_p(A_f)}{\det(A_f)} \int_{R(\C\setminus \DD)} \Bigl( \bigl| (f \circ R^{-1})'(z) \bigr|^p  \, - 1 \Bigr)  \, \dd m(z) \\
& \qquad =   \frac{{\bf B}_p(A_f)}{\det(A_f)} \int_{R(\C\setminus \DD)} \Bigl(  \bigl| f'( R^{-1}(z))\bigr|^{p} \, \bigl| (R^{-1})' (z)\bigr|^p  \, - 1 \Bigr)  \, \dd m(z) \\
& \qquad =   \frac{{\bf B}_p(A_f)}{\det(A_f)} \int_{\C\setminus \DD} \Bigl(  \bigl| f'(z)\bigr|^{p} \, \bigl| R' (z)\bigr|^{-p}  \, - 1 \Bigr)  \, 
|R'(z)|^2 \, \dd m(z). 
\end{align*}
Finally,  in the notation of \eqref{eq:expansionarea},  $f = R + \phi$ in $\C\setminus \DD$. In particular,  
\begin{align*}
& \int_{\DD}  \Big({\bf B}_p(\D f(z) ) - {\bf B}_p(A_f)\Big) \, \dd m(z)  \\
& \qquad \geq  - \frac{{\bf B}_p(A_f)}{\det(A_f)} \int_{\C\setminus \DD} \Big(| R'(z) + \phi'(z)|^p |R'(z)|^{2-p} - |R'(z)|^2\Big) \, \dd m(z).
\end{align*}

This suggests us to define
$$ H_p(z,w) \equiv  | R'(z) + w|^p |R'(z)|^{2-p} - |R'(z)|^2
$$
to estimate the integrand on the right-hand side.
We claim that	
\begin{equation} \label{eq:areaclaim}
 \int_{\C\setminus \DD}  H_p\bigl(z,\phi'(z) \bigr)  \, \dd m(z)\geq \frac{p}{2} \int_{\C\setminus \DD}  |\phi'(z)|^2 \,\dd m(z),
\end{equation}
To this end, we need a further lemma.
\begin{lemma} \label{lemma:convexity}
Let $ G_p(w) = |1+ w|^p - 1$. Then for any $2 < p < \infty$,
$$ G_p(w)  \geq  p \Re w + \frac{p}{2} |w|^2, \qquad w \in \C,
$$
where the coefficient in front of $|w|^2$ is the largest possible.
\end{lemma}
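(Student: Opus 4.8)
The plan is to collapse the two-real-variable inequality into a one-variable convexity statement via the substitution $t \equiv |1+w|^2$. Since $t = 1 + 2\Re w + |w|^2 \ge 0$, we have $G_p(w) = |1+w|^p - 1 = t^{p/2} - 1$, and the crucial (exact) identity is that the proposed lower bound also depends on $t$ alone:
$$
p\,\Re w + \frac{p}{2}|w|^2 = \frac{p}{2}\bigl(2\Re w + |w|^2\bigr) = \frac{p}{2}(t-1).
$$
Hence the asserted inequality is equivalent to
$$
t^{p/2} - 1 \ge \frac{p}{2}(t-1)\qquad\text{for all } t\ge 0,
$$
and as $w$ ranges over $\C$ the quantity $t$ ranges over all of $[0,\infty)$, so this reduction loses nothing.

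Next I would prove this scalar inequality. Writing $q \equiv p/2 > 1$ (here we use $p>2$), the function $\phi(t) = t^{q}$ is convex on $[0,\infty)$; its tangent line at $t=1$ is $\ell(t) = 1 + q(t-1)$, and convexity gives $\phi(t) \ge \ell(t)$ for every $t\ge 0$, which is exactly the displayed bound. (Alternatively one checks directly that $g(t) \equiv t^{q} - 1 - q(t-1)$ satisfies $g(1)=g'(1)=0$ and $g''\ge 0$ on $(0,\infty)$, with $g(0) = q-1 > 0$.) This step is routine; the only thing worth stating carefully is the exactness of the first-step substitution, so that the chain of equivalences is genuine and not merely a one-sided estimate — and that is the closest thing to an obstacle in the argument.

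Finally, for the optimality of the constant $\tfrac{p}{2}$ I would test the inequality on purely imaginary $w$, where the linear term $\Re w$ vanishes. Taking $w = \mathrm i\varepsilon$ with $\varepsilon\to 0$ gives $\Re w = 0$, $|w|^2 = \varepsilon^2$, and by Taylor expansion
$$
G_p(\mathrm i\varepsilon) = (1+\varepsilon^2)^{p/2} - 1 = \frac{p}{2}\,\varepsilon^2 + O(\varepsilon^4).
$$
If the inequality $G_p(w) \ge p\,\Re w + c\,|w|^2$ held with some $c > \tfrac{p}{2}$, dividing by $\varepsilon^2$ and letting $\varepsilon\to 0$ would force $\tfrac{p}{2}\ge c$, a contradiction; therefore $c = \tfrac{p}{2}$ is the largest admissible coefficient, completing the proof of the lemma.
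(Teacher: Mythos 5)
Your proof is correct and follows essentially the same route as the paper: the substitution $t=|1+w|^2$ is exactly the paper's change of variable $\zeta=w+1$, which reduces the claim to the radial inequality $t^{p/2}\ge 1+\tfrac{p}{2}(t-1)$ after the $\Re w$ terms cancel identically. You merely spell out the details the paper leaves as ``easy to verify'' (the tangent-line/convexity step) and replace its terse tangency remark at $\zeta=1$ with an explicit second-order test along $w=\mathrm i\varepsilon$ for the optimality of $\tfrac{p}{2}$, both of which are fine.
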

\begin{proof} Letting $\zeta = w + 1$,  the task is to show that 
$$ |\zeta|^p \geq 1 + p \Re(\zeta-1) + \frac{p}{2} |\zeta -1|^2 = 1-\frac{p}{2} + \frac{p}{2}|\zeta|^2,
$$
which is easy to verify.  The derivatives of the left-hand side and the right-hand side  both evaluate to $p$ at $\zeta=1$, hence the constant $\frac p 2$ is optimal.
\end{proof}
Note that
$$H_p(z,w) = |R'(z)|^2 G_p\bigl(w/R'(z)\bigr),$$
and thus Lemma \ref{lemma:convexity} yields
\begin{align}
\begin{split}
\label{eq:auxarea1}
H_p\bigl(z,\phi'(z)\bigr) & \geq p  |R'(z)|^2\Re\bigl(\phi'(z) / R'(z) \bigr) + \frac{p}{2}  |\phi'(z) |^2 
\\ & = p \Re\left(\phi'(z) \, \overline{R'(z)} \right) + \frac{p}{2}  |\phi'(z) |^2.  
\end{split}
\end{align}
To use this inequality note that 
as $\phi'(z) =- \sum_{j=2}^\infty \frac{j \,b_j}{z^{j+1}}$ and ${R'(z) = 1 -b_1/z^2}$,  by integration in polar coordinates it follows that 
$$ \int_{\C \setminus \mb D}  \phi'(z)\overline{R'(z)} \,\dd m(z) = 0.
$$ 
Thus we obtain the required claim \eqref{eq:areaclaim} upon integrating the estimate \eqref{eq:auxarea1}.  We have thus shown that
 $$\int_{\mb D}\Big( {\bf B}_p(\D f ) - {\bf B}_p(A_f)\Big) \, \dd m(z)  \geq - \frac{p}{2} \frac{{\bf B}_p(A_f)}{\det(A_f)}  \int_{\C\setminus \DD}  |\phi'(z)|^2 \, \dd m(z),$$
which completes the proof.
\end{proof}

\begin{proof}[Proof of Theorem \ref{thm:Bpareaintro}]
In the notation of \eqref{eq:expansionarea},  $\phi'(z) =- \sum_{j=2}^\infty \frac{j \,b_j}{z^{j+1}}$ where the powers  $z^{-(j+1)}$ are orthogonal in $L^2(\C\setminus \DD)$. Thus
$$\int_{\C \setminus \DD} |\phi'(z)|^2 \, \dd m(z) = \sum_{j=2} j^2 |b_j|^2 \Big\| z^{-j-1} \Big\|^2_{L^2(\C \setminus \DD)} = \pi \sum_{j=2} j |b_j|^2.
$$
Since ${\bf B}_2(A_f) = - \det(A_f)$, this combined with \eqref{eq:Bparea} proves Theorem  \ref{thm:Bpareaintro}.
\end{proof}

\section{The sharp higher integrability of the Jacobian}
\label{sec:LlogL}

As usual, in  this section $\Omega\subset \R^2$ denotes an arbitrary bounded domain such that $\mathscr L^2(\p \Omega)=0$. By letting $\,p\to 2\,$ in Theorems  \ref{thm:soft} and \ref{main}, we will obtain new quasiconvex functionals defined on
$$\R^{2\times 2}_+\equiv  \R^{2\times 2}\cap\{\det>0\}.$$

To be precise, any functional ${\bf E}\colon \R^{2\times2}_+\to \R$ can be naturally extended to $\R^{2\times 2}$ by setting
\begin{equation}\label{eq:extenddetfunc}
{\bf E}(A)=+\infty, \quad \text{ whenever } \det A\leq 0.
\end{equation}
We will always identify ${\bf E}$ with this extension.
 It is important to note that the set $\R^{2\times 2}\backslash \R^{2\times 2}_+=\{\det \leq 0\}$ is $\WW^{1,2}$-quasiconvex, recall Definition \ref{qcset}, and hence the extension of ${\bf E}$ is trivially $\WW^{1,2}$-quasiconvex at matrices with non-positive determinant. It is for this reason that there is no ambiguity in talking about quasiconvexity of functionals defined on $\R^{2\times 2}_+$.

Let us first consider the functional
 \begin{equation*}\label{F2}
 { \mathscr  F}(A) \equiv   \, |A|^2\, -  \,\left(1\, + \,\log |A|^2 \,\right )\, \det(A),  
 \end{equation*}
which is rank-one convex in $\R^{2\times 2}$, cf.\ \cite[Corollary 5.1]{Tade} or \eqref{eq:derivativeBp} below, but not polyconvex: indeed, for $t>0$ we have $\mathscr F(t\, \Id)=-2t^2 \log t$, which decreases faster than any quadratic function \cite[Corollary 5.9]{Dacorogna2007}. 

Here we succeed in proving that $\mathscr F$ is quasiconvex in $\R^{2\times 2}_+$:

\begin{corollary}\label{cor:LlogL} Let  $A \in \R^{2 \times 2}_+$ and $f \in A + \WW^{1,2}_0(\Omega)$ be a homeomorphism. Then
 \begin{equation}\label{LlogL}
 { \mathscr  F}(A) \leq   \fint _\Omega \, { \mathscr  F}(\D f(z)) \, \dd m(z).  
\end{equation}
\end{corollary}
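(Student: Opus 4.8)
The plan is to deduce Corollary~\ref{cor:LlogL} from Theorem~\ref{thm:soft} by the limiting procedure $p\to 2$ that was advertised in the introduction. First I would record the pointwise expansion
\begin{equation}\label{eq:derivativeBp}
\frac{p}{p-2}\bigl[{\bf B}_p(A)-{\bf B}_2(A)\bigr] \;\longrightarrow\; \mathscr F(A) \qquad \text{as } p\to 2^+,
\end{equation}
valid for every fixed $A\in\M$, together with monotonicity-type control of the difference quotient in $p$. Concretely, writing ${\bf B}_p(A)=\bigl((\tfrac p2-1)|A|^2-\tfrac p2\det A\bigr)|A|^{p-2}$ and using ${\bf B}_2(A)=-\det A$, a direct computation gives $\tfrac{p}{p-2}[{\bf B}_p(A)-{\bf B}_2(A)]=\tfrac{p}{2}|A|^p+\tfrac{p}{p-2}\det A\,(|A|^{p-2}-1)$, and letting $p\to2$ yields $|A|^2-(1+\log|A|^2)\det A=\mathscr F(A)$ since $\tfrac{|A|^{p-2}-1}{p-2}\to\log|A|$. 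I would also note the trivial but crucial sign fact: on the admissible set we may assume $|A|\le 1$ after rescaling, or more robustly, split into the regions $|A|\le 1$ and $|A|>1$ and handle equi-integrability separately; the cleanest route is a normalization reducing to $|A|\le1$ on a large set, but since $f\in A+\WW^{1,2}_0$ and $\D f\in\LL^2$, the natural statement is uniform integrability of the family $\{\tfrac{p}{p-2}[{\bf B}_p(\D f)-{\bf B}_2(\D f)]\}_{p\in(2,2+\delta)}$, which follows from the bound $|{\bf B}_p(A)-{\bf B}_2(A)|\le C(p-2)(1+|A|^2\log(e+|A|))$ on bounded-in-$\LL^2$ (hence $\LL\log\LL$) sets — though here one must be a little careful, as $\D f$ is only in $\LL^2$, not $\LL\log\LL$ a priori.

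The main step is the inequality itself. For $A\in\R^{2\times2}_+$ and a homeomorphism $f\in A+\WW^{1,2}_0(\Omega)$, I want to apply Theorem~\ref{thm:soft}, but that theorem requires ${\bf B}_p(\D f)\le 0$ a.e., i.e.\ $f$ must be $K$-quasiconformal with $K=\tfrac{p}{p-2}$. A general $f$ with $J_f>0$ a.e.\ need not satisfy this. The remedy, and I expect this to be the genuine obstacle, is to NOT apply Theorem~\ref{thm:soft} directly but to use instead the closed quasiconvexity of the \emph{local} Burkholder functional, Theorem~\ref{main}, or — better for this particular corollary — the polyconvexity of ${\bf B}_p^+$ from \cite{GK} combined with Theorem~\ref{thm:soft}, exactly as sketched right after Theorem~\ref{thm:soft} in the excerpt: splitting $\Omega=\{{\bf B}_p(\D f)\le0\}\cup\{{\bf B}_p(\D f)>0\}$ does not immediately work because the boundary values are not affine on each piece. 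The correct argument is: since ${\bf B}_p=\max\{{\bf B}_p,0\}+\min\{{\bf B}_p,0\}$ and ${\bf B}_p^+$ is polyconvex (hence quasiconvex with no sign restriction on $\D f$), while the contribution of $\{{\bf B}_p<0\}$ — on which $f$ restricted is $K$-quasiconformal-valued — is controlled by Theorem~\ref{thm:soft} applied after a suitable gradient-Young-measure argument. Actually the slickest path is: the hypotheses of Theorem~\ref{thm:soft} together with \cite{GK} give that ${\bf B}_p$ is quasiconvex at \emph{every} $A\in\M_+$ when tested against $\WW^{1,2}_0$ perturbations for which $J_f>0$; I would cite this combined statement (it is essentially stated in the excerpt just below Theorem~\ref{thm:soft}) and thus obtain, for $2<p<\infty$,
\begin{equation}\label{eq:Bpquasiconvex}
{\bf B}_p(A)\le \fint_\Omega {\bf B}_p(\D f(z))\,\dd m(z), \qquad {\bf B}_2(A)=-\det A=\fint_\Omega {\bf B}_2(\D f(z))\,\dd m(z),
\end{equation}
the latter because $\det$ is a null-Lagrangian and $f-A\in\WW^{1,2}_0$.

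Subtracting, multiplying by $\tfrac{p}{p-2}>0$, and rearranging \eqref{eq:Bpquasiconvex} gives
\begin{equation*}
\frac{p}{p-2}\bigl[{\bf B}_p(A)-{\bf B}_2(A)\bigr]\le \fint_\Omega \frac{p}{p-2}\bigl[{\bf B}_p(\D f(z))-{\bf B}_2(\D f(z))\bigr]\,\dd m(z).
\end{equation*}
Now I would let $p\to2^+$. The left-hand side converges to $\mathscr F(A)$ by \eqref{eq:derivativeBp}. For the right-hand side I would show $\limsup_{p\to2^+}\fint_\Omega \tfrac{p}{p-2}[{\bf B}_p(\D f)-{\bf B}_2(\D f)]\le\fint_\Omega\mathscr F(\D f)$; the pointwise limit is $\mathscr F(\D f)$, and the integrand is bounded above by $\tfrac p2|\D f|^p + \det(\D f)\cdot\tfrac{|\D f|^{p-2}-1}{p-2}\le C(1+|\D f|^2\log(e+|\D f|))$ uniformly for $p$ near $2$ once one observes $|\D f|^{p-2}\le 1+(p-2)|\D f|^2$ is false for $|\D f|>1$, so instead one uses $|\D f|^{p-2}-1\le (p-2)|\D f|^{p-2}\log|\D f|\le C_\delta(p-2)|\D f|^{2}$ for $p\le 2+\delta$ together with the fact that, for this \emph{specific} $f$, higher integrability is available: indeed ${\bf B}_p(\D f)\in\LL^1$ for $p$ slightly above $2$ follows from Theorem~\ref{thm:soft}/\cite{GK} applied with a slightly larger exponent, giving $\D f\in\LL^{2+\varepsilon}_{\loc}$-type control, or one simply invokes that the right-hand side of \eqref{eq:Bpquasiconvex} is finite for $p$ near $2$ — hence Fatou (in the reverse direction, using the uniform upper bound as the dominating function) applies. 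The hard part will be making this equi-integrability rigorous without circularity: the clean fix is to first prove \eqref{LlogL} assuming additionally $\D f\in\LL^{2+\varepsilon}(\Omega)$ (where everything above is routine), and then remove this assumption by a truncation/approximation argument using that $\{t\,\Id:t>0\}$ lies in the interior of $\mathrm{dom}(\mathscr F)=\M_+$ so that $\mathscr F$ is locally Lipschitz there, combined with the lower semicontinuity machinery of Section~\ref{sec:prelims} (Remark~\ref{rmk:moreintegrands}) — this last density step is the technical crux of the corollary.
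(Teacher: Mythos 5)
Your plan stands or falls on the claimed ``combined statement'' that ${\bf B}_p$ is quasiconvex at every $A\in\M_+$ when tested against all $\WW^{1,2}_0$ perturbations with $J_f>0$, and this is precisely what is \emph{not} available. Theorem \ref{thm:soft} gives the Jensen inequality only when ${\bf B}_p(\D f)\le 0$ a.e.\ (i.e.\ $\D f$ lies in the $\tfrac{p}{p-2}$-quasiconformal cone a.e.), and the consequence of \cite{GK} quoted after Theorem \ref{thm:soft} gives it only when ${\bf B}_p(\D f)\ge 0$ a.e.; the paper explicitly says that, combined, these provide ``strong evidence towards the full quasiconvexity of ${\bf B}_p$'', i.e.\ the mixed-sign case you need is the open problem, not a corollary. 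Your decomposition ${\bf B}_p={\bf B}_p^{+}+\min\{{\bf B}_p,0\}$ does not repair this: ${\bf B}_p^{+}$ is polyconvex, but $\min\{{\bf B}_p,0\}$ is not known to be quasiconvex (Theorem \ref{thm:soft} is a statement about maps whose gradient lies entirely in the cone, not a Jensen inequality for the truncated functional), and if both pieces were quasiconvex you would have proved full quasiconvexity of ${\bf B}_p$. A further warning sign is that your argument never uses the hypothesis that $f$ is a homeomorphism; without it, inequality \eqref{LlogL} is exactly Iwaniec's conjecture on the quasiconvexity of $\mathscr F$, which the paper records as open in Section \ref{sec:LlogL}. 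Theorem \ref{main} does not help either, since closed quasiconvexity of $\mathcal B_K$ again only constrains Young measures supported in $Q_2(K)$.

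The paper's route avoids fixing $p$ and testing with a general map. For a smooth diffeomorphism $f$ of $\overline\Omega$ with $f=A$ on $\partial\Omega$, the distortion is bounded by some finite $K$, and since $\tfrac{p}{p-2}\to\infty$ as $p\to2^{+}$, for all $p$ close enough to $2$ one has ${\bf B}_p(\D f)\le 0$ a.e., so Theorem \ref{thm:soft} applies \emph{directly}; subtracting the null-Lagrangian identity for ${\bf B}_2=-\det$, multiplying by $\tfrac{p}{p-2}$ and letting $p\to2$ is then harmless because $\D f$ is bounded and $J_f$ is bounded below, so none of the equiintegrability issues you wrestle with arise. The general case is then reduced to this one by the diffeomorphic approximation theorem of \cite{IKO} (this is where the homeomorphism hypothesis is essential): one takes diffeomorphisms $f_j\to f$ in $\WW^{1,2}(\Omega)$ with $f_j=A$ on $\partial\Omega$, applies the smooth case to each $f_j$, and passes to the limit by splitting $J_{f_j}\log|\D f_j|^2=J_{f_j}\log(1+|\D f_j|^2)-J_{f_j}\log(1+|\D f_j|^{-2})$, using Fatou for the first (nonnegative) term and dominated convergence for the second (dominated by $J_{f_j}|\D f_j|^{-2}\le 1$). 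So the missing ideas in your proposal are (i) exploiting that bounded distortion plus $p\to2$ puts smooth diffeomorphisms inside the scope of Theorem \ref{thm:soft}, and (ii) the approximation-by-diffeomorphisms step with the Fatou/dominated-convergence splitting, which is where the actual work of the corollary lies.
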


\begin{proof} 
Suppose first that $f$ is a smooth diffeomorphism on $\overline{\Omega}$, with boundary values $f|_{ \partial \Omega} = A$.  Since ${\bf B}_2(A) = - \det(A)$ is a null Lagrangian and
\begin{equation}
\label{eq:derivativeBp}
  \frac{p}{p-2} \left[ {\bf B}_p(A) - {\bf B}_2(A)\right]  \, = \,  |A|^2 - \det(A)(1+ \log(|A|^2) + {\mathcal O}(p-2),
\end{equation}
taking the limit $p \to 2$ we see from Theorem \ref{thm:soft} that \eqref{LlogL} holds for such a map $f$. 

In  the case of a general ${\WW}^{1,2}$-homeomorphism $f$ with boundary values $A$,  we  apply \cite[Theorem 1.1]{IKO} to obtain a sequence of  diffeomorphisms $f_j$ such that $f_j
\to f$ in $\WW^{1,2}(\Omega)$ and $f_j=A$ on $\p\Omega$. We now argue similarly as in the proof of \cite[Corollary 1.7]{AIPS12}, see also \cite{IwaniecVerde}.  Applying the quasiconvexity inequality for each $f_j$, we obtain
\begin{align*}
\mathscr F(A) & \leq \limsup_{j\to \infty} \fint_\Omega \mathscr F(\D f_j(z)) \,\dd m(z)\\
& = \lim_{j\to \infty} \fint_\Omega \left(|\D f_j|^2 - J_{f_j}\right)\dd m(z) -  \liminf_{j\to \infty} \fint_\Omega J_{f_j} \log|\D f_j|^2 \,\dd m(z).
\end{align*}
There is no difficulty in passing to the limit in the first term. For the second term, we estimate
\begin{align*}
& \liminf_{j\to \infty} \fint_\Omega J_{f_j} \log|\D f_j|^2\,\dd m\\
&\quad =\liminf_{j\to \infty} \fint_\Omega J_{f_j} \log(1+|\D f_j|^2) \,\dd m - \lim_{j\to \infty}
\fint_\Omega J_{f_j} \log(1+|\D f_j|^{-2})\, \dd m\\
&\quad \geq \fint_\Omega J_{f} \log(1+|\D f|^2) \, \dd m-\fint_\Omega J_f \log(1+|\D f_j|^{-2})\,\dd m\\
& \quad = \fint_\Omega J_{f} \log|\D f|^2 \, \dd m,
\end{align*}
where in the third line we applied Fatou's lemma for the first term, since the integrand is non-negative, and the Dominated Convergence Theorem for the second term, since the integrand is dominated pointwise by $J_{f_j}|\D f_j|^{-2}\leq 1$. The desired inequality follows.
\end{proof}

\begin{remark}
Exactly the same argument shows that, when $\Omega=\mb D$, inequality \eqref{LlogL} holds more generally if $f$ is a \textit{monotone map} in the topological sense of Morrey, i.e.\ if $f^{-1}(w)$ is connected for all $w$. In fact, by a Sobolev version of a classical theorem of Youngs, a map in $A + \WW^{1,2}_0(\mb D)$ is monotone if and only if it is the strong $\WW^{1,2}$-limit of a sequence of smooth diffeomorphisms with the same boundary condition \cite{IO2016}.
\end{remark}

For maps with  identity boundary values the estimate \eqref{LlogL} was obtained in \cite[Corollary 1.7]{AIPS12}.
The fact that the Jacobian determinant possesses higher integrability was first noticed by M\"uller \cite{Muller1990} and then generalized by Coifman, Lions, Meyer and Semmes \cite{CLMS}. From these results one deduces that if $f\in W^{1,n}_\tp{loc}(\R^n)$ is an orientation-preserving mapping then $J_f \log |\D f|^n\in L^1_\tp{loc}(\R^n)$, see \cite{IwaniecGreco}.  Corollary \ref{cor:LlogL} expresses a sharp, global version of this result when $n=2$: indeed, we may rewrite \eqref{LlogL} as the sharp inequality
\begin{equation}
\label{eq:sharpLlogL}
\int_\Omega J_f(z)(1+\log|\D f(z)|^2) \,\dd m(z) \leq \int_\Omega |\D f(z)|^2 \,\dd m(z) -\mathscr F(A)\mathscr L^2(\Omega),
\end{equation}
to hold whenever $f\in A + \WW^{1,2}_0(\Omega)$ is a homeomorphism and $A \in \R^{2 \times 2}_+$.

As mentioned above, the functional $\mathscr F\colon \R^{2\times 2}\to \R$ is rank-one convex and it is conjectured to be quasiconvex as well \cite[(6.5)]{Tade}, in which case \eqref{eq:sharpLlogL} would hold for general maps $f\in A +\WW^{1,2}_0(\Omega)$ which are not necessarily orientation-preserving. For such maps one cannot give meaning to $\int_\Omega J_f \log|\D f|^2\,\dd m(z)$ as a Lebesgue integral; instead, this integral needs to be interpreted distributionally  \cite{IwaniecVerde}.

\section{New sharp bounds via the Shield transformation}\label{sec:shield}

The optimal integral bounds of the previous Section lead to new ones via the {\it Shield transformation}. This is  a general procedure for generating new functionals, by applying a given functional to the inverse map:

\begin{definition}
Given ${\bf E}\colon \R^{2\times 2}_+\to \R$ we define $\widehat {\bf E}\colon \R^{2\times2}_+\to \R$ by
$$\widehat {\bf E}(A)\equiv {\bf E}(A^{-1}) \det A.$$
\end{definition}

In the Elasticity literature, the involution $\,\widehat \cdot \,$ is sometimes referred to as the \textit{Shield transformation}, after Schield's work \cite{Shield}. 
Since $\widehat{1}=\det$, $\widehat{\cdot}$ does not preserve convexity; yet it preserves the usual semi-convexity notions from the vectorial Calculus of Variations.  Indeed, it is easy to verify that polyconvexity and rank-one convexity are preserved by 
$\,\widehat \cdot \,$, see \cite[Theorem 2.6]{Ball1977} for further details.  The case of quasiconvexity, which is the one concerning us here, is more subtle.  Indeed, as in Definition \ref{quasiconvexity}, in the case  of  deformations  a crucial point is that one needs to address also the regularity of the inverse map when discussing quasiconvexity of $\widehat {\bf E}$, here see also Example \ref{ex:radialmapW}.  In fact, the inverse of a planar  $\WW^{1,1}_\tp{loc}$-homeomorphism $f$ is in $\WW^{1,2}_\tp{loc}$ if and only if $K_f\in \LL^1_\tp{loc}$ \cite[Theorem 1.7]{HK}, and hence this is the assumption that we shall make.

\begin{proposition}
\label{prop:shield}
 Let ${\bf E}\colon \R^{2\times 2}_+\to \R$ be a $\WW^{1,2}$-quasiconvex functional,  in the sense that
${\bf E}$ satisfies 
\eqref{eq:quasiconvexity} 
for any $A\in \R^{2\times 2}_+$ and any homeomorphism $f\in A + \WW_0^{1,2}(\Omega)$. 
Then the Shield transformation $\widehat{{\bf E}}\colon \R^{2\times 2}_+\to \R$ satisfies 
$$\widehat{{\bf E}}(A)\leq \fint_\Omega \widehat{{\bf E}}(\D f(z)) \,\dd m(z)$$
for all $A\in \R^{2\times 2}_+$ and all homeomorphisms $f\in A + \WW_0^{1,1}(\Omega)$ with $K_f\in \LL^1(\Omega)$.
\end{proposition}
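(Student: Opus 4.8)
The strategy is to pass the quasiconvexity inequality for ${\bf E}$ from a map to its inverse via the change of variables formula. Let $A\in\R^{2\times 2}_+$ and let $f\in A+\WW^{1,1}_0(\Omega)$ be a homeomorphism with $K_f\in\LL^1(\Omega)$. Write $\Omega'\equiv f(\Omega)$. By \cite[Theorem 1.7]{HK} the inverse $g\equiv f^{-1}$ belongs to $\WW^{1,2}_\tp{loc}(\Omega')$; since $f$ has linear boundary values $A$ on $\p\Omega$ (in the Sobolev sense) one checks that $g-A^{-1}\in\WW^{1,2}_0(\Omega')$, so that $g\in A^{-1}+\WW^{1,2}_0(\Omega')$. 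Note $\Omega'=A(\Omega)$ up to a set of measure zero because $f=A$ on $\p\Omega$, hence $|\Omega'|=\det(A)\,|\Omega|$. I would first record the pointwise identity, valid a.e.\ where $\D f$ and $\D g$ are defined and invertible (which is a.e., since $J_f>0$ a.e.\ by $K_f\in\LL^1$ and the standard theory, and $\D g(f(x))=\D f(x)^{-1}$ holds a.e.),
\begin{equation*}
\widehat{{\bf E}}(\D f(x)) = {\bf E}\bigl(\D f(x)^{-1}\bigr)\det\D f(x) = {\bf E}\bigl(\D g(f(x))\bigr)\,J_f(x).
\end{equation*}

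Next I would apply the change of variables. Since $f$ is a $\WW^{1,1}_\tp{loc}$-homeomorphism with $K_f\in\LL^1$, it satisfies Lusin's condition (N) and the area formula holds, so integrating the identity above over $\Omega$ and substituting $w=f(x)$ gives
\begin{equation*}
\int_\Omega \widehat{{\bf E}}(\D f(x))\,\dd m(x) = \int_\Omega {\bf E}\bigl(\D g(f(x))\bigr)J_f(x)\,\dd m(x) = \int_{\Omega'} {\bf E}(\D g(w))\,\dd m(w).
\end{equation*}
Now apply the hypothesis: ${\bf E}$ is $\WW^{1,2}$-quasiconvex, and $g\in A^{-1}+\WW^{1,2}_0(\Omega')$ is a homeomorphism, so
\begin{equation*}
{\bf E}(A^{-1}) \leq \fint_{\Omega'} {\bf E}(\D g(w))\,\dd m(w).
\end{equation*}
Combining the last two displays with $|\Omega'|=\det(A)|\Omega|$ yields
\begin{equation*}
\det(A)\,|\Omega|\,{\bf E}(A^{-1}) \leq \int_\Omega \widehat{{\bf E}}(\D f(x))\,\dd m(x),
\end{equation*}
and dividing by $|\Omega|$ gives exactly $\widehat{{\bf E}}(A)\le\fint_\Omega\widehat{{\bf E}}(\D f)\,\dd m$, which is the claim.

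The main obstacles are bookkeeping about regularity rather than deep: one must justify (i) that $g=f^{-1}\in\WW^{1,2}_\tp{loc}(\Omega')$ \emph{with the correct boundary values} $A^{-1}$ in the $\WW^{1,2}_0$ sense — this uses that $f=A$ on $\p\Omega$ together with the fact that $\|K_f\|_{\LL^1(\Omega)}$ controls $\|\D g\|^2_{\LL^2(\Omega')}$, so $g$ has finite $\WW^{1,2}$ energy and the trace argument closes; (ii) the a.e.\ validity of $\D g(f(x))=\D f(x)^{-1}$, which follows from the standard differentiability theory of Sobolev homeomorphisms of finite distortion (e.g.\ \cite{HK}, \cite{AIM}); and (iii) the change of variables formula, for which Lusin's (N) property of $f$ — again a consequence of $K_f\in\LL^1$ in the plane — is the key input. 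I would also remark that $f(\Omega)$ and $A(\Omega)$ agree up to null sets because the boundary condition forces $f$ and $A$ to have the same degree on $\Omega$; alternatively one can simply work with $\Omega'=f(\Omega)$ throughout and note that its measure equals $\det(A)|\Omega|$ by the change of variables applied to $J_f$, using that $\int_\Omega J_f=\det(A)|\Omega|$ since $\det$ is a null Lagrangian and $f-A\in\WW^{1,1}_0$ with $J_f\in\LL^1$ (the latter from $|\D f|^2\le K_f J_f$ and $|\D g|\in\LL^2$, hence $J_f=J_g^{-1}\circ f$ is integrable). None of these steps is genuinely hard, but they must all be invoked explicitly because the inverse is only assumed $\WW^{1,2}$, not smooth.
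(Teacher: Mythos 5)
Your overall strategy is exactly the paper's: invert $f$, pass the integral to $g=f^{-1}\in A^{-1}+\WW^{1,2}_0(A(\Omega))$ by the change of variables, and apply the $\WW^{1,2}$-quasiconvexity of ${\bf E}$ at $A^{-1}$, using $\mathscr L^2(A(\Omega))=\det (A)\,\mathscr L^2(\Omega)$; the paper gets the global $\WW^{1,2}$ bound and boundary values of $g$ from the Hencl--Koskela--Onninen identity \eqref{eq:aimo}, which is essentially your step (i). The one place where you diverge, and where your write-up is on shakier ground, is the justification of the area formula: you assert that a $\WW^{1,1}_{\loc}$-homeomorphism with $K_f\in\LL^1$ satisfies Lusin's condition (N), but this is not a standard citable fact at this low regularity (condition (N) for planar Sobolev homeomorphisms below $\WW^{1,2}$ is delicate, cf.\ Ponomarev-type examples), and you give no argument for it. The paper sidesteps this by routing the change of variables through the inverse: $g$ is a $\WW^{1,2}$-homeomorphism, and $\WW^{1,2}$-homeomorphisms do satisfy (N), which is the classical fact actually invoked there; equivalently, one can restrict $f$ to the full-measure set where it is approximately differentiable with $J_f>0$, where the area formula holds with no (N) hypothesis, and then use the a.e.\ chain rule $\D g(f(x))=\D f(x)^{-1}$. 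I would therefore replace your appeal to (N) for $f$ by this argument via $g$; the remaining points you flag (boundary values of $g$, $f(\Omega)=A(\Omega)$, $J_f>0$ a.e.) are handled at the same level of detail as in the paper and are fine.
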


\begin{proof}
Let $f\in A + \WW^{1,1}_0(\Omega)$ be a homeomorphism with integrable distortion.
Clearly we may assume that $J_f>0$ a.e.\ in $\Omega$, as otherwise there is nothing to prove. 
By \cite[Theorem 2.1]{HKO}, the inverse homeomorphism $g \equiv  f^{-1}$ satisfies\footnote{Strictly speaking, \cite[Theorem 2.1]{HKO} deals with the Euclidean norm instead of the operator norm, but the same argument establishes the desired identity.}
 \begin{equation}\label{eq:aimo}
\int_{A(\Omega )} |\D g(w)|^2 \, \dd m(w)=\int_\Omega K_f(z) \, \dd m(z)
\end{equation}
and thus $g\in  A^{-1} + W^{1,2}_{0}\bigl(A(\Omega)\bigr)$. Since $\WW^{1,2}$-homeomorphisms satisfy Lusin's condition (N), we can apply the change of variables formula to obtain
\begin{align*}
\int_\Omega \widehat {\bf E}(\D f(z))\,\dd m(z) & =\int_{A(\Omega)}{\bf E}(\D g(w))\,\dd m(w)\\& \geq \mathscr L^2(A(\Omega)){\bf E}(A^{-1}) = \mathscr L^2(\Omega) \widehat {\bf E}(A), 
\end{align*}
where in the last equality we used  $\mathscr L^2(A(\Omega))=\det(A) \,\mathscr L^2(\Omega)$.
\end{proof}

\begin{remark}
The same proof as in Proposition \ref{prop:shield} applies more generally to $\WW^{1,n}$-quasiconvex functionals ${\bf E}\colon \R^{n\times n}_+\to \R$. The natural assumptions are that $f\in A + \WW^{1,1}_0(\Omega)$ is a homeomorphism such that $K_f\in \LL^{n-1}(\Omega)$, where $\Omega\subset \R^n$ is a domain.
\end{remark}

\begin{remark}\label{rmk:automatichomeo}
We note that any map $f\in A + \WW^{1,2}_0(\Omega)$ such that $K_f\in \LL^1(\Omega)$ is necessarily a homeomorphism: such maps are automatically continuous and, by the results of Iwaniec and \v Sver\'ak \cite{IwaSve}, they are also open, hence the claim follows from degree theory.
\end{remark}
 \medskip



We then apply the Shield transform to generate a new quasiconvex functional, simply by setting  
\begin{equation}
\mathscr W\equiv \widehat {\mathscr F}+1.
\end{equation}

Using $|A^{-1}| = |A|/\det(A)$, one easily computes
\begin{equation} \label{W}
\mathscr W(A) =  \begin{cases}
\frac{|A|^2}{{\rm det}\, A} - \log \left(  \frac{|A|^2}{{\rm det}\, A} \right) + \log \det(A) \quad {\rm if} \;  \det(A) > 0,\\
+ \infty \hspace{4,9cm} {\rm otherwise}.
\end{cases}
\end{equation}
This functional had already been considered in \cite{AIPS12} and was also studied in the recent works \cite{Voss1,Voss2}.  In \cite{Voss1} 
it was suggested that $\mathscr W$, denoted by $W^+_{\rm magic}$ in that paper,  could be an example  of a functional  which is  rank-one convex but not quasiconvex. Later in \cite{Voss2} numerical evidence was found which instead supports the quasiconvexity of $\mathscr W$.  

As a direct consequence of Corollary \ref{cor:LlogL} and Proposition \ref{prop:shield}, we now obtain that indeed  $\mathscr W$ \textit{is quasiconvex}:

\begin{corollary}
\label{cor:W}  Let  $A \in \R^{2 \times 2}_+$ and
 let $f \in A+\WW^{1,1}_0 (\Omega)$ be a 
 homeomorphism. If $K_f \in \LL^1(\Omega)$ then 
   \begin{equation}\label{Wqconv}
   \mathscr W(A) \leq \fint_{\Omega} \mathscr W\bigl(\D f(z)\bigr)  \, \dd m(z).
  \end{equation}
\end{corollary}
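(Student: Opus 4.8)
The plan is to obtain this as an immediate consequence of Corollary \ref{cor:LlogL} and Proposition \ref{prop:shield}, once the algebraic identity $\mathscr W = \widehat{\mathscr F}+1$ (already recorded around \eqref{W}) is made explicit. So the first step is to verify that identity: for $A\in\R^{2\times 2}_+$ one has $|A^{-1}|=|A|/\det A$ and $\det(A^{-1})=1/\det A$, hence
$$
\widehat{\mathscr F}(A)=\mathscr F(A^{-1})\det A=\frac{|A|^2}{\det A}-1-\log\frac{|A|^2}{(\det A)^2}=\frac{|A|^2}{\det A}-\log\frac{|A|^2}{\det A}+\log\det A-1=\mathscr W(A)-1,
$$
and both $\mathscr W$ and $\widehat{\mathscr F}+1$ are set to $+\infty$ when $\det A\le 0$, so $\mathscr W=\widehat{\mathscr F}+1$ on all of $\R^{2\times 2}$.

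Next, I would observe that Corollary \ref{cor:LlogL} says exactly that $\mathscr F\colon\R^{2\times2}_+\to\R$ satisfies the $\WW^{1,2}$-quasiconvexity inequality required in the hypothesis of Proposition \ref{prop:shield}: for every $A\in\R^{2\times 2}_+$ and every homeomorphism $f\in A+\WW^{1,2}_0(\Omega)$ one has $\mathscr F(A)\le\fint_\Omega\mathscr F(\D f(z))\,\dd m(z)$. Applying Proposition \ref{prop:shield} with ${\bf E}=\mathscr F$ then yields
$$
\widehat{\mathscr F}(A)\le\fint_\Omega\widehat{\mathscr F}(\D f(z))\,\dd m(z)
$$
for every $A\in\R^{2\times 2}_+$ and every homeomorphism $f\in A+\WW^{1,1}_0(\Omega)$ with $K_f\in\LL^1(\Omega)$ — precisely the class of test maps appearing in Corollary \ref{cor:W}. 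Adding the constant $1$ to both sides, and using $\fint_\Omega 1\,\dd m=1$ together with $\mathscr W=\widehat{\mathscr F}+1$, converts this into the claimed inequality $\mathscr W(A)\le\fint_\Omega\mathscr W(\D f(z))\,\dd m(z)$.

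Finally, I would dispose of the degenerate case for the record: since $\mathscr F$ and $\widehat{\mathscr F}$ are real-valued on $\R^{2\times2}_+$, and since $K_f\in\LL^1(\Omega)$ forces $J_f>0$ a.e.\ in $\Omega$ (the distortion is infinite wherever $J_f=0$), all the integrals above are honest Lebesgue integrals and no further care is needed. I do not expect any real obstacle here — all the work has already been done, in establishing Theorem \ref{thm:soft} (hence Corollary \ref{cor:LlogL}, via the diffeomorphic approximation of $\WW^{1,2}$-homeomorphisms) and in the inverse-regularity change-of-variables identity \eqref{eq:aimo} underlying Proposition \ref{prop:shield}; the only mildly delicate point in assembling them is checking that the test-map classes match, which they do by construction of the Shield transform on maps of integrable distortion.
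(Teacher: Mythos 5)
Your proposal is correct and follows essentially the same route as the paper: the paper defines $\mathscr W\equiv\widehat{\mathscr F}+1$, verifies the same algebraic identity via $|A^{-1}|=|A|/\det A$, and obtains Corollary \ref{cor:W} as a direct consequence of Corollary \ref{cor:LlogL} combined with Proposition \ref{prop:shield}, exactly as you do. Your closing remark about the degenerate case matches the paper's implicit reduction (where $\det\D f\le 0$ the extended functional is $+\infty$, so one may assume $J_f>0$ a.e.), so there is nothing to add.
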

\smallskip

Koskela and Onninen showed in \cite{KO} that for a map $f\in \WW^{1,2}_\tp{loc}(\Omega)$ with $K_f\in \LL^1_\tp{loc}(\Omega)$, we have $\log J_f \in \LL^1_\tp{loc}(\Omega)$, see also Proposition \ref{prop:KOR} below and \cite{KOR} for an analogue in space. We can interpret \eqref{Wqconv} as a sharp, global version of their result: indeed, we may rewrite this estimate as
$$
\int_\Omega -\log J_f(z) \,\dd m(z) \leq \int_\Omega \left(K_f(z) -\log K_f(z)\right) \dd m(z) -\mathscr W(A)\mathscr L^2(\Omega)
$$
for homeomorphisms $f\in A+ \WW_0^{1,1}(\Omega)$ with $K_f\in \LL^1(\Omega)$.

Moreover,  $\mathscr W$ has also some  closed quasiconvexity features, for details see Proposition \ref{thm:closedqc.new} below.  

\subsection{Quasiconvexity beyond integrable distortion}

So far we have been discussing quasiconvexity of  $\mathscr W$ for maps with integrable distortion.  This assumption is natural since, as we saw in \eqref{eq:aimo}, it is \textit{equivalent} to the $\WW^{1,2}$-regularity of the inverse map. Nonetheless, one can ask whether $\mathscr W$ satisfies the quasiconvexity inequality when tested with smooth homeomorphisms with less regular inverses.  Indeed there are smooth homeomorphisms with finite $\mathscr W$-energy for which the inverse is not in $\WW^{1,2}$; this is possible since the distortion and the Jacobian terms in $\mathscr W$ may cancel each other:

\begin{example}\label{ex:radialmapW}
Consider the radial stretching 
$$f(z)=\exp(1-1/r^2) \frac{z}{r}, \qquad r=|z|.$$ It is easy to see that $f$ is a smooth homeomorphism which equals the identity on $\mb S^1$. We compute
\begin{align*}
K_f(z) = \frac{2}{r^2};
\end{align*}
in particular, $K_f \in \tp{weak-}\LL^1(\mb D)$ but $K_f \not\in \LL^1(\mb D)$. 
Nonetheless $f$ has finite $\mathscr W$-energy: indeed,
$$J_f(z)= \frac{2}{r^4} \exp(2-2/r^2) \quad \implies \quad
\log J_f (z) = 2-\frac{2}{r^2} + \log \frac{2}{r^4}$$
and hence the divergent terms $\frac{2}{r^2}$ cancel each other:
$$\mathscr W(\D f(z)) = 2(1-\log r) \quad \implies \quad \int_{\mb D} \mathscr W(\D f(z)) \, \dd m(z) = 3 \pi.$$
\end{example}


In view of the above example, if no further restrictions are made for the homeomorphisms $f$ in Corollary \ref{cor:W} beyond the assumptions that $f \in A + \WW^{1,1}_0(\Omega)$ and that $\mathscr W(\D f) \in \LL^1(\Omega)$, the inequality \eqref{Wqconv} reduces to a question of approximation, i.e.\  whether for any such $f$ there exists a sequence of diffeomorphisms $f_j \in C^\infty(\Omega)$ equal to $A$ on the boundary, with
\[\lim_{j\to\infty} \int_{\Omega} |\mathscr W(\D f(z)) - \mathscr W(\D f_j(z))| \dd m(z) = 0.\]
In the context of Sobolev homeomorphisms, the main tools for constructing a diffeomorphic approximation employ either the harmonic (or $p$-harmonic) extension \cite{IKO} or a shortest curve extension method \cite{HP}. Unfortunately, neither of these tools work in the present setting due to the existence of boundary maps for which the standard extension methods produce the wrong integrability of the distortion, see \cite[Example 1.3]{KosO}. Hence it is clear that to prove the appropriate result for the functional $\mathscr W$, completely new methods of approximation need to be developed first.

On the other hand, for radial maps as in Example \ref{ex:radialmapW} there is no difficulty in proving the quasiconvexity inequality \eqref{Wqconv} directly, using e.g.\ the argument in Lemma \ref{lemma:rcradialstretchings}. 
In addition, as we shall next see, 
 Corollary \ref{cor:W} extends to all monotone maps. Recall that a map is {\it monotone} in $\Omega$ if 
 $$ \langle f(z) - f(w), z - w \rangle \geq 0, \qquad \forall \; z, w \in \Omega.
 $$
 If furthermore, say,  $f\in \WW^{1,1}_\tp{loc}(\Omega)$, then monotonocity is equivalent to 
\begin{equation*}
\label{eq:monotone}
|\p_{\bar z} f |\leq \Re \p_z f\quad  \tp{ a.e.\ in } \Omega,
\end{equation*}
see for instance \cite[\S 3.11]{AIM}.
Note that any orientation-preserving radial stretching is monotone, since for such maps $\p_z f\in \R$.

In the monotone case, the main point of  proof   is to find an approximation of a given monotone map by maps with smaller $\mathscr W$-energy, at least in the region where the map has large distortion.  In order to construct such an approximating sequence we rely on a clever trick due to Chleb\'ik and Kirchheim \cite{Chlebik}.

\begin{proposition}\label{prop:monotone}
Let $A\in \R^{2\times 2}_+$ and let $f\in A+\WW^{1,2}_0(\Omega)$ be monotone with $ \mathscr W(\D f)\in \LL^1(\mb D)$. Then
$$\mathscr W(A) \leq \fint_{\mb D} \mathscr W(\D f(z))\,\dd m(z).$$
\end{proposition}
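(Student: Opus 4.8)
\textbf{Proof plan for Proposition~\ref{prop:monotone}.}
The strategy is to reduce the monotone case to Corollary~\ref{cor:W}, which already gives the quasiconvexity inequality for homeomorphisms with integrable distortion. The obstruction is precisely that a monotone map $f$ with $\mathscr W(\D f)\in\LL^1$ need not have $K_f\in\LL^1$, as Example~\ref{ex:radialmapW} shows, so one cannot apply Corollary~\ref{cor:W} directly. The idea is to \emph{truncate the distortion}: for a large parameter $L$, replace $f$ on the bad set $\{K_f>L\}$ by a map with controlled distortion, in such a way that the $\mathscr W$-energy only decreases (up to an error tending to $0$ as $L\to\infty$). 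This is exactly the spirit of the Chleb\'ik--Kirchheim trick alluded to in the paper: one exploits that $\mathscr W$ is built so that the distortion term $K_A$ and the Jacobian term $-\log J_A$ partially cancel, so that decreasing the distortion while suitably adjusting the Jacobian does not increase the energy.

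Concretely, first I would normalise to $\Omega=\mb D$ and $A=\Id$, which is harmless since $\mathscr W$ is (as one checks) covariant under the relevant affine changes of variables, or else one simply works on the given $\Omega$ with the given $A$ throughout. Next, using monotonicity, write $f$ in the factored form coming from the Stoilow-type / radial-modification machinery: monotone $\WW^{1,1}_\tp{loc}$ maps satisfy $|\p_{\bar z}f|\le\Re\,\p_z f$, and one can postcompose the Beltrami coefficient with a radial cut-off in the modulus of $\mu_f$ to produce maps $f_L$ with $K_{f_L}\le L$ that agree with $f$ on the set $\{K_f\le L\}$; monotonicity is preserved. Here the key pointwise inequality to establish is that on the bad set the integrand does not increase: one needs $\mathscr W(\D f_L)\le \mathscr W(\D f)$ pointwise a.e.\ on $\{K_f>L\}$ (or with an error that is integrable and vanishes as $L\to\infty$). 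This follows because along the relevant one-parameter family the function $t\mapsto \mathscr W$ restricted to the line through $\D f$ and the truncated matrix is monotone in the right direction — essentially the same convexity computation that shows $\mathscr W$ is rank-one convex, combined with the explicit formula \eqref{W}.

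Once $f_L\in \Id+\WW^{1,2}_0(\mb D)$ is constructed with $K_{f_L}\in\LL^\infty\subset\LL^1$, Remark~\ref{rmk:automatichomeo} ensures $f_L$ is a homeomorphism (it is continuous, and openness follows from Iwaniec--\v{S}ver\'ak / degree theory, monotonicity giving injectivity directly in this case), so Corollary~\ref{cor:W} applies and yields
\begin{equation*}
\mathscr W(\Id)\le \fint_{\mb D}\mathscr W(\D f_L(z))\,\dd m(z).
\end{equation*}
It then remains to pass to the limit $L\to\infty$. On the good set $\{K_f\le L\}$ the integrand is unchanged and converges monotonically to $\mathscr W(\D f)$; on the bad set one uses the pointwise bound $\mathscr W(\D f_L)\le \mathscr W(\D f)+\text{(error)}$ together with $\mathscr W(\D f)\in\LL^1(\mb D)$ and dominated/monotone convergence, noting that $\mathscr{L}^2(\{K_f>L\})\to 0$. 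This gives $\limsup_{L\to\infty}\fint_{\mb D}\mathscr W(\D f_L)\,\dd m\le \fint_{\mb D}\mathscr W(\D f)\,\dd m$, and combined with the displayed inequality for each $f_L$ we conclude
\begin{equation*}
\mathscr W(\Id)\le \fint_{\mb D}\mathscr W(\D f(z))\,\dd m(z),
\end{equation*}
which after undoing the normalisation is the assertion.

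\textbf{Main obstacle.} The delicate point is the construction of the truncated maps $f_L$ \emph{with all three properties at once}: agreement with $f$ off the bad set, the distortion bound $K_{f_L}\le L$, and the pointwise energy monotonicity $\mathscr W(\D f_L)\le\mathscr W(\D f)+o(1)$ on the bad set. Getting the map to remain in $\Id+\WW^{1,2}_0(\mb D)$ (rather than just $\WW^{1,1}_0$) after truncation, and verifying monotonicity is preserved, is where the Chleb\'ik--Kirchheim-type argument has to be deployed carefully; the rest is a routine limiting argument using Corollary~\ref{cor:W} and the integrability hypothesis $\mathscr W(\D f)\in\LL^1$.
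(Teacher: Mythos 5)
There is a genuine gap, and it is exactly at the point you flag as the ``main obstacle'': the truncated maps $f_L$ are never constructed, and the route you sketch for building them does not work. Cutting off the modulus of the Beltrami coefficient $\mu_f$ is an operation on the \emph{coefficient}, not on the map: a solution of the Beltrami equation depends nonlocally on $\mu$, so a map with the truncated coefficient (even if one exists in the right Sobolev class) will in general agree with $f$ nowhere, will not lie in $A+\WW^{1,2}_0(\Omega)$, and there is no reason it should remain monotone or satisfy a pointwise bound $\mathscr W(\D f_L)\le\mathscr W(\D f)+o(1)$ on $\{K_f>L\}$ --- the heuristic ``decrease the distortion while suitably adjusting the Jacobian'' is not a well-defined modification of $f$ itself. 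Since the whole argument funnels through this construction, the proof as proposed does not go through. (A smaller point: the reduction to $A=\Id$ is not free since $\mathscr W$ is not homogeneous; one must use the identities $\mathscr W(QA)=\mathscr W(A)$ and $\mathscr W(tA)=\mathscr W(A)+\log t^2$, as the paper does elsewhere, or simply keep the general $A$.)

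The paper's proof sidesteps truncation entirely; the Chleb\'ik--Kirchheim trick it invokes is the additive perturbation $f_\delta(z)\equiv f(z)+\delta z$. Monotonicity gives $\Re\,\p_z f\ge|\p_{\bar z}f|\ge 0$, whence $|\p_z f_\delta|^2\ge|\p_z f|^2+\delta^2$ and the crude but sufficient bound $K_{f_\delta}\le 4(|\D f|^2+\delta^2)/\delta^2\in\LL^1(\mb D)$, so Corollary \ref{cor:W} applies to $f_\delta\in A_\delta+\WW^{1,2}_0$ with $A_\delta=A+\delta\,\Id$. The pointwise comparison you were hoping for is then proved by an explicit one-variable computation: writing $\mathscr W(\D f)=\omega(|\p_z f|)$ with $\omega(t)=\frac{t+|\p_{\bar z}f|}{t-|\p_{\bar z}f|}+2\log(t-|\p_{\bar z}f|)$, one checks $\omega$ is decreasing on $(|\p_{\bar z}f|,2|\p_{\bar z}f|)$ and increasing beyond, which gives $\mathscr W(\D f_\delta)\le\mathscr W(\D f)$ on $E_\delta=\{|\p_z f_\delta|\le 2|\p_{\bar z}f|\}$ and $\mathscr W(\D f_\delta)\le\mathscr W(\D f_1)$ off $E_\delta$; a reverse Fatou argument (dominating by $\mathscr W(\D f_1)\in\LL^1$) then lets $\delta\to 0$. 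If you want to salvage your outline, replace the truncation step by this additive shift: it delivers, with essentially no construction work, all three properties you were trying to secure simultaneously.
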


\begin{proof} By the definition of $\mathscr  W$ as an extended real-valued functional in \eqref{eq:extenddetfunc}, we may assume that $J_f>0$ a.e.\ in $\mb D$. For $\delta\in (0,1)$ let us consider the map
$$f_\delta (z)\equiv  f(z)+ \delta z,$$
where we  compute 
$$|\p_z f_\delta| =  \sqrt{|\p_z f|^2+2 \delta \Re \p_z f + \delta^2}, \qquad |\p_{\bar z} f_\delta|= |\p_{\bar z} f|.$$
Notice that since $\Re(\p_z f) \geq 0$, the expression $|\p_z f_\delta|$ is an increasing function of $\delta$. Furthermore, we obtain
\begin{equation}
\label{eq:derivativefdelta}
|\p_z f|^2 + \delta^2 \leq |\p_z f_\delta|^2 \leq 2(|\p_z f|^2 + \delta^2).
\end{equation}
Let us set $E_\delta \equiv \{z\in \mb D: |\p_z f_\delta| \leq 2|\p_{\bar z} f|\}$. We claim that if $0 < \delta < 1$, then
\begin{equation}
\label{eq:comparison}
\begin{cases}
\mathscr W(\D f_\delta)\leq \mathscr W(\D f) & \tp{ in } E_\delta,\\
\mathscr W(\D f_\delta)\leq \mathscr W(\D f_1) & \tp{ in } \mb D \backslash E_\delta.
\end{cases}
\end{equation}
To see this, note that
$$\mathscr W(\D f)=\frac{|\p_z f|+|\p_{\bar z} f|}{|\p_z f|-|\p_{\bar z} f|}+ 2 \log\left(|\p_z f|-|\p_{\bar z} f|\right)=\omega(|\p_z f|),$$
where
$$\omega(t)\equiv \frac{t+|\p_{\bar z} f|}{t-|\p_{\bar z} f|}+ 2 \log(t-|\p_{\bar z} f|), \qquad  t > |\p_{\bar z} f|.$$
Since $\omega'(t)=2(t-2|\p_{\bar z}f|)/(t-|\p_{\bar z} f|)^2$, we see that $\omega$ is decreasing on $(|\p_{\bar z} f|,2|\p_{\bar z} f|)$ and increasing on $(2|\p_{\bar z} f|,+\infty)$, thus
$$\begin{cases}
\omega(|\p_z f_\delta|) \leq \omega(|\p_z f|)  &\tp{ in } E_\delta,\\
\omega(|\p_z f_\delta|) \leq \omega(|\p_z f_1|)  &\tp{ in } \mb D \backslash E_\delta, \qquad
\end{cases}
$$
which yields \eqref{eq:comparison}.

Since $J_f>0$ a.e.\ in $\mb D$,  using \eqref{eq:derivativefdelta} we can estimate
$$K_{f_\delta}= \frac{|\D f_\delta|^2}{J_{f_\delta}} 
\leq  2 \frac{|\p_z f_\delta|^2 + |\p_{\bar z} f|^2}{|\p_z f_\delta|^2 - |\p_{\bar z} f|^2}
\leq 4 \frac{|\p_z f|^2 + |\p_{\bar z} f|^2 + \delta^2}{J_f + \delta^2}
\leq 4\frac{|\D f|^2+\delta^2}{\delta^2},$$ thus $K_{f_\delta}\in \LL^1(\mb D)$. 
With $A_\delta\equiv A + \delta \,\Id$ we have that $f_\delta\in  A_\delta + \WW^{1,2}_{0}(\mb D)$ is a homeomorphism, cf.\ Remark \ref{rmk:automatichomeo}, and hence, by Corollary \ref{cor:W} and \eqref{eq:comparison},
\begin{align}
\label{eq:auxmonotone}
\begin{split}
\pi \mathscr W(A_\delta)&  \leq \int_\mb D \mathscr W(\D f_\delta(z))\, \dd m(z)\\
& = \int_{E_\delta} \mathscr W(\D f_\delta(z))\, \dd m(z)+\int_{\mb D \setminus E_\delta} \mathscr W(\D f_\delta(z))\, \dd m(z)\\
& \leq \int_{E_\delta} \mathscr W(\D f(z))\, \dd m(z) + \int_{\mb D \setminus E_\delta} \mathscr W(\D f_\delta(z))\, \dd m(z) .
\end{split}
\end{align}
We can now apply the reverse Fatou lemma to the last term in \eqref{eq:auxmonotone} since we have the estimate
$\mathscr W(\D f_\delta(z)) \leq \mathscr W(\D f_1(z))$ for $z \in \mb D \setminus E_\delta$, where $\mathscr W(\D f_1) \in \LL^1(\mb D)$ since $K_{f_1} \in \LL^1(\mb D)$. 
Therefore
\begin{align*}
\limsup_{\delta \to 0} \int_{\mb D \setminus E_\delta} \mathscr W(\D f_\delta(z))\, \dd m(z) &\leq \int_{\mb D} \limsup_{\delta \to 0} \chi_{\mb D \setminus E_\delta}(z) \mathscr W(\D f_\delta(z)) \, dz
\\ &= \int_{\mb D \setminus E_0} \mathscr W(\D f(z)) \, dz
.
\end{align*}
The conclusion follows by sending $\delta \to 0$ in \eqref{eq:auxmonotone}, since $\mathscr W(\D f)\in \LL^1(\mb D)$.
\end{proof}

\section{Improved integral inequalities} \label{sec:inequalities} 
The purpose of this section is to prove further integral inequalities for the functionals $\mathscr F$ and $\mathscr W$, similar to the Burkholder area inequality of Section \ref{sec:areaBp}.
As a first observation, the precise value of the constant on the right-hand side of \eqref{eq:Bparea} allows us to improve the bound in Corollary \ref{cor:LlogL} for principal homeomorphisms of Definition \ref{prmap}.  

\begin{corollary}
Let $f$ be a 
$\WW^{1,2}_\tp{loc}$-principal mapping, 
with the linear asymptotics $A_f(z) = z + b_1 \overline{z}$. 
Then
\begin{equation}\label{Ffunc}
 \int_{\DD} \left( {\mathscr  F}(\D f(z))  -  {\mathscr  F}(A_f) \right) \dd m(z) \geq  \left(1- \frac{{\mathscr  F}(A_f)}{\det(A_f)}\right)\int_{\C \setminus \DD} |\phi'(z)|^2 \dd m(z). 
 \end{equation}
\end{corollary}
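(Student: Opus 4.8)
The plan is to deduce \eqref{Ffunc} from the Burkholder area inequality (Theorem~\ref{thm:Bparea}) by letting $p\to 2^{+}$, exactly in the spirit in which Corollary~\ref{cor:LlogL} is obtained from Theorem~\ref{thm:soft}. We may assume $f$ is $K$-quasiconformal for some finite $K$, so that Theorem~\ref{thm:Bparea} is available over the nondegenerate interval $p\in[2,p_K]$, $p_K=\tfrac{2K}{K-1}>2$. Two inputs enter the limit. First, by \eqref{eq:derivativeBp},
\[
g_p(A)\equiv\frac{p}{p-2}\bigl({\bf B}_p(A)-{\bf B}_2(A)\bigr)\ \longrightarrow\ \mathscr F(A)\qquad(p\to 2^{+}),
\]
equivalently ${\bf B}_p(A)={\bf B}_2(A)+\tfrac{p-2}{p}\mathscr F(A)+\mathcal O\!\left((p-2)^{2}\right)$ for each fixed $A$. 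Second, since ${\bf B}_2=-\det$ and $\det A_f=1-|b_1|^{2}>0$, the classical area formula \eqref{areafmla} together with the identity $\int_{\C\setminus\DD}|\phi'(z)|^{2}\,\dd m(z)=\pi\sum_{j\ge 2}j|b_j|^{2}$ from the proof of Theorem~\ref{thm:Bpareaintro} gives the $p=2$ identity
\[
\int_{\DD}\bigl({\bf B}_2(\D f)-{\bf B}_2(A_f)\bigr)\,\dd m(z)=\pi\,\det A_f-\int_{\DD}J_f\,\dd m(z)=\int_{\C\setminus\DD}|\phi'(z)|^{2}\,\dd m(z).
\]

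Subtracting this identity from Theorem~\ref{thm:Bparea} and multiplying by $\tfrac{p}{p-2}$ gives, for $2<p\le p_K$,
\begin{equation}\label{eq:Fbeforelimit}
\int_{\DD}g_p(\D f(z))\,\dd m(z)-\pi\,g_p(A_f)\ \ge\ -\frac{p}{p-2}\left(\frac{p}{2}\,\frac{{\bf B}_p(A_f)}{\det A_f}+1\right)\int_{\C\setminus\DD}|\phi'(z)|^{2}\,\dd m(z).
\end{equation}
Plugging the expansion of ${\bf B}_p(A_f)$ into the prefactor on the right one finds $\tfrac{p}{2}\tfrac{{\bf B}_p(A_f)}{\det A_f}+1=\tfrac{p-2}{2}\bigl(\tfrac{\mathscr F(A_f)}{\det A_f}-1\bigr)+\mathcal O\!\left((p-2)^{2}\right)$, so that $-\tfrac{p}{p-2}$ times this prefactor tends to $1-\tfrac{\mathscr F(A_f)}{\det A_f}$; hence the entire right-hand side of \eqref{eq:Fbeforelimit} converges, as $p\to 2^{+}$, to the right-hand side of \eqref{Ffunc}. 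On the left, $g_p(A_f)\to\mathscr F(A_f)$ is a harmless scalar limit, so the argument is complete once we know that $\int_{\DD}g_p(\D f(z))\,\dd m(z)\to\int_{\DD}\mathscr F(\D f(z))\,\dd m(z)$ as $p\to 2^{+}$, and then let $p\to 2^{+}$ in \eqref{eq:Fbeforelimit}.

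This last convergence is the only substantive point, and the main obstacle. Fixing $p_0\in(2,p_K)$, one writes ${\bf B}_p-{\bf B}_2$ in the conformal coordinates \eqref{eq:confcoords} and applies the mean value theorem in the variable $p$ to obtain a majorant
\[
|g_p(A)|\ \le\ C(p_0)\,\bigl(1+|A|^{\,p_0-2}\bigr)\,|A|^{2}\,\bigl(1+\abs{\log\abs{A}}\bigr)\qquad\text{for all }p\in[2,p_0],
\]
uniform in $p$ (incidentally this re-proves the pointwise convergence $g_p(A)\to\mathscr F(A)$). Since $f$ is a $K$-quasiconformal principal map, \eqref{Belt2} and \eqref{Holder} give $\D f\in\LL^{s}(\DD)$ for every $s<p_K$; choosing $\varepsilon>0$ with $p_0+\varepsilon<p_K$, the above majorant evaluated at $\D f$ is $\le C_{\varepsilon}|\D f|^{\,p_0+\varepsilon}$ where $|\D f|\ge1$ and is bounded where $|\D f|<1$, hence lies in $\LL^{1}(\DD)$. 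Dominated convergence then delivers the required limit, together with the finiteness of $\int_{\DD}\mathscr F(\D f)\,\dd m$, and \eqref{eq:Fbeforelimit} yields \eqref{Ffunc}. A slicker alternative to the mean value estimate is to check that $p\mapsto g_p(A)$ is monotone on the quasiconformal cone $Q_2(K)$, which supplies a dominating function directly; in any case, securing this uniform integrable domination is the crux.
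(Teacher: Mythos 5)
Your algebra is sound and, for quasiconformal $f$, your argument is essentially the paper's: subtract the $p=2$ area identity \eqref{eq:areaB2} (equivalently \eqref{areafmla}) from Theorem \ref{thm:Bparea}, multiply by $\tfrac{p}{p-2}$, and let $p\searrow 2$, with exactly the same computation of the limiting prefactor $1-\mathscr F(A_f)/\det(A_f)$. Within that restricted setting your treatment of the left-hand limit is even somewhat more self-contained than the paper's: you justify $\int_{\DD}g_p(\D f)\,\dd m\to\int_{\DD}\mathscr F(\D f)\,\dd m$ by a mean-value majorant dominated via the higher integrability \eqref{Belt2}, whereas the paper sidesteps this by first assuming $f$ is a smooth diffeomorphism (so $|\D f|$ is bounded on $\overline{\DD}$ and the limit is trivial, and smoothness automatically makes $f$ $K$-quasiconformal for some finite $K$).

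The genuine gap is your opening reduction ``we may assume $f$ is $K$-quasiconformal for some finite $K$.'' The corollary is stated for an arbitrary $\WW^{1,2}_{\loc}$-principal mapping, i.e.\ a $\WW^{1,2}_{\loc}$-homeomorphism conformal outside $\DD$; such a map need not have bounded distortion, and no argument is offered (nor is one obvious) reducing the general case to the quasiconformal one. If $K_f\notin \LL^\infty$, then Theorem \ref{thm:Bparea} is not applicable for any $p>2$, so inequality \eqref{eq:Fbeforelimit} is never available and the whole scheme collapses; truncating the Beltrami coefficient would change the map, its coefficients $b_j$, and $A_f$, so one would still have to pass to a limit in all terms, which is precisely the missing work. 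The paper closes this case by approximation: it proves the inequality for smooth diffeomorphisms and then handles general $\WW^{1,2}$ principal homeomorphisms ``as in the proof of Corollary \ref{cor:LlogL},'' i.e.\ via the diffeomorphic approximation of \cite{IKO} combined with the Fatou/dominated-convergence splitting of the term $J_f\log|\D f|^2$ used there. Your proof therefore establishes the corollary only under the additional hypothesis that $f$ is quasiconformal; to recover the stated generality you need an approximation step of this kind (or some other device that does not pass through Theorem \ref{thm:Bparea} applied directly to $f$).
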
 
\begin{proof}
Let us begin by assuming that $f$ is a smooth diffeomorphism. The classical area formula, cf.\ \eqref{eq:area} and \eqref{areafmla}, 
can be written in the form
\begin{equation}\label{eq:areaB2}
 \int_{\DD} \left({\bf B}_2(\D f) - {\bf B}_2(A_f) \right) \dd m(z) =  \int_{\C \setminus \DD} |\phi'(z)|^2 \,\dd m(z),
\end{equation}
since ${\bf B}_2=-\det$. By \eqref{eq:Bparea} and \eqref{eq:areaB2}, for $p>2$ sufficiently close to 2  we have that $f$ is a principal $\frac{p}{p-2}$-quasiconformal map and so
\begin{align*}
\begin{split} \label{eq:auxareaF}
0 & \leq \int_{\DD} \left({\bf B}_p(\D f) - {\bf B}_p(A_f) \right) \dd m(z) + \frac{p}{2}   \frac{{\bf B}_p(A_f)}{\det(A_f)} \int_{\C \setminus \DD} |\phi'|^2 \, \dd m(z)\\
& = \int_{\DD} \left({\bf B}_p(\D f) - {\bf B}_2(\D f)\right) \,\dd m(z) - \int_{\DD} ({\bf B}_p(A_f) - {\bf B}_2(A_f))\, \dd m(z)\\
& \qquad + \left(1+ \frac{p}{2}   \frac{{\bf B}_p(A_f)}{\det(A_f)}\right) \int_{\C \setminus \DD} |\phi'(z)|^2 \,\dd m(z).
\end{split}
\end{align*}   
Recalling that 
$$
  \frac{p}{p-2} \left[ {\bf B}_p(A) - {\bf B}_2(A)\right]  \, = \, { \mathscr  F}(A)  + {\mathcal O}(p-2),
$$
multiplying the above estimate by $\frac{p}{p-2}$ and taking the limit $p \searrow 2$, since
$$\frac{p}{p-2}\left(1+ \frac{p}{2}   \frac{{\bf B}_p(A_f)}{\det(A_f)}\right)=
\frac{p}{2 \det(A_f)} \left(\frac{p}{p-2} [{\bf B}_p(A_f)-{\bf B}_2 (A_f)]\right)- \frac p 2,
$$
we obtain the desired inequality. The case of general orientation-preserving $\WW^{1,2}$ maps follows as in the proof of Corollary \ref{cor:LlogL}.
\end{proof}

The reader can easily check that \eqref{Ffunc} remains true under the scalings $f \mapsto t f $. 
Also, writing  $A(z)=a_+z +a_-\bar z$ we compute
 $$ 
 1-\frac{ { \mathscr  F}(A) }{\det(A)}  = 1 - \frac{2 |a_-|}{|a_+|-|a_-|} + 2 \log(|a_+| + |a_-|) \equiv c_p(|a_+|, |a_-|).
 $$
The sign of $c_p$ is not constant in the set $\{|a_+|>|a_-|\}$: for instance, the reader can verify that $c_p(1,|a_-|) > 0$ for $|a_-|$ small while $c_p(1,|a_-|)<0$ when $|a_-|$ is close to 1. 

We next prove Theorem \ref{thm:Wareaintro}, which is a version of the area inequality for $\mathscr W$. This result will be very useful in Section \ref{sec:swlsc} since, in combination with Theorem \ref{thm:intdistort}, it shows that one can test quasiconvexity of $\mathscr W$ with an appropriate class of gradient Young measures. 

\begin{theorem}
\label{thm:Warea}
Let $f\in \WW^{1,1}_\tp{loc}(\C)$ be a homeomorphism with $K_f\in \LL^1(\mb D)$ and suppose that $f$ is conformal outside $\mb D$ with expansion
\begin{equation} \label{princip3} f(z) = z + \frac{b_1}{z} + 
\sum_{j=2}^\infty \frac {b_j}{z^j},  \qquad |z| > 1.
\end{equation}
Letting $A_f\in \R^{2\times 2}$ be given by $A_f(z) = z + b_1 \bar z$, we have
$$\mathscr W(A_f)\leq \fint_{\mb D} \mathscr W(\D f(z)) \, \dd m(z).$$
\end{theorem}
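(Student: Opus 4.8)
\textbf{Proof plan for Theorem \ref{thm:Warea}.}

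The plan is to reduce this statement to the Burkholder area inequality for $\mathscr F$ (i.e.\ the estimate \eqref{Ffunc}) via the Shield transformation, exactly in the spirit of Corollary \ref{cor:W}. Recall that $\mathscr W = \widehat{\mathscr F} + 1$, so that $\mathscr W(A) = \mathscr F(A^{-1})\det A + 1$. First I would pass to the inverse map $g \equiv f^{-1}$. By hypothesis $K_f \in \LL^1(\mb D)$, and one checks (as in the proof of Proposition \ref{prop:shield}, using \cite[Theorem 2.1]{HKO}, i.e.\ \eqref{eq:aimo}) that $g \in \WW^{1,2}_{\loc}$ on the image side, with $g$ conformal outside $f(\mb D)$ since $f$ is conformal there. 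The key point is to identify $g$ as a (rescaled) \emph{principal} map: since $f(z) = z + b_1/z + \mathcal O(z^{-2})$ near infinity, the inverse has an expansion $g(w) = w + c_1/w + \mathcal O(w^{-2})$ with $c_1 = -b_1$, and so $A_g(w) = w - b_1 \bar w = (A_f)^{-1}\cdot\det(A_f)$ up to the normalization; more precisely $A_g = (A_f)^{-1}\det A_f$ after the standard rescaling that makes $g$ a genuine principal map in the sense of Definition \ref{prmap}. (Here I use $\det A_f = 1 - |b_1|^2 > 0$, from \eqref{asympto}.)

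Next I would apply the area inequality \eqref{Ffunc} to $g$. Since $g$ is a $\WW^{1,2}_{\loc}$ principal map, \eqref{Ffunc} gives
$$
\int_{\mb D} \bigl( \mathscr F(\D g(w)) - \mathscr F(A_g) \bigr)\,\dd m(w) \;\ge\; \Bigl(1 - \tfrac{\mathscr F(A_g)}{\det A_g}\Bigr)\int_{\C\setminus \mb D} |\psi'(w)|^2\,\dd m(w) \;\ge\; 0
$$
\emph{provided} the right-hand constant $1 - \mathscr F(A_g)/\det(A_g)$ is nonnegative; but as noted after \eqref{Ffunc} this sign is not constant, so one cannot simply discard the right-hand side. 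This is the main obstacle. To circumvent it, rather than using \eqref{Ffunc} I would argue directly from Corollary \ref{cor:LlogL} (quasiconvexity of $\mathscr F$ for $\WW^{1,2}$-homeomorphisms with fixed linear boundary values) applied on the domain $A_g(\mb D)$ to a suitable homeomorphic extension of $g$, exactly as $\mathscr W$'s quasiconvexity was deduced in Corollary \ref{cor:W}. Concretely: extend $g$ from $f(\mb D)$ to all of $\C$ by the recipe of Lemma \ref{lemma:extension} (replace $g$ outside its disk by $h\circ A_g$ where $h$ is the conformal correction with $h(z) = z + \mathcal O(z^{-2})$), truncate with cutoffs $\eta_j$ as in Lemma \ref{lemma:globalqcBp} to get maps with linear boundary value $A_g$ on a large disk, apply the quasiconvexity inequality \eqref{LlogL} (valid since these are $\WW^{1,2}$-homeomorphisms with integrable — in fact bounded — distortion outside $\mb D$), and pass to the limit controlling the error terms by the $\mathcal O(z^{-3})$ decay of the correction, just as in the proof of Theorem \ref{thm:Bparea}. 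This yields
$$
\mathscr F(A_g) \;\le\; \fint_{\mb D}\mathscr F(\D g(w))\,\dd m(w).
$$

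Finally I would transfer this back via the change of variables $w = f(z)$, using that $\WW^{1,2}$-homeomorphisms satisfy Lusin's condition (N) (so the change of variables formula applies) and that $\D g(f(z)) = (\D f(z))^{-1}$, $J_g(f(z)) = 1/J_f(z)$. Writing $\mathscr F(\D g) = (\mathscr W(\D f^{-1}) - 1)$-type identity carefully: since $\mathscr W(A) = \mathscr F(A^{-1})\det A + 1$, we have $\mathscr F(\D g(w)) = (\mathscr W(\D f(z)) - 1)\,J_f(z)$ with $w = f(z)$, $\dd m(w) = J_f(z)\,\dd m(z)$ — wait, this needs the correct bookkeeping: $\int_{\mb D}\mathscr F(\D g)\,\dd m(w) = \int_{f^{-1}(\mb D)\cap\mb D'}\cdots$; the cleanest route is to note $f(\mb D) \supseteq$ or $\subseteq$ the relevant region and instead integrate $\mathscr W(\D f)$ over $\mb D$ directly against $\mathscr F(\D g)$ over $f(\mb D)$, using $\mathscr L^2(f(\mb D)) = \int_{\mb D} J_f = \pi\det A_f$ by the area formula \eqref{areafmla}. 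Then $\int_{f(\mb D)}\mathscr F(\D g(w))\,\dd m(w) = \int_{\mb D}\mathscr F((\D f)^{-1})J_f\,\dd m(z) = \int_{\mb D}(\mathscr W(\D f) - 1)\,\dd m(z)$, while $\mathscr F(A_g)\,\mathscr L^2(f(\mb D)) = \mathscr F((A_f)^{-1})\det A_f\cdot \pi = \pi(\mathscr W(A_f) - 1)$; dividing by $\pi$ and rearranging gives precisely $\mathscr W(A_f)\le \fint_{\mb D}\mathscr W(\D f)\,\dd m$. Two technical points I expect to need care: first, checking that the extended-and-truncated maps really have $\WW^{1,2}$ inverses so that \eqref{LlogL} applies (this follows since their distortion is bounded outside $\mb D$ and $K_g \in \LL^1$ inside, using \eqref{eq:aimo} again); and second, justifying the limit passage in the truncation, which is where the $\mathcal O(z^{-2})$ decay of $h - \mathrm{id}$ from Lemma \ref{lemma:extension} does all the work, verbatim as in Lemma \ref{lemma:globalqcBp}.
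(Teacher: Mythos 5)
Your plan has a genuine gap at its central step, and the paper's proof is structured differently precisely to avoid it. The truncation argument you invoke (extend $g=f^{-1}$ by $h\circ A_g$ outside its conformality region, cut off with $\eta_j$, apply \eqref{LlogL}, pass to the limit) only produces a \emph{global} inequality $\int_{\C}\bigl(\mathscr F(\D \tilde g)-\mathscr F(A_g)\bigr)\,\dd m\ge 0$; to reach your claimed bound $\mathscr F(A_g)\le \fint \mathscr F(\D g)\,\dd m$ over the bounded region you must dispose of the exterior contribution, and for $\mathscr F$ it does not vanish. Since $\mathscr F\bigl(\D(h\circ A)\bigr)=J_h\,\mathscr F(A)-J_h\log J_h\,\det A$, a change of variables together with the null-quadrature property of ellipse complements (which removes the terms linear in $h'-1$ and in $\log h'$) leaves exactly
$$\frac{\mathscr F(A_g)}{\det A_g}\int \bigl|h'-1\bigr|^2\,\dd m \;-\;\int (J_h-1)\log J_h\,\dd m ,$$
whose first term carries the sign of $\mathscr F(A_g)$ — the very sign-indefinite constant from \eqref{Ffunc} that you set out to avoid. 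Your route relocates the obstruction rather than removing it. The paper instead performs the extension and truncation on the $f$-side with $\mathscr W$ itself (Lemma \ref{lemma:globalqcW}, which uses Corollary \ref{cor:W}; the inverse map enters only through the hypothesis $K_f\in\LL^1$), where conformal invariance of the distortion makes the exterior defect exactly $(\log J_h)\circ A_f$, and this integrates to zero over the complement of the ellipse $A_f(\mb D)$ by Lemma \ref{lemma:nullquaddomain}. That exact cancellation is the ingredient missing from your plan.

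Separately, your final bookkeeping is wrong by terms of the same order as what you are estimating. By the Gr\"onwall area theorem (cf.\ \eqref{areafmla}), $\mathscr L^2(f(\mb D))=\pi\bigl(1-\sum_{j\ge 1}j|b_j|^2\bigr)$, which differs from $\pi\det A_f=\pi(1-|b_1|^2)$ unless all higher coefficients vanish; and since $\mathscr F$ is not positively homogeneous, $A_g=\det A_f\,(A_f)^{-1}$ gives $\mathscr F(A_g)=(\det A_f)^2\,\mathscr F\bigl((A_f)^{-1}\bigr)-2\det A_f\log\det A_f$, not $\det A_f\,\mathscr F\bigl((A_f)^{-1}\bigr)$, so the identity $\mathscr F(A_g)\,\mathscr L^2(f(\mb D))=\pi(\mathscr W(A_f)-1)$ fails. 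The inequality you would actually need after the change of variables is $\pi\det A_f\,\mathscr F\bigl((A_f)^{-1}\bigr)\le\int_{f(\mb D)}\mathscr F(\D g)\,\dd m$, whose comparison matrix and weight both differ from the natural output of an area-type inequality for $g$ centred at $A_g$. (A smaller point: $g$ is conformal outside $f(\mb D)$, not outside a disk, so Lemma \ref{lemma:extension} does not apply verbatim and the gluing must be redone on a circle containing $f(\mb D)$.) Repairing your approach would require tracking all of these correction terms and verifying their cancellation, which after the change of variables essentially amounts to redoing the paper's $\mathscr W$-side argument.
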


Here recall from  \eqref{asympto} that for any principal homeomorphism 
$\det(A_f) > 0$. In fact, under the normalisation \eqref{princip3} we have $1 \leq \mathscr W(A_f) < +\infty$.

The proof of the above Theorem follows the same broad strategy as for  the Burkholder area inequality in Theorem \ref{thm:Bparea}, but some of the details are quite different.  As before, the first step is to establish a quasiconvexity inequality over the full space, as in  Lemma \ref{lemma:globalqcBp}. 

\begin{lemma}\label{lemma:globalqcW}
Given $f$  as in Theorem \ref{thm:Warea}, define the auxiliary function $\tilde f$ as in Lemma \ref{lemma:extension}. Then we have
$$0 \leq \int_{\C} \left(\mathscr W(\D \tilde f(z)) - \mathscr W(A_f) \right) \dd m(z).$$
\end{lemma}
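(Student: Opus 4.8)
The plan is to mirror the proof of Lemma~\ref{lemma:globalqcBp}, but to replace the use of Theorem~\ref{thm:soft} (closed quasiconvexity of the Burkholder functional) by Corollary~\ref{cor:W} (quasiconvexity of $\mathscr W$ for homeomorphisms of integrable distortion). First I would recall from Lemma~\ref{lemma:extension} that $\tilde f$ is a $\WW^{1,1}_\tp{loc}$-homeomorphism, $K$-quasiconformal whenever $f$ is (here we must be slightly careful: in the degenerate case $f$ has only integrable distortion, but the key point is that $\tilde f = h\circ A_f$ outside $\mb D$ with $h$ conformal and $A_f$ linear, so $K_{\tilde f}\in \LL^1_\tp{loc}(\C)$, since $K_{\tilde f}=K_f$ on $\mb D$ and $K_{\tilde f}=K_{A_f}$ is bounded outside $\mb D$). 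Moreover $h(z)=z+\mathcal O(z^{-2})$ at infinity.

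Next I would localise. Fix a large $j$, take a radially symmetric cutoff $\eta_j$ with $\eta_j\equiv 1$ on $\{|z|\le j\}$, $\eta_j\equiv 0$ on $\{|z|\ge j+1\}$, $|\nabla\eta_j|\le 2$, and set
$$
\tilde f_j \equiv \begin{cases} f & \tp{in } \mb D,\\ A_f+\eta_j\psi & \tp{in } \C\setminus\mb D,\end{cases}
$$
where $\psi(z)=\tilde f(z)-A_f(z)=h(A_f(z))-A_f(z)=\mathcal O(z^{-2})$. As in Lemma~\ref{lemma:globalqcBp}, the estimate $|\D(\eta_j\psi)(z)|\le C|z|^{-2}$ holds with $C$ independent of $j$, so for $j$ large the matrix $\D\tilde f_j=A_f+\D(\eta_j\psi)$ lies arbitrarily close to $A_f\in \R^{2\times 2}_+$ on $\{|z|\ge j\}$, hence $\det\D\tilde f_j>0$ there; on $\{|z|<j\}$ we have $\tilde f_j=\tilde f$, which has positive Jacobian a.e.\ and $K_{\tilde f}\in\LL^1$. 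Thus $\tilde f_j\in A_f+\WW^{1,1}_0(\mb D(0,j+1))$ is a homeomorphism with $K_{\tilde f_j}\in\LL^1$, and Corollary~\ref{cor:W} applies on the disc $\mb D(0,j+1)$, giving
$$
\mathscr W(A_f)\;\le\; \frac{1}{\pi(j+1)^2}\int_{\mb D(0,j+1)} \mathscr W(\D\tilde f_j(z))\,\dd m(z).
$$
Since $\mathscr W(A_f)\ge 1>0$, multiplying through by $(j+1)^2$ this is equivalent to $\int_{\mb D(0,j+1)}\bigl(\mathscr W(\D\tilde f_j)-\mathscr W(A_f)\bigr)\,\dd m(z)\ge 0$.

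The final step is to pass $j\to\infty$ in the decomposition
$$
\int_{|z|\le \rho}\!+\int_{\rho\le |z|\le j}\!+\int_{j\le |z|\le j+1}\!\bigl(\mathscr W(\D\tilde f_j)-\mathscr W(A_f)\bigr)\,\dd m(z)\ge 0
$$
for fixed $\rho>1$. On $\{|z|\le\rho\}$ the integrand is independent of $j$ (it equals $\mathscr W(\D\tilde f)-\mathscr W(A_f)$), though I should note it may be negative; it is integrable since $\mathscr W(\D\tilde f)\in\LL^1_\tp{loc}$ (because $K_{\tilde f}\in\LL^1_\tp{loc}$ and, via the area-type identity, $\log J_{\tilde f}\in\LL^1_\tp{loc}$ — here I would invoke the Koskela--Onninen estimate \cite{KO} already cited in the paper). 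On $\{\rho\le |z|\le j\}$ we have $\tilde f_j=\tilde f=h\circ A_f$ with $h'$ bounded on $A_f(\{|z|\ge\rho\})$ and $h'(w)=1+\mathcal O(w^{-3})$, so $\mathscr W(\D\tilde f)-\mathscr W(A_f)$ is bounded and decays like $|z|^{-3}$; dominated convergence gives convergence to $\int_{|z|\ge\rho}(\mathscr W(\D\tilde f)-\mathscr W(A_f))\,\dd m(z)$, and this integral is finite. Finally, the annular term over $\{j\le |z|\le j+1\}$ vanishes as $j\to\infty$: there $|\D(\eta_j\psi)|\le Cj^{-2}$, $\D\tilde f_j=A_f+\D(\eta_j\psi)$ stays in a fixed compact neighbourhood of $A_f$ in $\R^{2\times 2}_+$ on which $\mathscr W$ is Lipschitz, so $|\mathscr W(\D\tilde f_j)-\mathscr W(A_f)|\le C(A_f)j^{-2}$ while the area of the annulus is $(2j+1)\pi$, yielding a bound $\mathcal O(j^{-1})\to 0$. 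Combining, $\int_\C(\mathscr W(\D\tilde f)-\mathscr W(A_f))\,\dd m(z)\ge 0$, as claimed.

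The main obstacle I anticipate is verifying the integrability hypothesis needed to apply Corollary~\ref{cor:W} to $\tilde f_j$: namely, that $\tilde f_j$ is genuinely a homeomorphism in $A_f+\WW^{1,1}_0$ of the large disc with $K_{\tilde f_j}\in\LL^1$, and simultaneously that $\mathscr W(\D\tilde f_j)\in\LL^1$ so that the right-hand side of the quasiconvexity inequality is not vacuously $+\infty$. The homeomorphism property of $\tilde f_j$ is delicate because outside $\{|z|\le j\}$ we have modified $\tilde f$ by the cutoff, and injectivity of $A_f+\eta_j\psi$ on the annulus requires $j$ large (so that $\D(\eta_j\psi)$ is small relative to the non-degeneracy of $A_f$) — this is exactly the argument sketched for $\tilde f_j$ in Lemma~\ref{lemma:globalqcBp}, where one uses that $A_f$ is a fixed invertible linear map and that the perturbation has $\mathcal O(|z|^{-2})$ gradient; I would reproduce that reasoning, perhaps via Remark~\ref{rmk:automatichomeo} after checking continuity, openness and boundary behaviour. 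The control $\mathscr W(\D\tilde f)\in\LL^1_\tp{loc}$ near the unit circle should follow from $K_f\in\LL^1(\mb D)$ together with $\log(1/J_f)\in\LL^1_\tp{loc}$, the latter being the Koskela--Onninen result; outside $\mb D$ it is automatic since $\tilde f$ is quasiconformal there with bounded conformal factor.
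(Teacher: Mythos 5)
Your proposal is correct and follows essentially the same route as the paper: the same cutoff construction $\tilde f_j$ interpolating between $\tilde f$ and $A_f$, the same application of Corollary \ref{cor:W} (legitimate since $K_{\tilde f_j}\in\LL^1$ and $\tilde f_j$ has linear boundary values $A_f$), and the same $\mathcal O(j^{-1})$ estimate killing the transition-annulus term. The only differences are cosmetic — you bound the annulus term via local Lipschitz continuity of $\mathscr W$ near $A_f$ where the paper estimates $K$, $\log K$ and $\log J$ separately, and your extra splitting at a fixed radius $\rho$ (inherited from the Burkholder case, where it was needed for the $p_n\to p$ limit) is harmless but unnecessary here.
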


\begin{proof}
Like in the proof of Lemma \ref{lemma:extension}, define for $|z|>1$ a new function $\psi(z)$ by the identity
\begin{equation} \label{tilde2}
\tilde f(z)=h(A_f(z))=A_f(z) + \mathcal O(A_f(z)^{-2})\equiv A_f(z) + \psi(z).
\end{equation}
Let also $0\leq \eta_j\leq 1$ be a smooth, radially symmetric cutoff such that
$\eta_j(z)= 1$ if $|z|\leq j$,  $\eta_j(z)=0$ if $|z|\geq j+1$ and $|\nabla \eta_j|\leq 2$. 

As in the proof of Lemma \ref{lemma:globalqcBp} we have the basic estimate, that
for any given  $\varepsilon>0$ and  for all $j$ sufficiently large, depending on $\varepsilon$, 
\begin{equation}
\label{eq:decaycutoff}
|\D (\eta_j \psi)(z)| \leq  |\eta_j \D \psi|(z)+ |\psi \otimes \nabla \eta_j|(z) \leq C |z|^{-2} \leq \varepsilon
\end{equation}
whenever $|z|\geq j$. 
 With these we set
$$\tilde f_j =\begin{cases} f & \tp{in } \mb D,\\ A_f+\eta_j \psi & \tp{in } \C \backslash \mb D.\end{cases}$$

Our first task is to show that for $j$ large enough the distortion functions $K(\tilde f_j) \in \LL^1_\tp{loc}(\C)$. Indeed, in the unit disc $K(\tilde f_j) = K(f) \in \LL^1(\DD)$ by assumption, while by \eqref{tilde2} we have 
$K(\tilde f_j) = K(A_f)$ when $1 \leq |z| \leq j$ or $|z| \geq j+1$.  
Hence we only need to cover the annulus $\{ z : j < |z| < j+1 \}$, where by \eqref{eq:decaycutoff}
$$ K(\tilde f_j) = \frac{\bigl| A_f + \D (\eta_j \psi) \bigr|^2}{\, \det\bigl(A_f + \D (\eta_j \psi)\bigr)\,} \leq 2K(A_f ),
$$
when $j = j_\varepsilon$ is large enough.

Thus $K(\tilde f_j(z))  \in \LL^1_\tp{loc}(\C)$, and since $\tilde f_j(z)=A_f$ for $|z|\geq j+1$, we may now  apply Corollary \ref{cor:W}  to conclude that 
\begin{align*}
\int_{\C} \left(\mathscr W(\D \tilde f_j(z)) - \mathscr W(A_f)\right) \dd m(z)\ge 0 
\end{align*}
or, rearranging, 
$$\int_{|z|\leq j} \left(\mathscr W(\D \tilde f) - \mathscr W(A_f)\right) \dd m + \int_{j\leq |z|\leq j+1}\left( \mathscr W(\D \tilde f_j) - \mathscr W(A_f)\right) \dd m\ge 0.$$

Our claim is that, when $j\to \infty$, the second term above vanishes. 
For this we need quantitative estimates. Recalling that 
\begin{equation}
\label{Wdecom}
\mathscr W(\D f) = K_f - \log K_f + \log J_f ,
\end{equation}
we will estimate each term separately.

First, since $$|\det A_f- \det(A_f+\D(\eta_j \psi))| \leq C |\D(\eta_j \psi)| (|A_f|+|\D(\eta_j \psi)|), $$by taking $\varepsilon$ small enough we have 
\begin{equation}
\label{eq:lowerboundjac}
\frac 1 2\det A_f\leq \det(A_f+\D (\eta_j \psi)) \quad \tp{for all } |z|\geq j
\end{equation}

Second, given matrices $A, B \in \R^{2\times 2}_+$, we have the Lipschitz estimate
\begin{align*}
|K_A- K_B| & =
\left|\frac{\det B (|A|^2-|B|^2) + |B|^2 (\det B - \det A)}{\det A \det B}\right|\\
& \leq \frac{3|B|^2(|A|+|B|)}{\det A \det B} \left||A|-|B|\right|.
\end{align*}
Thus, applying this estimate with $A=A_f$ and $B=\D \tilde f_j=A_f+\D(\eta_j \psi)$, by \eqref{eq:decaycutoff} and \eqref{eq:lowerboundjac} we find
\begin{equation}
\label{eq:estimateK}
|K_{A_f} - K_{f_j}| \leq C(A_f) j^{-2}.
\end{equation}
For the second term in \eqref{Wdecom}   we use the estimate $|\log x - \log y|\leq c^{-1} |x-y|$ which holds provided that $0<c\leq x,y$. Thus, using \eqref{eq:estimateK} twice, we get
\begin{equation}
\label{eq:estimatelogK}
|\log K_{A_f} - \log K_{f_j} | \leq C(K_{A_f})  |K_{A_f} - K_{f_j} | \leq  C(A_f) j^{-2}.
\end{equation}
Finally, for the last term in  \eqref{Wdecom}  we have
\begin{align}
\begin{split}
\label{eq:estimatelogJ}
|\log J_{A_f} - \log J_{f_j} | & \leq C(A_f) |J_{A_f} -  J_{f_j}| \\
& \leq C(A_f) |A_f-\D f_j| \leq C(A_f) j^{-2},
\end{split}
\end{align}
where the constant $C(A_f)$ changes in each inequality.
Combining \eqref{eq:estimateK}--\eqref{eq:estimatelogJ}, since  $\mathscr L^2(\{j\leq |z|\leq j+1\}) \leq C j$, we finally obtain
\begin{align*}
 \left|\int_{j\leq |z|\leq j+1} \left(\mathscr W(\D \tilde f_j(z)) - \mathscr W(A_f)\right)\, \dd m(z) \right|   \leq \frac{C(p,A_f)}{j} \to 0,
\end{align*}
as claimed. Therefore, letting $j\to\infty$, we have
\begin{align*}
\int_{\mb C} \left( \mathscr W(\D \tilde f(z)) - \mathscr W(A_f) \right) \dd m(z) \geq 0.
\end{align*}
As in the proof of Lemma \ref{lemma:globalqcBp}, note that this is indeed a Lebesgue integral: as $|z|\to \infty$,  with $h'(z) = 1+ \mathcal O(z^{-3})$ we see from \eqref{tilde2} that 
$$\mathscr W(\D \tilde f) - \mathscr W(A_f) = \log( |h'\circ A_f|^2 )= \mathcal O(z^{-3}),$$
which is integrable.
\end{proof}

Continuing with the proof of Theorem \ref{thm:Warea}, instead of Lemma \ref{lemma:convexity} we now rely on the following well-known fact, whose proof we recall for the convenience of the reader:

%

\begin{lemma} \label{lemma:nullquaddomain} Let ${\mathscr E} \subset \C$ be an ellipse and suppose $u \in L^{1} (\C \setminus {\mathscr E})$ is holomorphic in $\C \setminus {\mathscr E}$.
Then
$$ \int_{\C \setminus {\mathscr E}} u(z) \,\dd m(z) = 0.
$$
\end{lemma}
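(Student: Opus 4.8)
The plan is to prove Lemma \ref{lemma:nullquaddomain} by reducing the general ellipse to the exterior of a disk via a conformal change of variables, where the statement becomes an elementary fact about Laurent series. First I would recall that any ellipse $\mathscr E$ is the image of the closed unit disk $\overline{\DD}$ under a map of the form $G(\zeta)=\alpha\zeta+\beta\bar\zeta+\gamma$ with $|\alpha|>|\beta|$; on the boundary circle $|\zeta|=1$ this agrees with the rational map $\Psi(\zeta)=\alpha\zeta+\beta/\zeta+\gamma$, which is a conformal bijection from $\overline{\C}\setminus\overline{\DD}$ onto $\overline{\C}\setminus\overline{\mathscr E}$ fixing $\infty$ (the same observation already used in Lemma \ref{lemma:extension}). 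Thus $u\circ\Psi$ is holomorphic on $\C\setminus\overline{\DD}$, and using the change of variables $z=\Psi(\zeta)$ one has
$$\int_{\C\setminus\mathscr E} u(z)\,\dd m(z)=\int_{\C\setminus\overline{\DD}} u(\Psi(\zeta))\,|\Psi'(\zeta)|^2\,\dd m(\zeta),$$
provided the right-hand side converges absolutely; the integrability of $u$ on $\C\setminus\mathscr E$ guarantees this since $|\Psi'(\zeta)|^2\,\dd m(\zeta)$ is exactly the pullback of Lebesgue measure.

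Next I would reduce to the model case of the disk. Expanding $u(\Psi(\zeta))=\sum_{n\le 0} c_n \zeta^n$ as a Laurent series convergent on $\{|\zeta|>1\}$ (there are no positive powers because $u$ is bounded at infinity — indeed $u(\Psi(\zeta))\to$ a finite limit, and integrability on the exterior of the disk forces the constant term to vanish as well once we integrate, but more directly $u\in L^1$ on the exterior of an ellipse with $u$ holomorphic forces $u(z)\to 0$ at $\infty$, hence $c_0=0$ too). Meanwhile $\Psi'(\zeta)=\alpha-\beta/\zeta^2$, so $|\Psi'(\zeta)|^2=|\alpha|^2-\alpha\bar\beta/\zeta^2-\bar\alpha\beta/\bar\zeta^2+|\beta|^2/|\zeta|^4$ is a finite combination of terms $\zeta^a\bar\zeta^b$ with $a+b$ even, $a,b\in\{0,-2\}$. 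Multiplying out and integrating in polar coordinates, the orthogonality relations $\int_0^{2\pi} e^{ik\theta}\,\dd\theta=0$ for $k\ne 0$ kill every term except those where the total power of $\zeta$ equals the total power of $\bar\zeta$; combined with $c_n=0$ for $n\ge 0$, every surviving term would need a power $\zeta^{n}$ with $n\le 0$ paired against a power $\bar\zeta^{n+2j}$ coming from $|\Psi'|^2$ with $j\in\{0,1\}$, and matching exponents forces $n=-j$, i.e. $n\in\{0,-1\}$; but $n=0$ is excluded and $n=-1$ pairs with a $\bar\zeta^{-1}$ that does not occur in $|\Psi'(\zeta)|^2$. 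Hence the integral is zero.

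Alternatively — and this is cleaner to write — once we are on the exterior of the disk I would simply note $\int_{\C\setminus\overline{\DD}}v(\zeta)|\Psi'(\zeta)|^2\,\dd m$ with $v=u\circ\Psi$ holomorphic and in $L^1$ is, by Stokes'/Green's theorem applied on the annulus $1<|\zeta|<R$ and letting $R\to\infty$, expressible as a boundary term that vanishes because $v$ has no constant or $\zeta^{-1}$ term in its Laurent expansion; this is precisely the classical computation behind the area formula \eqref{areafmla}, so one can even quote it. The main obstacle, such as it is, is purely bookkeeping: justifying the interchange of summation and integration (absolute convergence from $u\in L^1$) and confirming that holomorphy plus integrability at infinity really does kill the constant term of the Laurent expansion. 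Neither is deep, but both should be stated carefully so the orthogonality argument is not applied to a divergent series.
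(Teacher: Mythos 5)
There is a genuine gap, and it sits exactly where the paper's own proof does its real work. Your exponent bookkeeping in the orthogonality step is wrong: writing $v=u\circ\Psi=\sum_{n\le 0}c_n\zeta^n$ and $|\Psi'(\zeta)|^2=|\alpha|^2-\bar\alpha\beta\,\zeta^{-2}-\alpha\bar\beta\,\bar\zeta^{-2}+|\beta|^2|\zeta|^{-4}$, a monomial $\zeta^{a}\bar\zeta^{b}$ survives the angular integration iff $a=b$, so the surviving indices are $n\in\{0,2,-2\}$, not $n\in\{0,-1\}$ as you claim. In particular the term $c_{-2}\zeta^{-2}\cdot(-\alpha\bar\beta\,\bar\zeta^{-2})=-\alpha\bar\beta\,c_{-2}\,|\zeta|^{-4}$ pairs with a term that \emph{does} occur in $|\Psi'|^2$, its radial integral over $\{|\zeta|>1\}$ converges, and it contributes $-\pi\,\alpha\bar\beta\,c_{-2}$ to the integral, which is nonzero for a genuine ellipse ($\beta\neq0$) unless $c_{-2}=0$. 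Knowing only $c_0=0$ --- which is all you justify, via $u(z)\to 0$ at infinity --- therefore does not yield the conclusion. The same objection hits your alternative Stokes argument: the inner boundary term $\oint_{|\zeta|=\rho}v\,\Psi'\,\overline{\Psi}\,\dd\zeta$ picks up residues involving $c_{-2}$ (and the outer boundary term is not controlled by $v\to0$ alone), so ``no constant or $\zeta^{-1}$ term'' is not enough.

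The missing ingredient is the quantitative decay that the paper extracts from integrability in its very first line: since $u$ is holomorphic near $\infty$ and lies in $\LL^1$, and since $|z|^{-1}$ and $|z|^{-2}$ fail to be integrable at infinity in the plane, the Laurent coefficients of orders $0,-1,-2$ all vanish, i.e.\ $u(z)=\mathcal O(z^{-3})$ as $|z|\to\infty$, hence $c_0=c_{-1}=c_{-2}=0$. With that, your computation closes: all surviving indices $\{0,2,-2\}$ carry vanishing coefficients, and the decay also licenses the term-by-term integration over the unbounded region. The paper argues slightly differently --- Green's theorem turns the area integral into $\tfrac{1}{2i}\int_{\partial\Omega}u\,\bar z\,\dd z$, which after substituting $w\mapsto R(1/w)$ vanishes by the residue theorem because $u\circ\psi$ has a zero of order at least $3$ at the origin --- but the decisive input, three orders of decay from $\LL^1$, is identical, and it is precisely the piece your proposal omits while the erroneous matching of exponents hides the need for it.
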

\begin{proof} We may assume that $\Omega \equiv \C \setminus {\mathscr E} = R(\C \setminus \DD)$, where $R(z)= z + c/z$ and $|c| < 1$.  Since $u$ is Lebesgue-integrable and holomorphic in $\Omega$, we have 
$u(z) =  {\mathcal O}\left(\frac{1}{z^3} \right)$ as $|z| \to \infty$.
Now $\psi(w)\equiv R(1/w)$ is a conformal map from $\DD$ to  $\Omega$, and thus by Stokes' theorem and the Residue Theorem we have
\begin{align*}
 \int_{\Omega} u(z) \, \dd m(z) & = \frac{1}{2i} \int_{\partial \Omega} u(z) \, \overline{z} \, \dd z \\
& =  \frac{1}{2i} \int_{\mb S^1} (u \circ \psi)(w) \, \left(  w + \frac{\; \overline{ c }\, }{w} \,  \right)  \, \left( c - \frac{1}{w^2} \right) \dd w = 0,
\end{align*}
since at origin $u \circ \psi$ has a zero of order at least $3$.
\end{proof}

Lemma \ref{lemma:nullquaddomain} asserts that the complement of an ellipse is a \textit{null quadrature domain} \cite{GS}; such domains have been completely classified in the plane by Sakai \cite{Sakai} and very recently in \cite{Eberle} in higher dimensions.   

We can finally proceed to the proof of the main result.

\begin{proof}[Proof of Theorems \ref{thm:Warea} and \ref{thm:Wareaintro}]
By Lemma \ref{lemma:globalqcW},  recalling the definition of $\tilde f$ from Lemma \ref{lemma:extension}, we have
$$\int_{\mb D} \Big(\mathscr W(\D f(z)) - \mathscr W(A_f) \Big) \dd m(z) \geq -\int_{\C \backslash \DD}\left( \mathscr W(\D \tilde f(z)) - \mathscr W(A_f) \right) \dd m(z).$$
We claim that the right-hand side of this inequality vanishes, and so the conclusion will follow. Indeed,  in $\C \setminus \DD$  we have $\tilde f = h \circ A_f$, where $h$ is holomorphic, and thus 
$$ \mathscr W(\D\tilde f) - \mathscr W(A_f) = (\log J_h) \circ A_f \quad {\rm in} \; \C \setminus \DD.$$
A change of variables then gives 
$$  \int_{\C \setminus \DD}  (\log J_h) \circ A_f(z)  \, \dd m(z) = \frac{1}{\det(A_f)} \int_{\C  \setminus A_f (\DD)} \log J_h(w)\, \dd m(w)
$$
Here  $A_f(\DD)$ is an ellipse and  $h$ is a conformal map in $\C  \setminus A_f (\DD)$, while Lemma \ref{lemma:extension} gives  the decay  $h'(z) = 1 +  {\mathcal O}\left(\frac{1}{z^3} \right)$ as $|z| \to \infty$. It follows that  $\log h'(z)$ is analytic in  $\C  \setminus A_f (\DD)$ with the decay 
$$ \log h'(z) = {\mathcal O}\left(\frac{1}{z^3} \right) \quad  \tp{ as }  |z| \to \infty.$$
By Lemma \ref{lemma:nullquaddomain}, the integrals of $ \log h'(z) $ and of $\log J_h = 2 \Re \log h'(z) $ over $\C \setminus A_f(\DD)$ vanish, which completes
the proof for  Theorem \ref{thm:Wareaintro}, as well as for Theorem \ref{thm:Warea}, which is a reformulation of it.
\end{proof}

\section{The additive volumetric-isochoric split}\label{sec:advolum}

In this section we consider general functionals satisfying the so-called \textit{additive volumetric-isochoric split}, that is, we consider functionals defined on $\R^{2\times 2}_+\equiv  \{A\in \R^{2\times 2}: \det A>0\}$ which have the form
\begin{equation}
\label{eq:split}
{\bf E}(A)={\bf G}(\det A)+{\bf H}(K_A), \qquad K_A\equiv  \frac{|A|^2}{\det A},
\end{equation}
where  ${\bf G}\colon (0,+\infty)\to \R$ and ${\bf H}\colon [1,+\infty)\to \R$ are given functions. The term ${\bf G}$ corresponds to the \textit{volumetric} part of ${\bf E}$, while ${\bf H}$ represents the \textit{isochoric} part of ${\bf E}$; note also that ${\bf H}$ is invariant under the left- and right-actions of the conformal group $Q_2(1)$. Of course, any functional as in \eqref{eq:split} extends naturally to an functional ${\bf E}\colon \R^{2\times 2}\to \R\cup \{+\infty\}$ by setting ${\bf E}(A)=+\infty$ if $\det A\leq 0$,  cf. ~the discussion in Section \ref{sec:LlogL}. We also have that $\mathscr W$, as defined in \eqref{W},  can be written  in the form \eqref{eq:split} by taking
$${\bf G}(t)=\log(t),\quad {\bf H}(t)=t-\log t.$$

In addition to rank-one convexity of ${\bf E}$, we will assume that the isochoric part ${\bf H}$ is \textit{convex}:

\begin{theorem}\label{thm:decomposition}
Let ${\bf E}\colon\R^{2\times 2}_+\to \R$ be a rank-one convex functional of the form 
\eqref{eq:split},
where  ${\bf G}\colon (0,\infty)\to \R$ and ${\bf H}\colon [1,+\infty)\to \R$ is convex. Then there is a polyconvex functional ${\bf F}\colon \R^{2\times 2}_+\to \R$ and a constant $c\geq 0$ such that
$${\bf E}={\bf F}+c \mathscr W.$$ 
\end{theorem}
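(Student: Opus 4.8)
\textbf{Proof strategy for Theorem \ref{thm:decomposition}.}

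The plan is to exploit the special structure coming from the additive split together with the characterization of $\mathscr W$ as spanning the unique non-polyconvex extreme ray among such functionals. First I would recall that, by work of Flory and the rigidity results streamlined here, a functional of the form \eqref{eq:split} with ${\bf H}$ convex is rank-one convex precisely when the pair $({\bf G},{\bf H})$ satisfies a concrete ordinary differential inequality along rank-one lines; computing $\D f = A + tX$ with $\mathrm{rank}(X)=1$ and using the formulas \eqref{eq:detconformal} for $\det$ and $|\cdot|$ in conformal coordinates reduces rank-one convexity of ${\bf E}$ to a one-dimensional convexity condition coupling ${\bf G}'$, ${\bf G}''$, ${\bf H}'$ and ${\bf H}''$. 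One then observes that $\mathscr W$ itself, with ${\bf G}_{\mathscr W}(t)=\log t$ and ${\bf H}_{\mathscr W}(t)=t-\log t$, is an extremal solution of this inequality, in the sense that it saturates it in a particular direction. The idea is that subtracting off an appropriate multiple $c\mathscr W$ of this extremal removes exactly the ``non-polyconvex part'' of ${\bf E}$.

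Concretely, the key step is to determine the constant $c$. I would set
\[
c \equiv \inf\Bigl\{ \lambda \geq 0 : {\bf E} - \lambda\,\mathscr W \text{ is polyconvex on } \R^{2\times 2}_+ \Bigr\},
\]
or, what should turn out to be equivalent, read off $c$ from the second-order behaviour of ${\bf H}$ near the degenerate directions: since polyconvex functionals on $\R^{2\times 2}_+$ of the split form correspond to pairs $({\bf G},{\bf H})$ with ${\bf G}$ convex and ${\bf H}$ satisfying a \emph{stronger} convexity-type inequality than mere rank-one convexity, the gap between the two conditions is one-dimensional and is ``measured'' by $\mathscr W$. Thus $c$ is the largest constant for which ${\bf H} - c\,{\bf H}_{\mathscr W}$ and ${\bf G} - c\,{\bf G}_{\mathscr W}$ still satisfy the polyconvexity inequality; rank-one convexity of ${\bf E}$ guarantees $c<\infty$, and convexity of ${\bf H}$ together with the sign of ${\bf H}_{\mathscr W}''$ guarantees $c\geq 0$. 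Having fixed $c$, I would set ${\bf F}\equiv {\bf E}-c\,\mathscr W$ and verify that ${\bf F}$ is of the split form with the polyconvexity inequality now satisfied, whence ${\bf F}$ is polyconvex; the representation of split functionals through the subdeterminants $\det A$ and $|A|^2/\det A$ (the latter being a quasiconvex function of the minors, cf.\ the discussion preceding \eqref{eq:split}) makes polyconvexity of ${\bf F}$ equivalent to convexity of the corresponding generating function, which is exactly the inequality we have arranged.

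The main obstacle, I expect, will be the precise algebraic identification of the ``polyconvexity inequality'' for split functionals and the verification that the deficit between it and rank-one convexity is spanned by a single ray rather than a cone. This requires writing $\mathscr W(A)$ and a general ${\bf F}(A)$ as convex functions of $(\det A, |A|^2/\det A)$ — equivalently of a well-chosen tuple of $2\times 2$ minors — and checking that the Hessian condition for polyconvexity and the (weaker) one for rank-one convexity differ by exactly one degree of freedom, which is the $\mathscr W$-direction. This is where the analysis of \cite{Voss1,Voss2} on extreme rays is invoked; I would streamline it by working directly with the one-variable reductions of ${\bf G}$ and ${\bf H}$, so that the whole argument becomes a statement about real functions of one variable. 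Once that structural fact is in place, the existence of $c$ and the polyconvexity of ${\bf F}={\bf E}-c\,\mathscr W$ follow, and Theorem \ref{thm:morreysplit} is then immediate by combining this decomposition with Corollary \ref{cor:Wintro}.
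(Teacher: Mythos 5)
Your outline has the right overall shape (subtract a multiple of $\mathscr W$ and show the remainder is polyconvex using the split structure), but as written it defers or misses the steps that actually carry the proof. First, your definition of $c$ as $\inf\{\lambda\geq 0: {\bf E}-\lambda\mathscr W \text{ polyconvex}\}$ is circular unless you exhibit at least one admissible $\lambda$, and that is precisely what must be proved; your alternative of ``reading off $c$ from the second-order behaviour of ${\bf H}$'' points at the wrong function. The paper's proof produces an explicit constant from the \emph{volumetric} part: rank-one convexity along diagonal matrices $\tp{diag}(x,y)$, after the substitution $t=xy$, $s=x/y$, yields the scalar inequality $G_0+H_0\geq 0$ with $G_0\equiv\inf_{t>0}t^2{\bf G}''(t)$, $H_0\equiv\inf_{s>1}s^2{\bf H}''(s)$, and one takes $c=-G_0$ when $G_0<0$ (and $c=0$ otherwise). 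This single inequality is what makes both $\widetilde{\bf G}={\bf G}-c\log$ and $\widetilde{\bf H}={\bf H}-c(t-\log t)$ convex, and nothing in your sketch derives it.

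Second, your criterion for polyconvexity of the remainder is not correct as stated: $K_A=|A|^2/\det A$ is not a minor, so convexity of a generating function in the variables $(\det A,K_A)$ does not give polyconvexity. What is true (and what the paper uses, via its Lemma on split functionals) is that a convex function of $\det A$ is polyconvex, and a convex \emph{and non-decreasing} function of the polyconvex quantity $K_A$ is polyconvex. The monotonicity of $\widetilde{\bf H}$ is a genuine extra step: it comes from the Baker--Ericksen inequality ${\bf H}'\geq 0$ (itself a consequence of rank-one convexity and isotropy) together with a short contradiction argument using $\widetilde{\bf H}'(t)\geq c(1-1/t)$ and convexity of $\widetilde{\bf H}$. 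Your proposal never invokes Baker--Ericksen and never addresses monotonicity, so the claimed polyconvexity of ${\bf F}={\bf E}-c\mathscr W$ does not follow from what you wrote. Finally, the appeal to the ``unique non-polyconvex extreme ray'' of \cite{Voss1,Voss2} assumes the structural fact the theorem encodes rather than proving it; the paper's argument is self-contained and needs no such input. To repair the proposal, derive the inequality $G_0+H_0\geq 0$, define $c$ explicitly as above, and verify convexity of $\widetilde{\bf G}$ and convexity plus monotonicity of $\widetilde{\bf H}$ before concluding polyconvexity of the two pieces separately.
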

We recall that a functional ${\bf F}\colon \R^{2\times 2}\to \eR$ is said to be \textit{polyconvex} if there is a convex function $\tilde{\bf F}\colon \R^5\to \eR$ such that ${\bf F}(A)=\tilde {\bf F}(A, \det(A))$, see also \cite{Dacorogna2007} for further details. Since the determinant is a null Lagrangian, Jensen's inequality easily implies that polyconvex functionals are quasiconvex.

Theorem \ref{thm:decomposition} was proved implicitly in \cite{Voss1} and in this section we  give a short, direct proof. Combining Corollary \ref{cor:W} and Theorem \ref{thm:decomposition}, we obtain:

\begin{corollary}
Any functional ${\bf E}\colon \R^{2\times 2}\to \R$ as in Theorem \ref{thm:decomposition} is quasiconvex: if $A\in \R^{2\times 2}_+$ and if $f\in A+\WW^{1,1}_0(\Omega,\R^2)$ is a homeomorphism such that $K_f\in \LL^1(\Omega)$, then
$${\bf E}(A)\leq \fint_\Omega^* {\bf E}(\D f(z)) \,\dd m(z).$$
\end{corollary}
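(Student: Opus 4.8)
The plan is to reduce everything to the two results already in hand. By Theorem~\ref{thm:decomposition} there is a polyconvex functional ${\bf F}\colon\R^{2\times 2}_+\to\R$ and a constant $c\geq 0$ with ${\bf E}={\bf F}+c\,\mathscr W$ on $\R^{2\times 2}_+$; extending ${\bf F}$ and $\mathscr W$ by $+\infty$ on $\{\det\leq 0\}$ as in \eqref{eq:extenddetfunc}, the identity ${\bf E}={\bf F}+c\,\mathscr W$ holds on all of $\R^{2\times 2}$. Fix $A\in\R^{2\times 2}_+$ and a homeomorphism $f\in A+\WW^{1,1}_0(\Omega,\R^2)$ with $K_f\in\LL^1(\Omega)$; we may assume $J_f>0$ a.e.\ in $\Omega$, for otherwise ${\bf E}(\D f)=+\infty$ on a set of positive measure and the asserted inequality is trivial.

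For the term $c\,\mathscr W$ we invoke Corollary~\ref{cor:W} directly: it gives $\mathscr W(A)\leq\fint_\Omega\mathscr W(\D f(z))\,\dd m(z)$, and multiplying by $c\geq 0$ preserves the inequality. For the polyconvex term ${\bf F}$ I would argue by the standard principle, recorded after Theorem~\ref{thm:decomposition}, that a polyconvex functional is quasiconvex because $\det$ is a null Lagrangian. Concretely, write ${\bf F}(X)=\tilde{\bf F}(X,\det X)$ with $\tilde{\bf F}\colon\R^5\to\eR$ convex. Since $f-A\in\WW^{1,1}_0$ we have $\fint_\Omega\D f=A$, and the hypothesis $K_f\in\LL^1(\Omega)$ supplies the remaining ingredient, namely $\fint_\Omega\det\D f=\det A$: exactly as in the proof of Proposition~\ref{prop:shield}, the identity $\int_{A(\Omega)}|\D g|^2\,\dd m=\int_\Omega K_f\,\dd m$ of \cite[Theorem 2.1]{HKO} shows $g\equiv f^{-1}\in\WW^{1,2}(A(\Omega))$, hence $g$ satisfies Lusin's condition (N); combining this with the always-valid area inequality $\int_\Omega\det\D f\leq\mathscr L^2(f(\Omega))=(\det A)\,\mathscr L^2(\Omega)$ yields equality, so $\det\D f$ acts as a null Lagrangian on the relevant class. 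Jensen's inequality for the convex $\tilde{\bf F}$ then gives $${\bf F}(A)=\tilde{\bf F}\Bigl(\fint_\Omega\D f,\ \fint_\Omega\det\D f\Bigr)\leq\fint_\Omega\tilde{\bf F}(\D f,\det\D f)\,\dd m=\fint_\Omega{\bf F}(\D f)\,\dd m.$$ Adding this to $c$ times the inequality for $\mathscr W$ produces ${\bf E}(A)={\bf F}(A)+c\,\mathscr W(A)\leq\fint^*_\Omega{\bf E}(\D f(z))\,\dd m(z)$, which is the assertion.

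Since the two building blocks are already proved, the only genuinely delicate point — and the step I would treat most carefully — is the bookkeeping forced by the extended-real-valued setting together with the weak regularity $\WW^{1,1}$. One must (i) justify that $\det$ really is a null Lagrangian for homeomorphisms in $A+\WW^{1,1}_0(\Omega)$ with integrable distortion, i.e.\ that the change-of-variables step above is licit; and (ii) check that the upper integrals $\fint^*$ are not $-\infty$, so that the two inequalities may be summed legitimately. For (ii) one notes that ${\bf F}(\D f)^-\in\LL^1(\Omega)$ by the standard affine lower bound for a polyconvex integrand together with $\det\D f\in\LL^1(\Omega)$, while $\mathscr W(\D f)^-\in\LL^1(\Omega)$ is contained in Corollary~\ref{cor:W} (it is precisely the sharp global $\log J_f$-integrability discussed there); when either upper integral equals $+\infty$ there is nothing to prove. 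Both points are handled by the very machinery already deployed for Proposition~\ref{prop:shield} and Corollary~\ref{cor:W}, so I do not anticipate a new difficulty beyond a careful repetition of those arguments.
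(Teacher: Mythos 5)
Your overall route is the same as the paper's: the paper proves this corollary in one line by combining Theorem \ref{thm:decomposition} with Corollary \ref{cor:W}, which is exactly your decomposition ${\bf E}={\bf F}+c\,\mathscr W$, with the $c\,\mathscr W$ term handled by Corollary \ref{cor:W} and the polyconvex term by Jensen. Your treatment of the $\mathscr W$-part, of the barycentre identity $\fint_\Omega \D f\,\dd m=A$, and of the integrability bookkeeping is fine.

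The genuine gap is in the step you yourself flag as delicate: the claim $\fint_\Omega \det \D f\,\dd m=\det A$. Your justification --- $g=f^{-1}\in \WW^{1,2}$ satisfies Lusin's condition (N), hence equality holds in the area inequality for $f$ --- is a non sequitur. Condition (N) for the \emph{inverse} prevents $f$ from crushing sets of positive measure onto null sets (this is what underlies $J_f>0$ a.e.), but it says nothing about $f$ blowing up a null set onto a set of positive measure, and that is exactly the mechanism that produces strict inequality $\int_\Omega J_f\,\dd m<\mathscr L^2(f(\Omega))$. For a $\WW^{1,1}$-homeomorphism, equality in the area identity is \emph{equivalent} to condition (N) for $f$ itself, and in the test class of this corollary that is a genuinely delicate property: Cantor-type constructions from the theory of mappings of finite distortion produce planar homeomorphisms, equal to the identity on the boundary, with $K_f\in \LL^p$ for every $p<\infty$ (in particular $K_f\in\LL^1$) which map a null set onto a set of positive measure --- the sharp condition for (N) is at the level of exponential integrability of $K_f$. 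So the identity you invoke is not available by the argument you give, and the polyconvex half of your proof does not go through as written. Note where the difficulty actually sits: for the isochoric piece $\widetilde{\bf H}(K_A)$ of ${\bf F}$ the area \emph{inequality} $\fint_\Omega J_f\,\dd m\leq\det A$ suffices, since $(X,d)\mapsto \widetilde{\bf H}\bigl(|X|^2/d\bigr)$ is convex and nonincreasing in $d$; the problem is concentrated in the term $\widetilde{\bf G}(\det)$ with $\widetilde{\bf G}$ convex but not monotone --- indeed already ${\bf E}=\det$ (i.e.\ ${\bf G}=\mathrm{id}$, ${\bf H}\equiv0$) is admissible, and for it the asserted inequality \emph{is} the Jacobian identity, so there is no way to avoid confronting condition (N) for $f$ at this point. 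The safer way to make the step rigorous, in the spirit of what the paper does for $\mathscr W$ itself, is to transfer the determinant part to the inverse map $g\in A^{-1}+\WW^{1,2}_0(A(\Omega))$ via Proposition \ref{prop:shield}, where the planar Jacobian genuinely is a null Lagrangian in $\WW^{1,2}$; to be fair, the paper's one-line proof leaves precisely this point implicit, and making it precise is where the real work lies.
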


The proof of Theorem \ref{thm:decomposition} relies on the classical Baker--Ericksen inequality. Given $A\in \R^{2\times 2}$, we write $\lambda(A)\equiv (\lambda_1(A),\lambda_2(A))$ for the vector of \textit{singular values} of $A$, which is are the eigenvalues of the positive-definite matrix $\sqrt{A^\tp{T} A}$. The Baker--Ericksen inequality read as follows:

\begin{lemma}\label{lemma:BE}
Let ${\bf E}\colon \R^{2\times 2}_+\to \R$ be an isotropic rank-one convex functional: thus there is a symmetric function $\Phi\colon (0,\infty)^2 \to \R$ such that
$${\bf E}(A)=\Phi(\lambda_1(A), \lambda_2(A)).$$ If $\Phi$ is $C^1$ and $\lambda_1\neq \lambda_2$ then
$$\frac{\lambda_1 \p_1 \Phi(\lambda) - \lambda_2 \p_2 \Phi(\lambda)}{\lambda_1-\lambda_2}\geq 0$$
for all $\lambda=(\lambda_1,\lambda_2)\in \R^2$ such that $\lambda_1,\lambda_2>0$.
\end{lemma}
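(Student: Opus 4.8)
The plan is to reduce the Baker--Ericksen inequality to the rank-one convexity of $\Phi$ along a carefully chosen rank-one line. First I would recall the standard fact that a rank-one matrix $X$ has the form $X = a\otimes b$ for vectors $a,b\in\R^2$, and, more usefully for the spectral setting, that if $A = R\,\mathrm{diag}(\lambda_1,\lambda_2)\,S^{\tp T}$ is a singular value decomposition (with $R,S\in\tp{SO}(2)\cup\tp{O}(2)$), then perturbing $A$ in a direction that only changes one singular value corresponds to a rank-one perturbation. Concretely, write $A = R\,D\,S^{\tp T}$ with $D = \mathrm{diag}(\lambda_1,\lambda_2)$ and consider the family
$$
A(t) \equiv R\,\mathrm{diag}(\lambda_1 e^{t},\lambda_2 e^{-t})\,S^{\tp T}.
$$
The point is \emph{not} that this is a rank-one line — it is not — so instead I would use the classical trick: consider the rank-one line through $A$ obtained by adding $t\,(Re_1)\otimes(Se_2)$, i.e.
$$
A_t \equiv A + t\,(Re_1)\otimes(Se_2) = R\begin{pmatrix}\lambda_1 & t\\ 0 & \lambda_2\end{pmatrix}S^{\tp T}.
$$
This matrix has rank at most differing by rank one from $A$, so $t\mapsto {\bf E}(A_t)$ is convex by hypothesis. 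Its determinant is $\lambda_1\lambda_2$, independent of $t$, and its singular values are the solutions of an explicit quadratic; at $t=0$ they are $\lambda_1,\lambda_2$, and one computes $\tfrac{d}{dt}\big|_{t=0}$ of the singular values.

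The key computation is then the following: since ${\bf E}$ is isotropic, ${\bf E}(A_t) = \Phi(\mu_1(t),\mu_2(t))$ where $\mu_1(t)\ge\mu_2(t)>0$ are the singular values of the upper triangular matrix $\begin{pmatrix}\lambda_1 & t\\ 0 & \lambda_2\end{pmatrix}$. One has $\mu_1(t)^2+\mu_2(t)^2 = \lambda_1^2+\lambda_2^2+t^2$ and $\mu_1(t)\mu_2(t) = \lambda_1\lambda_2$, from which $\mu_1(0)'=\mu_2(0)'=0$ and the second derivatives at $t=0$ are
$$
(\mu_1^2)''(0) = \frac{2\lambda_1^2}{\lambda_1^2-\lambda_2^2},\qquad (\mu_2^2)''(0) = \frac{-2\lambda_2^2}{\lambda_1^2-\lambda_2^2}
$$
(assuming $\lambda_1>\lambda_2$; these come from differentiating the two symmetric-function relations twice). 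Hence $\mu_1''(0) = \tfrac{\lambda_1}{\lambda_1^2-\lambda_2^2}$ and $\mu_2''(0) = \tfrac{-\lambda_2}{\lambda_1^2-\lambda_2^2}$. Differentiating $t\mapsto\Phi(\mu_1(t),\mu_2(t))$ twice at $t=0$ and using $\mu_i'(0)=0$ gives
$$
\frac{d^2}{dt^2}\Big|_{t=0}{\bf E}(A_t) = \p_1\Phi(\lambda)\,\mu_1''(0) + \p_2\Phi(\lambda)\,\mu_2''(0) = \frac{\lambda_1\p_1\Phi(\lambda)-\lambda_2\p_2\Phi(\lambda)}{\lambda_1^2-\lambda_2^2}.
$$
Convexity of $t\mapsto{\bf E}(A_t)$ forces this to be $\ge 0$, and since $\lambda_1^2-\lambda_2^2 = (\lambda_1+\lambda_2)(\lambda_1-\lambda_2)>0$ we conclude $\tfrac{\lambda_1\p_1\Phi(\lambda)-\lambda_2\p_2\Phi(\lambda)}{\lambda_1-\lambda_2}\ge 0$, which is the claim. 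The case $\lambda_1<\lambda_2$ follows by symmetry of $\Phi$, or by swapping the roles of the two columns in the construction.

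The main obstacle — really the only delicate point — is the justification that $t\mapsto\mu_i(t)$ is $C^2$ near $t=0$ and the bookkeeping of the derivatives, i.e. showing the singular values are smooth functions of the entry $t$ when the two singular values of $A$ are distinct (which holds by assumption $\lambda_1\ne\lambda_2$). This is standard: the singular values are $C^\infty$ functions of the matrix entries away from coincidence of the $\lambda_i$, e.g. because $\mu_1^2,\mu_2^2$ are the roots of a quadratic with discriminant bounded away from zero near $t=0$. Once that is in place the proof is just the chain rule applied to the two elementary symmetric relations $\mu_1^2+\mu_2^2 = \lambda_1^2+\lambda_2^2+t^2$ and $\mu_1^2\mu_2^2 = (\lambda_1\lambda_2)^2$. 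I would present this briefly, note that ${\bf E}$ being $C^1$ (equivalently $\Phi$ being $C^1$) is exactly what is assumed in the statement so that the first-derivative terms make sense, and remark that the identity obtained is the planar $2\times 2$ Baker--Ericksen inequality used subsequently in the proof of Theorem~\ref{thm:decomposition}.
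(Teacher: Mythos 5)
Your proof is correct, and it is the classical argument: the determinant-preserving shear $A_t=R\begin{pmatrix}\lambda_1 & t\\ 0 & \lambda_2\end{pmatrix}S^{\tp T}$ is a rank-one perturbation of $A$ staying in $\R^{2\times 2}_+$, the relations $\mu_1^2+\mu_2^2=\lambda_1^2+\lambda_2^2+t^2$ and $\mu_1\mu_2=\lambda_1\lambda_2$ give $\mu_i'(0)=0$ and the stated values of $\mu_i''(0)$, and convexity of $t\mapsto{\bf E}(A_t)$ then yields exactly the Baker--Ericksen quotient. Note that the paper itself does not prove this lemma: it only cites \cite[Proposition 3.2]{GK}, and the argument there is essentially the same computation you give, so there is no divergence of method to report. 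The one place to tighten your write-up is the sentence ``differentiating $t\mapsto\Phi(\mu_1(t),\mu_2(t))$ twice'': since $\Phi$ is only assumed $C^1$, you cannot invoke the chain rule with a Hessian of $\Phi$; instead observe that $g(t)\equiv\Phi(\mu_1(t),\mu_2(t))$ is $C^1$ with $g'(t)=\p_1\Phi(\mu(t))\mu_1'(t)+\p_2\Phi(\mu(t))\mu_2'(t)$, and because $\mu_i'(0)=0$ the difference quotient $t^{-1}\bigl(g'(t)-g'(0)\bigr)$ converges to $\p_1\Phi(\lambda)\mu_1''(0)+\p_2\Phi(\lambda)\mu_2''(0)$, so $g''(0)$ exists and is nonnegative by convexity of $g$; this one-line remark makes the step airtight without strengthening the hypotheses.
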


Lemma \ref{lemma:BE} is well-known and the interested reader can find a short proof for instance in \cite[Proposition 3.2]{GK}.
We will also require the following result:

\begin{lemma}\label{trivlemma}
Let ${\bf E}\colon \R^{2\times 2}_+\to \R$ be a rank-one convex functional with the representation \eqref{eq:split}.
\begin{enumerate}
\item\label{it:h=0} If ${\bf H}=0$ then ${\bf G}\colon (0,\infty)\to \R$ is convex and ${\bf E}$ is polyconvex.
\item\label{it:g=0} If ${\bf G}=0$ then ${\bf H}\colon [1,\infty)\to \R$ is convex and non-decreasing, and ${\bf E}$ is polyconvex.
\end{enumerate}
\end{lemma}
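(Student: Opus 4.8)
The plan is to prove both items by reducing to the variable $t \mapsto {\bf E}$ along well-chosen rank-one lines and invoking the Baker--Ericksen inequality of Lemma \ref{lemma:BE}. For item \eqref{it:h=0}, suppose ${\bf H}=0$, so ${\bf E}(A)={\bf G}(\det A)$. First I would show ${\bf G}$ is convex: pick any $t_0>0$ and consider diagonal matrices $A_s = \tp{diag}(e^s, e^{-s} t_0)$, which all have $\det A_s = t_0$. More usefully, fix a rank-one direction that changes the determinant linearly; for instance take $A=\tp{diag}(a,b)$ with $a,b>0$ and the rank-one perturbation $X=\tp{diag}(1,0)$, so $\det(A+\tau X)=b(a+\tau)$ is an affine function of $\tau$ with positive slope $b$. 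Rank-one convexity of ${\bf E}$ says $\tau\mapsto {\bf G}(b(a+\tau))$ is convex on the interval where $a+\tau>0$; since $s\mapsto bs$ is an affine increasing reparametrisation, ${\bf G}$ is convex on $(0,\infty)$. Then ${\bf E}(A)={\bf G}(\det A)$ is a convex function of the null Lagrangian $\det A$, hence polyconvex by definition (take $\tilde{\bf F}(A,d)={\bf G}(d)$).

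For item \eqref{it:g=0}, suppose ${\bf G}=0$, so ${\bf E}(A)={\bf H}(K_A)$ with $K_A=|A|^2/\det A = (\lambda_1^2+\lambda_2^2)/(\lambda_1\lambda_2)$ in terms of singular values — wait, with the operator norm $|A|=\max(\lambda_1,\lambda_2)$, one has $K_A = \max(\lambda_1,\lambda_2)^2/(\lambda_1\lambda_2) = \max(\lambda_1/\lambda_2,\lambda_2/\lambda_1)$. So ${\bf E}$ is isotropic with $\Phi(\lambda_1,\lambda_2)={\bf H}(\max(\lambda_1/\lambda_2,\lambda_2/\lambda_1))$. To show ${\bf H}$ is non-decreasing: after mollifying ${\bf E}$ in the interior of its domain we may assume $\Phi$ is $C^1$ where needed, apply Lemma \ref{lemma:BE}, and compute $\lambda_1\p_1\Phi - \lambda_2\p_2\Phi$ on the region $\lambda_1>\lambda_2$, where $\Phi(\lambda)={\bf H}(\lambda_1/\lambda_2)$; a short computation gives $\lambda_1\p_1\Phi-\lambda_2\p_2\Phi = 2(\lambda_1/\lambda_2){\bf H}'(\lambda_1/\lambda_2)$, and since $\lambda_1-\lambda_2>0$ the Baker--Ericksen inequality forces ${\bf H}'\ge 0$, i.e. ${\bf H}$ non-decreasing. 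For convexity of ${\bf H}$: restrict ${\bf E}$ to a rank-one line along which $K_A$ traces out an interval of $[1,\infty)$ monotonically; concretely, with $A_\tau = \tp{diag}(1+\tau, 1)$ for $\tau\ge 0$ we get $K_{A_\tau}=1+\tau$, an affine increasing function of $\tau$, and rank-one convexity of ${\bf E}$ gives convexity of $\tau\mapsto {\bf H}(1+\tau)$, hence of ${\bf H}$ on $[1,\infty)$. Finally, polyconvexity of ${\bf E}={\bf H}(|A|^2/\det A)$ when ${\bf H}$ is convex and non-decreasing: since $A\mapsto |A|^2$ is convex and $(A,d)\mapsto |A|^2/d$ is convex on $\{d>0\}$ (it is a perspective-type function, jointly convex in $(A,d)$), composing with the convex non-decreasing ${\bf H}$ yields a convex function of $(A,\det A)$, so ${\bf E}$ is polyconvex.

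The one genuinely delicate point is the regularity needed to apply Lemma \ref{lemma:BE}, which is stated for $C^1$ representations $\Phi$. I would handle this exactly as in the proof of Lemma \ref{lemma:rcradialstretchings}: since the relevant matrices lie in the interior of $\tp{dom}({\bf E})=\R^{2\times 2}_+$ and ${\bf E}$ is locally Lipschitz there (rank-one convex and finite-valued on an open set, cf.\ \cite{Dacorogna2007}), one mollifies ${\bf E}$ to a smooth rank-one convex isotropic functional, applies Baker--Ericksen, and passes to the limit. Alternatively one avoids Lemma \ref{lemma:BE} for the monotonicity of ${\bf H}$ altogether by noting that ${\bf E}(A)={\bf H}(K_A)$ rank-one convex together with convexity of ${\bf H}$ along the line $A_\tau=\tp{diag}(1+\tau,1)$ already pins down convexity, and monotonicity then follows by comparing the rank-one directions $\tp{diag}(1+\tau,1)$ and $\tp{diag}(1,1+\tau)$ through a rotation, exploiting isotropy; but the Baker--Ericksen route is cleaner and is the one I would write up. The rest of the argument is routine convexity bookkeeping.

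Thus the main obstacle is purely technical (smoothness for Baker--Ericksen), and the structural content is that rank-one convexity restricted to cleverly chosen rank-one segments along which either $\det A$ or $K_A$ varies affinely transfers directly into one-dimensional convexity of ${\bf G}$ or ${\bf H}$, after which polyconvexity is immediate from standard composition rules.
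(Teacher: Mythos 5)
Your part (1) and the convexity and polyconvexity portions of part (2) are correct and essentially coincide with the paper's argument: the paper cites the classical result for (1), obtains convexity of ${\bf H}$ from rank-one convexity along $\lambda_1\mapsto\tp{diag}(\lambda_1,1)$ exactly as you do, and concludes polyconvexity from the polyconvexity of $A\mapsto K_A$ composed with a convex, non-decreasing ${\bf H}$, again as you do. Where you diverge is the monotonicity of ${\bf H}$: the paper does not use Baker--Ericksen at all. It fixes $1\le s<t$, writes $\tp{diag}(s,1)$ as the convex combination $\theta\,\tp{diag}(t,1)+(1-\theta)\,\tp{diag}(t^{-1},1)$ along the rank-one direction $\tp{diag}(1,0)$, and exploits that $K_{\tp{diag}(t^{-1},1)}=K_{\tp{diag}(t,1)}=t$, so rank-one convexity gives ${\bf H}(s)\le\theta{\bf H}(t)+(1-\theta){\bf H}(t)={\bf H}(t)$; no differentiability of ${\bf H}$ is needed anywhere.

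Your Baker--Ericksen route has a gap as written. The identity $\lambda_1\p_1\Phi-\lambda_2\p_2\Phi=2(\lambda_1/\lambda_2){\bf H}'(\lambda_1/\lambda_2)$ uses the specific representation $\Phi(\lambda)={\bf H}(\lambda_1/\lambda_2)$ and requires ${\bf H}$ to be differentiable, which is not assumed; but the mollification you invoke acts on ${\bf E}$ in matrix space, and the mollified functional is no longer of the form ${\bf H}_\varepsilon(K_A)$, so Lemma \ref{lemma:BE} applied to it does not yield your displayed formula (while mollifying ${\bf H}$ alone need not preserve rank-one convexity of the composition). The step can be repaired: for instance, prove convexity of ${\bf H}$ first, so that ${\bf H}$ is locally Lipschitz and differentiable off a countable set and the Baker--Ericksen argument can be run at points of differentiability; or apply Lemma \ref{lemma:BE} to an isotropic smooth approximation and pass to the limit its integrated consequence, namely that $t\mapsto\Phi(\lambda_1e^{t},\lambda_2e^{-t})$ is non-decreasing for $\lambda_1>\lambda_2$, which is precisely monotonicity of ${\bf H}$. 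Either repair is more work than the paper's three-point rank-one convexity inequality, which avoids derivatives entirely.
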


The first claim in Lemma \ref{trivlemma} is classical, see e.g.\ \cite[Theorem 5.46]{Dacorogna2007}. The result in Lemma \ref{trivlemma}\eqref{it:g=0} is not difficult to obtain, see e.g.\ \cite{Neff2017}. Here we present a short proof for the sake of completeness. The crucial point is the easily-checked fact that $A\mapsto K_A$ is a polyconvex functional. We refer the reader to \cite{IM} for a systematic study of polyconvexity properties of distortion functions in higher dimensions.

\begin{proof}[Proof of Lemma \ref{trivlemma}(\ref{it:g=0})]
Since ${\bf E}$ is rank-one convex, for $\lambda_1\geq 1$ we have that 
$\lambda_1\mapsto {\bf E}(\tp{diag}(\lambda_1,1))={\bf H}(\lambda_1)$ is convex. To prove the monotonicity, fix $1\leq s <t$ and let $\theta\in (0,1)$ be such that $\theta t + (1-\theta) t^{-1} =s$. Thus, by rank-one convexity,
\begin{align*}
{\bf H}(s) & ={\bf E}(\tp{diag}(s,1))\\ &\leq \theta {\bf E}(\tp{diag}(t,1))+(1-\theta){\bf E}(\tp{diag}(t^{-1},1))\\ 
& =\theta {\bf H}(t) + (1-\theta){\bf H}(t)= {\bf H}(t)
\end{align*}
and hence ${\bf H}$ has the claimed properties.
Since ${\bf H}$ is non-decreasing and convex, and $K_A$ is polyconvex, it follows that ${\bf E}(A)={\bf H}(K_A)$ is polyconvex as well.
\end{proof}

\begin{proof}[Proof of Theorem \ref{thm:decomposition}]
Since polyconvexity and rank-one convexity are preserved under pointwise limits, there is no loss of generality in assuming that both ${\bf G}\colon (0,\infty)\to \R$ and ${\bf H}\colon (1,\infty)\to \R$ are smooth. Note, however, that we do not assume that ${\bf H}$ is smooth up to $t=1$.

We consider arbitrary $x>y>0$. Since ${\bf E}$ is rank-one convex, a simple calculation yields
$$0\leq x^2 \p_{xx} {\bf E}(\tp{diag}(x,y))=(x y )^2 {\bf G}''(x y ) + \Big(\frac x y \Big)^2 {\bf H}''(x/y).$$
By changing variables $t=x y , s = x/y$, we deduce the inequality
$$\inf_{t>0} t^2 {\bf G}''(t)+\inf_{s>1} s^2 {\bf H}''(s) \equiv G_0+H_0\geq 0.$$	
Similarly, with $\Phi(x,y)={\bf E}(\tp{diag}(x,y))$ as in Lemma \ref{lemma:BE}, we calculate
$$\frac{ x \p_x \Phi(x,y)-y\p_y \Phi(x,y)}{x-y} = \frac{2 x}{y} \frac{{\bf H}'(x/y)}{x-y}$$
and thus the Baker--Ericksen inequality implies the condition
$${\bf H}'(t)\geq 0 \quad \tp{ for } t>1.$$

By assumption $H_0\geq 0$.  Suppose that $G_0\geq 0$ as well; in this case, both ${\bf H}$ and ${\bf G}$ are convex and Lemma \ref{trivlemma} shows that ${\bf E}$, being the sum of two polyconvex functionals, is itself polyconvex, so we may take $c=0$. Hence we now assume that $G_0\leq 0$ and we take $c\equiv -G_0$.

 We claim that
$F\equiv {\bf E}-c \mathscr W$ is polyconvex. In fact, $F$ can be written as 
\begin{align*}
{\bf F}(A)& =[{\bf G}(\det A)- c \log(\det A)]+ [{\bf H}(K_A)- c (K_A-\log K_A)]\\ 
& \equiv \widetilde {\bf G}(\det A)+\widetilde {\bf H}(K_A)
\end{align*}
and we claim that both terms are polyconvex functionals. This will follow from Lemma \ref{trivlemma}. That $\widetilde {\bf G}$ is convex follows from the definition of $c$:
$$\widetilde {\bf G}''(t)={\bf G}''(t)+c/t^2 \geq 0.$$
Again from the definition of $c$, we have
$$\widetilde {\bf H}''(t)={\bf H}''(t)-c/t^2\geq (H_0-c)/t^2\geq 0,$$
so $\widetilde {\bf H}$ is convex. Suppose now that $\widetilde {\bf H}$ is not non-decreasing, so in particular there is $t_0>1$ such that $\widetilde {\bf H}'(t_0)<0$. For $t>1$, since ${\bf H}'(t)\geq 0$, 
$$\widetilde {\bf H}'(t)={\bf H}'(t)+c(1-1/t)\geq c(1-1/t).$$
The right-hand side vanishes in the limit $t\to 1$; so, by choosing $t$ sufficiently close to $1$, we may suppose that $t<t_0$ and that $\widetilde {\bf H}'(t)\geq \widetilde {\bf H}'(t)/2>\widetilde {\bf H}'(t_0)$.
This contradicts the fact that $\widetilde {\bf H}'$ is non-decreasing in $(1,\infty)$, since $\widetilde {\bf H}$ is convex in the same interval. 
\end{proof}

\section{Sequential weak lower semicontinuity and minimizers}
\label{sec:swlsc}

In this last section we apply the previous results and methods  to establish existence of minimisers for the Burkholder functionals as well as for a quite large class of functionals directly related to Nonlinear Elasticity. All functionals  considered here are non-polyconvex.
 
\subsection{Existence of minimizers for the Burkholder energy}
As usual, throughout this section $\Omega\subset\C$ denotes a bounded domain.
It is well known that for functionals with standard growth properties, quasiconvexity is equivalent to sequential weak lower semicontinuity \cite{AcerbiFusco,Morrey}. However, the equivalence need not hold for $\overline{\R}$-valued functionals, cf.\ the discussion in Section \ref{sec:prelims}.  Therefore, in finding minimizers for the Burkholder functional, we start by showing that under natural assumptions the functional is sequentially weakly lower semicontinuous.

\begin{proposition}\label{prop:lscBp}
Let $K\geq 1$ and fix $2\leq p \leq \frac{2K}{K-1}$.  Given a sequence $(f_j)\subset \WW^{1,p}(\Omega)$ of $K$-quasiregular maps such that $f_j\weak f$ in $\WW^{1,p}(\Omega)$ and $({\bf B}_p(\D f_j))$ is equiintegrable,  we have
$$\liminf_{j\to \infty} \int_\Omega {\bf B}_p(\D f_j(z))\, \dd m(z) \geq \int_\Omega {\bf B}_p(\D f(z))\,\dd m(z).$$
\end{proposition}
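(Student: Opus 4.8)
The natural strategy is to pass to gradient Young measures. Given the weakly convergent sequence $(f_j)$ in $\WW^{1,p}(\Omega)$, by Theorem \ref{thm:fundYM} we may (after passing to a subsequence that realizes the $\liminf$, not relabelled) assume that $(\D f_j)$ generates a $\WW^{1,p}$-gradient Young measure $\nu=(\nu_x)_{x\in\Omega}$. Since each $f_j$ is $K$-quasiregular, we have $\D f_j(x)\in Q_2(K)$ a.e., and because $Q_2(K)$ is closed this forces $\mathrm{supp}(\nu_x)\subset Q_2(K)$ for a.e.\ $x$. On $Q_2(K)$ the functional ${\bf B}_p={\bf B}_{K,p}$ is finite and continuous, so the equiintegrability hypothesis on $({\bf B}_p(\D f_j))$ puts us exactly in the setting of Remark \ref{rmk:moreintegrands} (with ${\bf E}={\bf B}_{K,p}$, which is lower semicontinuous and continuous on its effective domain), giving for any $\eta\in\LL^\infty(\Omega)$, $\eta\ge 0$,
\begin{equation*}
\liminf_{j\to\infty}\int_\Omega \eta(x)\,{\bf B}_p(\D f_j(x))\,\dd m(x)\ \ge\ \int_\Omega \eta(x)\int_{\M}{\bf B}_{K,p}(A)\,\dd\nu_x(A)\,\dd m(x);
\end{equation*}
in fact, by the equiintegrability, the $\liminf$ is a genuine limit and we get equality, but the inequality is all we need.

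Next I would localize. By the Localization Principle (Proposition \ref{prop:homogenization}), for a.e.\ $x\in\Omega$ the measure $\nu_x$ lies in $\mathscr M^p_\tp{qc}$, and moreover $\mathrm{supp}(\nu_x)\subset Q_2(K)$, so $\nu_x\in\mathscr M^p_\tp{qc}(Q_2(K))$. Also, by \eqref{limit}, $\langle\nu_x,\Id\rangle=\D f(x)$ for a.e.\ $x$, where $f$ is the weak limit. Now apply Theorem \ref{main}: since $p>\tfrac{2K}{K+1}$ (indeed $p\ge 2\ge \tfrac{2K}{K+1}$ for every $K\ge 1$), the local Burkholder functional $\mathcal B_K={\bf B}_{K,p_K}$ is closed $\WW^{1,p}$-quasiconvex — but we need ${\bf B}_{K,p}$ for the given $p\le p_K$, which is exactly Proposition \ref{kqc} when $2<p<p_K$, and Theorem \ref{main} itself at the endpoint $p=p_K$; the case $p=2$ is the null-Lagrangian property of $-\det$. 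In all cases we obtain the pointwise Jensen inequality
\begin{equation*}
{\bf B}_p(\D f(x))\ =\ {\bf B}_{K,p}\big(\langle\nu_x,\Id\rangle\big)\ \le\ \int_{\M}{\bf B}_{K,p}(A)\,\dd\nu_x(A)\qquad\text{for a.e.\ }x\in\Omega.
\end{equation*}
Integrating this over $\Omega$ and combining with the Young-measure lower bound above (taking $\eta\equiv 1$) yields the claimed inequality $\liminf_j\int_\Omega {\bf B}_p(\D f_j)\,\dd m\ge\int_\Omega{\bf B}_p(\D f)\,\dd m$, and the arbitrariness of the chosen subsequence gives the full statement.

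\textbf{Main obstacle.} The delicate point is the interface between the equiintegrability hypothesis and the value of the integrand on the right: ${\bf B}_{K,p}$ takes the value $+\infty$ off $Q_2(K)$, so one must be sure that the generated Young measure is genuinely supported in $Q_2(K)$ (this is where $K$-quasiregularity of the whole sequence is used, not just $\det\ge 0$) and that passing to the limit is legitimate for the truncations ${\bf B}_{K,p}\wedge k$ and then $k\to\infty$ — precisely the content of Remark \ref{rmk:moreintegrands}, which requires the equiintegrability of $({\bf B}_p(\D f_j))$ that is assumed. A secondary technical point is justifying that the weak limit $f$ is itself $K$-quasiregular (so that ${\bf B}_p(\D f)$ is finite a.e.\ and the left side of Jensen makes sense); this follows since $Q_2(K)$ is $\WW^{1,p}$-quasiconvex (Example \ref{ex:qccone}, as $p\ge 2\ge\tfrac{2K}{K+1}$) and $\D f(x)=\langle\nu_x,\Id\rangle\in Q_2(K)$ for a.e.\ $x$. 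Everything else is a routine assembly of results already established in the paper.
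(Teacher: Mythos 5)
Your proof is correct and follows essentially the same route as the paper's: extract a generating $\WW^{1,p}$-gradient Young measure, use the equiintegrability of $({\bf B}_p(\D f_j))$ via Remark \ref{rmk:moreintegrands} to represent the limit energy, note the support lies in $Q_2(K)$, and conclude by the Localization Principle together with the closed quasiconvexity of ${\bf B}_{K,p}$ and the identification $\langle \nu_x,\Id\rangle=\D f(x)$. The only cosmetic differences are that the paper obtains the support property by testing with ${\bf F}_K=\min\{0,K\det(\cdot)-|\cdot|^2\}$ rather than invoking closedness of $Q_2(K)$ directly, and that you are slightly more precise in citing Proposition \ref{kqc} for $2<p<p_K$ where the paper simply refers to Theorem \ref{main}.
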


\begin{proof}
Theorem \ref{thm:fundYM} shows that up to a subsequence, which we do not relabel,  $(\D f_j)$ generates a $\WW^{1,p}$-gradient Young measure $(\nu_z)_{z\in \Omega}$. Since by hypothesis $({\bf B}_p(\D f_j))$ is equiintegrable, we have
\begin{align*}
\lim_{j\to \infty} \int_\Omega {\bf B}_p(\D f_j) \, \dd m(z) 
& = \int_\Omega \int_{\R^{2\times 2}} {\bf B}_p(A) \,\dd \nu_z(A) \, \dd m(z).
\end{align*}
Moreover, here 
$\tp{supp}\,\nu_z \subset Q_2(K)$ a.e., as seen  from  
\begin{equation}
\label{eq:FK}
0 = \int_\Omega {\bf F}_K(\D f_j) \, \dd m(z) \to \int_\Omega \int_{\R^{2\times 2}} {\bf F}_K(A) \, \dd \nu_z(A) \, \dd m(z),
\end{equation} 
with ${\bf F}_K\equiv \min\{0,K \det(\cdot) - |\cdot|^2\}$.
In addition, ${\bf B}_{K,p}={\bf B}_p$ on the $K$-quasiconformal cone $Q_2(K)$, and hence
$$\lim_{j\to \infty} \int_\Omega {\bf B}_p(\D f_j) \, \dd m(z) 
 = \int_\Omega \int_{\R^{2\times 2}} {\bf B}_{K,p}(A) \,\dd \nu_z(A) \, \dd m(z).
$$ 
Continuing the calculation, by Proposition \ref{prop:homogenization} we have  $\nu_z\in \mathscr M^p_\tp{qc}$ for a.e.\ $z$ and thus, by Theorem \ref{main},  we find
 $$\int_{\R^{2\times 2}} {\bf B}_{K,p}(A) \, \dd \nu_z(A) \geq {\bf B}_{K,p}(\langle \nu_z, \Id \rangle) \quad \tp{ for a.e.\ } z\in \Omega.$$
 Finally via \eqref{limit} this yields
 $$\lim_{j\to \infty} \int_\Omega {\bf B}_p(\D f_j) \, \dd m(z) 
 \geq  \int_\Omega {\bf B}_p(\langle \nu_z, \Id \rangle) \, \dd m(z) 
 =  \int_\Omega {\bf B}_p(\D f) \, \dd m(z),$$
 as wished.
\end{proof}

Proposition \ref{prop:lscBp} gives us the  tools to prove  existence of minimizers for the ${\bf B}_p$-energy in suitable Dirichlet classes, as stated in Corollary \ref{cor:Bpminims}:

\begin{proof}[Proof of Corollary \ref{cor:Bpminims}]
The main point is that each $K$-quasiregular map $f \in g + \WW^{1,p}_0(\Omega,\C)$ admits the uniform bound
\begin{equation}\label{eq:sobouniformf}||f||_{\WW^{1,q}(\Omega)} \leq C(\Omega,q,K,g)\end{equation}
for all $p \leq q < \frac{2K}{K-1}$.
Indeed, we extend $f$ as a $K$-quasiregular mapping of the whole plane by setting $f \equiv g$ in $\C \setminus \Omega$, and then we use the higher integrability of quasiregular mappings \cite[Corollary 13.2.5]{AIM}.

The corollary now follows by the Direct Method of the Calculus of Variations. Indeed, take a sequence $(f_j)\subset g + \WW^{1,p}_0(\Omega)$ of $K$-quasiregular maps such that
$$\int_\Omega {\bf B}_p(\D f_j) \, \dd m \to 
\inf\left\{\int_\Omega {\bf B}_p(\D h) \, \dd m: h\in g + \WW^{1,p}_0(\Omega) \tp{ is } K\tp{-quasiregular}\right\}.$$
By \eqref{eq:sobouniformf} 
we may assume that $f_j \weak f$ in $\WW^{1,p}(\Omega)$ for some map $f$ which is then automatically $K$-quasiregular. Moreover, \eqref{eq:sobouniformf} also shows that $|\D f_j|^p$ is equiintegrable, hence Proposition \ref{prop:lscBp} applies and gives
$$\liminf_{j\to \infty} \int_\Omega {\bf B}_p(\D f_j(z)) \, \dd m(z) \geq \int_\Omega {\bf B}_p(\D f(z)) \, \dd m(z).$$
It follows that $f$ is a minimizer, as claimed.
\end{proof}

\subsection{Sequential weak lower-semicontinuity of $\mathscr W$}

Our next goal is to prove Theorem \ref{thm:lscW}, 
which is an analogue of Proposition \ref{prop:lscBp} for the functional $\mathscr W$ introduced in \eqref{W}.


Before proceeding with the main part of the proof, we need the following result, which is essentially proved in \cite{KOR}:

\begin{proposition}
\label{prop:KOR}
Let $g\in \WW^{1,1}_\tp{loc}(\C)$ be a homeomorphism with $K_g\in \LL^q_\tp{loc}$ for $q\geq 1$.  If $f\in g + \WW^{1,1}_0(\Omega)$ is a homeomorphism such that $K_f\in \LL^q(\Omega),$ then 
$$\int_\Omega \log^q\left(e+\frac{1}{J_f(z)}\right) \, \dd m(z) \leq C(q,g,\Omega) \left(1+\int_\Omega K_f(z)^q \, \dd m(z)\right).$$
\end{proposition}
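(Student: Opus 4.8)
The plan is to follow the argument of \cite{KOR} (equivalently, the planar result of Koskela and Onninen \cite{KO}) after a reduction to the inverse map, which moves the whole estimate into the well-understood $\WW^{1,2}$-setting and cleanly converts the boundary datum of $f$ into a boundary datum for $f^{-1}$.

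\textbf{Reduction to the inverse.} Since $K_f\in \LL^q(\Omega)\subseteq \LL^1(\Omega)$ and $f$ is a $\WW^{1,1}_{\loc}$-homeomorphism of finite distortion, \cite[Theorem 1.7]{HK} gives $h\equiv f^{-1}\in \WW^{1,2}_{\loc}(f(\Omega))$ with $J_h(w)=1/J_f(h(w))$ and $K_h(w)=K_f(h(w))$ for a.e.\ $w$. Because $f=g$ on $\p\Omega$ in the sense $f-g\in\WW^{1,1}_0(\Omega)$ and both are homeomorphisms, $\Omega'\equiv f(\Omega)=g(\Omega)$; in particular $|\Omega'|=|g(\Omega)|$ depends only on $g$ and $\Omega$, while $h=g^{-1}$ on $\p\Omega'$ and $g^{-1}\in\WW^{1,2}$ near $\overline{\Omega'}$ (using $K_g\in\LL^1_{\loc}$ and \cite[Theorem 2.1]{HKO}), so that $h-g^{-1}\in\WW^{1,2}_0(\Omega')$. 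By \cite[Theorem 2.1]{HKO} one also has $\int_{\Omega'}|\D h|^2=\int_\Omega K_f<\infty$, whence $h\in\WW^{1,2}(\Omega')$ with norm controlled by $\|K_f\|_{\LL^q(\Omega)}$ and $|\Omega'|$. Since $\WW^{1,2}$-homeomorphisms satisfy Lusin's condition $(N)$, the change of variables $z=h(w)$ applied to the nonnegative integrands $\log^q(e+1/J_f)$ and $K_f^q$ turns the desired inequality into
$$
\int_{\Omega'} J_h(w)\,\log^q\!\big(e+J_h(w)\big)\,\dd m(w)\ \le\ C(q,g,\Omega)\Big(1+\int_{\Omega'} K_h(w)^q\,J_h(w)\,\dd m(w)\Big),
$$
because $\int_{\Omega'}K_h^q J_h=\int_\Omega K_f^q$.

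\textbf{The reduced estimate.} The displayed inequality is exactly the $L\log^q L$-integrability bound for the Jacobian of a planar $\WW^{1,2}$-homeomorphism of finite distortion, with controlled distortion and prescribed (here $g$-dependent) boundary values, that is proved in \cite{KOR,KO}. The scheme is: exploit the distributional identity $J_h=\p_x(h^1\p_y h^2)-\p_y(h^1\p_x h^2)$ together with $h-g^{-1}\in\WW^{1,2}_0(\Omega')$ to obtain a \emph{global} div--curl/Hardy-space estimate for $J_h$ on $\Omega'$, and then run a Gehring-type self-improvement using the pointwise distortion inequality $|\D h|^2\le K_h J_h$, splitting $\Omega'$ according to the size of $J_h$ relative to powers of $K_h$; on $\{J_h\lesssim K_h^2\}$ one bounds $\log^q(e+J_h)\lesssim 1+K_h^q$ directly (using $q\ge 1$ and $K_h\ge 1$), and the complementary ``almost volume-preserving'' region is covered by the higher-integrability input of \cite{KOR}. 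The boundary contribution is $\lesssim\|\D g^{-1}\|_{\LL^2}$ near $\p\Omega'$, hence depends only on $g$ and $\Omega$, and the only remaining $f$-dependence is through $\int_\Omega K_f^q$; tracking this through the argument yields the claimed constant $C(q,g,\Omega)$.

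\textbf{Main obstacle.} The delicate point is not the algebra but upgrading the interior estimates of \cite{KO,KOR} to a bound valid up to $\p\Omega$: this is precisely where the boundary datum $g$ and the hypothesis $f-g\in\WW^{1,1}_0(\Omega)$ must be used, and where the assumption $K_g\in\LL^q_{\loc}$ enters (to guarantee $g^{-1}\in\WW^{1,2}$ near $\p\Omega'$ and hence a controllable $\WW^{1,2}_0$-correction for $h$). A secondary technical issue is the low regularity of $f$ itself: since $f$ is only a $\WW^{1,1}$-homeomorphism of finite distortion, one should carry out all change-of-variables computations on $h$, which is genuinely $\WW^{1,2}$, invoking conditions $(N)$ and $(N^{-1})$ for $h$ where needed, and one must check that the identification $f(\Omega)=g(\Omega)$ is legitimate at this level of regularity.
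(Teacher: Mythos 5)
Your reduction to the inverse map is a legitimate reformulation (and for $q=1$ it is close to how the planar result of \cite{KO} itself is obtained), but for $q>1$ the core of your argument has a genuine gap. After the change of variables, the inequality you must prove is
$\int_{\Omega'} J_h\log^q(e+J_h)\,\dd m \le C\bigl(1+\int_{\Omega'}K_h^{q}J_h\,\dd m\bigr)$
for the $\WW^{1,2}$-homeomorphism $h=f^{-1}$ with boundary datum $g^{-1}$, where $K_h^qJ_h=K_h^{q-1}|\D h|^2$. This is not ``exactly'' what \cite{KO,KOR} prove: those papers establish decay of the Jacobian of the \emph{direct} map, i.e.\ (up to the very same change of variables) the inequality you are trying to prove, and moreover only \emph{locally} --- the entire content of the Proposition is the up-to-the-boundary version with a constant depending only on $(q,g,\Omega)$. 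Your interior mechanism does not close either: the div--curl/Hardy-space (CLMS/M\"uller) machinery, even in its global form exploiting $h-g^{-1}\in\WW^{1,2}_0(\Omega')$, produces $J_h\log(e+J_h)$, i.e.\ a single logarithm; on your ``almost volume-preserving'' region $\{J_h\gtrsim K_h^2\}$ the weight $K_h^{q-1}$ can stay bounded while $J_h$ is huge, so $q$ logarithms cannot be extracted from $\int|\D h|^2$ alone (one would need $|\D h|^2\log^{q-1}(e+|\D h|)\in\LL^1$, which no hypothesis supplies), and falling back on ``the higher-integrability input of \cite{KOR}'' is circular and, if used locally, reintroduces exactly the boundary problem you set out to solve. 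A further warning sign is that your scheme only invokes $K_g\in\LL^1_{\loc}$ (to get $g^{-1}\in\WW^{1,2}_{\loc}$), whereas the hypothesis $K_g\in\LL^q_{\loc}$ must enter somewhere.

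The paper's proof is much shorter and avoids the inverse altogether: since $f-g\in\WW^{1,1}_0(\Omega)$, setting $f\equiv g$ on $\C\setminus\Omega$ yields a global $\WW^{1,1}_{\loc}(\C)$ mapping of finite distortion whose distortion function lies in $\LL^q_{\loc}(\C)$ --- this is precisely where $K_g\in\LL^q_{\loc}$ is used --- and the local estimate of \cite{KOR}, applied to this extension on a neighbourhood of $\overline\Omega$, gives the stated bound, with the contribution of $K_g$ outside $\Omega$ absorbed into $C(q,g,\Omega)$. If you wish to salvage your route, you would still have to prove the weighted interior $J\log^qJ$ estimate for $h$, which is the hard analytic content already contained in \cite{KOR}; at that point the extension trick is both simpler and what the authors intend.
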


\begin{proof}
In \cite{KOR} the authors only prove a local estimate; however  setting $f(x) =g(x)$ on $\C\setminus \Omega$ defines a global mapping and reduces the required bound to the estimate in  \cite{KOR}. 
\end{proof}

As a quick consequence of Proposition \ref{prop:KOR} we have 

\begin{lemma}\label{lemma:equiintegrability} 
Let $(f_j)\subset \WW^{1,1}(\Omega)$ be a sequence of homeomorphisms that $\sup_j \|K_{f_j}\|_{\LL^q(\Omega)}<\infty$ for some $q>1$. Suppose that either
\begin{enumerate}
\item\label{it:trace} $f_j=g$ on $\p \Omega$ for a homeomorphism $g\in \WW^{1,1}_\tp{loc}(\C)$ with $K_g \in \LL^q_\tp{loc}$,  or
\item\label{it:principal} $\Omega=\mb D$ and $f_j$ are principal maps.
\end{enumerate}
Then 
$(\mathscr W(\D f_j))$ is equiintegrable.
\end{lemma}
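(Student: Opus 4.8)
The plan is to reduce both cases to a single quantitative bound. Recall from the decomposition \eqref{Wdecom} that on the quasiconformal cone
\[
\mathscr W(\D f_j) = K_{f_j} - \log K_{f_j} + \log J_{f_j},
\]
and since $t \mapsto t - \log t$ is bounded below, the only obstruction to equiintegrability of $\mathscr W(\D f_j)$ from above is the equiintegrability of $K_{f_j}$, while the obstruction from below is the equiintegrability of $\log(1/J_{f_j}) = -\log J_{f_j}$ (note $\log J_{f_j}$ is bounded above on any bounded domain by $K_{f_j}$ and the area/Koebe bounds, so its positive part is controlled by $K_{f_j}$ too). Thus it suffices to show: (a) $(K_{f_j})$ is equiintegrable, and (b) $(\log^+(1/J_{f_j}))$ is equiintegrable. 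Claim (a) is immediate from the hypothesis $\sup_j \|K_{f_j}\|_{\LL^q(\Omega)} < \infty$ with $q>1$, by de la Vallée-Poussin. Claim (b) follows from Proposition \ref{prop:KOR}: in case \eqref{it:trace} the hypotheses of that proposition are exactly met, giving $\sup_j \int_\Omega \log^q(e + 1/J_{f_j}) \, \dd m(z) < \infty$, and then de la Vallée-Poussin with the superlinear weight $t\mapsto t \log^{q-1}(e+t)$ (or simply $t^q$ applied to $\log(e+1/J_{f_j})$, using $q>1$) yields equiintegrability of $\log^+(1/J_{f_j})$.

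For case \eqref{it:principal}, where $\Omega = \mb D$ and the $f_j$ are principal maps, I would first observe that each principal map is the identity outside $\mb D$, so it can be viewed as an element of $g + \WW^{1,1}_0(\mb D')$ for a slightly larger disc $\mb D' \supset \overline{\mb D}$ with $g = \mathrm{id}$ (which has $K_g \equiv 1 \in \LL^q_\tp{loc}$), restricting attention afterwards to $\mb D$. Alternatively, and more directly, one applies Proposition \ref{prop:KOR} on $\mb D$ itself with $g = \mathrm{id}$; the global extension argument in the proof of that proposition works verbatim since a principal map already agrees with the identity outside $\mb D$. Either way one obtains $\sup_j \int_{\mb D} \log^q(e + 1/J_{f_j}) \, \dd m(z) < \infty$, and conclude as before.

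The one point requiring a little care — and the main (minor) obstacle — is controlling the \emph{positive} part of $\log J_{f_j}$, i.e.\ ruling out concentration of $J_{f_j}$ at large values; but on a bounded domain this is harmless: since $J_{f_j} \le |\D f_j|^2 \le K_{f_j} J_{f_j}$ gives nothing directly, one instead uses $\log^+ J_{f_j} \le J_{f_j}$ pointwise and the fact that $\int_\Omega J_{f_j}\,\dd m(z) = \mathscr L^2(f_j(\Omega))$ is uniformly bounded — in case \eqref{it:principal} by the area formula \eqref{areafmla} (indeed $\le \pi$), and in case \eqref{it:trace} because $f_j = g$ on $\partial\Omega$ forces $f_j(\Omega)$ into a fixed bounded set, so $\int_\Omega J_{f_j} \le \mathscr L^2(g(\Omega)) < \infty$. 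In fact one gets better: $\log^+ J_{f_j} \le K_{f_j} + \log^+ |\D f_j|^2 \le K_{f_j} + |\D f_j|^{2}$ is not equiintegrable a priori, so the cleanest route is the elementary inequality $\log^+ J_{f_j} \le C_\varepsilon + \varepsilon J_{f_j}$ combined with the uniform $\LL^1$ bound on $J_{f_j}$ just noted, which gives equiintegrability of $\log^+ J_{f_j}$ by the Vitali characterisation. Assembling (a), (b) and this last observation, we conclude that $\bigl(\mathscr W(\D f_j)\bigr)$ is equiintegrable, as claimed. $\qquad\blacksquare$
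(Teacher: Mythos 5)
Your overall architecture is the same as the paper's: split $\mathscr W(\D f_j)=K_{f_j}-\log K_{f_j}+\log J_{f_j}$, control $K_{f_j}-\log K_{f_j}$ by the uniform $\LL^q$ bound with $q>1$, control $\log^+(1/J_{f_j})$ via Proposition \ref{prop:KOR}, and control the remaining positive Jacobian contribution by a sublinear-in-$J_{f_j}$ bound together with the uniform $\LL^1$ bound $\int_\Omega J_{f_j}\,\dd m\leq \mathscr L^2(f_j(\Omega))$. The only cosmetic difference is that the paper uses the pointwise estimate $|\log J_{f_j}|\leq \log(e+1/J_{f_j})+2J_{f_j}^{1/2}$ and Cauchy--Schwarz, whereas you use $\log^+ J_{f_j}\leq \varepsilon J_{f_j}+C_\varepsilon$; both are correct and equivalent in substance.

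There is, however, a genuine error in your treatment of case \eqref{it:principal}: a principal map is \emph{not} the identity outside $\mb D$. By Definition \ref{prmap} it is merely conformal there, with Laurent expansion $z+b_1/z+\cdots$, so it only agrees with the identity asymptotically at infinity. Consequently the $f_j$ in case \eqref{it:principal} have $j$-dependent boundary values and do not lie in any fixed Dirichlet class $g+\WW^{1,1}_0$: neither your device of viewing them in $\mathrm{id}+\WW^{1,1}_0(\mb D')$ on a larger disc, nor the claim that the extension step in the proof of Proposition \ref{prop:KOR} "works verbatim since a principal map already agrees with the identity outside $\mb D$", is available as stated, and a blind application of Proposition \ref{prop:KOR} with $g=f_j$ would produce a constant $C(q,f_j,\Omega)$ with no uniformity in $j$. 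The correct way to rescue this case is to use what principal maps actually give you: each $f_j$ is already a globally defined homeomorphism of finite distortion with $K_{f_j}\equiv 1$ on $\C\setminus\overline{\mb D}$, so the local estimate of \cite{KOR} underlying Proposition \ref{prop:KOR} applies directly, and the $j$-uniformity of the constants comes from the principal normalization (e.g.\ Koebe's theorem gives $f_j(\mb D)\subset \mb D(0,2)$, which is also why the paper's Jacobian bound uses $C=\mathscr L^2(\mb D(0,2))$ rather than $\pi$). Relatedly, your appeal to the area formula \eqref{areafmla} for the bound $\int_{\mb D}J_{f_j}\,\dd m\leq\pi$ is slightly loose, since \eqref{areafmla} is stated for $\WW^{1,2}_{\loc}$ principal maps while here the $f_j$ are only assumed $\WW^{1,1}$ with $\LL^q$ distortion; the inclusion $f_j(\mb D)\subset\mb D(0,2)$ gives the needed uniform bound without this issue.
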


\begin{proof} 
Since $K_{f_j}\geq 1$ we have $K_{f_j} - \log K_{f_j} \leq 2 K_{f_j}$ which is equiintegrable by assumption. To deal with the Jacobian term in $\mathscr W$, we use the pointwise estimate
\begin{equation}
\label{eq:elementaryestimatelog}
|\log(J_{f_j})| \leq  \log\left(e+\frac{1}{J_{f_j}}\right) + 2(J_{f_j})^{\frac 1 2}.
\end{equation}
The first term is clearly equiintegrable by Proposition \ref{prop:KOR}. 
Given a measurable set $U\subset \Omega$, we have
$$\int_U (J_{f_j})^{\frac 1 2}\, \dd m(z) \leq \mathscr L^2(U)^{\frac 1 2}  \left(\int_{\Omega}  J_{f_j} \, \dd m(z)\right)^{\frac 1 2}  \leq C \mathscr L^2(U)^{\frac 1 2},$$
where $C=\mathscr L^2(g(\Omega))$ in case (\ref{it:trace}) and $C=\mathscr L^2(\mb D(0,2))$ in case (\ref{it:principal}), by the area formula.
Thus the second term is also equiintegrable.
\end{proof}

\begin{remark}
\label{rmk:explicitKO}
A small variant of the above proof, combined with Proposition \ref{prop:KOR}, gives the estimate
$$\int_\Omega |\log(J_f(z))|^q  \, \dd m(z) \leq C(g,\Omega)\left(1+\int_\Omega K_f(z)^q\, \dd m(z)\right)$$
for homeomorphisms $f\in g + \WW^{1,1}_0(\Omega)$ such that $K_f\in \LL^q(\Omega)$, where $q\geq 1$.
\end{remark}

For the proof of Theorem \ref{thm:lscW} we also need some closed-quasiconvexity features for the $\mathscr W$-functional.

\begin{proposition}\label{thm:closedqc.new}
Let $\nu \in \mathscr M^{2}_\tp{qc}(\R^{2\times 2}_+)$ be a gradient Young measure  
generated by a bounded sequence $(\psi_j)\subset \WW^{1,{2}}(\mb D)$ of homeomorphisms such that 
$$\|K_{\psi_j}\|_{\LL^{q}(\mb D)}\leq C,$$ 
for some $q > 1$. Then 
$$ \mathscr W(\langle \nu, \Id \rangle) \leq \int_{\R^{2\times 2}} \mathscr W(A) \,\dd \nu(A).
$$ 
\end{proposition}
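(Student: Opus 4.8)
The plan is to deduce Proposition \ref{thm:closedqc.new} from the Burkholder-area-type inequality for $\mathscr W$ (Theorem \ref{thm:Warea}/\ref{thm:Wareaintro}) together with the structural result on quasiregular gradient Young measures generated by homeomorphisms with integrable distortion (Theorem \ref{thm:intdistort}). First I would reduce to the normalized case. Since $\nu\in\mathscr M^2_\tp{qc}(\R^{2\times 2}_+)$, its barycenter $\langle\nu,\Id\rangle$ is a matrix with positive determinant; by the invariance properties of $\mathscr M^p_\tp{qc}$ under the left/right $\tp{SO}(2)$-actions and under scalar multiplication (Lemma \ref{lemma:invYM}), and since $\mathscr W$ is itself isotropic and behaves well under these symmetries (indeed $\mathscr W$ depends only on $\det A$ and $K_A$, both of which are unchanged by $A\mapsto QAR$ and scale simply under $A\mapsto tA$), I can assume without loss of generality that $\langle\nu,\Id\rangle = A$ with $A(z)=z+a\bar z$ for some $|a|<1$. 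If instead $\int_{\R^{2\times 2}}\mathscr W\,\dd\nu=+\infty$ there is nothing to prove, so assume the right-hand side is finite; in particular $\nu$ is supported in $\R^{2\times 2}_+$.

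Next I would invoke Theorem \ref{thm:intdistort}: since $\nu$ is generated by a bounded sequence $(\psi_j)\subset\WW^{1,2}(\mb D)$ of homeomorphisms with $\|K_{\psi_j}\|_{\LL^q(\mb D)}\le C$ for some $q>1$, and has barycenter $A$ as above, there exist principal maps $f_j$ such that, for each $r<1$, $(f_j|_{\mb D(0,r)})$ is bounded in $\WW^{1,2}(\mb D(0,r))$ and generates $\nu$, with $\psi_j=h_j\circ f_j$ for conformal $h_j$. Crucially, by Koebe's theorem the $f_j$ satisfy $f_j(\mb D)\subset 2\mb D$, so $\int_{\mb D}J_{f_j}\le 4\pi$ (in fact the principal-map area bound gives $|f_j(\mb D)|\le\pi$), and moreover $K_{f_j}=K_{\psi_j}$ pointwise on $\mb D$ (since $h_j$ is conformal), so $\|K_{f_j}\|_{\LL^q(\mb D)}\le C$ uniformly. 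Hence by Lemma \ref{lemma:equiintegrability}\eqref{it:principal}, the sequence $(\mathscr W(\D f_j))$ is equiintegrable on $\mb D$. Now each $f_j$ is a principal map with $K_{f_j}\in\LL^q(\mb D)\subset\LL^1(\mb D)$, so Theorem \ref{thm:Warea} applies and yields
\begin{equation*}
\mathscr W(A_{f_j}) \le \fint_{\mb D}\mathscr W(\D f_j(z))\,\dd m(z),
\end{equation*}
where $A_{f_j}(z)=z+b_1^{(j)}\bar z$ and $b_1^{(j)}$ is the first Laurent coefficient of $f_j$.

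It then remains to pass to the limit. On the left-hand side: since $(f_j)$ generates $\nu$ with barycenter $A$ and, as in the proof of Theorem \ref{thm:intdistort}, $f_j\to f$ locally uniformly with $f(z)=z+a\bar z$ on $\mb D$ and $f(z)=z+a/z$ outside, we get $b_1^{(j)}\to a$, hence $A_{f_j}\to A$ and, by continuity of $\mathscr W$ on $\R^{2\times 2}_+$ together with $\det A>0$, $\mathscr W(A_{f_j})\to\mathscr W(A)$. On the right-hand side: since $(\D f_j)$ generates $\nu$ on every $\mb D(0,r)$ and $(\mathscr W(\D f_j))$ is equiintegrable, Remark \ref{rmk:moreintegrands} (applied to the lower semicontinuous functional $\mathscr W$, continuous on its effective domain $\R^{2\times 2}_+$) gives $\fint_{\mb D(0,r)}\mathscr W(\D f_j)\to \int_{\R^{2\times 2}}\mathscr W\,\dd\nu$; letting $r\to 1$ (and using $\mathscr L^2(\mb S^1)=0$ together with equiintegrability to control the shrinking annulus) the full-disk average converges to the same limit. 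Combining, $\mathscr W(A)\le\int_{\R^{2\times 2}}\mathscr W\,\dd\nu$, which is the assertion after undoing the normalization. The main obstacle I anticipate is the bookkeeping in the limit-passage: one must be careful that Theorem \ref{thm:intdistort} only guarantees generation of $\nu$ on compactly contained subdisks $\mb D(0,r)$ (not uniformly up to $\partial\mb D$), so the equiintegrability of $(\mathscr W(\D f_j))$ on the full disk must be used to show the averages over $\mb D(0,r)$ converge to those over $\mb D$ uniformly in $j$, and simultaneously that $\mathscr W(A_{f_j})$ is controlled — but since $\mathscr W\ge$ something bounded below near $A$ and $A_{f_j}\to A$ with $\det A>0$, this causes no real difficulty. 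A secondary technical point is verifying that the symmetry reduction is legitimate, i.e.\ that $\mathscr W(QAR\,/\,t)$ relates correctly to $\mathscr W(A)$ and that $\Meas^2_\tp{qc}$ is preserved — both follow from Lemma \ref{lemma:invYM} and the explicit form \eqref{W}.
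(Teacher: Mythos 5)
Your proposal is correct and follows essentially the same route as the paper's proof: normalize the barycenter using Lemma \ref{lemma:invYM} together with the rotation invariance and logarithmic scaling of $\mathscr W$, pass to principal maps via Theorem \ref{thm:intdistort}, use $K_{f_j}=K_{\psi_j}$ and Lemma \ref{lemma:equiintegrability} for equiintegrability of $(\mathscr W(\D f_j))$, apply Theorem \ref{thm:Warea} to each $f_j$, and let $r\nearrow 1$ and $j\to\infty$. The only cosmetic difference is that you identify $\lim_j A_{f_j}=A$ through the local uniform convergence from the proof of Theorem \ref{thm:intdistort}, whereas the paper invokes \eqref{eq:asympt} and Remark \ref{affine}; both are valid.
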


\begin{proof}  
To start with, momentarily assume that in conformal coordinates
\begin{equation}
\label{eq:normalizationnuz0}
\langle \nu, \Id \rangle = A; \quad A(z) = z + a \overline{z},  \quad {\rm with} \; \; |a|<1.
\end{equation}
Applying Theorem \ref{thm:intdistort}, we find a sequence of maps $f_j\colon \C \to \C$ such that:
\begin{enumerate}
\item $f_j$ are principal maps;
\item $(f_j)\subset \WW^{1,2}_\tp{loc}(\C)$ is bounded and $(f_j|_{\mb D(0,r)})$ generates $\nu$ for all $r<1$;
\item $\psi_j = h_j\circ f_j$ for some conformal maps $h_j\colon f_j(\mb D) \to \psi_j(\DD)$.
\end{enumerate}
In particular, we have $K_{f_j}=K_{\psi_j}$ a.e.\ in $\mb D$ and so by Lemma \ref{lemma:equiintegrability} the sequence $(\mathscr W(\D f_j))$ is again equiintegrable over $\mb D$. Thus
\begin{align*}
\int_{\M} \mathscr W(A) \,\dd \nu(A)
&=\lim_{j\to \infty} \fint_{\mb D(0,R)} \mathscr W(\D f_j(z))\,\dd m(z)\\
& \to\lim_{j\to \infty} \fint_{\mb D} \mathscr W(\D f_j(z))\,\dd m(z) \ge \lim_{j\to \infty} \mathscr W(A_{f_j}) = \mathscr W(\langle \nu, \Id \rangle),
\end{align*}
as $r\nearrow 1$, where the  inequality follows from Theorem \ref{thm:Warea} and the last identity from \eqref{eq:asympt} and Remark \ref{affine}. 

To deal with the general case where we do not have \eqref{eq:normalizationnuz0}, note that since $\nu \in \mathscr M^{2}_\tp{qc}(\R^{2\times 2}_+)$
one still has $\det(\langle \nu, \Id \rangle) > 0$. Thus we simply replace $\nu$ by a suitable normalized measure.
Namely  if $\langle \nu, \Id \rangle = A \in \R^{2\times 2}_+$, 
choose $t>0$ and $Q\in \tp{SO}(2)$ such that
$$ t Q A (z) = z + a \bar z, \qquad |a| < 1. 
$$
Here notice that $\mathscr W$ satisfies
$$\mathscr W(QA)=\mathscr W(A) \qquad \tp{and} \qquad \mathscr W(tA)=\mathscr W(A)+\log t^2$$
for all $Q\in \tp{SO}(2)$ and $t>0$. Hence, if in the notation of Lemma \ref{lemma:invYM} one defines
$\mu \equiv \left(\nu_{(Q,\Id)}\right)_t$, then $\mu$ satisfies \eqref{eq:normalizationnuz0} so that 
\begin{align*}
\langle \nu, \mathscr W\rangle +\log t^2
= \langle \nu, \mathscr W(t\cdot)\rangle 
= \langle \mu, \mathscr W \rangle 
\ge \mathscr  W(\langle  \mu,\Id \rangle) = \mathscr W(\langle  \nu, \Id \rangle )+ \log t^2.
\end{align*}
This completes the proof. 
\end{proof} 

We are now ready for the main result of this Subsection.

\begin{proof}[Proof of Theorem \ref{thm:lscW}]  Given a homeomorphism $g\in \WW^{1,2}_\tp{loc}(\C)$ and a sequence $(f_j)\subset g + \WW^{1,2}_0(\Omega)$ such that 
$f_j\weak f$ in $\WW^{1,2}(\Omega)$ and  $\|K_{f_j}\|_{\LL^{q}(\Omega)} \leq C < \infty$ for some $q>1$, we are to show that 
\begin{equation}
\label{eq:Wwlsc}
\liminf_{j\to \infty} \int_\Omega {\mathscr W}(\D f_j(z)) \, \dd m(z) \geq \int_\Omega {\mathscr W}(\D f(z))\, \dd m(z).
\end{equation}

For this we follow a similar strategy as in Proposition \ref{prop:lscBp}.  First note that by Remark \ref{rmk:automatichomeo} the sequence $f_j$ consists of homeomorphisms.
Also, by passing to subsequences we may assume that $f_j \weak f$ in $\WW^{1,2}(\Omega)$ and, by Theorem \ref{thm:fundYM},  that $f_j$ generates the $\WW^{1,2}$-gradient Young measure $(\nu_z)_{z\in \Omega}$. 

Since $(\mathscr W(\D f_j))$ is equiintegrable by Lemma \ref{lemma:equiintegrability}, we have
\begin{equation}
\label{eq:repW}
\lim_{j\to \infty}\int_\Omega \mathscr W(\D f_j)\,\dd m(z)=
\int_\Omega \int_{\R^{2\times 2}} \mathscr W(A) \,\dd \nu_z(A) \, \dd m(z).
\end{equation}
Moreover, for a.e.\ $z\in \Omega$ 
\begin{equation}
\label{eq:conditionsbarycenter}
\det \langle  \nu_z, \Id \rangle  >0\quad \tp{and} \quad \nu_{z}\in \Meas^2_\tp{qc},
\end{equation}
where the former claim follows by arguing similarly to \eqref{eq:FK}, but with $F_K$ replaced with $\min\{0,\det\}$, and the latter claim by Proposition \ref{prop:homogenization}. 

Let us then fix a point $z_0\in \Omega$ for which \eqref{eq:conditionsbarycenter} holds. The measure $\nu_{z_0}$ is generated by taking a diagonal subsequence $(\psi_j)$ of $\psi_{j,\lambda}(z)=\lambda^{-1} f_j(z_0+\lambda z)$, where $j\to \infty$ and $\lambda\to 0$, cf. ~Remark \ref{rmk:homogenization}. In particular,  $\psi_j\colon \mb D \to \C$ is a sequence of homeomorphisms such that $\sup_j\|K_{\psi_j}\|_{\LL^q(\mb D)} <\infty$. 

We can now apply Proposition \ref{thm:closedqc.new} which says that
\begin{equation}
\label{Wqconv2}
\int_{\M} \mathscr W(A) \,\dd \nu_{z_0}(A)
 \ge \mathscr W(\langle  \nu_{z_0}, \Id \rangle ).
\end{equation}
On the other hand, by \eqref{limit} we have $\langle  \nu_{z}, \Id \rangle = \D f(z) $ for a.e.\ $z \in \Omega$. Combining this  with \eqref{eq:repW} and \eqref{Wqconv2} completes the proof.
\end{proof} 

It remains an interesting open question whether \eqref{eq:Wwlsc} still holds at $q =1$, i.e.\ under the natural assumption $\|K_{f_j}\|_{\LL^{1}(\Omega)} \leq C < \infty$.

\subsection{Existence of minimizers in Nonlinear Elasticity}\label{sec:elasticity}
As a last theme, let us collect the  previous results to prove existence of minimisers for a quite large family of functionals in Nonlinear Elasticity. We again emphasize that these are non-polyconvex, see Example \ref{nonpoly}.

First, as is well-known, polyconvex functionals are lower semicontinuous, see e.g.~\cite{DacorognaMarcellini}. Thus  combining Theorems \ref{thm:lscW} and \ref{thm:decomposition}   gives:  

\begin{corollary}
\label{cor:lscsplit}
Let $g\in \WW^{1,1}_\tp{loc}(\C)$ be a homeomorphism with $K_g\in \LL^q_\tp{loc}(\C)$ for some $q>1$ and  let ${\bf E}\colon \R^{2\times 2}_+\to \R$ be a functional as in Theorem \ref{thm:decomposition},
$${\bf E}(A) = {\bf G}(\det A)+ {\bf H}(K_A),
$$
 which we assume to be rank-one convex.  

If $f_j \weak f$ in $g + \WW^{1,2}_0(\Omega)$ and $\sup_j \|K_{f_j}\|_{\LL^q(\Omega)}<\infty$  then
$$\liminf_{j\to \infty} \int_\Omega {\bf E}(\D f_j(z)) \, \dd m(z) \geq \int_\Omega {\bf E}(\D f(z)) \, \dd m(z).$$
\end{corollary}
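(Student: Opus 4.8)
The plan is to combine the additive decomposition of Theorem~\ref{thm:decomposition} with the lower semicontinuity of $\mathscr W$ from Theorem~\ref{thm:lscW} and with the (by now classical) lower semicontinuity of polyconvex integrands. By Theorem~\ref{thm:decomposition} we write ${\bf E}={\bf F}+c\,\mathscr W$ on $\R^{2\times 2}_+$ with ${\bf F}$ polyconvex and $c\ge 0$; extending both sides by $+\infty$ on $\{\det\le 0\}$ keeps the identity. We may assume $\det\D f_j>0$ a.e.\ (else the left-hand side is $+\infty$) and, passing to a non-relabelled subsequence, that the $\liminf$ is a limit, that $f_j\weak f$ in $\WW^{1,2}(\Omega)$, and (Theorem~\ref{thm:fundYM}) that $(\D f_j)$ generates a $\WW^{1,2}$-gradient Young measure $(\nu_z)_{z\in\Omega}$. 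Both $\int_\Omega{\bf F}(\D f_j)\,\dd m$ and $\int_\Omega\mathscr W(\D f_j)\,\dd m$ are bounded below uniformly in $j$: for $\mathscr W$ one uses $\mathscr W(A)\ge 1+\log\det A$ together with the uniform bound $\int_\Omega|\log J_{f_j}|\,\dd m\le C$ provided by Remark~\ref{rmk:explicitKO}, and for ${\bf F}$ it follows from the splitting in the next paragraph (where the determinant term contributes only a constant). Hence, by superadditivity of $\liminf$,
\[
\liminf_{j\to\infty}\int_\Omega{\bf E}(\D f_j)\,\dd m \;\ge\; \liminf_{j\to\infty}\int_\Omega{\bf F}(\D f_j)\,\dd m \;+\; c\,\liminf_{j\to\infty}\int_\Omega\mathscr W(\D f_j)\,\dd m ,
\]
and Theorem~\ref{thm:lscW} gives $\liminf_j\int_\Omega\mathscr W(\D f_j)\,\dd m\ge\int_\Omega\mathscr W(\D f)\,\dd m$.

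For the polyconvex term, recall from the proof of Theorem~\ref{thm:decomposition} that ${\bf F}(A)=\widetilde {\bf G}(\det A)+\widetilde {\bf H}(K_A)$ with $\widetilde {\bf G}$ convex on $(0,\infty)$ and $\widetilde {\bf H}$ convex non-decreasing on $[1,\infty)$. Picking a subgradient $b$ of $\widetilde {\bf G}$ at some point, the corrected functional $\widetilde {\bf F}\equiv {\bf F}-b\det$ is again polyconvex (a convex function of the quasiaffine $\det$ plus a convex non-decreasing function of the polyconvex distortion $K_A$) and is now bounded below, since $\widetilde {\bf G}(t)-bt$ is bounded below on $(0,\infty)$ and $\widetilde {\bf H}(K_A)\ge \widetilde {\bf H}(1)$. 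Being polyconvex and bounded below, $\widetilde {\bf F}$ is closed $\WW^{1,2}$-quasiconvex: indeed $\langle\nu_z,\det\rangle=\det\langle\nu_z,\Id\rangle$ for $\nu_z\in\Meas^2_\tp{qc}$ (as $\det$ is quasiaffine of quadratic growth), so Jensen's inequality for the associated convex function gives $\langle\nu_z,\widetilde {\bf F}\rangle^\ast\ge \widetilde {\bf F}(\langle\nu_z,\Id\rangle)=\widetilde {\bf F}(\D f(z))$; and since $\widetilde {\bf F}$ is lower semicontinuous (after, if necessary, replacing its value at the single matrix $A=0$ by its lower limit there, which changes no integral because $\det\D f_j>0$ a.e.), applying \eqref{eq:lscYM} to the truncations $\min\{\widetilde {\bf F},k\}$ and letting $k\to\infty$ by monotone convergence yields $\liminf_j\int_\Omega\widetilde {\bf F}(\D f_j)\,\dd m\ge\int_\Omega\langle\nu_z,\widetilde {\bf F}\rangle^\ast\,\dd m\ge\int_\Omega\widetilde {\bf F}(\D f)\,\dd m$. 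Finally, since $\det$ is a null Lagrangian and $f_j-g,\ f-g\in\WW^{1,2}_0(\Omega)$, we have $\int_\Omega\det\D f_j\,\dd m=\int_\Omega\det\D g\,\dd m=\int_\Omega\det\D f\,\dd m$ for every $j$, whence
\[
\liminf_{j\to\infty}\int_\Omega{\bf F}(\D f_j)\,\dd m = \liminf_{j\to\infty}\int_\Omega\widetilde {\bf F}(\D f_j)\,\dd m + b\!\int_\Omega\det\D g\,\dd m \ge \int_\Omega\widetilde {\bf F}(\D f)\,\dd m + b\!\int_\Omega\det\D f\,\dd m = \int_\Omega{\bf F}(\D f)\,\dd m .
\]
Combining this with the estimate for the $\mathscr W$-term completes the proof.

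The step that needs the most care is the lower semicontinuity of $\int_\Omega\widetilde {\bf F}(\D f_j)\,\dd m$. The point is that $\widetilde {\bf F}(\D f_j)=\widetilde {\bf G}(J_{f_j})-bJ_{f_j}+\widetilde {\bf H}(K_{f_j})$ need not be equiintegrable — the growth of $\widetilde {\bf H}$ at infinity may outrun the $\LL^q$-integrability of the distortions — so one cannot pass to the limit directly and must instead exploit the lower bound on $\widetilde {\bf F}$ to run the truncation argument, paying some attention to the behaviour of $\widetilde {\bf F}$ near $\{\det=0\}$, where it equals $+\infty$. All the remaining ingredients — superadditivity of $\liminf$, the null-Lagrangian identity for the determinant at the exponent $p=2$, and the polyconvexity of $A\mapsto K_A$ and its stability under composition with convex non-decreasing functions — are routine and already recorded in the excerpt.
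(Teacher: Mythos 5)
Your overall route is the same as the paper's: decompose ${\bf E}={\bf F}+c\,\mathscr W$ via Theorem \ref{thm:decomposition}, handle the $\mathscr W$-part with Theorem \ref{thm:lscW}, and handle the polyconvex remainder ${\bf F}$ by a lower semicontinuity argument (the paper simply cites the known semicontinuity of polyconvex integrands, Dacorogna--Marcellini, at this point). Your preparatory steps — the uniform lower bounds via $\mathscr W(A)\ge 1+\log\det A$ and Remark \ref{rmk:explicitKO}, the superadditivity of the $\liminf$, the subgradient correction $\widetilde{\bf F}={\bf F}-b\det$ together with the null-Lagrangian identity $\int_\Omega\det\D f_j\,\dd m=\int_\Omega\det\D g\,\dd m=\int_\Omega\det\D f\,\dd m$, and the Jensen step based on $\langle\nu_z,\det\rangle=\det\langle\nu_z,\Id\rangle$ — are all fine.

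The gap is in the step you yourself flag as delicate: the claim that $\widetilde{\bf F}$ is lower semicontinuous ``except possibly at the single matrix $A=0$''. This is false in general: $\widetilde{\bf F}$ equals $+\infty$ on all of $\{\det\le 0\}$, while along $\det A\to 0^+$ with $A\not\to 0$ one has $K_A\to\infty$ and hence $\widetilde{\bf F}(A)\to\lim_{t\to 0^+}\bigl(\widetilde{\bf G}(t)-bt\bigr)+\sup\widetilde{\bf H}$, which is finite whenever $\widetilde{\bf H}$ is bounded (i.e.\ eventually constant, being convex nondecreasing) and $\widetilde{\bf G}$ is bounded near $0$. This situation is not exotic: already for ${\bf E}=\mathscr W$ Theorem \ref{thm:decomposition} produces $c=1$ and ${\bf F}\equiv 0$ on $\R^{2\times 2}_+$, extended by $+\infty$, which fails to be lower semicontinuous at every point of $\{\det=0\}$, not just at $0$. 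Consequently the truncations $\min\{\widetilde{\bf F},k\}$ are not lower semicontinuous either, and the appeal to \eqref{eq:lscYM} (whose justification in Remark \ref{rmk:moreintegrands} requires lower semicontinuity) is not legitimate as written; moreover the inequality $\liminf_j\int_\Omega\widetilde{\bf F}(\D f_j)\,\dd m\ge\int_\Omega\langle\nu_z,\widetilde{\bf F}\rangle\,\dd m$ can genuinely fail for a non-lsc integrand if the Young measures $\nu_z$ charge the discontinuity set $\{\det=0\}$, so the problem cannot be waved away by the remark that $\det\D f_j>0$ a.e. To close it you need one extra idea, for instance: (i) replace $\widetilde{\bf F}$ by the lsc polyconvex minorant $\bar\Psi(A,\det A)$, where $\bar\Psi$ is the lsc convex envelope of $(A,t)\mapsto\widetilde{\bf G}(t)-bt+\widetilde{\bf H}(|A|^2/t)$, run your Young-measure/Jensen argument for $\bar\Psi$, and recover $\widetilde{\bf F}(\D f)$ on the right using $\det\D f>0$ a.e.\ (which does follow from Theorem \ref{thm:lscW} combined with the finiteness of $\liminf_j\int_\Omega\mathscr W(\D f_j)\,\dd m$); or (ii) show that $\nu_z\bigl(\{\det\le 0\}\bigr)=0$ for a.e.\ $z$, by applying the Young-measure Fatou inequality to the lsc function $A\mapsto\log^q\bigl(e+1/\det A\bigr)$ together with the uniform bound of Remark \ref{rmk:explicitKO}, after which the lack of lower semicontinuity of $\widetilde{\bf F}$ on $\{\det= 0\}$ becomes harmless; or simply (iii) quote, as the paper does, the semicontinuity of polyconvex integrands without continuity assumptions on the determinant.
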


In conclusion, to promote the lower semicontinuity to  the existence of minimizers requires now
 some form of coercivity, and this takes us to the following examples.

\begin{corollary}\label{thm:miniselasticity}
Suppose  $q > q_0 \geq 1$ with $p\geq 2$, and let
$${\bf E}(A)\equiv {\bf G}(\det A)+ {\bf H}(K_A) + |A|^p,$$
where we assume that for some $C>0$,
\begin{enumerate}
\item $A\mapsto {\bf G}(\det A)+{\bf H}(K_A)$ is rank-one convex;
\smallskip

\item $|{\bf G}(t)| \leq C (1+ |\log(t)|^{q_0} )$;
\smallskip

\item ${\bf H}$ is convex and ${\bf H}(t)\geq  t^q/C - C $. 
\end{enumerate}
\smallskip

\noindent Then for any homeomorphism $g\in \WW^{1,1}_\tp{loc}(\C)$ with $K_g\in \LL^q_\tp{loc}(\C)$ there is a minimizer $f\in \WW^{1,p}(\Omega)$ of the problem
$$\inf\left\{\int_\Omega {\bf E}(\D h(z))\, \dd m(z) : h \in g + \WW^{1,p}_0(\Omega)\right\}.$$
In addition, $f$ is a homeomorphism such that $f^{-1}\in \WW^{1,2}(g(\Omega))$.
\end{corollary}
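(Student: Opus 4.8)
The plan is to run the Direct Method of the Calculus of Variations on the functional $\mathbf E$, using the lower semicontinuity already packaged in Corollary~\ref{cor:lscsplit} together with the coercivity assumptions (2)--(3) and the $|A|^p$ term. First I would check that the admissible class is nonempty: one must verify $g\in \WW^{1,p}(\Omega)$, or rather that $\mathbf E(\D g)\in\LL^1(\Omega)$, so that the infimum is finite. Since $g$ is a homeomorphism with $K_g\in\LL^q_\tp{loc}$, and $q>q_0$, the hypotheses on $\mathbf G,\mathbf H$ combined with Proposition~\ref{prop:KOR} (applied with $f=g$ on a slightly larger domain, or the local version) bound $\int_\Omega|\log J_g|^{q_0}$ and $\int_\Omega K_g^q$; one also needs $g\in\WW^{1,p}_\tp{loc}$, which should be imposed implicitly or follows from $K_g\in\LL^q$ with $q$ large enough via higher integrability --- this is a point to state carefully. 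Assuming the infimum $m$ is finite, take a minimizing sequence $(f_j)\subset g+\WW^{1,p}_0(\Omega)$.

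Next I would extract coercivity. From $\int_\Omega|\D f_j|^p\,\dd m\le \int_\Omega\mathbf E(\D f_j)\,\dd m + \int_\Omega\big(|\mathbf G(\det\D f_j)|+|\mathbf H(K_{f_j})|_-\big)\,\dd m$ --- more precisely, since $\mathbf H\ge t^q/C-C\ge -C$ is bounded below and $\mathbf G$ only grows logarithmically, one gets $\int_\Omega|\D f_j|^p\,\dd m\le m+1 + C|\Omega| + \int_\Omega|\mathbf G(\det\D f_j)|\,\dd m$. The term $\int_\Omega|\mathbf G(\det\D f_j)|\,\dd m\le C\int_\Omega(1+|\log J_{f_j}|^{q_0})\,\dd m$ must be absorbed. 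Here I would use the lower bound ${\bf H}(K_{f_j})\ge K_{f_j}^q/C-C$ inside $\mathbf E$ to first deduce $\int_\Omega K_{f_j}^q\,\dd m\le C(m+1)$ (using again that $|\mathbf G|$ is controlled by $|\log J_{f_j}|^{q_0}$ and, via Proposition~\ref{prop:KOR} resp.\ Remark~\ref{rmk:explicitKO}, $\int_\Omega|\log J_{f_j}|^{q_0}\lesssim 1+\int_\Omega K_{f_j}^{q_0}\le 1+ |\Omega|^{1-q_0/q}(\int_\Omega K_{f_j}^q)^{q_0/q}$, so the $K^{q_0}$ term is absorbed into $K^q$ by Young's inequality since $q>q_0$). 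This bootstrap yields uniform bounds $\|K_{f_j}\|_{\LL^q(\Omega)}\le C$ and $\|\log J_{f_j}\|_{\LL^{q_0}(\Omega)}\le C$, hence $\int_\Omega|\mathbf G(\det\D f_j)|\le C$, and then $\|f_j\|_{\WW^{1,p}(\Omega)}\le C$ via the $|A|^p$ term and the Dirichlet condition (Poincar\'e). I expect this absorption bootstrap --- showing the logarithmic volumetric term cannot destroy coercivity supplied by the $K^q$ term --- to be the main technical point, though it is elementary.

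Having a bounded minimizing sequence, pass to a subsequence with $f_j\weak f$ in $\WW^{1,p}(\Omega)$; then $f\in g+\WW^{1,p}_0(\Omega)$ since $\WW^{1,p}_0$ is weakly closed. By Corollary~\ref{cor:lscsplit}, using the uniform bound $\sup_j\|K_{f_j}\|_{\LL^q(\Omega)}<\infty$ just established, we get $\liminf_j\int_\Omega\big({\bf G}(\det\D f_j)+{\bf H}(K_{f_j})\big)\,\dd m\ge \int_\Omega\big({\bf G}(\det\D f)+{\bf H}(K_f)\big)\,\dd m$; note this already forces $\det\D f>0$ a.e.\ (else the right side is $+\infty$). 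The convex term $A\mapsto|A|^p$ is weakly lower semicontinuous on $\WW^{1,p}$ by Morrey/standard convexity, so adding the two inequalities gives $\int_\Omega\mathbf E(\D f)\le\liminf_j\int_\Omega\mathbf E(\D f_j)=m$, hence $f$ is a minimizer.

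Finally, for the last assertion: since $f$ is a minimizer, $\mathbf E(\D f)\in\LL^1(\Omega)$, so $K_f\in\LL^q(\Omega)\subset\LL^1(\Omega)$ and $\det\D f>0$ a.e.; by Remark~\ref{rmk:automatichomeo}, any $f\in g+\WW^{1,2}_0(\Omega)$ --- and here $p\ge 2$, so $f\in\WW^{1,2}$ --- with $K_f\in\LL^1(\Omega)$ is automatically a homeomorphism (continuity plus openness from Iwaniec--\v Sver\'ak, then degree theory). Then the identity \eqref{eq:aimo}, $\int_{g(\Omega)}|\D(f^{-1})|^2\,\dd m=\int_\Omega K_f\,\dd m<\infty$, together with $f^{-1}\in g^{-1}+\WW^{1,2}_0(g(\Omega))$ as noted in the proof of Proposition~\ref{prop:shield}, gives $f^{-1}\in\WW^{1,2}(g(\Omega))$, completing the proof.
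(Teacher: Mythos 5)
Your proof is correct and follows essentially the same route as the paper: coercivity from ${\bf H}(K_A)\geq K_A^q/C-C$ absorbing the logarithmic volumetric term via Remark~\ref{rmk:explicitKO} and a Young-type inequality (since $q>q_0$), then the Direct Method with Corollary~\ref{cor:lscsplit} for lower semicontinuity and the homeomorphism/inverse-regularity claims from integrable distortion exactly as in the paper. Your additional care in treating the convex $|A|^p$ term separately and in flagging the finiteness of the infimum are points the paper leaves implicit, but they do not change the argument.
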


\begin{proof} 
By our hypothesis we have the lower bound
\begin{align*}
\int_\Omega {\bf E}(\D f(z)) \, \dd m(z) & \geq 
\int_\Omega |\D f(z)|^p \, \dd m(z) +  \frac{1}{C} \int_\Omega K_f(z)^q \, \dd m(z)\\
& \qquad  - C \int_\Omega |\log(J_f(z))|^{ q_0} \, \dd m(z)  - 2C|\Omega| 
\end{align*}
for any orientation-preserving map $f\in \WW^{1,p}_g(\Omega)$. 

Next, 
for any $\varepsilon > 0$ and $q > q_0 \geq 1$, we have  $C x^{q_0} \leq \varepsilon x^q + M$ where  the constant $M = M(\varepsilon,C,q_0/q) < \infty$. 
With Remark \ref{rmk:explicitKO} this gives us  
$$C \int_\Omega |\log J_f(z)|^{ q_0} \, \dd m(z) \leq \frac{1}{2C} \int_\Omega K_f(z)^q \,\dd m(z)+ C_1(M, g,\Omega).$$
That is, we have the coercivity
\begin{equation*}
\label{eq:coercivity}
\int_\Omega {\bf E}(\D f) \, \dd m(z) \geq 
 \int_\Omega |\D f|^p \, \dd m(z) +  \frac{1}{2C} \int_\Omega K_f(z)^q \,\dd m(z) - C_2(M, g,\Omega).
\end{equation*}
The result is now a consequence of the Direct Method: 
we take a minimizing sequence $(f_j)\subset g + \WW^{1,2}_0(\Omega)$ 
and by the last estimate we  have 
$$\sup_j\|\D f_j\|_{\LL^p(\Omega)} <\infty, \qquad  \sup_j\|K_{f_j} \|_{\LL^q(\Omega)} <\infty.$$ Since $p\geq 2$, up to a subsequence we have $f_j\weak f$ in $\WW^{1,2}(\Omega)$ and hence Corollary \ref{cor:lscsplit} yields 
$$\liminf_{j\to \infty} \int_\Omega {\bf E} ( \D f_j(z)) \, \dd m(z) \geq \int_\Omega {\bf E}(\D f(z)) \, \dd m(z),$$
thus $f$ is a minimizer.  Since $f\in g + \WW^{1,p}_0(\Omega)$ is a map of integrable distortion, we have $f^{-1}\in \WW^{1,2}(g(\Omega))$.
\end{proof}

The main point in the proof of Corollary \ref{thm:miniselasticity} is that, once the condition $\int_\Omega {\bf E}(\D f) \,\dd m(z)<\infty$ imposes bounds on the $\LL^p$ norms of both $\D f$ and $K_f$, the existence of minimizers follows from the sequential lower semicontinuity result of Corollary \ref{cor:lscsplit}, which in turn is essentially a consequence of Theorem \ref{thm:lscW}. In particular, it is not difficult to write more general versions of Corollary \ref{thm:miniselasticity}: for instance,  to ${\bf E}$ as in Corollary \ref{thm:miniselasticity} one can add a polyconvex term ${\bf P}\geq 0$, as well as a quasiconvex term ${\bf Q}$ satisfying $0\leq {\bf Q}\leq c(1+|\cdot|^p)$ for some $c \geq 0$, and still obtain existence of minimizers. Instead of pursuing the maximal degree of generality, we give here a simple, concrete example,  where we add a neo-Hookean term to $\mathscr W$, and leave more complicated examples to the interested reader:

\begin{example} \label{nonpoly} For $c\geq 0$, consider the functional ${\bf E}_c\colon \R^{2\times 2}_+\to \R$,
$${\bf E}_c(A) = \mathscr W(A) + c \left(\frac{1}{\det A} + |A|^2\right)^2.$$
Then the ${\bf E}$-energy admits minimizers, as in Corollary \ref{thm:miniselasticity}.
Moreover,  ${\bf E}_c$ is bounded from below and it satisfies \eqref{eq:blowup}. If $c$ is small enough then ${\bf E}_c$ is non-polyconvex.
\end{example}

\begin{proof}[Proof of the above claims]
The functional ${\bf E}_c$  satisfies the assumptions of Corollary \ref{thm:miniselasticity}, hence one obtains a minimizer for ${\bf E}_c$ in $\WW^{1,2}$.

It is not difficult to check that ${\bf E}_c$ is bounded from below; for instance,  
$$\mathscr W(A)+\frac{c}{(\det A)^2}\geq \frac 3 2 + \log(\sqrt{2 c}).$$
The blow-up condition \eqref{eq:blowup} is easy to verify.

We then show that ${\bf E}_c$ is non-polyconvex for small enough $c$. First, consider points
$$A_1 = \tp{diag}(3/10,3/10),\quad A_2=\tp{diag}(2/3, 8), \quad A_3 = \tp{diag}(8,2/3).$$
These points satisfy the so-called minors relations, that is,
\begin{gather*}
\frac{100}{121} A_1 + \frac{21}{242} A_2 + \frac{21}{242} A_3 = \Id, \\
\frac{100}{121} \det A_1 + \frac{21}{242} \det A_2 + \frac{21}{242} \det A_3 = 1,
\end{gather*}
yet we have
$$\frac{100}{121} \mathscr W(A_1) + \frac{21}{242} \mathscr W(A_2 )+ \frac{21}{242} \mathscr W(A_3) \approx 0.78 < 1=\mathscr W(\Id),$$
which in particular shows that $\mathscr W$ is non-polyconvex.  Clearly we still have
$$\frac{100}{121} \mathscr {\bf E}_c(A_1) + \frac{21}{242} {\bf E}_c(A_2 )+ \frac{21}{242} {\bf E}_c(A_3)  < {\bf E}_c(\Id),$$
provided that $c$ is chosen sufficiently small.
\end{proof}

\begin{remark}\label{weehat} In the same token, the non-negative functional 
$$\widetilde{\,  \mathscr  W \, } (A) =  \frac{|A|^2}{\det A} - \log \left( \frac{|A|^2}{\det A}\right) + |\log \det A|$$ 
from \eqref{modiW} is quasiconvex but not polyconvex. 
Indeed, the functional is the sum of $ \mathscr W$ and a polyconvex functional,  thus  quasiconvex by Theorem \ref{thm:morreysplit}. Moreover, 
$\widetilde{\,  \mathscr  W \, } (A) =  \mathscr W(A)$ when $\det(A) > 1$. 

Hence we can use the above minors relations and multiply the $A_j$'s by a number $t >1$ so that each has determinant $> 1$.
As $\mathscr W(t A) = \mathscr W(A) + \log(t^2)$ with $ \mathscr W(t \Id) = 1 + \log(t^2)$, the new minors relations with the multiplied matrices
show that $\widetilde{\,  \mathscr W \,}$ is non-polyconvex. 

Also, similarly  as in Example  \ref{nonpoly} considering, say,  $\widetilde{\,  \mathscr  W \, }(A) + c(|A|^2 + K_A^2)$
gives a non-negative and non-polyconvex functional which admits  minimizers.
\end{remark}

Having established the existence of minimizers for the above class of functionals it is natural to inquire about their regularity properties, but
this appears to be a very difficult problem.  Away from perturbative regimes,  almost nothing is known concerning regularity of minimizers in
nonlinear elasticity, even for polyconvex functionals, but see \cite{Bauman,IKO2} for some interesting results.

\bibliographystyle{amsplain}

\end{document}